\documentclass{amsart}
\usepackage{tikz}
\usepackage{xcolor}
\usepackage{amssymb,latexsym,amsmath,extarrows}
\usepackage{graphicx,mathrsfs,comment}
\usepackage{hyperref,url}
\usepackage[margin=1in, centering]{geometry}

\usepackage{amstext}
\usepackage{bbm}

\numberwithin{equation}{section}

\renewcommand{\theequation}{\arabic{section}.\arabic{equation}}

\newtheorem{theorem}{Theorem}[section]

\newtheorem{lemma}[theorem]{Lemma}

\newtheorem{proposition}[theorem]{Proposition}
\newtheorem{remark}[theorem]{Remark}

\newtheorem{definition}[theorem]{Definition}
\newtheorem{corollary}[theorem]{Corollary}

\newcommand{\ga}{\gamma}
\newcommand{\Ga}{\Gamma}
\newcommand{\de}{\delta}

\newcommand{\De}{\Delta}
\newcommand{\e}{\epsilon}

\newcommand{\la}{\lambda}

\newcommand{\si}{\sigma}

\newcommand{\om}{\omega}

\newcommand{\mj}{\mathbf{j}}

\newcommand{\p}{\partial}

\newcommand{\wt}{\widetilde}
\newcommand{\wh}{\widehat}

\newcommand{\ZZ}{\mathbb{Z}}

\newcommand{\ZN}{\mathbb{N}}

\newcommand{\Id}{{\mathbf{1}}}

\newcommand{\bI}{{\bf I}}

\newcommand{\cC}{{\mathcal C}}

\newcommand{\R}{\mathbb{R}}

\newcommand{\dist}{{\rm dist}}

\newcommand{\bnu}{\boldsymbol{\nu}}

\newcommand{\bv}{\mathbf{v}}
\newcommand{\bde}{\mathbf{e}}
\newcommand{\bla}{\boldsymbol{\lambda}}
\newcommand{\cI}{\mathcal I}

\newcommand{\laprod}{\lambda_{\mathrm{prod}}}

\newcommand{\tc}{\tau_\circ}

\begin{document}

\title[]{Sharp Rational points counting near nondegenerate curves in $\mathbb{R}^n$}

\author{Shengwen Gan} \address{ Shengwen Gan\\  Department of Mathematics\\ Sun Yat-sen University\\ Guangzhou, 510275, P.R. China} \email{shengwengan2018@gmail.com}

\author{Shaoming Guo} \address{ Shaoming Guo\\ Chern Institute of Mathematics, LPMC and New Cornerstone Science Laboratory, Nankai University, Tianjin 300071, P.R. China} \email{shaomingguo2018@gmail.com}

\author{Changkeun Oh}\address{ Changkeun Oh\\ Department of Mathematical Sciences and RIM, Seoul National University, Republic of Korea} \email{changkeun@snu.ac.kr}

\begin{abstract}
We establish sharp bounds for rational points near compact smooth nondegenerate curves in $\mathbb R^n$, for every $n\geq4$.
For a nondegenerate curve $\mathcal C$ in $\mathbb R^n$, let 
\[ \mathcal R_{\mathcal C}(\delta,Q)
 :=\#\left\{(\mathbf p,q)\in\mathbb Z^n\times\mathbb N:
 1\leq q\leq Q,\ \operatorname{dist}(\mathbf p/q,\mathcal C)\leq\delta/q\right\}.\]
We prove that
\[\mathcal R_{\mathcal C}(\delta,Q)
\lesssim_{\varepsilon,\mathcal C}
\delta^{n-1}Q^2+Q^\varepsilon
\left(Q+\sum_{k=1}^{n-1}
\delta^{k^2/(k+1)}Q^{(2k+1)/(k+1)}\right).\]
A key new ingredient is a wave packet method developed by Gan--Maldague--Oh.
\end{abstract}

\maketitle




\section{Introduction}
\label{sec:introduction}

Let $n \geq 2$ and $I\subset\mathbb R$ be a compact interval. Let
\[
 \boldsymbol\gamma(t)=\bigl(\gamma_1(t),\gamma_2(t),\ldots,\gamma_n(t)\bigr),
 \qquad t\in I,
\]
be a $C^\infty$ parametrization of a compact curve
$\mathcal C\subset\mathbb R^n$.  We say that $\mathcal C$ is
\emph{nondegenerate} if
\begin{equation}
 \left|\det\bigl(\boldsymbol\gamma'(t),\boldsymbol\gamma''(t),\ldots,
 \boldsymbol\gamma^{(n)}(t)\bigr)\right|>0
 \label{eq:intro-nondegeneracy}
\end{equation}
for all $t \in I$.
For $Q\geq1$ and $0<\delta\leq1$, define the counting function
\begin{equation}
 \mathcal R_{\mathcal C}(\delta,Q)
 :=\#\left\{(\mathbf p,q)\in\mathbb Z^n\times\mathbb N:
 1\leq q\leq Q,\ \operatorname{dist}(\mathbf p/q,\mathcal C)\leq\delta/q\right\}.
 \label{eq:intro-cumulative-count}
\end{equation}
We count
representations $(\mathbf p,q)$, so no coprimality condition is imposed.

\begin{theorem}
\label{thm:main}
Let $n\ge3$ and let $\mathcal C\subset\mathbb R^n$ be a compact
$C^\infty$ nondegenerate curve. For every $\varepsilon>0$, $Q\ge2$,
and $0<\delta\le1$,
\begin{equation}\label{eq:intro-refined-bound}
\mathcal R_{\mathcal C}(\delta,Q)
\lesssim_{\varepsilon,\mathcal C}
\delta^{n-1}Q^2+Q^\varepsilon
\left(Q+\sum_{k=1}^{n-1}
\delta^{k^2/(k+1)}Q^{(2k+1)/(k+1)}\right).
\end{equation}

\end{theorem}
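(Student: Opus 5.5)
We plan to prove Theorem~\ref{thm:main} by induction on scales in $\delta$, combining a Fourier-analytic count for the main term with an incidence estimate controlled by a wave packet decomposition. First we reduce to a dyadic count: let $N(\delta,Q)$ count pairs $(\mathbf p,q)$ with $Q/2< q\le Q$ and $\dist(\mathbf p/q,\mathcal C)\le \delta/Q$. It suffices to bound $N$ by the right-hand side of \eqref{eq:intro-refined-bound}. After a partition of $I$ and an affine change of variables, we may assume $\mathcal C$ is a graph $\boldsymbol\gamma(t)=(t,\gamma_2(t),\dots,\gamma_n(t))$ over a short interval. Then the condition says that the integer point $(q,\mathbf p)\in\mathbb Z^{n+1}$ lies in a $\delta$-neighbourhood of the cone $\{s(1,\boldsymbol\gamma(t))\}$, truncated to $s\sim Q$. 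This is a $\delta\times Q$ neighbourhood of a curved cone, and we decompose it into planks.

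Fix an intermediate scale. For $t$ in an interval $J$ of length $\sigma$, the truncated cone over $J$ is contained in a box with dimensions $Q\times Q\sigma\times Q\sigma^2\times\cdots\times Q\sigma^{n}$, thickened by $\delta$. The relevant box has side lengths $Q, Q\sigma,\dots,Q\sigma^{k},\delta,\dots,\delta$, where $k$ is the largest index with $Q\sigma^k\ge\delta$. The lattice-point count in such a convex body $B$ is at most $|B|+O(\text{lower-dimensional projections})$. Davenport's lemma bounds it by $\sum_j \max$ volumes of projections to $j$-dimensional coordinate subspaces, plus one. Summing over the $\sigma^{-1}$ intervals $J$ and choosing $\sigma$ optimally for each $k$ should produce exactly the terms $\delta^{k^2/(k+1)}Q^{(2k+1)/(k+1)}$. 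Indeed, at $\sigma$ with $Q\sigma^{k+1}\approx\delta$, i.e.\ $\sigma=(\delta/Q)^{1/(k+1)}$, the count per piece is $Q^{k+1}\sigma^{k(k+1)/2}$ in the worst projection. We will verify this bookkeeping yields the stated exponent. The trivial volume term gives $\delta^{n-1}Q^2$, and the "$+1$" terms give $\sigma^{-1}\le Q$.

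Davenport alone is lossy, however, because it ignores the case where lattice points in a plank concentrate on a lower-dimensional affine sublattice. Here we use the wave packet method of Gan--Maldague--Oh. We write the counting function as $\sum_{(q,\mathbf p)}\mathbf 1_{N_\delta}$ with a smooth bump adapted to the cone neighbourhood. We then decompose it into wave packets adapted to planks at each scale $\sigma$, and pass from scale $\sigma$ to $\sigma'$ by a two-scale (broad/narrow) argument. In the narrow case, the points lie in a plank from a coarser scale. That plank rescales by the affine symmetry $t\mapsto t_0+\sigma t$ of nondegenerate curves (the Pramanik--Seeger type rescaling) to a problem of the same form with new parameters $(\delta',Q')$. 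In that case we apply induction on $Q$, and the $Q^\varepsilon$ loss absorbs the logarithmic number of steps. In the broad case, the points spread across transverse planks, and a lattice-point count in a genuinely $(k+1)$-dimensional box applies. There, the integrality of $(q,\mathbf p)$ forces the sublattice spanned by the points to have covolume $\ge1$, which bounds the count by volume.

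The main obstacle is the broad case with intermediate $k$. One must show that when the lattice points in a $\delta$-plank are not captured by a smaller plank, they are counted by the volume of the correct $(k+1)$-dimensional projection. This should be sharp only at the balance point between adjacent $k$. I expect to handle it by a successive-minima argument: take the successive minima $\lambda_1\le\dots\le\lambda_{n+1}$ of the plank body with respect to $\mathbb Z^{n+1}$. If $\lambda_{j}\le1<\lambda_{j+1}$, the points lie in a $j$-dimensional sublattice, and its determinant is controlled via the nondegeneracy (the Wronskian) of $\boldsymbol\gamma$ to rule out degenerate alignment. I would try this first. If it fails, the fallback is to use the GMO wave-packet incidence bound to control how many planks can share a $j$-dimensional rational subspace.
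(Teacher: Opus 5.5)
Your plan has a genuine gap. The Davenport bookkeeping does not produce the stated terms, and neither of the two devices meant to repair it (rescaling induction and a successive-minima/Wronskian argument) works for lattice counting.

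\textbf{The accounting and the broad case.} Take $\sigma=(\delta/Q)^{1/(k+1)}$. Your per-plank bound $Q^{k+1}\sigma^{k(k+1)/2}$, summed over the $\sigma^{-1}$ planks, gives $Q^2$ when $k=1$. That is the trivial bound, not $\delta^{1/2}Q^{3/2}$. For $k\ge2$, a single plank already exceeds $\delta^{k^2/(k+1)}Q^{(2k+1)/(k+1)}$ by the factor $(Q/\delta)^{k(k-1)/(2(k+1))}$.

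The extra terms in \eqref{eq:intro-refined-bound} come from $O(1)$ exceptional planks. For $\Gamma_n$ this is the plank at $t=0$, whose osculating $(k+1)$-plane is the rational coordinate subspace $\{p_{k+1}=\cdots=p_n=0\}$. The count there is $Q\cdot Q\sigma\cdot\delta^{k-1}$, the volume of a $\delta$-neighbourhood of a two-dimensional cone inside that subspace, not the volume of a $(k+1)$-dimensional box.

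So everything hinges on bounding how many planks carry lattice points concentrated on a rational subspace of small height. Your idea of excluding such alignment via the Wronskian cannot succeed. Nondegeneracy is a local differential condition and says nothing about whether osculating subspaces are rational. Proposition~\ref{cor:intro-moment-count} shows that the alignment genuinely occurs and is exactly what makes the theorem sharp. Counting planks whose osculating flag is close to a low-height rational flag is a Diophantine problem on the dual side, and your proposal has no mechanism for it. The GMO input does not supply one either: GMO gives Fourier decay of the cone measure, not an incidence bound for rational subspaces.

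\textbf{The narrow case.} The rescaling $t\mapsto t_0+\sigma t$ acts on $(q,\mathbf p)$-space by a lower-triangular linear map depending on $t_0$ and $\sigma$. This is exact for the moment curve and approximate in general. That map does not preserve $\mathbb Z^{n+1}$. The rescaled problem therefore counts points of a different, highly skewed lattice, and it is not ``of the same form with new parameters $(\delta',Q')$''. The induction on $Q$ has nothing to apply to, because lattice counting, unlike decoupling, has no rescaling symmetry.

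\textbf{How the paper avoids both issues.} It uses Poisson summation.
\begin{enumerate}
\item The range $\delta<2/Q$ is split off by projecting to a planar curve.
\item For $\delta\ge 2/Q$, Poisson summation gives the main term $\delta^{n-1}Q^2$ plus $\delta^{n-1}Q$ times a sum of $\widehat{\mu_{\omega,\chi}}(Q\mathbf j)$ over nonzero dual frequencies.
\item GMO enters only as the frequency-side pointwise bound of Proposition~\ref{prop:n-fourier-input}.
\item Dual integer vectors in $\boldsymbol\lambda$-planks are then counted in normalized derivative coordinates. A rough bound is improved by iterated van der Corput B-processes (Proposition~\ref{lem:bp36-B}): Poisson in $j_r$ and $j_{r+1}$, stationary phase, then Poisson in the remaining coefficients. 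These reduce the problem to rational points near the projected curves $\mathcal C_{r,n-r-1}$.
\end{enumerate}
The arithmetic input at the bottom is the planar Huxley/Vaughan--Velani bound $\delta Q^2+Q^{1+\varepsilon}$. The paper also uses it directly for $\delta<2/Q$, and your plan contains no substitute for it.
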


In dimension three, Chen, Seeger, Srivastava and Technau~\cite[Corollary~1.3]{CSST2026}
established the bound
$\mathcal R_{\mathcal C}(\delta,Q)\lesssim_{\varepsilon,\mathcal C}\delta^2Q^2+Q^{4/3+\varepsilon}$,
which yields the optimal volume bound for $\delta\ge Q^{-1/3+\varepsilon}$.
Theorem~\ref{thm:main} extends the optimal volume bound to higher dimensions
and provides refined estimates for small $\delta$, including in dimension three.

The first term in \eqref{eq:intro-refined-bound}
is the main term predicted by volume. For fixed $q$, the condition
$\operatorname{dist}(\mathbf p/q,\mathcal C)\leq\delta/q$ places $\mathbf p$
in the $\delta$-neighborhood of $q\mathcal C$. Since $q\mathcal C$ has length
comparable to $q$,
\[
\operatorname{vol}\{\mathbf x\in\mathbb R^n:
\operatorname{dist}(\mathbf x,q\mathcal C)\leq\delta\}
\asymp_{\mathcal C}q\delta^{n-1}.
\]
Since the integer lattice has one point per unit volume on average,
the volume heuristic predicts about $q\delta^{n-1}$ admissible numerators
$\mathbf p$ for each $q$. Summing over $1\leq q\leq Q$ gives
\[
\sum_{1\leq q\leq Q}q\delta^{n-1}
=\delta^{n-1}\sum_{1\leq q\leq Q}q
\asymp\delta^{n-1}Q^2.
\]
This motivates the \emph{volume bound}
$\mathcal R_{\mathcal C}(\delta,Q)\lesssim_{\mathcal C}\delta^{n-1}Q^2$.

In the range of $\delta$ stated below, the matching lower bound
$\mathcal R_{\mathcal C}(\delta,Q)\gtrsim_{\mathcal C}\delta^{n-1}Q^2$
follows from Beresnevich, Vaughan, Velani and Zorin
\cite[Corollary~1.2]{BVVZ2021}. Combining this with the upper bound supplied
by Theorem~\ref{thm:main} yields the following corollary. Its range of $\delta$
is optimal  up to the factor $Q^\varepsilon$,
as shown by Proposition~\ref{cor:intro-moment-count} below.

\begin{corollary}
Let $n\ge3$ and let $\mathcal C$ be as in Theorem~\ref{thm:main}.
Set $D_n=n-1+\left\lfloor\frac{(n-1)^2}{4}\right\rfloor$.
For every $\varepsilon>0$, if $Q\ge2$ is sufficiently large depending on
$\varepsilon$ and $\mathcal C$, and
$Q^{-1/D_n+\varepsilon}\leq\delta\leq1$, then
\begin{equation}
 c\delta^{n-1}Q^2
 \leq \mathcal R_{\mathcal C}(\delta,Q) \leq C\delta^{n-1}Q^2,
 \label{eq:intro-volume-range}
\end{equation}
where $c,C>0$ depend only on $\varepsilon$ and $\mathcal C$.
\end{corollary}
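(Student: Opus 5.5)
The upper bound comes from Theorem~\ref{thm:main} and the lower bound from \cite[Corollary~1.2]{BVVZ2021}. So the proof reduces to a computation: on the range $Q^{-1/D_n+\varepsilon}\leq\delta\leq 1$, every error term on the right of \eqref{eq:intro-refined-bound} is $\lesssim\delta^{n-1}Q^2$, provided $\varepsilon$ is replaced by a smaller $\varepsilon'$ in the theorem.

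\emph{Lower bound.} I would invoke \cite[Corollary~1.2]{BVVZ2021} directly. As the text indicates, it gives $\mathcal R_{\mathcal C}(\delta,Q)\gtrsim_{\mathcal C}\delta^{n-1}Q^2$ for nondegenerate curves whenever $\delta\geq Q^{-\eta_n+\varepsilon}$ with some exponent $\eta_n$ and $Q$ large. I only need to check that our range is contained in theirs, i.e.\ $1/D_n\leq\eta_n$. Their stated range should be of the form $\delta\ge Q^{-3/(2n-1)+\varepsilon}$ or similar. Comparing, $3/(2n-1)\ge 1/D_n$ amounts to $3D_n\ge 2n-1$, which holds since $D_n\ge n-1\ge 2$. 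If their range were narrower, the lower bound would have to be restricted accordingly; I expect it is not.

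\emph{Upper bound.} Apply Theorem~\ref{thm:main} with a small $\varepsilon'$ chosen in terms of $\varepsilon$ and $n$. For the term $Q^{1+\varepsilon'}$, we need $Q^{1+\varepsilon'}\le\delta^{n-1}Q^2$, i.e.\ $\delta\ge Q^{-(1-\varepsilon')/(n-1)}$. This is implied by $\delta\ge Q^{-1/D_n+\varepsilon}$ because $D_n\ge n-1$. For the $k$-th term, we need
\[
Q^{\varepsilon'}\delta^{k^2/(k+1)}Q^{(2k+1)/(k+1)}\le\delta^{n-1}Q^2,
\]
which is $\delta^{\,n-1-k^2/(k+1)}\ge Q^{\varepsilon'-1/(k+1)}$. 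When $n-1-k^2/(k+1)\le 0$ the inequality is automatic because $\delta\le1$. Otherwise it is equivalent to
\[
\delta\ge Q^{-1/((k+1)(n-1)-k^2)+O(\varepsilon')}.
\]
So the binding constraint comes from the $k\in\{1,\dots,n-1\}$ maximizing $f(k)=(k+1)(n-1)-k^2$. This is a concave quadratic with maximum at $k=(n-1)/2$. Writing $m=n-1$, the integer maximum is
\[
m+\max_k\bigl(k(m-k)\bigr)=m+\lfloor m^2/4\rfloor=D_n.
\]
Hence every term is $\lesssim\delta^{n-1}Q^2$ once $\delta\ge Q^{-1/D_n+\varepsilon}$ and $\varepsilon'$ is small relative to $\varepsilon$. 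For $Q$ large, the implicit constants are absorbed into $C$.

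The main obstacle is bookkeeping rather than mathematics. One part is choosing $\varepsilon'$, for example $\varepsilon'=\varepsilon/(2n)$ so that $\varepsilon' f(k)^{-1}$-type losses are dominated by $\varepsilon$. The other part is confirming that the precise hypotheses of \cite[Corollary~1.2]{BVVZ2021} cover the full range $Q^{-1/D_n+\varepsilon}\le\delta\le1$ for nondegenerate $C^\infty$ curves. This includes $\delta$ close to $1$, where the lower bound is the trivial volume count.
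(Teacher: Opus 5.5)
Your proposal is correct and follows the paper's route: you cite \cite[Corollary~1.2]{BVVZ2021} for the lower bound, and you get the upper bound from Theorem~\ref{thm:main} by checking that each error term is $\lesssim\delta^{n-1}Q^2$. The binding exponent there is $\max_k\bigl((k+1)(n-1)-k^2\bigr)=D_n$, exactly as you compute. For the citation you only need its range to contain $\delta\gtrsim Q^{-1/(n-1)}$, which is how the paper uses it in Section~\ref{sec:sharpness}; this already covers $\delta\ge Q^{-1/D_n+\varepsilon}$ because $D_n\ge n-1$, so your uncertainty about the exact BVVZ exponent does not matter.
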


To explain the additional terms in \eqref{eq:intro-refined-bound}, consider the moment curve
\[
 \Gamma_n=\{(t,t^2,\ldots,t^n):0\leq t\leq1\}.
\]
For this curve, the term $Q$ corresponds to $k=0$, namely the pairs $(\mathbf 0,q)$ with
$q\asymp Q$. For $1\leq k\leq n-1$, we restrict to denominators $Q/2<q\leq Q$ and construct integer points
$\mathbf p=(p_1,\ldots,p_k,0,\ldots,0)$ within distance $\delta$ of $q\Gamma_n$.
Closeness in the $(k+1)$st coordinate requires $qt^{k+1}\leq\delta$, which implies
\[
0\leq t\leq\left(\frac{\delta}{q}\right)^{1/(k+1)}
\asymp\left(\frac{\delta}{Q}\right)^{1/(k+1)}.
\]
This is therefore a rational point problem for the $k$-dimensional curve
$(t,t^2,\ldots,t^k)$ on this short interval near the origin.
The volume of the corresponding $\delta$-neighborhood predicts the count
\[
\underbrace{Q}_{\substack{\text{number of}\\\text{denominators}}}
\underbrace{Q\left(\frac{\delta}{Q}\right)^{1/(k+1)}}_{
\substack{\text{length of the}\\\text{dilated curve segment}}}
\underbrace{\delta^{k-1}}_{\substack{\text{cross-sectional}\\\text{volume}}}
=\delta^{k^2/(k+1)}Q^{(2k+1)/(k+1)}.
\]
The following proposition establishes these predicted lower bounds for the moment curve $\Gamma_n$, showing that Theorem~\ref{thm:main} is sharp up to the factor $Q^\varepsilon$.

\begin{proposition}
\label{cor:intro-moment-count}
Let $n\ge3$ and let $\Gamma_n$ be the moment curve defined above.
For every $Q\ge2$ and $0<\delta\le1$,
\[
\delta^{n-1}Q^2+
\sum_{k=0}^{n-1}\delta^{k^2/(k+1)}Q^{(2k+1)/(k+1)}
\lesssim_n \mathcal R_{\Gamma_n}(\delta,Q).
\]
\end{proposition}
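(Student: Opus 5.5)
Since every term on the left-hand side is nonnegative, it suffices to bound each of the $n+1$ terms separately by $\mathcal R_{\Gamma_n}(\delta,Q)$. The term $k=0$ is immediate. For each $q\in(Q/2,Q]$, the point $\mathbf 0$ lies at distance $0$ from $q\Gamma_n\ni q\boldsymbol\gamma(0)=\mathbf 0$. Hence the pairs $(\mathbf 0,q)$ contribute $\gtrsim Q = \delta^{0}Q^{1}$.

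For the volume term I would invoke \cite[Corollary~1.2]{BVVZ2021}, which gives $\mathcal R_{\Gamma_n}(\delta,Q)\gtrsim\delta^{n-1}Q^2$ in its admissible range of $\delta$ (at least $\delta\ge Q^{-1/D_n+\varepsilon}$, and in the form proved there for somewhat smaller $\delta$ as well). Outside that range I would show that $\delta^{n-1}Q^2$ is dominated by one of the terms $k\le n-1$, so no separate lower bound is needed. The ratio of the volume term to the $k$-th term is $\delta^{\,n-1-k^2/(k+1)}Q^{1/(k+1)}$. Choosing $k$ appropriately (for instance $k=n-1$, where the ratio is $(\delta^{n-1}Q)^{1/n}$, and intermediate $k$ otherwise) makes this ratio $\lesssim1$ whenever $\delta$ is below the threshold at which the Beresnevich--Vaughan--Velani--Zorin lower bound applies. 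Checking that these ranges genuinely cover all $\delta$ is a bookkeeping step that I would do carefully but expect to be routine.

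For $1\le k\le n-1$ I would follow the construction described before the proposition. Restrict to $q\in(Q/2,Q]$ and $0\le t\le c(\delta/Q)^{1/(k+1)}$ with $c$ small. Then $|qt^j|\le\delta/2n$ for all $j\ge k+1$, so any $\mathbf p=(p_1,\dots,p_k,0,\dots,0)$ with $|p_j-qt^j|\le\delta/2n$ for $j\le k$ satisfies $\dist(\mathbf p/q,\Gamma_n)\le\delta/q$. This reduces the claim to a lower bound of size $Q\cdot QL\cdot\delta^{k-1}$, with $L=(\delta/Q)^{1/(k+1)}$, for lattice points near the dilates $q\Gamma_k|_{[0,L]}$ in $\mathbb R^k$, summed over $q$.

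To get this bound I would parametrize by $p_1=m$ with $0\le m\lesssim QL$ and set $t=m/q$. That fixes the first coordinate exactly, and the remaining requirement is $\|m^j/q^{j-1}\|\le\delta/2n$ for $2\le j\le k$, where $\|\cdot\|$ denotes distance to the nearest integer. So I need to count pairs $(m,q)$ for which the vector $(m^2/q,\dots,m^k/q^{k-1})$ lies within $\delta$ of $\mathbb Z^{k-1}$; the expected count is $Q\cdot QL\cdot\delta^{k-1}$. The main obstacle is proving this lower bound, an equidistribution-in-a-small-box statement, uniformly down to the regime where the expected count is comparable to $Q$. Two approaches seem available:
\begin{itemize}
\item First try: a second-moment / Erd\H{o}s--Tur\'an argument over the pairs $(m,q)$ in the box, using Weyl/van der Corput bounds for the exponential sums $\sum e(h\,m^j/q^{j-1})$.
\item Fallback: apply the $k$-dimensional BVVZ lower bound to the nondegenerate curve $\Gamma_k$. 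I would choose the parameter range in terms of $\delta$ and $Q$ so that, at the scale where the $k$-th term dominates, one lands back inside the volume range for dimension $k$. This uses that the short-interval count is comparable to the volume count of $\Gamma_k$ itself exactly when the $k$-th term is the largest.
\end{itemize}
In either case, only the ranges of $\delta$ where the $k$-th term is not dominated by another term need treatment, which is what makes the fallback viable.
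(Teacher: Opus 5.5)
Your treatment of the $k=0$ term, of the volume term, and of $k=1$ is fine, and so is the reduction to $\mathbf p=(p_1,\dots,p_k,0,\dots,0)$ with $t\le c(\delta/Q)^{1/(k+1)}$; all of this matches the paper. For $k=1$, your reduction imposes nothing beyond $0\le m\lesssim QL$, which gives $Q\cdot QL=\delta^{1/2}Q^{3/2}$ directly. For the volume term, BVVZ applies for $\delta\gtrsim Q^{-1/(n-1)}$, and below that $\delta^{n-1}Q^2\lesssim Q$, so the $k=0$ term covers the rest with no further bookkeeping.

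The gap is the central step for $2\le k\le n-1$. You leave open the lower bound for $\#\{(m,q):\|m^j/q^{j-1}\|\le\delta/2n,\ 2\le j\le k\}$, and neither proposed route closes it.
\begin{itemize}
\item The BVVZ fallback fails. That theorem counts points near a fixed arc of $\Gamma_k$, with constants depending on the arc, whereas you need parameters $t\lesssim L=(\delta/Q)^{1/(k+1)}\to0$. Points with $t$ of order one violate $qt^{k+1}\lesssim\delta$ after padding with zeros. Rescaling the short arc by $x_j\mapsto L^{-j}x_j$ does not preserve the lattice $q^{-1}\mathbb Z^k$. The comparability you invoke also does not hold: the short-arc count is smaller than the volume count of $\Gamma_k$ by the factor $L$ in every range.
\item The Erd\H{o}s--Tur\'an route is only a heuristic. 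Showing that $\sum_{h\ne0}|\hat\phi(h)|\,|\sum_{q,m}e(\sum_jh_jm^j/q^{j-1})|$ stays below the main term, uniformly where the $k$-th term dominates, is essentially a lower-dimensional instance of the dual counting problem the rest of the paper is devoted to. It is not a routine estimate.
\end{itemize}

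The missing observation is that no equidistribution or averaging over $q$ is needed. On the short arc, $qt^j$ varies more slowly than $qt^{j-1}$ by the factor $t\ll1$, so you can build the integers deterministically from the top coordinate down, as the paper does.
\begin{itemize}
\item Fix $q$ and choose $p_k$ among the $\asymp(\delta^kQ)^{1/(k+1)}$ integers of size $\asymp q(c_0\delta/Q)^{k/(k+1)}$.
\item Given $p_j$, choose $p_{j-1}$ among the integers in $F^{-1}([p_j-c_0\delta,p_j+c_0\delta])$, where $F(x)=q^{-1/(j-1)}x^{j/(j-1)}$ satisfies $F(qt^{j-1})=qt^j$. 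On this range $F'\asymp t\asymp(c_0\delta/Q)^{1/(k+1)}$, so the preimage has length $\asymp\delta/t\asymp(\delta^kQ)^{1/(k+1)}$. This is $\gg1$ once $\delta^kQ$ is large; otherwise the $k$-th term is $\lesssim Q$ anyway.
\item Set $t=p_1/q$. The bound $F'\lesssim1$ gives $|qt^j-p_j|\lesssim|qt^{j-1}-p_{j-1}|+c_0\delta$, so all errors are $O_n(c_0\delta)$, and also $qt^{k+1}\lesssim c_0\delta$.
\item A descending induction on $t_j=(p_j/q)^{1/j}$ keeps every $t_j\asymp(c_0\delta/Q)^{1/(k+1)}$.
\end{itemize}
This produces $\gtrsim(\delta^kQ)^{k/(k+1)}$ admissible $\mathbf p$ for each single $q$. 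In your language, it proves the $(m,q)$-count pointwise in $q$, and summing over $q\in(Q/2,Q]$ gives the $k$-th term.
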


The planar case is not covered by Theorem~\ref{thm:main}.
For a compact $C^\infty$ curve $\mathcal C\subset\mathbb R^2$ with nonvanishing curvature, the theorem of Vaughan and Velani~\cite[Theorem~3 and the following remark]{VV2006} implies
\begin{equation}
\mathcal R_{\mathcal C}(\delta,Q)
\lesssim_{\varepsilon,\mathcal C}\delta Q^2+Q^{1+\varepsilon},
\qquad Q\ge2,\quad 0<\delta\le1,
\label{eq:intro-planar-bound}
\end{equation}
for every $\varepsilon>0$.
Their proof combines Fourier analysis with Huxley's estimate obtained by the determinant method~\cite{Huxley1994}.
We do not know how to prove \eqref{eq:intro-planar-bound} using Fourier analysis alone.

Chen, Seeger, Srivastava and Technau~\cite{CSST2026} recently established essentially sharp bounds for rational points near nondegenerate curves in $\mathbb R^3$.
We found it surprising that their proof of these essentially sharp upper bounds uses the planar counting estimate \eqref{eq:intro-planar-bound} as its only arithmetic ingredient; see \cite[Section~3.7]{CSST2026}.
Once this planar estimate is taken as an input, the remaining argument proceeds entirely by Fourier analysis, including Poisson summation, stationary phase, and estimates for the Fourier transform of the dual surface.

A difficulty in extending the argument of Chen, Seeger, Srivastava and Technau is to obtain pointwise bounds for the oscillatory integrals arising in the proof. The size of the oscillatory integral involves $n$ variables, and the correct form of a bound in terms of all these variables is difficult to conjecture. Moreover, even if one conjectures such a bound, it is unclear whether it can be proved by extending the argument based on van der Corput's lemma to higher dimensions. To overcome this difficulty, we use the \textit{wave packet method} in Gan, Maldague and Oh \cite{GMO2025}. Wave packet method is a way to characterize the Fourier decay of nondegenerate curves using $\bla$-planks. A key geometric object---the
$\boldsymbol{\lambda}$-plank---was first defined by
Gan, Maldague and Oh~\cite{GMO2025}. It originates from the high--low method introduced by
Guth, Solomon and Wang~\cite{GSW2019}. The multiscale anisotropic geometry
underlying the $\boldsymbol{\lambda}$-plank appeared in dimension three in
the work of
Gan, Guo, Guth, Harris, Maldague and Wang~\cite{GGGHMW2026} and was subsequently
developed in higher dimensions by Gan, Guo and Wang~\cite{GGW2024}. With the help of $\bla$-planks, we are able to define the anisotropic counting problems which plays a key role in the study of the original counting problems.

At a broad level, our proof of Theorem~\ref{thm:main} uses Poisson summation and stationary phase,
as in the approach of Chen, Seeger, Srivastava and Technau. We also use an observation by
Chen, Seeger, Srivastava and Technau~\cite{CSST2026} when choosing the threshold of cutoff radius; in particular, it guides
the choice of the cutoff $\tau_\circ$ in \eqref{tc}, which separates the low- and
high-frequency contributions in Section~\ref{sec:algebraic-computation}. However, we offer a new perspective on their three-dimensional argument by
reinterpreting it in the language of wave packets.  By adapting the wave packet
method of Gan, Maldague and Oh~\cite{GMO2025} to this setting, we believe that
our work provides a new tool for rational point counting.
Our approach yields essentially sharp bounds for compact smooth
nondegenerate curves in $\mathbb R^n$ for every $n\ge4$, throughout the full
range $0<\delta\le1$.


\subsection{General manifolds}

Counting rational points of bounded height is a classical problem in Diophantine geometry; see, for example, Schanuel~\cite{Schanuel1979}.
For rational points lying near smooth manifolds, to the best of our knowledge, the first general estimates in the planar case were obtained by Huxley~\cite{Huxley1994}.
A general formulation of the counting problem for smooth manifolds appears in Beresnevich's work~\cite{Beresnevich2012}.

Let $1\le d<n$, put $m=n-d$, and let $f_1,\ldots,f_m$ be smooth functions on  $[0,1]^d$. Consider the manifold \[ \mathcal M:=\left\{\bigl(x_1,\ldots,x_d,f_1(\mathbf x),\ldots,f_m(\mathbf x)\bigr): \mathbf x=(x_1,\ldots,x_d)\in[0,1]^d\right\}\subset\mathbb R^n. \] Define $\mathcal R_{\mathcal M}(\delta,Q)$ by \eqref{eq:intro-cumulative-count}, with $\mathcal C$ replaced by $\mathcal M$. Beresnevich~\cite[Problem~1.1]{Beresnevich2012} asks for counting estimates near a generic smooth manifold.  If $f_1,\ldots,f_m$ are real analytic on $[0,1]^d$ and $\mathcal M$ is not contained in any proper affine subspace, Beresnevich~\cite[Corollary~1.5]{Beresnevich2012} proves a lower bound which gives \[ \mathcal R_{\mathcal M}(\delta,Q)\gtrsim_{\mathcal M}\delta^mQ^{d+1}, \qquad C_{\mathcal M}Q^{-1/m}\le\delta\le1, \] for sufficiently large $Q$, where $C_{\mathcal M}>0$ depends on $\mathcal M$. He then posed a question of estimating $R_{\mathcal M}(\delta,Q)$. \par Substantial progress on this problem has been made under various geometric assumptions. For planar curves with nonvanishing curvature, Vaughan and Velani~\cite{VV2006} proved the upper bound \eqref{eq:intro-planar-bound}. Their work also provided a Fourier-analytic framework, based on Poisson summation and duality, that has played a central role in subsequent advances in rational point counting near manifolds. Huang~\cite{Huang2020} established the corresponding bound for hypersurfaces with nonvanishing Gaussian curvature. These results give the volume bound for $\delta\ge Q^{-1+\varepsilon}$ for every $\varepsilon>0$, a range that is optimal up to the $Q^\varepsilon$ loss. Allowing curvature to vanish, Chen~\cite{Chen2025} obtained sharp counting estimates for planar curves of finite type, while Srivastava and Technau~\cite{SrivastavaTechnau2023} treated certain homogeneous hypersurfaces. In higher codimension, Schindler and Yamagishi~\cite{SchindlerYamagishi2022} obtained volume bounds under a strong curvature condition. Subsequent work of Srivastava~\cite{Srivastava2025} and Hickman, Srivastava and Wright~\cite{HSW2025} enlarged the range of $\delta$ in which these bounds hold. For sufficiently smooth nondegenerate manifolds, Schindler, Srivastava and Technau~\cite[Theorem~1.5]{SST2023} combined quantitative nondivergence estimates of Bernik, Kleinbock and Margulis~\cite[Theorem~1.4]{BKM2001} with Fourier analysis to prove the volume bound $\mathcal R_{\mathcal M}(\delta,Q)\lesssim\delta^mQ^{d+1}$ when $\delta$ is at least an explicit power of $Q^{-1}$ determined by the dimension and the order of nondegeneracy. For the higher-codimension and nondegenerate classes considered in these works, determining the optimal range of $\delta$ remains open in general. \par For compact smooth nondegenerate curves in $\mathbb R^3$, Chen, Seeger, Srivastava and Technau~\cite[Corollary~1.4]{CSST2026} proved the volume bound in the range $\delta\ge Q^{-1/3+\varepsilon}$, which is sharp up to the $Q^\varepsilon$ loss. 
For curves in higher dimensions, Hickman and Srivastava~\cite{HickmanSrivastava2025} obtained partial results. More precisely, they obtained the volume bound in the range $\delta\ge Q^{-1/\alpha_n+\varepsilon}$, where\[
\qquad
\alpha_n=
\begin{cases}\dfrac{n(n+4)}{4},&n\ \text{even},\\[6pt]\dfrac{n(n+4)-1}{4},&n\ \text{odd}.
\end{cases}
\]
To the best of our knowledge, this gives the best known range for the volume upper bound for rational points near smooth nondegenerate curves in $\mathbb R^n$ for $n\ge4$. Theorem~\ref{thm:main} provides essentially sharp counting estimates throughout the full range $0<\delta\le1$ for compact smooth nondegenerate curves in $\mathbb R^n$, $n\ge4$. In particular, it yields the volume bound in the sharp range $\delta\ge Q^{-1/D_n+\varepsilon}$, as stated in \eqref{eq:intro-volume-range}. Our wave packet method does not rely heavily on the specific structure of the moment curve. We are therefore hopeful that it can be adapted to study rational points near more general manifolds.

\subsection{Acknowledgements}
\label{sec:acknowledgements}

We would like to thank Mingfeng Chen, Rajula Srivastava, Andreas Seeger, Niclas Technau, and Wooyeon Kim for valuable discussions. Shaoming Guo’s research is partly supported by the Nankai Zhide Foundation, NSFC Grant No. 12426204, the New Cornerstone Science Foundation, and the Fundamental Research Funds for the Central Universities No. 100-63263092. 
Changkeun Oh was supported by the POSCO Science Fellowship of POSCO TJ Park Foundation, and the National Research Foundation of Korea (NRF) grant funded by the Korea government (MSIT) RS-2024-00341891.

\subsection{Use of AI}
\label{sec:ai-use}

After reading the work of Chen, Seeger, Srivastava and Technau \cite{CSST2026},
we observed that their three-dimensional result could be interpreted in the language of wave packets developed in \cite{GMO2025}.
In developing the proof in higher dimensions, we used GPT-6 for numerology checks; in particular, it assisted in writing Subsection \ref{conclude}

\subsection{Note added}
\label{sec:independent-work}After completing this manuscript, we learned that Chen, Srivastava and Technau~\cite{ChenSrivastavaTechnau2026Counterexamples} had independently obtained the lower bounds in Proposition~\ref{cor:intro-moment-count}.

\section{Proof sketch and computation of numerology}
\label{sec:algebraic-computation}

In this section, we explain the
geometric intuition behind the proof.
The arguments in this section are not rigorous. For rigorous proof, we refer to later sections.

Recall that $\ga(t)=(\ga_1(t),\dots,\ga_n(t))$ and $\cC=\{\ga(t): t\in I\}$ is a nondegenerate curve in $\R^n$. The discussion below is easier to follow with the moment curve
$\gamma(t)=(t,t^2,\ldots,t^n)$ in mind. Define the counting function
\[ N_n(\de,Q):=\#\{(\mathbf p,q)\in \ZZ^n\times \ZN:Q\le q\le 2Q, \dist(\mathbf p/q,\cC) \le \de/q \}. \]
The goal of the paper is to bound $N_n(\de,Q)$ by the right hand side of \eqref{eq:intro-refined-bound}.

Define the cone associated with $\cC$ to be
\[ \Ga:=\{s(1,\ga(t)): s\in[1,2],t\in I\}. \]
Morally speaking, we have
\begin{equation}\label{Nn}
    N_n(\de,Q)\sim \#\{(\mathbf p,q)\in \ZZ^{n+1}: \dist((\frac qQ,\frac{\mathbf p}{Q}),\Ga)\le \de/Q  \}. 
\end{equation} 
Indeed, for $Q\le q\le 2Q$,
\[\dist(\mathbf p/q,\cC)\le \de/q\Leftrightarrow \dist(\frac{\mathbf p}{Q},\frac qQ\cC)\le \de/Q  \Leftrightarrow \dist((\frac qQ,\frac{\mathbf p}{Q}),\Ga)\le \de/Q.\]
So morally speaking, \textit{counting rationals} near $\ga$ is equivalent to \textit{counting lattice points} near $\Ga$. 
There are two parameters $\de,Q$ in the counting function $N_n(\de,Q)$. We call $\de$ the \textit{width} and $Q$ the \textit{height}; $R=\de^{-1}Q$ is called the \textit{scale} of the counting problem.

\subsection{Write the counting function as an integral}
Let $R:=\de^{-1}Q$.
Define
\[ X:=N_{R^{-1}}(B(0,1)\cap Q^{-1}\ZZ^{n+1}). \]
This is the union of $R^{-1}$-balls centered at lattice points $Q^{-1}\ZZ^{n+1}$ inside the unit ball.
Define $\mu_\Ga$ to be the surface measure on $\Ga$. Then \eqref{Nn} can be interpreted as the following integral:
\[ N_n(\de,Q)\sim R^{2}\int \Id_X \mu_\Ga. \]
This is because if a point  $v\in Q^{-1}\ZZ^{n+1}$ lies $R^{-1}$-close to $\Ga$, then it contributes $|\Ga\cap B(v,R^{-1})|\sim R^{-2}$ to the integral. That is why we multiply the factor $R^2$.

\bigskip

\subsection{Transfer to a dual counting problem using Plancherel} By Plancherel, one has
\begin{equation}\label{planch}
    N_n(\de,Q)\sim R^2 \int \overline{\wh{\Id_X}(\xi)}\,\wh{\mu_\Ga}(\xi)\,d\xi. 
\end{equation} 
Let us compute $\wh \Id_X$. Recall that $X$ is the union of $R^{-1}$-balls centered at points of the lattice $Q^{-1}\ZZ^{n+1}$ inside $B(0,1)$. Define the dual set
\begin{equation}\label{eq:sketch-dual-lattice-set} X':=N_1( B(0,R)\cap Q\ZZ^{n+1} ). \end{equation}
We claim that
\begin{equation}\label{whFX}
    \wh \Id_X\approx |X| \Id_{X'}\sim \de^{n+1}\Id_{X'}. 
\end{equation} 
To be rigorous, one needs to use  smooth approximations of $X,X'$, instead of the indicator functions. But we  omit the technical detail here.
Since $X$ is near a union of $Q^{-1}$-spaced lattice points, this suggests that the support of $\wh \Id_{X}$ is near a union of $Q$-spaced lattice points.
Since $X$ is supported in $B(0,1)$, $\wh \Id_X$ is locally constant at scale $1$. Since $X$ is a union of $R^{-1}$-balls, $\wh \Id_X$ decays rapidly outside $B(0,R)$. At the points where there is constructive interference, one has $\wh \Id_X \sim |X|\sim \de^{n+1}$. This yields \eqref{whFX}. The rigorous proof can be given by Poisson summation formula.

Hence, the goal is to understand the behavior of $\wh \mu_\Ga$. 
\begin{remark}
    {\rm
    The idea of transferring the original counting problem to a dual counting problem was first used by Huang \cite{Huang2020} to count rationals near hypersurfaces. 
    }
\end{remark}

\bigskip

\subsection{Gan--Maldague--Oh's characterization of \texorpdfstring{$\wh\mu_\Ga$}{}}
A full characterization of the decay rate of $\wh \mu_\Ga$ was given in \cite{GMO2025}. We briefly discuss the results in this subsection. For a complete discussion, we refer to Section 3 and Theorem~3.22 in \cite{GMO2025}. 

We first define the Frenet frame and the dual frame associated with $\Ga$. Let $\bv_1(t):=\frac{1}{\sqrt{|\ga(t)|^2+1}}(1,\ga(t))$. Since $\ga$ is nondegenerate, $\bv_1(t),\bv_1'(t),\dots,\bv_1^{(n)}(t)$ are linearly independent. Applying the Gram--Schmidt process to them gives an orthonormal frame
\[ \bv_1(t),\dots,\bv_{n+1}(t). \]
Let $\bde_i(t):=\bv_{n+2-i}(t)$ for $1\le i\le n+1$.
We call 
\[ \bde_1(t),\dots,\bde_{n+1}(t) \]
the \textit{dual Frenet frame} associated to the cone $\Ga$.

Next, we introduce $\bla$-planks. Here, $\bla=(\la_1,\dots,\la_{n-1})$ with each $\la_i\in (0,1]$. Our notation may differ slightly from that used in \cite{GMO2025}. In \cite{GMO2025}, these are called $\boldsymbol{\de}$-planks. Since $\de$ denotes another quantity, namely the width, in this paper, we use $\bla$ instead.

\begin{definition}[Admissible]
    Fix $r\ge 1$. Let $\bla=(\la_1,\dots,\la_{n-1})$ with $\la_i\in(0,1]$ and let $A\subset \{1,\dots,n-1\}$. We say $(\bla,A)$ is admissible at scale $r$ in dimension $n$ if $r^{-1}=\la_1^n\la_2^{n-1}\dots\la_{n-1}^2$, and $i\in A$ if and only if $\la_i<1$.
\end{definition}

\begin{definition}[$\rho$-sequence]
    Given $\bla=(\la_1,\dots,\la_{n-1})$, set $\rho_0=1$ and, for $1\le k\le n-1$, define
    \begin{equation}\label{defrho}
        \rho_k=\la_1^k\la_2^{k-1}\dots\la_k.
    \end{equation}
    We also set $\rho_n=\la_1^n\dots\la_{n-1}^2=r^{-1}$.
\end{definition}

\begin{remark}
    {\rm
    It is easy to see that
\begin{equation}\label{rho n-1}
    \rho_n\rho_{n-2}=\rho_{n-1}^2 \;\;\;\; \mathrm{and}  \qquad \rho_{i}\rho_{i+2}\le \rho_{i+1}^2 \quad (0\le i\le n-3). 
\end{equation} 
    }
\end{remark}

\begin{definition}[$(\bla,A)$-plank]
    Let $(\bla,A)$ be admissible at scale $r\ge 1$. Define the $(\bla,A)$-plank as follows
    \begin{equation}
        P[\bla,A]:=\Biggl\{(x_1,\dots,x_{n+1})\in\mathbb R^{n+1}:
\begin{array}{l}
|x_i|\lesssim r\rho_{i-1}
 \quad (1\le i\le n-1,\ i\notin A),\\[1mm]
|x_i|\sim r\rho_{i-1}
 \quad (i\in A),\\[1mm]
|x_n|\lesssim r\rho_{n-1},\quad |x_{n+1}|\lesssim r\rho_n=1
\end{array}\Biggr\}.
    \end{equation} 
\end{definition}
We also define the $(\bla,A)$-plank associated with the dual Frenet frame. For $t\in I$, define
\[ P[\bla,A](t):=\left\{ \sum_{i=1}^{n+1}x_i\bde_i(t): (x_1,\dots,x_{n+1})\in P[\bla,A] \right\}. \]

\begin{remark}
    {\rm
    For $i\in A$, the condition $|x_i|\sim r\rho_{i-1}$ reads as either $x_i\in [-Cr\rho_{i-1},-cr\rho_{i-1}]$ or $x_i\in [cr\rho_{i-1},Cr\rho_{i-1}]$ for some constants $0<c<C$.
    Thus, $P[\bla,A](t)$ consists of $2^{|A|}$ boxes, each with dimensions $r\times r\rho_1\times \dots\times r\rho_{n-1}\times 1$. For our purpose, it is harmless to consider a single piece of $P[\bla,A]$ which satisfies $x_i>0$ for $i\in A$. The other pieces can be handled similarly.
    When only the dimensions of $P[\bla,A](t)$ are relevant, we refer to it as a $\bla$-plank.
    }
\end{remark}

Next, we define the \textit{aperture} of a $\bla$-plank:
\[ h:=\prod_{i=1}^{n-1}\la_i. \]
(In the later sections, we will also use $\laprod$ to denote this quantity.)
This is the scale at which one can distinguish two different $\bla$-planks. The precise statement is given below. It is from \cite[Lemma 3.10, Lemma 3.12]{GMO2025}.
\begin{lemma}\label{lemcompare}
Let $h=\prod_{i=1}^{n-1}\la_i$.
    If $|s-s'|\lesssim h$, then $P[\bla,A](s)$ and $P[\bla,A](s')$ are comparable. If $|s-s'|\ge C h$, then $P[\bla,A](s)$ and $P[\bla,A](s')$ are disjoint.
\end{lemma}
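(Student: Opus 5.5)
The plan is to reduce both claims to how far the frame $\bde_1(\cdot),\dots,\bde_{n+1}(\cdot)$ rotates as $s\mapsto s'$, and to measure this rotation against the side lengths of the plank. The side lengths are $r\rho_0\ge r\rho_1\ge\dots\ge r\rho_{n-1}\ge r\rho_n=1$.

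\emph{Rotation of the frame.} Write $\bde_i(s')$ in the basis $\{\bde_j(s)\}$. The Frenet equations for $\bv_1,\dots,\bv_{n+1}$ say that $\bv_j'$ is a combination of $\bv_{j-1}$ and $\bv_{j+1}$, with curvatures bounded above and below by nondegeneracy. Taylor expansion then gives
\[
\langle \bde_i(s'),\bde_j(s)\rangle = O\bigl(|s-s'|^{|i-j|}\bigr),
\]
and for $|i-j|=1$ this coefficient is comparable to $|s-s'|$.

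\emph{Comparability when $|s-s'|\lesssim h$.} Take $x=\sum_i x_i\bde_i(s')$ in $P[\bla,A](s')$. Its $j$-th coordinate in the $s$-frame is
\[
\sum_i x_i\,O\bigl(|s-s'|^{|i-j|}\bigr).
\]
We need this to be $\lesssim r\rho_{j-1}$ for every $j$. The term with $i=j$ is trivially fine.

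\begin{itemize}
\item For $i<j$: the condition is $r\rho_{i-1}h^{j-i}\lesssim r\rho_{j-1}$. Now $\rho_{j-1}/\rho_{i-1}=\prod_{k=i}^{j-1}(\rho_k/\rho_{k-1})$, and $\rho_k/\rho_{k-1}=\la_1\la_2\cdots\la_k\ge h$ because every $\la_m\le 1$. So each factor is at least $h$, and the condition holds.
\item For $i>j$: the sizes only decrease, so the condition is trivial.
\item The last coordinate $r\rho_n=1$: we need $\rho_{n-1}/\rho_n\cdot h\lesssim 1$. Here $\rho_n/\rho_{n-1}=\la_1\cdots\la_{n-1}=h$ exactly, so this holds too.
\end{itemize}

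For $i\in A$ we also need the lower bound $|x_i|\sim r\rho_{i-1}$ to be preserved. It is, because the off-diagonal contributions are bounded by a small constant times $r\rho_{i-1}$ once the implied constant in $|s-s'|\lesssim h$ is chosen small. For a general $O(h)$ separation, we chain through boundedly many intermediate points, or enlarge the constants.

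\emph{Disjointness when $|s-s'|\ge Ch$.} Suppose $x$ lies in both planks. The key is the rank-one rotation between the last two directions. Its coefficients are $x_n\approx\langle x,\bde_n(s)\rangle$ with $|x_n|\lesssim r\rho_{n-1}$, and $x_{n+1}$ with $|x_{n+1}|\lesssim 1$.

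For the moment curve, I would argue directly with the polynomial $\langle x,\bv_1(t)\rangle$. Its derivatives in $t$ up to order $n$ are controlled by the plank coordinates, and membership in both planks forces these derivatives to be small at both $s$ and $s'$. The nonzero coordinate $x_i$, with $i\in A$ or $i=1$, of size $\sim r\rho_{i-1}$ should then contradict the Taylor expansion between $s$ and $s'$ once $|s-s'|\ge Ch$.

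Concretely, consider the function $g(t)=\langle x,\bde_{n+1}(t)\rangle$. This is comparable to $\langle x,\bv_1(t)\rangle$, the "cone distance" function. Then:
\begin{itemize}
\item $|g^{(k)}(s)|\lesssim r\rho_{n-k}$, up to normalization;
\item $|g(s)|,|g(s')|\lesssim 1$;
\item some derivative of $g$ at $s$ is large, because $x_1\sim r$ or $x_i\sim r\rho_{i-1}$.
\end{itemize}
Using the log-concavity $\rho_i\rho_{i+2}\le\rho_{i+1}^2$ from \eqref{rho n-1}, a Taylor expansion of $g$ around $s$ evaluated at $s'$ then gives $|g(s')|\gg1$ whenever $|s-s'|\ge Ch$. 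That is the contradiction.

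The main obstacle is this disjointness direction. One must handle cancellation among the Taylor terms, that is, show that no choice of coordinates makes $g$ small at distant points. I would try to use the largest-index active constraint and a Vandermonde/polynomial-approximation argument on the interval $[s,s']$. Failing that, I would cite \cite[Lemma 3.10, Lemma 3.12]{GMO2025}, as the statement allows.
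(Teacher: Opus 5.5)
Your comparability half is fine for the upper bounds. The check $\rho_k/\rho_{k-1}=\la_1\cdots\la_k\ge h$ (with $\rho_n/\rho_{n-1}=h$) is the same scale inequality the paper uses when it recenters planks in the proof of Lemma~\ref{lem:app-frenet-covering}. The paper itself does not prove Lemma~\ref{lemcompare}: it cites Gan--Maldague--Oh and uses the lemma only in the heuristic Section~\ref{sec:algebraic-computation}.

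The lower bounds are different: they survive only for $|s-s'|\le ch$ with $c$ small. At separation $Kh$, the contribution of $x_{i'}$ ($i'<i$) to the $i$-th coordinate is bounded only by $K^{i-i'}r\rho_{i-1}$. So neither chaining nor enlarging constants preserves $|x_i|\sim r\rho_{i-1}$. The paper instead picks, among $n^2+1$ consecutive centers, one where all lower bounds persist, via Lagrange interpolation.

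\textbf{The gap is in the disjointness half.} The claimed step, that a Taylor expansion of $g$ at $s$ gives $|g(s')|\gg1$ for $|s-s'|\ge Ch$, is false: the cancellation you worried about really happens. Take the moment curve in $\R^3$ near $t=0$, where the dual Frenet frame is the monomial basis. Let $\la_1,\la_2\ll1$ and $A=\{1,2\}$, so $r=\la_1^{-3}\la_2^{-2}$ and $h=\la_1\la_2$. Define $x$ by $\langle x,(1,t,t^2,t^3)\rangle=rt^2(t+\la_1)$. Then $x_1=r$, $x_2=r\la_1=r\rho_1$ and $x_3=x_4=0$, so $x\in P[\bla,A](0)$. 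Yet $g(-\la_1)=0$, although $\la_1=h/\la_2\gg h$.

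The reason is structural. Writing $s'-s=Th$ and $k_0=n+1-\max A$, the coefficient of $T^{k_0}$ is $\gtrsim1$, but the coefficient of $T^{k_0+1}$ is only $O(\la_{\max A})$. So the Taylor polynomial can vanish at $|T|\sim\la_{\max A}^{-1}$. In the example, $x$ is excluded from $P[\bla,A](-\la_1)$ only by another coordinate: $\langle x,\bv_2(-\la_1)\rangle\asymp r\la_1^2=h^{-1}\la_2^{-1}\gg h^{-1}$.

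Any proof must therefore use all the constraints at $s'$. Since these are known only at two points, two standard tools fail. A one-point expansion does not work: for a general curve its remainder bound $O(r|s-s'|^{n+1})$ already exceeds $1$ once $|s-s'|\gg r^{-1/(n+1)}$. The finite-difference argument of Lemma~\ref{lem:n-fibre-counting} does not work either, because it needs the bounds on a whole interval.

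\textbf{Literal disjointness is the wrong target for $n\ge4$} on an interval of fixed length. Take $\bla=(1,1,r^{-1/2})$, $A=\{3\}$, $h=r^{-1/2}$, and $G(t)=\langle x,(1,t,\dots,t^4)\rangle=\sigma^{-2}rt^2(t-\sigma)^2$ with $\sigma$ fixed. At both $t=0$ and $t=\sigma$ one has $G=G'=0$, $G''=2r$, and $|G'''|+|G''''|\lesssim\sigma^{-2}r$. Hence $x$ lies in both $P[\bla,A](0)$ and $P[\bla,A](\sigma)$ (with implicit constants $\sim\sigma^{-2}$), while $\sigma/h\to\infty$. The phase genuinely has two separated nondegenerate stationary points.

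What Section~\ref{sec:algebraic-computation} actually uses, and what can be proved, is bounded overlap. The missing idea for that is root clustering rather than a one-point expansion:
\begin{itemize}
\item For $k=n+1-\max A$ one has $r\rho_{n-k}=h^{-k}$.
\item Hence every $t$ with $x\in P[\bla,A](t)$ satisfies $|g(t)|\lesssim1$ and, for $\la_{\max A}$ below a small constant, $|g^{(k)}(t)|\gtrsim h^{-k}$.
\item Approximate $g$ by a polynomial $p$ of bounded degree and use $|p^{(k)}(t)|\lesssim|p(t)|\min_\ell|t-z_\ell|^{-k}$.
\item This places each such $t$ within $O(h)$ of one of boundedly many complex roots $z_\ell$, so only $O(1)$ $h$-separated parameters can occur.
\end{itemize}
This is exactly the proof of Proposition~\ref{prop:n-overlap-input}, with $R^{O(\beta)}$ losses coming from the polynomial approximation. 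It is the paper's rigorous replacement for the disjointness statement.
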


Finally, we state the result concerning the decay rate estimate of $\wh \mu_\Ga$. See \cite[Theorem 3.22]{GMO2025} and the discussion in \cite[Section 3.4]{GMO2025}.

\begin{lemma}
Fix $r\ge 1$. Let $\cI(h)$ be a $h$-net of $I$.
For $x\in \R^{n+1}$ with $|x|\sim r$, we have 
    \begin{equation}\label{gmo0}
        |\wh \mu_\Ga(x)|\lesssim \sum_{\begin{subarray}{c}
          (\bla,A) \\
          \text{~admissible at~}r
\end{subarray}}h\sum_{t\in \cI(h)}\mathbf{1}_{P[\bla,A](t)}(x).
    \end{equation}
Here, the first sum runs over all pairs $(\bla,A)$ admissible at scale $r$, with each $\la_i$ dyadic. 
\end{lemma}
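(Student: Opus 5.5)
The plan is to reduce $\wh\mu_\Ga(x)$ to a one-dimensional oscillatory integral in $t$. We then localize to the $t$ where the phase is small, and read off the plank $P[\bla,A](t)$ containing $x$ from the Taylor coefficients of the phase in the Frenet frame. Throughout we work with a smooth cutoff $a(s,t)$ replacing the surface measure, so that
\[
\wh\mu_\Ga(x)=\int\!\!\int e^{-i s\, x\cdot(1,\ga(t))}a(s,t)\,ds\,dt .
\]

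\textbf{Step 1 (integrate in $s$).} Integrating by parts in $s\in[1,2]$ gives
\[
|\wh\mu_\Ga(x)|\lesssim_N \int_I \bigl(1+|\phi_x(t)|\bigr)^{-N}\,dt,\qquad \phi_x(t):=x\cdot \bv_1(t)\sqrt{1+|\ga(t)|^2}.
\]
So it suffices to bound the measure of the sublevel sets $\{t:|\phi_x(t)|\le 2^m\}$, with the factor $2^{-mN}$ absorbing dyadic enlargements. The enlarged sets are handled the same way with $\rho_n$ replaced by $2^m\rho_n$; this is harmless after adjusting implicit constants.

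\textbf{Step 2 (local Taylor structure).} Fix $t_0$ with $|\phi_x(t_0)|\lesssim 1$. For $|u|\lesssim 1$, since $\bv_1,\dots,\bv_1^{(n)}$ span and Gram--Schmidt is triangular, we have
\[
\phi_x(t_0+u)=\sum_{j=0}^{n} c_j u^j+O(\text{lower order}),
\]
where $c_j$ is comparable to $x\cdot\bv_{j+1}(t_0)=x\cdot\bde_{n+1-j}(t_0)$ up to triangular corrections. Writing $x_i:=x\cdot\bde_i(t_0)$, we define the dyadic $\la_i$ by the rules
\[
|x_i|\sim r\rho_{i-1}\ \text{ if } \la_i<1,\qquad |x_i|\lesssim r\rho_{i-1}\ \text{ if } \la_i=1,
\]
for $i=1,\dots,n-1$, normalized so that $\rho_n=r^{-1}$. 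Since $|x|\sim r$ and the frame is orthonormal, this is consistent with admissibility. The task is then to check that $|x_n|\lesssim r\rho_{n-1}$ and $|x_{n+1}|\lesssim 1$, i.e. that $x\in P[\bla,A](t_0)$. The last condition is exactly $|\phi_x(t_0)|\lesssim1$. The condition on $x_n$ should follow by choosing $t_0$ as the (approximate) minimizer of $|\phi_x|$ on its sublevel component, so that $|\phi_x'(t_0)|$ is controlled by the higher coefficients.

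\textbf{Step 3 (sublevel set size equals the aperture).} The component of $\{|\phi_x|\lesssim1\}$ containing $t_0$ has length $\lesssim \min_{j\ge1}|c_j|^{-1/j}$. The key numerology check, which I expect to be the main obstacle, is
\[
\min_j\bigl(r\rho_{n-j}\bigr)^{-1/j}\sim h=\prod_i \la_i .
\]
This is where the log-concavity relations \eqref{rho n-1} enter: the minimum is attained at the top nonzero coefficient compatible with $A$, and the identity $\rho_n\rho_{n-2}=\rho_{n-1}^2$ makes the quadratic and linear terms balance. I would first verify this for the moment curve, where the Taylor expansion is exact, and then transfer the result by a perturbation argument on $|u|\lesssim h$. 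The same scale $h$ governs when planks at nearby $t$ are comparable, by Lemma~\ref{lemcompare}. Consequently, replacing $t_0$ by the nearest point of $\cI(h)$ keeps $x$ in a comparable plank.

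\textbf{Step 4 (multiplicity).} By nondegeneracy, the Wronskian of $1,\ga_1,\dots,\ga_n$ is nonvanishing. After subdividing $I$ into $O(1)$ pieces, $\phi_x$ minus any constant has at most $n$ zeros on each piece. Hence $\{|\phi_x|\lesssim1\}$ has $O_n(1)$ components, each of length $\lesssim h$ for its own $\bla$. Summing over components, and then over the $O(\log r)$ dyadic choices of $\bla$ (only the correct one is nonzero for each component), gives \eqref{gmo0}.
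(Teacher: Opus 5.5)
\textbf{The gap is in Step~1, and it cannot be repaired later.} Once you put absolute values inside the $t$-integral, the quantity $\int_I(1+|\phi_x(t)|)^{-N}\,dt$ is no longer dominated by the right-hand side of \eqref{gmo0}. Steps~2--4 are therefore trying to prove a false inequality.

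Already for the moment curve, take $x=(-x_1t_1,x_1,0,\dots,0)$ with $|x_1|\sim r$ and $t_1$ interior to $I$. Then $|x|\sim r$ and $\phi_x(t)=x_1(t-t_1)$.
\begin{itemize}
\item The sublevel set $\{|\phi_x|\lesssim 1\}$ is an interval of length $\sim r^{-1}$, so your bound is $\gtrsim r^{-1}$.
\item But $x$ lies in no plank. The condition $|x\cdot\bde_{n+1}(t)|\lesssim 1$ forces $|t-t_1|\lesssim r^{-1}$. At such $t$, since $\bde_n=\bv_2$ is the normalized $\bv_1'$, one has $|x\cdot\bde_n(t)|\sim|\phi_x'(t)|\sim r$.
\item Every admissible plank requires $|x_n|\lesssim r\rho_{n-1}=h^{-1}\le r^{1/2}$.
\end{itemize}
So the right-hand side of \eqref{gmo0} is zero at this $x$. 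This is exactly where your Step~2 argument for the $x_n$-condition fails. If $\phi_x$ has a transversal zero in the component, the minimizer of $|\phi_x|$ is that zero, and nothing controls $\phi_x'$ there.

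Nor can you retreat to a weaker, sublevel-set version of the lemma. Consider the $\sim\delta^{-(n+1)}$ frequencies $Q\mathbf j$ with $|\mathbf j|\sim\delta^{-1}$ for which $P_{\mathbf j}$ has such a zero with $|P_{\mathbf j}'|\sim\delta^{-1}$. Summing your bound over them already contributes $\delta^{n-1}Q^2\cdot\delta^{-n}Q^{-1}=\delta^{-1}Q$ to the count. For example, at $\delta=Q^{-1/2}$ this exceeds \eqref{eq:intro-refined-bound}.

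\textbf{What is missing is the cancellation in $t$.} In the example,
\[\wh\mu_\Ga(x)=\int\omega(s)\,e(sx_1t_1)\,\wh\chi(sx_1)\,ds=O_N(r^{-N}).\]
This holds because $s$ stays away from $0$ on the support of the cutoff, so $|\partial_t(s\phi_x)|\sim r$ and one integrates by parts in $t$.

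In general the planks single out the near-stationary points of $\phi_x$. Up to the triangular Gram--Schmidt corrections, besides $|\phi_x(t)|\lesssim 1$ they impose $|\phi_x'(t)|\lesssim r\rho_{n-1}=h^{-1}$ and $|\phi_x''(t)|\lesssim r\rho_{n-2}=h^{-2}$, and similarly for higher derivatives. Hence $\phi_x$ varies by $O(1)$ over the whole $h$-interval, and that is where the weight $h$ comes from. A short sublevel component crossed with large slope is instead cancelled by the neighbouring oscillations of $e(-s\phi_x)$.

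A proof must therefore combine your Newton-polygon choice of $\bla$ with non-stationary phase in $t$ at every intermediate scale. Your choice of $\bla$, together with the aperture numerology of Step~3, is sound near a genuine critical point. Making these choices consistent across scales is exactly what the Gan--Maldague--Oh decomposition provides (\cite[Theorem~3.22 and Lemma~3.20]{GMO2025}), which the paper invokes in Lemma~\ref{lem:app-gmo-pointwise}. This is also why the rigorous form, Proposition~\ref{prop:n-fourier-input}, carries an $R^{O(\beta)}$ loss and an $R^{-L}$ tail. As displayed, with neither, the statement is only the heuristic version; indeed $\wh\mu_\Ga$ is small but nonzero at the $x$ above.
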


Summing over dyadic $r$ in $[2,R]$, we obtain
\begin{equation}\label{gmo}
\begin{split}
        |\wh \mu_\Ga(x)|\Id_{B(0,R)}&\lesssim\Id_{B(0,1)}(x)\\
        +\sum_{\begin{subarray}{c}
          2\le r\le R \\
          \text{dyadic}
\end{subarray}} &\Id_{B(0,r)\setminus B(0,r/2)}(x)\sum_{\begin{subarray}{c}
          (\bla,A) \\
          \text{~admissible at~}r
\end{subarray}}h\sum_{t\in \cI(h)}\mathbf{1}_{P[\bla,A](t)}(x).
        \end{split}
    \end{equation}

Recall $R=\de^{-1}Q$.
Plug \eqref{whFX} and \eqref{gmo} into \eqref{planch}. Note that there are only $O_n(1)$ unit balls in $X'\cap B(0,Q)$, so we only need to consider $r=1$ and $r\ge Q$. Hence, we obtain
\begin{equation}\label{anisotropic1}
    N_n(\de,Q)\lesssim \de^{n-1}Q^2+ \de^{n-1}Q^2  \sum_{\begin{subarray}{c}
          Q\le r\le R \\
          \text{dyadic}
\end{subarray}}\sum_{\begin{subarray}{c}
          (\bla,A) \\
          \text{~admissible at~}r
\end{subarray}}h\int \Id_{X'} \sum_{t\in \cI(h)}\mathbf{1}_{P[\bla,A](t)}.
\end{equation}
By Lemma~\ref{lemcompare}, the planks have bounded overlap for each fixed $(\bla,A)$, so the sum of their indicators is comparable to the indicator of their union. The right hand side is the dual counting problem of the original one. Here, for each $(\bla,A)$, we need to count the number of lattice points of $X'$ in 
\[ \bigcup_{t\in \cI(h)} P[\bla,A](t), \] which is an anisotropic neighborhood of a certain developable manifold. 

\bigskip

\subsection{Anisotropic counting problems}
Let us digest the integral on the right hand side of \eqref{anisotropic1}. For $(\bla,A)$ admissible at scale $r$, set $p[\bla,A](t):=r^{-1}P[\bla,A](t)$, the $r^{-1}$-dilation of $P[\bla,A](t)$. Then $p[\bla,A](t)$ is of dimensions
\[ 1\times \rho_1\times \dots\times \rho_{n-1}\times r^{-1}, \]
where $\rho_i$ is given by \eqref{defrho}. By \eqref{eq:sketch-dual-lattice-set}, we have \[r^{-1} X'=N_{r^{-1}}(B(0,R/r)\cap Q/r\ZZ^{n+1}).  \]
Then morally speaking,
\[\begin{aligned} \int \Id_{X'} \sum_{t\in \cI(h)}\mathbf{1}_{P[\bla,A](t)} &\sim \#\left\{ x\in Q\ZZ^{n+1}:x\in \bigcup_{t\in \cI(h)} P[\bla,A](t) \right\} \\ &= \#\left\{ x\in \frac{Q}{r}\ZZ^{n+1}:x\in \bigcup_{t\in \cI(h)} p[\bla,A](t) \right\}. \end{aligned}\]
This is the counting problem with \textit{height} parameter $Q_*=r/Q$ and \textit{width} parameter $\de_*=r^{-1}(r/Q)=Q^{-1}$. Here, \textit{width} means the ratio of the shortest length of the plank and the spacing of the lattice.

\begin{definition}
Let $1\le Q_*$ and $0<\de_*\le Q_*$.
    Suppose $(\bla,A)$ is admissible at scale $\de_*^{-1}Q_*$. Define the anisotropic counting function
\begin{equation}\label{defanisotropic}
       N_n(\de_*,Q_*;\bla):=\#\left\{ x\in Q_*^{-1}\ZZ^{n+1}:x\in \bigcup_{t\in \cI(h)} p[\bla,A](t)\right\}.
    \end{equation}
\end{definition}

\begin{remark} 

    {\rm We give two remarks.

    \begin{itemize}
        \item When $n=2$ and $\de_*\le 1\le Q_*$, let $\la_1=(\de_* Q_*^{-1})^{1/2}$. Then the anisotropic counting function $N_2(\de_*,Q_*;\la_1)$ agrees with the isotropic counting function $N_2(\de_*,Q_*)$.

        \item It may happen that $\de_*\ge 1$. In this case, the counting problem is trivial, since the shortest edge of the $\bla$-plank is larger than $Q_*^{-1}$, the spacing of the lattice $Q_*^{-1}\ZZ^{n+1}$. We have the trivial volume bound:
    \begin{equation}\label{volumebound}
        N_n(\de_*,Q_*;\bla)\lesssim Q_*^{n+1} \left|\bigcup_{t\in \cI(h)} p[\bla,A](t)\right|\lesssim Q_*^{n+1}h^{-1}\prod_{i=1}^n\rho_i. 
    \end{equation}
    \end{itemize}}
\end{remark}

Using the anisotropic counting function defined in \eqref{defanisotropic}, we can bound the dual counting term in \eqref{anisotropic1} by $N_n(Q^{-1},r/Q;\bla)$. This gives the following proposition.
\begin{proposition}\label{input1}Let $0<\de\le 1\le Q$. Then,
    \begin{equation}
        N_n(\de,Q)\lesssim \de^{n-1}Q^2+ \de^{n-1}Q^2  \sum_{\begin{subarray}{c}
          Q\le r\le \de^{-1}Q \\
          \text{dyadic}
\end{subarray}}\sum_{\begin{subarray}{c}
          (\bla,A) \\
          \text{~admissible at~}r
\end{subarray}}h\cdot N_n(Q^{-1},\frac rQ;\bla).
    \end{equation}
\end{proposition}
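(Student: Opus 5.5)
We make the heuristic chain of this section rigorous with smooth weights in place of indicator functions, so that every step is an inequality.

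\emph{Smoothing and Poisson summation.} Fix a Schwartz bump $\phi\ge0$ with $\phi\gtrsim1$ on $B(0,1)$ and $\wh\phi$ compactly supported, and a smooth cutoff $\chi$ adapted to $B(0,2)$. Put
\[F(x):=\chi(x)\sum_{v\in Q^{-1}\ZZ^{n+1}}\phi(R(x-v)),\]
and let $\mu$ be a smooth compactly supported density on the cone $\Ga$ (surface measure times a smooth cutoff). If $(\mathbf p,q)$ is counted in $N_n(\de,Q)$, then $v=(q,\mathbf p)/Q$ lies within $\lesssim R^{-1}$ of $\Ga$, so $\int\phi(R(\cdot-v))\,d\mu\gtrsim R^{-2}$. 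Summing over $v$ gives
\[N_n(\de,Q)\lesssim R^2\int F\,d\mu=R^2\int\overline{\wh F}\,\wh\mu .\]
By Poisson summation,
\[\wh F=\wh\chi*\Big(R^{-(n+1)}Q^{n+1}\sum_{m\in Q\ZZ^{n+1}}\wh\phi(R^{-1}m)\,\delta_m\Big),\]
so
\[|\wh F(\xi)|\lesssim\de^{n+1}\sum_{m\in Q\ZZ^{n+1},\,|m|\lesssim R}|\wh\chi(\xi-m)|.\]
This is the rigorous form of \eqref{whFX}. Here $\wh\chi$ is Schwartz, so it behaves like a unit bump up to rapidly decaying tails.

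\emph{Inserting the decay of $\wh\mu_\Ga$.}
\begin{itemize}
\item The term $m=0$: $\wh\mu$ is bounded and $\wh\chi$ is integrable, so it contributes $R^2\de^{n+1}=\de^{n-1}Q^2$.
\item The terms $m\neq0$: here $|m|\ge Q$, and up to Schwartz tails only $|\xi|\sim r$ with $Q\lesssim r\lesssim R$ matters. For such $\xi$ we apply \eqref{gmo0}.
\end{itemize}
The Schwartz tails are handled by the crude bound $|\wh\mu(\xi)|\lesssim1$ together with the rapid decay of $\wh\chi(\xi-m)$ and $\wh\phi(R^{-1}m)$. This produces $R^2\de^{n+1}\sum_m\langle\xi-m\rangle^{-N}$-type errors, which are absorbed as follows:
\begin{itemize}
\item errors located at a point $m$ lying near a plank are absorbed into the plank term;
\item the remaining errors give $O(\de^{n-1}Q^2)$.
\end{itemize}
Small values of $r$ (say $r\lesssim Q$) only meet $O(1)$ lattice points $m$, namely $m=0$ and its neighbours, so they are absorbed into the first term. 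This gives
\[N_n(\de,Q)\lesssim\de^{n-1}Q^2+\de^{n-1}Q^2\sum_{r}\sum_{(\bla,A)}h\sum_{t\in\cI(h)}\sum_{m\in Q\ZZ^{n+1}}\int|\wh\chi(\xi-m)|\,\Id_{P[\bla,A](t)}(\xi)\,d\xi .\]

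\emph{Reduction to the anisotropic count.} Each plank has all sides $\ge1$, its shortest side being $r\rho_n=1$. Hence
\[\int|\wh\chi(\xi-m)|\Id_{P}(\xi)\,d\xi\lesssim\Id_{CP}(m)+(\text{tail decaying in }\dist(m,P)).\]
The tails are summed as a geometric series over dilates $2^jCP$. By Lemma~\ref{lemcompare}, the planks $P[\bla,A](t)$, $t\in\cI(h)$, have bounded overlap, and their dilates behave the same way. So the main term is $\#\{m\in Q\ZZ^{n+1}\cap\bigcup_t CP[\bla,A](t)\}$. Rescaling by $r^{-1}$, this is exactly $N_n(Q^{-1},r/Q;\bla)$, after replacing the implicit constants in $P$ by larger ones, which the definition permits.

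\emph{Main obstacle.} The delicate point is controlling the dilates $2^jCP$ in the tail. We must show that the counts for $2^jCP$ are at most polynomially larger in $2^j$, so that the Schwartz decay wins. The plan is to compare $2^jCP[\bla,A](t)$ with a plank of comparable dimensions at a slightly different $(\bla,A)$ and $r$, using the nesting properties from \cite{GMO2025}. Failing that, we bound it crudely by the lattice volume, which grows only like $2^{j(n+1)}$.
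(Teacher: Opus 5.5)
Your route is the paper's route: a smooth majorant, Poisson summation, the pointwise plank bound \eqref{gmo0}, and rescaling to $N_n(Q^{-1},r/Q;\bla)$. As written, however, the argument has a genuine gap, and the gap is self-inflicted.

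\textbf{The gap.} You multiply the lattice sum by the spatial cutoff $\chi$ and then pass to $\int|\wh F|\,|\wh\mu|$. This replaces the exact values $\wh\mu(m)$, $m\in Q\ZZ^{n+1}$, by smeared averages $\int|\wh\chi(\xi-m)|\,|\wh\mu(\xi)|\,d\xi$. You are then obliged to bound
\[
\sum_{j\ge0}2^{-jN}\#\bigl(Q\ZZ^{n+1}\cap\bigcup_t N_{2^j}(P[\bla,A](t))\bigr)
\]
by the right-hand side. You do not do this, and neither remedy you sketch works.

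\emph{The crude volume fallback.} The bound $\#(Q\ZZ^{n+1}\cap S)\lesssim Q^{-(n+1)}|S|$ is false for sets thinner than the spacing $Q$. A plank, or a union of planks, may contain no point of $Q\ZZ^{n+1}$ while its $2^j$-neighbourhood contains many. So the fallback only disposes of the range $2^j\gtrsim Q$, where $2^{-jN}\le Q^{-N}$ makes everything harmless. For $1\le 2^j\ll Q$ you have no bound in terms of $N_n(Q^{-1},r/Q;\bla)$.

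\emph{The comparison with other planks.} This does not land in the right-hand side either. A box at distance $\sim r$ whose shortest side is $2^j$ is $2^j$ times a box at scale $r2^{-j}$. After rescaling, its count is a problem of width $2^jQ^{-1}$, i.e.\ of the form $N_n(2^jQ^{-1},r/Q;\bla')$, not one of the terms $N_n(Q^{-1},r/Q;\bla)$. For the same reason, your assertion that the remaining tail errors are $O(\de^{n-1}Q^2)$ is unsupported.

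\textbf{The fix.} There are no tails at all if you do not cut off in space. Since $\mu$ is compactly supported, take $F_0(x)=\sum_{v\in Q^{-1}\ZZ^{n+1}}\phi(R(x-v))$. Because $\wh\phi$ has compact support, Poisson summation gives the finite identity $F_0(x)=\de^{n+1}\sum_{m\in Q\ZZ^{n+1}}\wh\phi(m/R)e(m\cdot x)$. Hence
\[
N_n(\de,Q)\lesssim R^2\int F_0\,d\mu\lesssim\de^{n-1}Q^2\sum_{m\in Q\ZZ^{n+1},\ |m|\lesssim R}|\wh\mu(m)|.
\]
Now $\wh\mu$ is sampled exactly at lattice points.
\begin{itemize}
\item The term $m=0$ gives $\de^{n-1}Q^2$.
\item No $m$ satisfies $0<|m|<Q$.
\item For $|m|\sim r\ge Q$, \eqref{gmo0} bounds $|\wh\mu(m)|$ by $\sum_{(\bla,A)}h\sum_{t\in\cI(h)}\Id_{P[\bla,A](t)}(m)$. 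Bounded overlap (Lemma~\ref{lemcompare}) then gives $\sum_{|m|\sim r}\sum_{t}\Id_{P[\bla,A](t)}(m)\lesssim\#\bigl(Q\ZZ^{n+1}\cap\bigcup_tP[\bla,A](t)\bigr)$, which after rescaling by $r^{-1}$ is $N_n(Q^{-1},r/Q;\bla)$.
\end{itemize}

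This is how the rigorous version is organized in the proof of Theorem~\ref{thm:main} assuming Proposition~\ref{prop:conical-n}. There, Poisson summation in $\mathbf p$ and then in $q$ produces $\sum_{\mathbf j}\wh\Phi(\de\mathbf j_{\ge1})\wh{\mu_{\om,\chi}}(\tfrac Q2\mathbf j)$, with the weight $\om(s)$ on the cone supplying compactness. Propositions~\ref{prop:n-fourier-input} and~\ref{prop:n-overlap-input} are then applied pointwise at the lattice points.
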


\bigskip

\subsection{Thickening and projection arguments}
$N_n(\de,Q)$ and $N_n(\de,Q;\bla)$ concern $n$-dimensional counting problems. We call them $n$-dimensional problems, since they are related to the nondegenerate curve $\cC$ in $\R^n$. In practice, we also need to consider counting problems with intermediate dimensions.

Let $3\le m\le n$. We will study the counting function $N_m(\de_*,Q_*;\la_1,\dots,\la_{m-1})$, where the tuple $(\la_1,\dots,\la_{m-1})$ is admissible at scale $\de_*^{-1}Q_*$ in dimension $m$. It is related to the the nondegenerate curve in $\R^m$ determined by $\cC$, for which we do not discuss in detail. In what follows, we suppress $A$ in the notation and focus on the scale parameters $\la_i$. In this subsection, we outline the proof of Proposition~\ref{input2}. We use two methods, namely \textit{thickening} and \textit{projection}, to bound $N_m$ in terms of $N_{m-1}$ and $N_2$.

Write $\bla=(\la_1,\dots,\la_{m-1})$. Set 
\[ r_*:=\de_*^{-1}Q_*=\Big(\prod_{i=1}^{m-1}\la_i^{m+1-i}\Big)^{-1},\qquad h_m:=\prod_{i=1}^{m-1}\la_i. \]  Here, the second equality follows from the admissibility of $\bla$. Let $(\rho_i)_{i=0}^{m}$ be the $\rho$-sequence associated with $(\la_i)_{i=1}^{m-1}$ as in \eqref{defrho}.
By definition, $N_m(\de_*,Q_*;\bla)$ counts the number of points of the lattice $Q_*^{-1}\ZZ^{m+1}$ in a union of $1\times \rho_1\times \rho_2\times \dots\times \rho_{m-1}\times r_*^{-1}$-boxes denoted by $\{p[\bla](t)\}_{t\in \cI(h_m)}$. To estimate $N_m(\de_*,Q_*;\bla)$, we thicken each box $p[\bla](t)$ to $p[\wt\bla](t)$, where $\wt\bla$ is obtained by increasing only $\la_{m-1}$. We give  details below.  

Note that the last two side lengths of $p[\bla](t)$, which are the two shortest, are $\la_1^{m-1}\la_2^{m-2}\dots\la_{m-1}$ and $\la_1^m\dots\la_{m-1}^2$, respectively, while the other side lengths are independent of $\la_{m-1}$. We will enlarge the parameter $\la_{m-1}$ to increase the two shortest side lengths of $p[\bla](t)$ while keeping the other side lengths fixed. Suppose we increase the shortest side length from $r_*^{-1}$ to $\tau$, and denote the enlarged value of $\la_{m-1}$ by $\wt\la_{m-1}$. The original and enlarged shortest side lengths satisfy
\[
r_*^{-1}=\left(\prod_{i=1}^{m-2}\la_i^{m+1-i}\right)\la_{m-1}^{2},
\qquad
\tau=\left(\prod_{i=1}^{m-2}\la_i^{m+1-i}\right)\wt\la_{m-1}^{2}.
\]
Admissibility requires $\wt\la_{m-1}\le1$, which is equivalent to $\tau\le\prod_{i=1}^{m-2}\la_i^{m+1-i}$.  We also impose $\tau\le Q_*^{-1}$, since when the thickness exceeds the lattice spacing, the trivial volume bound \eqref{volumebound} applies. Therefore, we require $\tau$ to satisfy
\begin{equation}
    r_*^{-1}\le \tau\le \min\{ \prod_{i=1}^{m-2}\la_i^{m+1-i},Q_*^{-1}\}=:\tc.\label{tc}
\end{equation}
We will establish the following estimate.

\begin{proposition}\label{input2} Fix $m\ge 3$. Let $0<\de_*\le 1\le Q_*$. 
Let $\bla=(\la_1,\dots,\la_{m-1})$ be admissible at scale $\de_*^{-1}Q_*$. Let $\tc=\min\{Q_*^{-1}, \prod_{i=1}^{m-2}\la_i^{m+1-i}\}$. Then for any $\tau\in [\de_*Q_*^{-1},\tc]$, we have
    \begin{equation}
\begin{split}
N_m&(\de_*,Q_*;\bla)\lesssim \frac{\tau}{\tc}N_{m-1}(\prod_{i=1}^{m-2}\la_i^{m-i}Q_*,Q_*;\la_1,\dots,\la_{m-2})\\
&+(\prod_{i=1}^{m-2}\rho_i)\cdot(\rho_{m-2}\tau)^{1/2}\tau Q_*^{m+1}N_2(\rho_{m-2}^{-1}Q_*^{-1},\tau^{-1}Q_*^{-1})\prod_{i=1}^{m-3}(1+\rho_{i}^{-1}Q_*^{-1}).
\end{split}
    \end{equation}
\end{proposition}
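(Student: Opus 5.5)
We would prove Proposition~\ref{input2} by a thickening step followed by a dichotomy according to how the thickened planks sit relative to the lattice $Q_*^{-1}\ZZ^{m+1}$. Fix $\tau\in[\de_*Q_*^{-1},\tc]$ and let $\wt\la_{m-1}\ge\la_{m-1}$ be defined by $\tau=(\prod_{i=1}^{m-2}\la_i^{m+1-i})\wt\la_{m-1}^2$, and set $\wt\bla=(\la_1,\dots,\la_{m-2},\wt\la_{m-1})$. Each $p[\bla](t)$ is contained in $p[\wt\bla](t')$ for some $t'\in\cI(\wt h_m)$, because enlarging $\la_{m-1}$ only enlarges the last two sides, and by Lemma~\ref{lemcompare} the coarser aperture $\wt h_m\ge h_m$ only merges comparable planks. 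Hence $N_m(\de_*,Q_*;\bla)\le\#\{x\in Q_*^{-1}\ZZ^{m+1}\cap\bigcup_{t'}p[\wt\bla](t')\}$, and it suffices to count lattice points in a single thickened plank and then sum over $t'$. The thickened plank has dimensions $1\times\rho_1\times\dots\times\rho_{m-2}\times(\rho_{m-2}\tau)^{1/2}\times\tau$; here the second-to-last side follows from $\wt\rho_{m-1}^2=\wt\rho_m\rho_{m-2}$, using \eqref{rho n-1}.

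\textbf{Projection step (second term).} Project $Q_*^{-1}\ZZ^{m+1}\cap p[\wt\bla](t')$ onto the plane spanned by $\bde_m(t'),\bde_{m+1}(t')$, which is the plane of the two thinnest directions. The union over $t'$ of these two-dimensional slabs of size $(\rho_{m-2}\tau)^{1/2}\times\tau$, taken together with the fibres in the first $m-1$ directions, is organised as follows. For a fixed value of the first $m-1$ coordinates, the remaining two coordinates trace a planar nondegenerate curve, namely the last two components in the dual frame, rescaled by $\rho_{m-2}$. The lattice points in the union of slabs are therefore counted by a planar counting problem $N_2$ whose width is $\rho_{m-2}^{-1}Q_*^{-1}$ (lattice spacing relative to the scale $\rho_{m-2}$) and whose height is $\tau^{-1}Q_*^{-1}$. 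The number of fibres in the first $m-1$ directions is bounded by $Q_*^{m-1}\prod_{i\le m-2}\rho_i\prod_{i\le m-3}(1+\rho_i^{-1}Q_*^{-1})$; the factors $1+\rho_i^{-1}Q_*^{-1}$ account for sides shorter than the lattice spacing. Combining this fibre count with the normalising factor $(\rho_{m-2}\tau)^{1/2}\tau Q_*^2$, which converts the planar count into a count per fibre, gives the second term.

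\textbf{Thickening step (first term).} When $\tau=\tc$, either $\wt\la_{m-1}=1$ or $\tau=Q_*^{-1}$. In the first case the plank is exactly an $(m-1)$-dimensional plank $(\la_1,\dots,\la_{m-2})$ times an extra direction of length comparable to the lattice spacing or smaller. Fibring along that direction then yields $N_{m-1}$ at scale $(\prod_{i\le m-2}\la_i^{m-i})^{-1}$, with width $\prod_{i\le m-2}\la_i^{m-i}Q_*$ and height $Q_*$. For general $\tau\le\tc$, we compare with the $\tc$-thickening. The fibres in the shortest direction contain at most one lattice point, so the count in the $\tau$-thickening is at most a $\tau/\tc$ fraction of the count in the $\tc$-thickening. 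This fraction is justified by equidistribution of lattice points along that direction: it holds on average once the plank is long compared to $Q_*^{-1}$ in the other directions, and the complementary non-equidistributed contribution is exactly what the projection term above controls. This yields the first term.

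The main obstacle is making the $\tau/\tc$ gain rigorous: the lattice points of the $\tc$-plank need not be spread uniformly in the thin direction. We would first try to split lattice points by whether their projection to the $(\bde_m,\bde_{m+1})$-plane is "generic" or not. For generic points a Fourier/Poisson argument in the thin variable gives the gain. For non-generic points the planar structure is forced, and those points go into the $N_2$ term.
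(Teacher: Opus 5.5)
Your thickening step and the final passage from the $\tc$-thickened planks to $N_{m-1}$ by projection agree with the paper. However, the two steps that carry the content of the proposition are missing, and the mechanism you propose for the second term would not work.

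\textbf{The $N_2$ term.} The $N_2$ term is not a count of points of $Q_*^{-1}\ZZ^{m+1}$ at all. In the paper one writes $N_m(\wt\de_*,Q_*;\wt\bla)\sim\tau^{-(m+1)}\int\Id_Y\,\Id_{\bigcup_t p[\wt\bla](t)}$, with $Y$ the $\tau$-neighbourhood of $Q_*^{-1}\ZZ^{m+1}\cap B(0,1)$. One then splits the \emph{function} $\Id_{\bigcup_t p[\wt\bla](t)}$ into frequencies $|\xi|\lesssim\tc^{-1}$ and $\tc^{-1}\lesssim|\xi|\lesssim\tau^{-1}$. The high part is handled by Plancherel. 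First, $\wh{\Id_Y}\approx\tau^{m+1}Q_*^{m+1}\Id_{Y'}$ is concentrated on the \emph{dual} lattice $Q_*\ZZ^{m+1}$. Second, $|\wh{\Id_{p[\wt\bla](t)}}|\lesssim|p[\wt\bla](t)|\,\Id_{p[\wt\bla](t)^*}$, where the dual box has dimensions $\tau^{-1}\times(\rho_{m-2}\tau)^{-1/2}\times\rho_{m-2}^{-1}\times\cdots\times\rho_1^{-1}\times1$. Each factor of the second term then has a dual-side meaning:
\begin{itemize}
\item the prefactor $(\prod_{i\le m-2}\rho_i)(\rho_{m-2}\tau)^{1/2}\tau Q_*^{m+1}$ is $Q_*^{m+1}|p[\wt\bla](t)|$, i.e.\ the size of the Fourier coefficients;
\item $N_2(\rho_{m-2}^{-1}Q_*^{-1},\tau^{-1}Q_*^{-1})$ counts points of $Q_*\ZZ^{3}$ in the projections of the dual boxes onto their three longest directions, rescaled by $\tau$;
\item $\prod_{i\le m-3}(1+\rho_i^{-1}Q_*^{-1})$ counts $Q_*$-spaced points along the dual sides $\rho_i^{-1}$.
\end{itemize}
Your physical-space version projects onto $\operatorname{span}\{\bde_m(t'),\bde_{m+1}(t')\}$, counts ``fibres'' $\prod(1+\rho_iQ_*)$, and multiplies by a ``normalising factor'' $(\rho_{m-2}\tau)^{1/2}\tau Q_*^2$. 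This only matches the formula numerically, for three reasons:
\begin{itemize}
\item a number of lattice points cannot be bounded by a volume factor (which is at most $1$) times another count without a Fourier argument;
\item the projection plane moves with $t'$, so the projected points form neither a lattice nor a neighbourhood of a fixed planar curve;
\item the inverted parameters $\rho_{m-2}^{-1}Q_*^{-1}$ and $\tau^{-1}Q_*^{-1}$ show that this count lives on the dual side.
\end{itemize}

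\textbf{The factor $\tau/\tc$.} This factor does not come from equidistribution of lattice points in the thin direction. Nor does it come from splitting the points into ``generic'' and ``non-generic'' ones: the high-frequency contribution is not a count of a subset of the points but a signed oscillatory term, and it is this term that the second summand bounds. In the paper the low part is deterministic. Convolving with $\eta_{\le\tc^{-1}}^\vee$ turns $\Id_{\bigcup_t p[\wt\bla](t)}$ into $\kappa\,\Id_{\bigcup_t p[\bla'](t)}$, where $\bla'=(\la_1,\dots,\la_{m-2},\la_{m-1}(r_*\tc)^{1/2})$. Here $\kappa\sim\tau/\tc$, because a $\bla'$-plank has $(\tc/\tau)^{3/2}$ times the volume of a $\wt\bla$-plank while there are $(\tc/\tau)^{1/2}$ times fewer of them. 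This yields $(\tau/\tc)N_m(\tc Q_*,Q_*;\bla')$ with no loss. Every failure of equidistribution is pushed into the high-frequency part, which the dual count above controls.

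The missing idea is therefore this frequency decomposition of the plank indicator at scale $\tc^{-1}$, paired via Plancherel with the Poisson-summation description of $\wh{\Id_Y}$. A Fourier argument ``in the thin variable'' alone is not enough, because the planks rotate with $t$ and the analysis has to be done in all $m+1$ dimensions.
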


We now outline the proof. Fix $\tau\in[r_*^{-1},\tc]$. Keeping $\la_1,\dots,\la_{m-2}$ fixed, choose $\wt\la_{m-1}:=\la_{m-1}(r_*\tau)^{1/2}$ and set
\[ \wt\bla=(\la_1,\dots,\la_{m-2},\wt\la_{m-1}). \]
The associated $\rho$-sequence is
\[ (1,\rho_1,\dots,\rho_{m-2},\wt\rho_{m-1},\tau), \]
where
\[
\wt\rho_{m-1}:=\left(\prod_{i=1}^{m-2}\la_i^{m-i}\right)\wt\la_{m-1}
=(\rho_{m-2}\tau)^{1/2}.
\]
Applying \eqref{rho n-1} to the enlarged tuple gives the last equality. Since $\tau\le\tc$, we have
\begin{equation}\label{upperboundla}
    \wt\la_{m-1}\le \la_{m-1}(r_*\tc)^{1/2}\le \la_{m-1}(r_* \prod_{i=1}^{m-2}\la_i^{m+1-i})^{1/2}=1.
\end{equation}
Thus, the enlarged tuple $\wt\bla$ is admissible at scale $\tau^{-1}$. Let 
\[\wt h_m:=\la_1\dots\la_{m-2}\wt\la_{m-1},\qquad \wt \de_*:=\tau Q_*. \] 
Let $p[\wt \bla](t)$ be the $\wt\bla$-planks. Then they are of dimensions
\[ 1\times \rho_1\times \dots\times \rho_{m-2}\times \wt\rho_{m-1}\times \tau. \]
The \textit{aperture} of the $\wt\bla$-plank is $\wt h_m$. The new \textit{width} is $\wt\de_*$. \medskip

Since $p[\bla](t)\subset p[\wt\bla](t)$, and by Lemma \ref{lemcompare} that perturbing $t$ within $\wt h_m$ keeps $p[\wt\bla](t)$ essentially unchanged, we have
\begin{equation}\label{unionplankequal}
\bigcup_{t'\in\cI(h_m):|t'-t|\le \wt h_m }p[\bla](t')\subset p[\wt\bla](t). 
\end{equation} 
Therefore,
\begin{equation*}
\bigcup_{t\in\cI(h_m)}p[\bla](t)\subset \bigcup_{t\in\cI(\wt h_m)}p[\wt\bla](t). 
\end{equation*} 
Therefore we obtain:
    \begin{equation}   N_m(\de_*,Q_*;\bla)\lesssim N_m(\wt\de_*,Q_*;\wt\bla).
    \end{equation}
This completes the initial thickening step. \medskip

We now estimate $N_m(\wt\de_*,Q_*;\wt\bla)$. Let 
\begin{equation}\label{defY}
    Y:=N_\tau (B(0,1)\cap Q_*^{-1}\ZZ^{m+1}). 
\end{equation} 
By definition \eqref{defanisotropic}, we can rewrite the counting function as follows.
\[ N_m(\wt\de_*,Q_*;\wt\bla)\sim   \tau^{-(m+1)} \int \Id_Y \Id_{\bigcup_{t\in\cI(\wt h_m)}p[\wt\bla](t)}.  \]
Since $\bigcup_{t\in\cI(\wt h_m)}p[\wt\bla](t)$ is a union of $\tau$-balls, the Fourier transform of $\Id_{\bigcup_{t\in\cI(\wt h_m)}p[\wt\bla](t)}$ is essentially supported in $B(0,\tau^{-1})$. (One may replace the indicator function by smooth bump function to make this argument rigorous.) Recall $\tc$ from \eqref{tc}. Let $\eta_{\le\tc^{-1}}$ and
$\eta_{r'}$ be smooth cutoffs adapted to $B(0,\tc^{-1})$ and
$\{|\xi|\sim r'\}$, respectively, forming a dyadic partition
of unity:
\[
1=\eta_{\le\tc^{-1}}(\xi)
+\sum_{\substack{\tc^{-1}\le r'\le\tau^{-1}\\ r'\text{ dyadic}}}
\eta_{r'}(\xi),
\qquad |\xi|\le\tau^{-1}.
\]
Applying this decomposition and Fourier inversion, we obtain
\[ \wh\Id_{\bigcup_{t\in\cI(\wt h_m)}p[\wt\bla](t)}=\wh\Id_{\bigcup_{t\in\cI(\wt h_m)}p[\wt\bla](t)}\Big( \eta_{\le\tc^{-1}}+\sum_{\tc^{-1}\le r'\le \tau^{-1}}\eta_{r'}\Big). \]
Therefore, we have
\begin{equation}
N_m(\wt\de_*,Q_*;\wt\bla)\sim \tau^{-(m+1)}  \int \Id_Y \bigg(\Id_{\bigcup_{t\in\cI(\wt h_m)}p[\wt\bla](t)}*\Big( \eta_{\le\tc^{-1}}^\vee+\sum_{\tc^{-1}\le r'\le \tau^{-1}}\eta_{r'}^\vee \Big)\bigg).
\end{equation}
On the right hand side, we call the first term \textit{low part}, and the second term \textit{high part}.

\medskip

\subsubsection{\textbf{\textup{Low part}}} We first deal with the \textit{low part}. Since we have
\[ |\eta_{\le\tc^{-1}}^\vee|\lesssim \frac{1}{|B(0,\tc)|}\Id_{B(0,\tc)} \]
by ignoring the tails,
we see that the low part is bounded by
\[ \tau^{-(m+1)} \int \Id_Y \bigg(\Id_{\bigcup_{t\in\cI(\wt h_m)}p[\wt\bla](t)}* \Big(  \frac{1}{|B(0,\tc)|}\Id_{B(0,\tc)} \Big)\bigg). \]
Convolution with $\Id_{B(0,\tc)}$ blurs the set $\bigcup_{t\in\cI(\wt h_m)}p[\wt\bla](t)$ to its $\tc$-neighborhood, so we have
\begin{equation}\label{convolution}
\Id_{\bigcup_{t\in\cI(\wt h_m)}p[\wt\bla](t)}* \Big(  \frac{1}{|B(0,\tc)|}\Id_{B(0,\tc)} \Big)\sim \kappa \Id_{N_{\tc}(\bigcup_{t\in\cI(\wt h_m)}p[\wt\bla](t))},
\end{equation}
where $\kappa\sim\tau/\tc$. To show that $\kappa\sim\tau/\tc$, we first describe the $\tc$-neighborhood using enlarged planks. 

We  increase the shortest side length from $\tau$ to $\tc$
using the same construction as above. Set
\[
\la_{m-1}':=\la_{m-1}(r_*\tc)^{1/2},
\qquad
\bla':=(\la_1,\dots,\la_{m-2},\la_{m-1}').
\]
By \eqref{upperboundla}, $\la_{m-1}'\le1$, so $\bla'$ is
admissible at scale $\tc^{-1}$. The planks $p[\bla'](t)$ have
dimensions
\[
1\times\rho_1\times\cdots\times\rho_{m-2}
\times(\rho_{m-2}\tc)^{1/2}\times\tc
\]
and aperture $h_m':=\la_1\cdots\la_{m-2}\la_{m-1}'$.
By the same reasoning as \eqref{unionplankequal}, we have
\begin{equation*}
\bigcup_{t'\in\cI(\wt h_m):|t'-t|\le  h_m' }p[\wt\bla](t')\subset p[\bla'](t). 
\end{equation*}
The $\tc$-neighborhood of this union is comparable to the enlarged plank $p[\bla'](t)$:
\begin{equation*}
N_{\tc}\Big(\bigcup_{t'\in\cI(\wt h_m):|t'-t|\le  h_m' }p[\wt\bla](t')\Big)\approx p[\bla'](t). 
\end{equation*}
We omit the proof of this geometric comparison. Hence,
\begin{equation}
N_{\tc}\Big(\bigcup_{t\in\cI(\wt h_m) }p[\wt\bla](t)\Big)\approx \bigcup_{t\in \cI(h_m')}p[\bla'](t). 
\end{equation}
Substituting into \eqref{convolution} gives
\[ \Id_{\bigcup_{t\in\cI(\wt h_m)}p[\wt\bla](t)}* \Big(  \frac{1}{|B(0,\tc)|}\Id_{B(0,\tc)} \Big)\sim \kappa \Id_{\bigcup_{t\in \cI(h_m')}p[\bla'](t)}. \]
Since $\frac{1}{|B(0,\tc)|}\Id_{B(0,\tc)}$ is $L^1$-normalized, integrating the preceding comparison gives
\[
\kappa\sim
\frac{\left|\bigcup_{t\in\cI(\wt h_m)}p[\wt\bla](t)\right|}
{\left|\bigcup_{t\in\cI(h_m')}p[\bla'](t)\right|}.
\]
The two planks differ only in their last two side lengths, so
\[
\frac{|p[\wt\bla](t)|}{|p[\bla'](t)|}
\sim\frac{(\rho_{m-2}\tau)^{1/2}\tau}
{(\rho_{m-2}\tc)^{1/2}\tc}
=\left(\frac{\tau}{\tc}\right)^{3/2}.
\]
The parameter nets have cardinalities comparable to the reciprocals of their spacings. Hence,
\[
\frac{\#\cI(\wt h_m)}{\#\cI(h_m')}
\sim\frac{h_m'}{\wt h_m}
=\frac{\la_{m-1}'}{\wt\la_{m-1}}
=\left(\frac{\tc}{\tau}\right)^{1/2}.
\]
By the bounded overlap in Lemma~\ref{lemcompare}, the volume of each union is comparable to the number of planks times the volume of one plank. Therefore,
\[
\kappa\sim
\left(\frac{\tc}{\tau}\right)^{1/2}
\left(\frac{\tau}{\tc}\right)^{3/2}
=\frac{\tau}{\tc}.
\]
Combining the above discussion, we see that 
\[ \mathrm{Low \; part}\  \lesssim  \tau^{-(m+1)} \int\Id_Y \frac{\tau}{\tc}\Id_{\bigcup_{t\in\cI(h'_m)}p[\bla'](t)}\sim \frac{\tau}{\tc}N_m(\tc Q_*,Q_*; \bla').  \]
The last $\sim$ is from \eqref{defanisotropic}, because it is the lattice counting problem with height $Q_*$ and width $\tc Q_*$.

\medskip

We next apply the \textit{projection argument}. As this section presents a proof sketch, we focus on the geometric idea behind the projection. Recall that $N_m(\tc Q_*,Q_*;\bla')$ counts the number of $Q_*^{-1}\ZZ^{m+1}$ in the union of $1\times \rho_1\times\dots\times \rho_{m-2}\times (\rho_{m-2}\tc)^{1/2}\times \tc$-boxes. We project the lattice points and the boxes onto an $m$-dimensional subspace. The projected boxes have dimensions
\[ 1\times \rho_1\times \dots\times \rho_{m-2}\times (\rho_{m-2}\tc)^{1/2}. \]
Noting that
\[ (\rho_{m-2}\tc)^{1/2}\le (\rho_{m-2}\prod_{i=1}^{m-2}\la_i^{m+1-i})^{1/2}\le \prod_{i=1}^{m-2}\la_i^{m-i}, \]
we may increase the shortest side length of the projected box to $\prod_{i=1}^{m-2}\la_i^{m-i}$, so that it becomes of size
\[ 1\times \rho_1\times \dots\times \rho_{m-2}\times \prod_{i=1}^{m-2}\la_i^{m-i}.\]
These are precisely the dimensions of a $(\la_1,\dots,\la_{m-2})$-plank. Thus, the projected counting problem is bounded by $N_{m-1}(\de_{m-1},Q_*;\la_1,\dots,\la_{m-2})$, where $\de_{m-1}=Q_*\prod_{i=1}^{m-2}\la_i^{m-i}$. Since the thickness $\tc$ is at most the lattice spacing $Q_*^{-1}$, fixing the projected coordinates leaves only $O(1)$ choices for the last coordinate. Therefore,
\[ N_m(\tc Q_*,Q_*;\bla')\lesssim N_{m-1}(\de_{m-1},Q_*;\la_1,\dots,\la_{m-2}). \]
We record the general form of this projection estimate, which will also be used in the induction in Section~\ref{conclude}.
\begin{proposition}\label{input0}
    Let $\de_m=Q_*\prod_{i=1}^{m-1}\la_i^{m+1-i}$ and $\de_{m-1}=Q_*\prod_{i=1}^{m-2}\la_i^{m-i}$. Then
    \[ N_m(\de_m,Q_*;\la_1,\dots,\la_{m-1})\lesssim (1+\de_m)\cdot N_{m-1}(\de_{m-1},Q_*;\la_1,\dots,\la_{m-2}). \]
    The additional factor $1+\de_m$ counts the choice of the last component of the lattice points when their first $m$ components are fixed.
\end{proposition}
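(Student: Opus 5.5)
We prove Proposition~\ref{input0} by projecting along the direction of the shortest side of the plank and then fibering. Let $\bla=(\la_1,\dots,\la_{m-1})$ and $r_*=\de_m^{-1}Q_*$. By \eqref{defanisotropic}, $N_m(\de_m,Q_*;\bla)$ counts points of $Q_*^{-1}\ZZ^{m+1}$ lying in $\bigcup_{t\in\cI(h_m)}p[\bla](t)$. Each box $p[\bla](t)$ has dimensions $1\times\rho_1\times\cdots\times\rho_{m-1}\times r_*^{-1}$, and its shortest side is along the last dual Frenet vector $\bde_{m+1}(t)=\bv_1(t)$.

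The structural step is to choose one fixed linear projection $\pi:\R^{m+1}\to\R^m$ that is compatible with all boxes simultaneously. Following the reduction from dimension $m$ to $m-1$ (the curve in $\R^{m-1}$ determined by $\cC$), we take $\pi$ to forget one coordinate. We then check that $\pi(p[\bla](t))$ is contained in a bounded dilate of $p[\la_1,\dots,\la_{m-2}](t)$ for the lower-dimensional cone. This amounts to a Frenet-frame comparison: the first $m-1$ dual frame vectors of the $(m+1)$-dimensional cone project, up to $O(1)$ distortion and lower-order mixing, onto the corresponding frame of the $m$-dimensional cone. The side lengths $1,\rho_1,\dots,\rho_{m-2}$ are preserved. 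The side $\rho_{m-1}=\prod_{i=1}^{m-1}\la_i^{m-i}$ is dominated by $\de_{m-1}/Q_*=\prod_{i=1}^{m-2}\la_i^{m-i}$, which is the shortest side of a $(\la_1,\dots,\la_{m-2})$-plank; this is the computation already displayed before the proposition, using $\la_{m-1}\le1$. The aperture $h_{m-1}=\la_1\cdots\la_{m-2}\ge h_m$ is coarser, so by Lemma~\ref{lemcompare} the projected union is covered by $\bigcup_{t\in\cI(h_{m-1})}p[\la_1,\dots,\la_{m-2}](t)$ with bounded multiplicity. I expect this step to be the main obstacle. The off-diagonal entries of the projected frame are controlled by Taylor expansion in $t$, and the hierarchy \eqref{rho n-1} of the $\rho_i$ is what makes the mixing terms harmless.

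Next comes the counting step. The lattice $Q_*^{-1}\ZZ^{m+1}$ projects onto $Q_*^{-1}\ZZ^m$ when $\pi$ is a coordinate projection, or a bounded-index sublattice otherwise. Each point counted by $N_m$ therefore maps to a point counted by $N_{m-1}(\de_{m-1},Q_*;\la_1,\dots,\la_{m-2})$. To bound the fiber, we fix the projected point. The remaining points of the $m$-dimensional problem then lie on a line segment in the forgotten direction, of length $\lesssim r_*^{-1}$ times a bounded factor, inside a union of $O(1)$ boxes. The spacing along that line is $Q_*^{-1}$, so the number of such points is at most $1+r_*^{-1}Q_*=1+\de_m$.

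Multiplying the fiber bound by the projected count gives the stated inequality. The thickening and projection already sketched for the low part then specialize this to the case $\de_m=\tc Q_*\le1$.
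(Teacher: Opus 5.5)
Your proposal is correct at the level of rigor of the paper's explicitly heuristic argument in Section~\ref{sec:algebraic-computation}, and it follows the same route. You project away the shortest direction $\bv_1(t)$; the projected boxes have sides $1\times\rho_1\times\cdots\times\rho_{m-1}$ with $\rho_{m-1}=\la_{m-1}\prod_{i=1}^{m-2}\la_i^{m-i}\le\de_{m-1}/Q_*$, so they lie inside $(\la_1,\dots,\la_{m-2})$-planks counted by $N_{m-1}$; and you charge $1+\de_m$ for the forgotten coordinate.

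The step you flag as the main obstacle is actually routine, and the difficulty sits elsewhere. If you forget the coordinate paired with $\gamma_0=1$, then in derivative coordinates the projection of $\{|P_\xi^{(k)}(t)|\lesssim\rho_{m-k},\ 0\le k\le m\}$ is exactly $\{|P_\xi^{(k)}(t)|\lesssim\rho_{m-k},\ 1\le k\le m\}$, so no Taylor expansion in $t$ is needed. The delicate point is the one you pass over: that a fiber line meets only $O(1)$ boxes. This depends on the suppressed lower-bound conditions indexed by $A$, is equally unproved in the paper's sketch, and is made rigorous by Lemma~\ref{lem:n-fibre-counting}.
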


We summarize the low part estimate as follows:
\begin{equation}
   \mathrm{Low \; part} \ \lesssim \frac{\tau}{\tc}N_{m-1}(\prod_{i=1}^{m-2}\la_i^{m-i}Q_*,Q_*;\la_1,\dots,\la_{m-2}).
\end{equation}

\subsubsection{\textbf{\textup{High part}}} As this section presents a proof sketch, we restrict attention to the highest-frequency contribution, corresponding to $r'=\tau^{-1}$.  We want to bound
\[ \textrm{Highest part}:= \tau^{-(m+1)}\int \Id_Y \bigg(\Id_{\bigcup_{t\in\cI(\wt h_m)}p[\wt\bla](t)}*\eta_{\tau^{-1}}^\vee\bigg). \]
By Plancherel, it equals
\[ \tau^{-(m+1)} \int  \overline{\wh\Id_Y} \bigg(\wh\Id_{\bigcup_{t\in\cI(\wt h_m)}p[\wt\bla](t)}\eta_{\tau^{-1}}\bigg). \]

We first study $\wh \Id_Y$. Recall \eqref{defY}. We define
\[ Y':=N_{1}(B(0,\tau^{-1})\cap Q_*\ZZ^{m+1}). \]
Then by the same argument as \eqref{whFX}, we have
\[ \wh \Id_Y\approx |Y|\Id_{Y'}\sim\tau^{m+1}Q_*^{m+1} \Id_{Y'}. \]
We next study  $\wh\Id_{\bigcup_{t\in\cI(\wt h_m)}p[\wt\bla](t)}$ in $\{|x|\sim \tau^{-1}\}$, the support of $\eta_{\tau^{-1}}$.
By Lemma~\ref{lemcompare}, the boxes $p[\wt\bla](t)$ are essentially disjoint, so we may write
\[\wh\Id_{\bigcup_{t\in\cI(\wt h_m)}p[\wt\bla](t)}\approx\sum_{t\in\cI(\wt h_m)}\wh\Id_{p[\wt\bla](t)}.\]
Recall that $p[\wt\bla](t)$ is a box of dimensions
\[ 1\times \rho_1\times \dots\times \rho_{m-2}\times (\rho_{m-2}\tau)^{1/2}\times\tau. \]
The dual box, denoted by $p[\wt\bla](t)^*$, has dimensions
\[ \tau^{-1}\times (\rho_{m-2}\tau)^{-1/2}\times \rho_{m-2}^{-1}\times \dots\times \rho_1^{-1}\times 1. \]
So we have
\[ |\wh \Id_{p[\wt\bla](t)}|\lesssim |p[\wt\bla](t)|\cdot \Id_{p[\wt\bla](t)^*}\sim (\prod_{i=1}^{m-2}\rho_i)\cdot(\rho_{m-2}\tau)^{1/2}\tau\cdot \Id_{p[\wt\bla](t)^*}.  \]

Combining the preceding estimates, we obtain
\begin{equation}\label{highest1}
    \textrm{Highest part}\ \lesssim (\prod_{i=1}^{m-2}\rho_i)\cdot(\rho_{m-2}\tau)^{1/2}\tau Q_*^{m+1}\int_{|x|\sim \tau^{-1}}  \Id_{Y'}\sum_{t\in \cI(\wt h_m)}\Id_{p[\wt\bla](t)^*}.
\end{equation}
We regard the last integral as counting the number of points of $Q_*\ZZ^{m+1}\cap B(0,\tau^{-1})$ inside the union of boxes with dimensions $\tau^{-1}\times (\rho_{m-2}\tau)^{-1/2}\times \rho_{m-2}^{-1}\times \dots\times \rho_1^{-1}\times 1$. We project the counting problem to a 3-dimensional plane. It becomes the problem of counting points of $Q_*\ZZ^{3}\cap B(0,\tau^{-1})$ inside the union of boxes with dimensions $\tau^{-1}\times (\rho_{m-2}\tau)^{-1/2}\times \rho_{m-2}^{-1}$. Rescaling by the factor $\tau$, we see that this problem has \textit{height} $\tau^{-1}Q_*^{-1}$ and \textit{width} $\rho_{m-2}^{-1}Q_*^{-1}$. Hence, the projected counting problem contributes at most \[N_2(\rho_{m-2}^{-1}Q_*^{-1},\tau^{-1}Q_*^{-1}).\] For the remaining components, since the lattice $Q_*\ZZ^{m+1}$ is $Q_*$-spaced and the remaining side lengths of $p[\wt\bla](t)^*$ are $\rho_{m-3}^{-1},\dots,\rho_1^{-1},1$, they contribute at most 
\[ \prod_{i=1}^{m-3}(1+\rho_{i}^{-1}Q_*^{-1}). \]
Combining all the estimates, we obtain
\[ \int_{|x|\sim \tau^{-1}}  \Id_{Y'}\sum_{t\in \cI(\wt h_m)}\Id_{p[\wt\bla](t)^*}\lesssim N_2(\rho_{m-2}^{-1}Q_*^{-1},\tau^{-1}Q_*^{-1})\prod_{i=1}^{m-3}(1+\rho_{i}^{-1}Q_*^{-1}). \]
Substituting this estimate into \eqref{highest1} yields
\begin{equation}\label{high}
    \textrm{Highest part}\ \lesssim (\prod_{i=1}^{m-2}\rho_i)\cdot(\rho_{m-2}\tau)^{1/2}\tau Q_*^{m+1}N_2(\rho_{m-2}^{-1}Q_*^{-1},\tau^{-1}Q_*^{-1})\prod_{i=1}^{m-3}(1+\rho_{i}^{-1}Q_*^{-1}).
\end{equation}

The estimates above give the low-frequency and highest-frequency contributions to Proposition~\ref{input2}. We omit the intermediate-frequency analysis from this outline.

\bigskip

\subsection{The two-dimensional theorem of Huxley and Vaughan--Velani}
We record the following counting theorem due to Huxley \cite{Huxley1994} and Vaughan--Velani \cite{VV2006}.
\begin{proposition}\label{input3}
    Let $0<\de\le 1\le Q$. Then for any $\e>0$,
    \begin{equation}
        N_2(\de,Q)\lesssim_\e \de Q^2+Q^{1+\e}.
    \end{equation}
\end{proposition}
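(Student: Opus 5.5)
The plan is to deduce the bound directly from the planar estimate \eqref{eq:intro-planar-bound} of Vaughan and Velani (built on Huxley's determinant method) rather than reprove it. Here $N_2(\de,Q)$ denotes the dyadic counting function attached to the planar curve $\cC_2$ determined by $\cC$. For the moment curve this is $t\mapsto(t,t^2)$; in general it is the curve obtained in the projection step, e.g.\ $(\ga_1,\ga_2)$ after a fixed linear change of variables.

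\textbf{Step 1: curvature of $\cC_2$.} First I check that $\cC_2$ is a compact $C^\infty$ curve with nonvanishing curvature. This follows from the nondegeneracy condition \eqref{eq:intro-nondegeneracy} applied to the first two frame vectors, which gives $\det(\ga_2'(t),\ga_2''(t))\neq0$ for the planar projection on $I$. I will choose the projection so that this $2\times 2$ minor is the one guaranteed nonzero.

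\textbf{Step 2: reduction to graphs.} Next I partition $I$ into $O_{\cC}(1)$ subintervals. On each one, $\cC_2$ is the graph of a $C^\infty$ function with second derivative bounded away from zero, over one of the two coordinate axes. Swapping coordinates preserves $\ZZ^2$ and the distance condition, so this is harmless. The counting function is subadditive over such a finite cover, up to the $O(1)$ multiplicity coming from overlaps of $\de/q$-neighbourhoods. It therefore suffices to treat one graph piece.

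\textbf{Step 3: comparison of counting functions.} Then I compare with the cumulative count. Since $N_2(\de,Q)$ counts pairs with $Q\le q\le 2Q$ and $\dist(\mathbf p/q,\cC_2)\le\de/q$, we have trivially
\[
N_2(\de,Q)\le\mathcal R_{\cC_2}(\de,2Q).
\]
Applying \eqref{eq:intro-planar-bound} gives
\[
N_2(\de,Q)\lesssim_\e \de(2Q)^2+(2Q)^{1+\e}\lesssim \de Q^2+Q^{1+\e}.
\]
If the lattice formulation \eqref{Nn} is used instead of the literal definition, I first pass between the two: $\dist((q/Q,\mathbf p/Q),\Ga)\le\de/Q$ with $q\sim Q$ implies $\dist(\mathbf p/q,\cC_2)\lesssim\de/q$. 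Replacing $\de$ by $C\de$ only changes constants, and if $C\de>1$ the trivial bound $O(\de Q^2)$ suffices.

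\textbf{Main obstacle: uniformity of constants.} The step needing care is that the implied constant be uniform over the planar curves to which the proposition is applied in the induction. These arise from projections of dual-Frenet-frame planks and may depend on parameters. The plan is to note that all of them come from a fixed compact family of curves, with $C^3$ norms and curvature lower bounds controlled by $\cC$. Vaughan--Velani's constant depends only on these quantities, through Huxley's estimate and the Fourier analysis, so one $\lesssim_\e$ holds for the whole family.
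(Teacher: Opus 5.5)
Your proposal is correct and matches the paper, which does not prove Proposition~\ref{input3} but records it from Huxley and Vaughan--Velani; all that is needed is the inclusion $N_2(\delta,Q)\le\mathcal R_{\mathcal C_2}(\delta,2Q)$ followed by \eqref{eq:intro-planar-bound}, exactly as in your Step~3 (the same reduction, including a piecewise choice of coordinate projection since no single $2\times 2$ minor need be nonzero on all of $I$, appears rigorously in Case~2 of the reduction of Theorem~\ref{thm:main} to Proposition~\ref{prop:conical-n} and in Lemma~\ref{lem:bp36-planar-bound}). Your ``main obstacle'' is not really an obstacle, and you can drop the unproved claim about how the Vaughan--Velani constant depends on $C^3$ norms: the rigorous argument uses only finitely many fixed planar curves (the projections $\mathcal C_{r,2}$ for $0\le r\le n-3$, the terminal curve in Lemma~\ref{lem:bp36-lead-tails}, and $\mathcal C_{12}$), all determined by $\gamma$ alone, so \eqref{eq:intro-planar-bound} can be applied to each curve with its own constant.
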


\bigskip

\subsection{Concluding the main theorem}\label{conclude}
In this subsection we prove the main theorem using
Propositions~\ref{input1}, \ref{input2}, \ref{input0}, and \ref{input3}. The computation is assisted by GPT-6. We state the
result in the dyadic-denominator notation used throughout this section.

\begin{theorem}\label{thm:section-two-main}
Let $0<\de\leq1\leq Q$. Then for every $\e>0$,
\begin{equation}\label{eq:section-two-main}
 N_n(\de,Q)
 \lesssim_\e
 \de^{n-1}Q^2+Q^\e\left(
 Q+\sum_{k=1}^{n-1}
 \de^{\,k^2/(k+1)}Q^{(2k+1)/(k+1)}\right).
\end{equation}
\end{theorem}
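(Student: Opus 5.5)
The plan is an induction on the dimension $m$, proving a bound for the anisotropic counting function $N_m(\de_*,Q_*;\bla)$ that is uniform over admissible $\bla$. This bound is then fed into Proposition~\ref{input1}.

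\textbf{Step 1: the inductive hypothesis.} For $2\le m\le n$ and $\bla=(\la_1,\dots,\la_{m-1})$ admissible at scale $\de_*^{-1}Q_*$ (with $\de_*\le1\le Q_*$), I will aim to show
\[
N_m(\de_*,Q_*;\bla)\lesssim_\e Q_*^{\e}\Big(\mathrm{Vol}_m(\bla,Q_*)+\sum_{k=0}^{m-1}E_k\Big).
\]
Here $\mathrm{Vol}_m=Q_*^{m+1}h_m^{-1}\prod_{i=1}^m\rho_i$ is the volume term \eqref{volumebound}, and the $E_k$ are lower-order terms: products of powers of $Q_*$ and of the $\rho_i$, modelled on the $k$-dimensional degenerate configurations. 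For $m=2$ this is Proposition~\ref{input3}, using the remark that $N_2(\de_*,Q_*;\la_1)=N_2(\de_*,Q_*)$.

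\textbf{Step 2: the inductive step.} For $m\ge3$, apply Proposition~\ref{input2}. If $\de_*\ge1$, use the volume bound instead, or Proposition~\ref{input0}, which loses only the factor $1+\de_m$.

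The low part is $(\tau/\tc)N_{m-1}(\cdot)$, which the inductive hypothesis controls. The high part is controlled by Proposition~\ref{input3} applied to $N_2(\rho_{m-2}^{-1}Q_*^{-1},\tau^{-1}Q_*^{-1})$. When $\rho_{m-2}^{-1}Q_*^{-1}\ge1$, the width exceeds the spacing and the high-part count is instead bounded by volume.

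Write $\tau=\tc\theta$ with $\theta\in[\de_*Q_*^{-1}/\tc,1]$. The two contributions then have the shape
\[
\theta\,A+\theta^{3/2}B\big(\rho_{m-2}^{-1}Q_*^{-1}\theta^{-1}\tc^{-1}Q_*^{-2}+(\tc\theta Q_*)^{-1-\e}\big).
\]
The $\delta Q^2$ piece of $N_2$ gives $\theta^{1/2}$ times a quantity which, after expanding $\tc$ via admissibility, equals $\mathrm{Vol}_m$. The $Q^{1+\e}$ piece gives $\theta^{1/2}$ times a quantity which should reproduce a new $E_{m-1}$-type term. I will choose $\theta$ to balance $\theta A$ against $\theta^{1/2}B'$, or take $\theta$ at an endpoint when that is optimal. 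Since everything is monomial, it suffices to check the exponents of $Q_*$ and of each $\la_i$ separately. The two regimes $\tc=Q_*^{-1}$ and $\tc=\prod_{i\le m-2}\la_i^{m+1-i}$ have to be handled as separate cases. The dyadic frequencies $r'$ between $\tc^{-1}$ and $\tau^{-1}$ interpolate and cost only a $\log$.

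\textbf{Step 3: returning to the original count.} Substitute the $m=n$ bound, with $\de_*=Q^{-1}$ and $Q_*=r/Q$, into Proposition~\ref{input1}. The weight is $\de^{n-1}Q^2 h$, summed over dyadic $r\in[Q,\de^{-1}Q]$ and dyadic $\bla$, at a cost of $O(\log^n Q)\lesssim Q^\e$.

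The volume terms should sum to $\de^{n-1}Q^2\cdot h\cdot (r/Q)^{n+1}h^{-1}\prod\rho_i$. Using $\prod_{i\le n}\rho_i\, r^{n+1}=\prod_i(r\rho_i)$, each such sum should be $\lesssim Q^\e$ times a term already on the right of \eqref{eq:section-two-main}. Each $E_k$ term, maximized over $\bla$ and $r$, should give $\de^{k^2/(k+1)}Q^{(2k+1)/(k+1)}$. The extremal configuration should be $\la_{k+1}=\dots=1$ together with $r$ comparable to the scale at which the $(k+1)$-st coordinate saturates, matching the example in Proposition~\ref{cor:intro-moment-count}.

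\textbf{Main obstacle.} I expect the main obstacle to be Step 2: finding the exact form of the $E_k$ so that the induction closes under both the low and high parts for all admissible $\bla$. This amounts to a linear-programming check on exponents. I would first guess $E_k$ by running the recursion on the extremal configurations $\la_{k+1}=\dots=\la_{m-1}=1$, and then verify monotonicity in each $\la_i$ so that the maximum over $\bla$ is attained at those configurations.
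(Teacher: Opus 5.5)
\textbf{There is a genuine gap: the inductive bound is never fixed, and the one exponent computation you do carry out has the wrong sign.} Your architecture is the paper's: induction on $m$ through Propositions~\ref{input2}, \ref{input0}, \ref{input3}, then Proposition~\ref{input1}. Once those propositions are granted, though, the whole content of the theorem is the numerology.

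\textbf{The miscomputed exponent.} In the high part, $N_2(\rho_{m-2}^{-1}Q_*^{-1},\tau^{-1}Q_*^{-1})$ has main term $\rho_{m-2}^{-1}Q_*^{-1}(\tau Q_*)^{-2}$. With $\tau=\tc\theta$ this carries $\theta^{-2}\tc^{-2}$, not $\theta^{-1}\tc^{-1}$. After the prefactor $\theta^{3/2}$, this piece therefore scales like $\theta^{-1/2}$, not $\theta^{1/2}$. In the paper's normalization ($\sigma=M\tau$ with $M=Q_*$, and $\ell=\rho_{m-2}Q_*$) it is $S(\ell/\sigma)^{1/2}$, which decreases in $\sigma$.
\begin{itemize}
\item This is exactly the term that forbids taking $\tau$ minimal. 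It must be traded against the increasing low part $\sigma(1+w^{-1})\mathbf N_{m-1}$.
\item The piece $S\ell^{3/2}\sigma^{1/2}$ coming from $Q^{1+\e}$ is checked separately.
\item It is also not the volume term: $\mathcal V_m$ arises from the low part at $\sigma=\de_m$, via $\de_m\mathcal V_{m-1}=\mathcal V_m$.
\end{itemize}
So balancing $\theta A$ against $\theta^{1/2}B'$, where both terms increase, is not the right optimization.

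\textbf{The missing inductive bound.} You correctly flag the form of the $E_k$ as the main obstacle, but that is the proof itself. The paper does not use intrinsic $m$-dimensional error terms. It fixes the final target $U=E_n(Q^{-1},M)/h$, which depends on the whole $n$-tuple, and proves $\mathbf N_m\lessapprox\mathcal V_m+U$ for every $2\le m\le n$ with this same $U$.
\begin{itemize}
\item All needed inequalities ($U\ge M$, $S\ell\le U$, $\min\{S\ell^2,S\ell_-^{2/3}\}\le U$, $\mathcal V_m/w\le U$) follow from the single bound $U\ge M^{j+1}(\ell_j/M)^{1/(n-j)}$.
\item If $\de_m\ge1$ or $\mathcal V_{m-1}\le U$, Proposition~\ref{input2} is not used at all; one uses volume, or projection via Proposition~\ref{input0}.
\item Otherwise $\mathcal V_{m-1}=S\ell^2>U\ge S\ell$ forces $\ell>1$. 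This puts Proposition~\ref{input3} in range and makes the fiber factor $\prod_{i\le m-3}(1+\rho_i^{-1}Q_*^{-1})$ bounded.
\item Your shape formula silently drops that fiber factor, and your volume fallback for $\ell\le1$ does not control it.
\item In the remaining case, $\ell>\ell_-^{1/3}$ yields the admissibility constraint $\sigma\le w$. Then $\sigma=\max\{\de_m,S^2\ell/T^2\}$ with $T=\max\{\mathcal V_m,U\}$ makes all three terms $\lesssim T$.
\end{itemize}
Without this device or an explicit substitute, ``guess $E_k$ from extremal configurations and check monotonicity in each $\la_i$'' is a hope, not an argument.

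\textbf{Two smaller points.} First, you need the reduction to $\de\ge Q^{-1}$ by monotonicity, because the absorption $h\mathcal V_n\le Q^{-1}M^{n-1}$ uses $M\le Q$. Second, measure losses in $Q^{\e}$ rather than $Q_*^{\e}$: the planar loss $\sigma^{-\e}$ is only bounded via $\sigma\ge\de_n=Q^{-1}$, and it can exceed $Q_*^{\e}$.
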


\begin{proof}
\textbf{Step 1. Derivation of \eqref{eq:section-two-normalized-recurrence}.}
We first treat $Q^{-1}\leq\de\leq1$.  In
Proposition~\ref{input1}, write
\[
 M:=\frac rQ.
\]
Then the range of $M$ is
\[ 1\leq M\leq\de^{-1}\leq Q.\]
Fix one tuple
$\bla=(\la_1,\ldots,\la_{n-1})$ that is admissible at scale $r$, and put
$h:=\prod_{i=1}^{n-1}\la_i$.  Admissibility gives
\begin{equation}\label{eq:section-two-full-relation}
 \prod_{i=1}^{n-1}\la_i^{\,n+1-i}=r^{-1}=(QM)^{-1}.
\end{equation}

To make the computation simple, we introduce some new parameters. Let
\begin{equation}\label{eq:section-two-Lj}
 \ell_0:=M,
 \qquad
 \ell_j:=M\prod_{i=1}^{j}\la_i^{\,j+1-i}
 \quad(1\leq j\leq n-1),
\end{equation}
and, for $2\leq m\leq n$, set
\begin{equation}\label{eq:section-two-deltam}
 \de_m:=M\prod_{i=1}^{m-1}\la_i^{\,m+1-i}
      =\frac{\ell_{m-1}^2}{\ell_{m-2}}.
\end{equation}
Hence the shortest side of the $m$-dimensional $(\la_1,\dots,\la_{m-1})$-plank is
$\de_mM^{-1}$, and \eqref{eq:section-two-full-relation} says that
$\de_n=Q^{-1}$.  Write
\[
 \mathbf N_m:=N_m(\de_m,M;\la_1,\ldots,\la_{m-1}).
\]
Start from $m=n$.
If $\de_m\le 1$, we use Proposition \ref{input2}. If $\de_m>1$, the iteration stops and we use the volume bound \eqref{volumebound}.   Introduce also the volume scale
\begin{equation}\label{eq:section-two-volume}
 \mathcal V_m
 :=\left(\prod_{j=0}^{m-2}\ell_j\right)\ell_{m-1}^2.
\end{equation}
This is the volume of the union of the  $(\la_1,\dots,\la_{m-1})$-planks multiplied by the
lattice density $M^{m+1}$. The same reasoning as in \eqref{volumebound} shows that if $\de_m>1$, then
\begin{equation}\label{volumeestimate}
    \mathbf N_m\lesssim \mathcal V_m.
\end{equation} 

We next rewrite Proposition~\ref{input2} in these parameters. We use $(\de_m,M)$ in place of the $(\de_*,Q_*)$ from Proposition \ref{input2}. For a fixed
$m\geq3$, put
\begin{equation}\label{eq:section-two-local-parameters}
 \ell:=\ell_{m-2},\qquad \ell_-:=\ell_{m-3},\qquad
 S:=\prod_{j=0}^{m-3}\ell_j,\qquad
 g:=\prod_{i=1}^{m-2}\la_i=\frac \ell{\ell_-},\qquad
 w:=\ell g^2=\frac{\ell^3}{\ell_-^2}.
\end{equation}
Since $\rho_i=\ell_i/M$ for $i\leq m-2$, the cutoff in
Proposition~\ref{input2} is
\[
 \tc=M^{-1}\min\{1,w\}.
\]
If $\tau$ denotes the parameter in Proposition~\ref{input2}, put
$\sigma:=M\tau$.  Thus the condition $\tau\in [\de_* Q_*^{-1},\tc]$ can be rewritten as
\begin{equation}\label{eq:section-two-sigma-range}
 \de_m\leq\sigma\leq\min\{1,w\}.
\end{equation}

The projected count $N_{m-1}(\prod_{i=1}^{m-2}\la_i^{m-i}Q_*,Q_*;\la_1,\dots,\la_{m-2})$ in Proposition~\ref{input2} is $\mathbf N_{m-1}$, and
\[
 \frac{\tau}{\tc}
 =\frac{\sigma}{\min\{1,w\}}
 \sim\sigma(1+w^{-1}).
\]
Moreover,
\[
 \prod_{i=1}^{m-2}\rho_i=\frac{S\ell}{M^{m-1}},
 \qquad
 (\rho_{m-2}\tau)^{1/2}=\frac{(\ell\sigma)^{1/2}}M,
\]
and the planar count $N_2(\rho_{m-2}^{-1}Q_*^{-1},\tau^{-1}Q_*^{-1})$ appearing there is
\[
 N_2(\ell^{-1},\sigma^{-1}).
\]
The full coefficient multiplying this planar count is therefore
\[
 \left(\prod_{i=1}^{m-2}\rho_i\right)
 (\rho_{m-2}\tau)^{1/2}\tau M^{m+1}
 =S\ell^{3/2}\sigma^{3/2}.
\]
When $\ell>1$, the remaining fiber factor is harmless: indeed
$\ell_0\geq \ell_1\geq\cdots\geq \ell_{m-2}=\ell$, and hence
\[
 \prod_{i=1}^{m-3}(1+\rho_i^{-1}M^{-1})
 =\prod_{i=1}^{m-3}(1+\ell_i^{-1})\lesssim_m1.
\]
Since $\ell>1$ and $\sigma\leq1$, the hypotheses
$\ell^{-1}\leq1\leq\sigma^{-1}$ of Proposition~\ref{input3} hold, and it gives
\[
 N_2(\ell^{-1},\sigma^{-1})
 \lesssim_\e \ell^{-1}\sigma^{-2}+\sigma^{-1-\e}.
\]
In the rest of this subsection, we write $X\lessapprox Y$ if, for every $\eta>0$,
$X\lesssim_\eta Q^\eta Y$. Notice that $\si\ge \de_m\ge \de_n=Q^{-1}$. Hence $\sigma^{-\e}$ is absorbed by $Q^\e$.
Consequently Propositions~\ref{input2} and \ref{input3} imply that if $\ell>1$ and \eqref{eq:section-two-sigma-range} hold, then  
\begin{equation}\label{eq:section-two-normalized-recurrence}
 \mathbf N_m\lessapprox
 \sigma(1+w^{-1})\mathbf N_{m-1}
 +S\left(\frac \ell\sigma\right)^{1/2}
 +S\ell^{3/2}\sigma^{1/2}.
\end{equation}

\medskip
\noindent\textbf{Step 2. The inductive bound and preliminary estimates.}
We now iterate \eqref{eq:section-two-normalized-recurrence}.  For
$0\leq k\leq n-1$, let
\begin{equation}\label{eq:section-two-Dnk}
 D_{n,k}:=(n-1)(k+1)-k^2=n-1+k(n-1-k),
\end{equation}
and define
\begin{equation}\label{eq:section-two-En}
 E_n(Q^{-1},M)
 :=\max_{0\leq k\leq n-1}
 \left(Q^{-1}M^{D_{n,k}}\right)^{1/(k+1)},
 \qquad
 U:=\frac{E_n(Q^{-1},M)}h.
\end{equation}
We claim that $U$ and the volume scale $\mathcal V_m$ control $\mathbf N_m$:
\begin{equation}\label{eq:section-two-induction}
 \mathbf N_m\lessapprox\mathcal V_m+U,
 \qquad 2\leq m\leq n.
\end{equation}

We will prove this bound by induction on $m$. We first establish the numerical inequalities needed for the induction. From
\eqref{eq:section-two-full-relation} and \eqref{eq:section-two-Lj}, for $0\leq j\leq n-1$,
\[
 Q\ell_j=\prod_{i=1}^j\la_i^{-(n-j)}\prod_{i=j+1}^{n-1}\la_i^{-(n+1-i)}\leq h^{-(n-j)}.
\]
Fix $0\leq j\leq n-1$ and choose $k=n-j-1$ in
\eqref{eq:section-two-En}. Then 
\[
 D_{n,n-j-1}
 =(n-1)(n-j)-(n-j-1)^2
 =(j+1)(n-j)-1.
\]
Therefore,
\[
 U
 \geq h^{-1}
 \left(Q^{-1}M^{D_{n,n-j-1}}\right)^{1/(n-j)}
 =h^{-1}Q^{-1/(n-j)}M^{j+1-1/(n-j)}.
\]
Moreover, the preceding estimate for $Q\ell_j$ gives
$h^{-1}\geq(Q\ell_j)^{1/(n-j)}$.  Substituting this into the preceding inequality yields
\begin{equation}\label{eq:section-two-basic-U}
 U
 \geq (Q\ell_j)^{1/(n-j)}Q^{-1/(n-j)}
 M^{j+1-1/(n-j)} 
 =M^{j+1}\left(\frac{\ell_j}{M}\right)^{1/(n-j)}.
\end{equation}
In particular,
\begin{equation}\label{eq:section-two-U-consequences}
 U\geq M,\qquad
 U\geq M^{m-3/2}\ell^{1/2},\qquad
 U\geq M^{m-1}\ell_{m-1}.
\end{equation}
Indeed, for $j=m-2$ the exponent in
\eqref{eq:section-two-basic-U} is $1/(n-m+2)\leq1/2$, while $\ell/M\leq1$.
For $j=m-1$, use $1/(n-m+1)\leq1$ and $\ell_{m-1}/M\leq1$.
Since every $\ell_j\leq M$ and $S\leq M^{m-2}$, it follows that
\begin{equation}\label{eq:section-two-SL}
 S\ell\le M^{m-2}M^{1/2}\ell^{1/2} \leq U.
\end{equation}
We shall also use
\begin{equation}\label{eq:section-two-min-bound}
 \min\{S\ell^2,S\ell_-^{2/3}\}
 \leq (S\ell^2)^{1/4}(S\ell_-^{2/3})^{3/4}
 =S(\ell \ell_-)^{1/2}\le M^{m-2}\ell^{1/2}M^{1/2} \leq U
\end{equation}
and
\begin{equation}\label{eq:section-two-volume-over-w}
 \frac{\mathcal V_m}{w}\leq U.
\end{equation}
Indeed,
$\ell_{m-1}\leq \ell^2/\ell_-$ and
$\mathcal V_m=S\ell\,\ell_{m-1}^2$, so
\[
 \frac{\mathcal V_m}{w}
 \leq S\ell_-\ell_{m-1}
 \leq M^{m-1}\ell_{m-1}\leq U.
\]

\medskip
\noindent\textbf{Step 3. Proof of \eqref{eq:section-two-induction} by induction.}
For the case $m=2$ of \eqref{eq:section-two-induction}, if $\de_2\geq1$ the
elementary volume estimate \eqref{volumeestimate} gives $\mathbf N_2\lesssim\mathcal V_2$.  If
$\de_2<1$, Proposition~\ref{input3} gives
$\mathbf N_2\lessapprox\de_2M^2+M=\mathcal V_2+M$, which is bounded by
$\mathcal V_2+U$ because $U\geq M$.

Suppose the claim is known in dimension $m-1$. If $\de_m\geq1$, the volume estimate \eqref{volumeestimate} gives
$\mathbf N_m\lessapprox\mathcal V_m$.  If $\de_m<1$, Proposition \ref{input0} gives
$\mathbf N_m\lessapprox(1+\de_m)\mathbf N_{m-1}\lesssim\mathbf N_{m-1}$. Thus,
when $\mathcal V_{m-1}\leq U$, the induction hypothesis already gives
$\mathbf N_m\lessapprox U$.

It remains only to consider
\begin{equation}\label{eq:section-two-hard-case}
 \de_m<1,
 \qquad
 \mathcal V_{m-1}=S\ell^2>U.
\end{equation}
The induction hypothesis then gives
$\mathbf N_{m-1}\lessapprox S\ell^2$.  Since $S\ell^2>U\geq S\ell$ (see \eqref{eq:section-two-SL}), we have
$\ell>1$.  Furthermore, \eqref{eq:section-two-min-bound} implies
$S\ell_-^{2/3}\leq U$.  Comparing this with $S\ell^2>U$ gives
$\ell>\ell_-^{1/3}$, and hence
\begin{equation}\label{eq:section-two-admissibility-bound}
 \frac{S\ell_-}{\ell}\leq S\ell_-^{2/3}\leq U.
\end{equation}

Choose
\begin{equation}\label{eq:section-two-sigma-choice}
 T:=\max\{\mathcal V_m,U\},
 \qquad
 \sigma_0:=\frac{S^2\ell}{T^2},
 \qquad
 \sigma=\max\{\de_m,\sigma_0\}.
\end{equation}
We verify that $\sigma$ satisfies \eqref{eq:section-two-sigma-range} and that each of the three terms on the right-hand side of \eqref{eq:section-two-normalized-recurrence} is $\lessapprox T$.

Because $S\ell\leq U\leq T$ and $\ell>1$,
\[
 \sigma_0=\frac{(S\ell)^2}{T^2\ell}\leq \ell^{-1}<1.
\]
Also, by \eqref{eq:section-two-admissibility-bound},
\[
 \frac{\sigma_0}{w}
 =\left(\frac{S\ell_-}{\ell T}\right)^2\leq1.
\]
Finally,
\[
 \de_m=w\la_{m-1}^2\leq w.
\]
Together with \eqref{eq:section-two-hard-case}, these estimates verify
\eqref{eq:section-two-sigma-range}.

We apply \eqref{eq:section-two-normalized-recurrence}.  The middle term is
at most $T$, since
\[ S\left(\frac{\ell}{\sigma}\right)^{1/2} \le S\left(\frac{\ell}{\sigma_0}\right)^{1/2}=T.\]
For the first term, 
\[
\begin{aligned}
 \sigma(1+w^{-1})\mathbf N_{m-1}
 &\lessapprox(\de_m+\sigma_0)(1+w^{-1})S\ell^2 \\
 &=\mathcal V_m+\frac{\mathcal V_m}{w}
   +\frac{(S\ell)^3+(S\ell_-^{2/3})^3}{T^2} \\
 &\leq T+U+\frac{2U^3}{T^2}
 \leq4T.
\end{aligned}
\]
Indeed, the second line uses
\[
 \de_mS\ell^2=\mathcal V_m,
 \qquad
 \sigma_0S\ell^2=\frac{(S\ell)^3}{T^2},
 \qquad
 \frac{(S\ell)^3}{w}=(S\ell_-^{2/3})^3.
\]
The last line follows from $\mathcal V_m\leq T$,
\eqref{eq:section-two-volume-over-w}, the bounds
$S\ell\leq U$ and $S\ell_-^{2/3}\leq U$ obtained above, and $U\leq T$.
Finally,
\[
 \bigl(S\ell^{3/2}\sigma^{1/2}\bigr)^2
 =\max\left\{(S\ell)\mathcal V_m,\frac{(S\ell)^4}{T^2}\right\}
 \leq T^2.
\]
This closes the induction and proves \eqref{eq:section-two-induction}.

\medskip
\noindent\textbf{Step 4. Completion of the proof of Theorem~\ref{thm:section-two-main}.}
At $m=n$, the volume term can be absorbed into $E_n(Q^{-1},M)$.
We first compute it directly from \eqref{eq:section-two-Lj} and
\eqref{eq:section-two-volume}:
\[
\begin{aligned}
 h\mathcal V_n
 &=\left(\prod_{i=1}^{n-1}\la_i\right)
   \left(\prod_{j=0}^{n-2}
      \left(M\prod_{i=1}^{j}\la_i^{\,j+1-i}\right)\right)
   \left(M\prod_{i=1}^{n-1}\la_i^{\,n-i}\right)^2 \\
 &=M^{n+1}\prod_{i=1}^{n-1}
   \la_i^{\,1+\sum_{j=i}^{n-2}(j+1-i)+2(n-i)} \\
 &=M^{n+1}\prod_{i=1}^{n-1}
   \la_i^{(n+1-i)(n+2-i)/2}.
\end{aligned}
\]
The admissibility relation \eqref{eq:section-two-full-relation} therefore gives
\[
\begin{aligned}
 h\mathcal V_n
 &=M^{n+1}
   \left(\prod_{i=1}^{n-1}\la_i^{\,n+1-i}\right)
   \prod_{i=1}^{n-1}\la_i^{(n-i)(n+1-i)/2} \\
 &=Q^{-1}M^n
   \prod_{i=1}^{n-1}\la_i^{(n-i)(n+1-i)/2}.
\end{aligned}\]
Consequently, using $M\le \de^{-1}\leq Q$,
\[
 h\mathcal V_n
 \leq Q^{-1}M^n(Q^{-1}M^{-1})^{1/2}  
 \leq Q^{-1}M^{n-1}
 \leq E_n(Q^{-1},M).
\]

Multiplying \eqref{eq:section-two-induction} by $h$ gives the estimate needed
in Proposition~\ref{input1}:
\begin{equation}\label{eq:section-two-inner-bound}
 hN_n(Q^{-1},M;\bla)
 \lessapprox E_n(Q^{-1},M)
 \leq\sum_{k=0}^{n-1}
 Q^{-1/(k+1)}M^{D_{n,k}/(k+1)}.
\end{equation}

It remains to sum over the dyadic radii.  There are only $O((\log(2Q))^{n-1})$
admissible dyadic $\bla$-tuples at each radius, while the auxiliary set $A$
has $O_n(1)$ possibilities.  Since $r$ is dyadic, $M=r/Q$ ranges over a
dyadic sequence.  As $D_{n,k}>0$,
\[
 \sum_{\substack{1\leq M\leq\de^{-1}\\M=r/Q,\ r\ {\rm dyadic}}}
 M^{D_{n,k}/(k+1)}
 \lesssim \de^{-D_{n,k}/(k+1)}.
\]
Substituting \eqref{eq:section-two-inner-bound} into
Proposition~\ref{input1}, we find that the $k$-th contribution is
\begin{align*}
 \de^{n-1}Q^2Q^{-1/(k+1)}
 \de^{-D_{n,k}/(k+1)}
 &=\de^{\,k^2/(k+1)}Q^{(2k+1)/(k+1)}.
\end{align*}
This finishes the proof when $\de\ge Q^{-1}$.

Finally, if $0<\de<Q^{-1}$, monotonicity gives
$N_n(\de,Q)\leq N_n(Q^{-1},Q)$.  The estimate already proved at
$\de=Q^{-1}$ is $O_\e(Q^{1+\e})$: indeed the exponent of $Q$ in every
$k\geq1$ term is
$(2k+1-k^2)/(k+1)\leq1$, and the main term is also $O(Q)$.  Since the
right-hand side of \eqref{eq:section-two-main} contains $Q^{1+\e}$, this
proves the remaining range.
\end{proof}

\section{Reduction to estimating oscillatory integrals}

In this section, we reduce Theorem~\ref{thm:main} to Proposition~\ref{prop:conical-n}.
Fix nonnegative functions
\[
\omega\in C_c^\infty((1/2,3)),
\qquad
\chi\in C_c^\infty(\mathbb R),
\]
such that
\[
\omega(s)\geq 1 \quad\text{for }1\leq s\leq 2,
\qquad
\chi(t)\geq 1 \quad\text{for }0\leq t\leq 1.
\]
We write $e(u):=\exp(2\pi i u)$ for $u\in\mathbb R$.
We may assume that $\gamma_1(t)=t$. Set $\gamma_0(t):=1$ and write
\[
    \widetilde{\gamma}(t)
    :=(\gamma_0(t),\ldots,\gamma_n(t))=(1,\gamma(t)),
\]
and define the compactly supported measure $\mu_{\omega,\chi}$ on
$\mathbb{R}^{n+1}$ by
\begin{equation}\label{defmu}
    \int_{\mathbb{R}^{n+1}} F(x)\,d\mu_{\omega,\chi}(x)
    :=
    \int_{\mathbb{R}}\int_{\mathbb{R}}
    F\bigl(s\widetilde{\gamma}(t)\bigr)
    \omega(s)\chi(t)\,dt\,ds,
    \qquad F\in C_c(\mathbb{R}^{n+1}).
\end{equation}
We then have
\[
    \widehat{\mu_{\omega,\chi}}(\tau,\xi)
    =
    \int_{\mathbb{R}}\int_{\mathbb{R}}
    \omega(s)\chi(t)
    e\bigl(-s(\tau+\xi\cdot\gamma(t))\bigr)
    \,dt\,ds,
    \qquad
    (\tau,\xi)\in\mathbb{R}\times\mathbb{R}^n.
\]
This can be understood as a Fourier extension operator for the following manifold:
\begin{equation}
    \Gamma= \{ s\widetilde{\gamma}(t): s,t \in \R \}.
\end{equation}
We use the notation
\[
\mathbf j=(j_0,j_1,\ldots,j_n),\qquad
\mathbf j_{\leq i}=(j_0,\ldots,j_i),\qquad
\mathbf j_{\geq i}=(j_i,\ldots,j_n)
\]
for $\mathbf j\in\mathbb Z^{n+1}$ and $0\leq i\leq n$.
Recall from \eqref{eq:section-two-En} that
\[
E_n(\delta,Q)
=\max_{0\le k\le n-1}
\bigl(\delta Q^{(n-1)(k+1)-k^2}\bigr)^{1/(k+1)}.
\]
We show that Theorem~\ref{thm:main} can be deduced from the following proposition.

\begin{proposition}
\label{prop:conical-n}
Let $n\ge3$, $Q\geq 1$, $Q^{-1}\le\delta\le 1$, and
$\Psi\in C_c^\infty(\mathbb R^{n+1})$. Then, for every $\epsilon>0$,
\[
\left|
    \sum_{{
        \mathbf{j}\in\mathbb{Z}^{n+1}:
        \mathbf{j}_{\geq 1}\neq 0
    }}
    \Psi(\delta\mathbf{j})
    \widehat{\mu_{\omega,\chi}}(Q\mathbf{j})
\right|
\lesssim Q^{\epsilon}E_n(Q^{-1},\delta^{-1}).
\]
\end{proposition}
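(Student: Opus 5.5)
We will make the heuristic argument of Section~\ref{sec:algebraic-computation} rigorous. The sum in Proposition~\ref{prop:conical-n} is exactly the dual side of \eqref{planch}: the points $Q\mathbf j$ with $|\mathbf j|\lesssim\delta^{-1}$ play the role of the lattice set $X'$, and $\Psi(\delta\mathbf j)$ is the smooth cutoff to $B(0,R)$ with $R=\delta^{-1}Q$. The condition $\mathbf j_{\ge1}\neq0$ removes the trivial frequencies $(j_0,0,\dots,0)$. For these, $\widehat{\mu_{\omega,\chi}}$ is just $\widehat{\omega}(Qj_0)$ times a constant, and it is handled separately in the reduction.

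First we decompose dyadically in $|\mathbf j|\sim M$, with $1\le M\lesssim\delta^{-1}$, so that $|Q\mathbf j|\sim r=QM$. Frequencies with $|\mathbf j_{\ge1}|\ll|j_0|$ give rapid decay by nonstationary phase in $s$, and contribute $O(Q^{-100})$. On the remaining part we apply the Gan--Maldague--Oh wave packet bound \eqref{gmo0} at scale $r$. This is legitimate for the smooth measure $\mu_{\omega,\chi}$ after stationary phase in $s$, which localizes $t$ to the region where $\tau+\xi\cdot\gamma(t)\approx0$. The result is
\[
|\widehat{\mu_{\omega,\chi}}(Q\mathbf j)|\lesssim\sum_{(\bla,A)}h\sum_{t\in\cI(h)}\Id_{P[\bla,A](t)}(Q\mathbf j)+O(r^{-100}).
\]
Summing over $\mathbf j$ reduces the bound to $\sum_M\sum_{\bla}h\,N_n(Q^{-1},M;\bla)$, as in Proposition~\ref{input1}. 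There are $O(\log^{n-1}Q)$ tuples $\bla$ and $O(\log Q)$ values of $M$, which costs only $Q^\epsilon$.

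Next we would prove rigorous versions of Propositions~\ref{input2} and \ref{input0} for smooth weights adapted to unions of planks. For the thickening step we use Lemma~\ref{lemcompare}. We split the Fourier side into a low part (frequencies $\lesssim\tc^{-1}$) and dyadic high shells $r'\in[\tc^{-1},\tau^{-1}]$. Each high shell is treated by Poisson summation in $\mathbf j$, which turns it into a dual count of lattice points $Q_*\ZZ^{m+1}$ inside the dual boxes $p[\wt\bla](t)^*$. That count is projected to three coordinates, where Proposition~\ref{input3} applies. The intermediate shells $r'$ should obey the same bound as the top shell $r'=\tau^{-1}$. The reason is that shell $r'$ corresponds to thickening at $\tau'=1/r'\in[\tau,\tc]$, and the coefficient $S\ell^{3/2}\sigma'^{1/2}+S(\ell/\sigma')^{1/2}$ is maximized at the endpoints. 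Summing over the shells therefore costs only a logarithm.

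With Propositions~\ref{input0}, \ref{input2} and \ref{input3} in hand, Steps~1--4 of the proof of Theorem~\ref{thm:section-two-main} give $h\,N_n(Q^{-1},M;\bla)\lessapprox E_n(Q^{-1},M)$. Since $E_n(Q^{-1},M)$ is increasing in $M$, the dyadic sum over $M\le\delta^{-1}$ is $\lesssim Q^\epsilon E_n(Q^{-1},\delta^{-1})$, which is the claim. Note that the normalization of the proposition has no prefactor $\delta^{n-1}Q^2$, and no main term, because $\mathbf j_{\ge1}\ne0$.

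The main obstacle is the rigorous form of \eqref{gmo0} for $\widehat{\mu_{\omega,\chi}}$: pointwise decay controlled by a sum of plank indicators, with Schwartz tails that must be summed over $Q\ZZ^{n+1}$ without loss. I would first try to import \cite[Theorem~3.22]{GMO2025} directly after the $s$-integration. The tails can then be absorbed by replacing indicators with weights $(1+\text{plank-norm})^{-N}$ and checking that the counting bounds are stable under such weights.
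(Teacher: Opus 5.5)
There is a genuine gap. Your proposal is a plan to make Section~\ref{sec:algebraic-computation} rigorous, but the work is deferred to ``rigorous versions'' of Propositions~\ref{input2} and \ref{input0} and of \eqref{gmo0}. These are exactly the pieces the paper itself declares heuristic.

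\textbf{What the paper leaves unproved.} The sketch does not prove the low-part comparison \eqref{convolution}. Computing $\kappa$ by integrating only gives an average density. The argument needs a pointwise bound of $\Id_{\bigcup p[\wt\bla]}*|B(0,\tc)|^{-1}\Id_{B(0,\tc)}$ by $(\tau/\tc)$ times the indicator of the coarse union. The sketch also omits the intermediate frequencies entirely.

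\textbf{The Gan--Maldague--Oh input.} The rigorous form of \eqref{gmo0} that is actually available, Proposition~\ref{prop:n-fourier-input}, carries $R^{O(\beta)}$ losses. Its widths in derivative orders $0$ and $1$ are enlarged by $R^\beta$, and its overlap must be proved separately (Proposition~\ref{prop:n-overlap-input}). These boxes are no longer admissible $\bla$-planks, so the induction of Steps~1--4 cannot simply be quoted. You name this as your main obstacle but do not resolve it.

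\textbf{The intermediate-shell argument is wrong as stated.} You bound shell $r'$ by $g(\sigma')=S\ell^{3/2}\sigma'^{1/2}+S(\ell/\sigma')^{1/2}$ with $\sigma'=M/r'$, and note that $g$ is maximized at an endpoint of $[\sigma,\min\{1,w\}]$. That only gives $\max\{g(\sigma),g(\min\{1,w\})\}$, while Step~3 controls only the value at the chosen $\sigma$.

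A concrete failure: take $n=m=3$, $\lambda_1=1$, and $M=Q^{1/2}\le\delta^{-1}$.
\begin{itemize}
\item Then $S=\ell=w=M$, $\lambda_2=Q^{-3/4}$, $U=Q$, $\mathcal V_3=Q^{1/2}$, and $\mathcal V_2=Q^{3/2}>U$, so this is the hard case with $T=Q$.
\item However $g(1)\ge S\ell^{3/2}=Q^{5/4}$.
\item After multiplying by $h=Q^{-3/4}$ this gives $Q^{1/2}$, whereas the target is $E_3(Q^{-1},M)=Q^{1/4}$.
\end{itemize}

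\textbf{What is missing.} The shell-$r'$ piece of the $\tau$-thin union carries the relative density $\tau/\tau'=\sigma/\sigma'$. With that factor the shell contributes $S\ell^{1/2}\sigma\sigma'^{-3/2}+S\ell^{3/2}\sigma\sigma'^{-1/2}$, which is decreasing in $\sigma'$, so the top shell does dominate. Establishing this factor requires a per-shell analysis of $\sum_t\widehat{\Id_{p[\wt\bla](t)}}$ on $|x|\sim r'$. One must show that only about $(\tau'/\tau)^{1/2}$ fine dual boxes overlap there, which uses their $\rho_{m-2}^{-1}$ extent normal to the cone, and that their union is the coarse dual neighbourhood. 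None of this is in the proposal.

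\textbf{How the paper proceeds instead.} After transferring the Gan--Maldague--Oh bound to the boxes $\Pi_{\tau,\bla,A}$, the paper avoids plank-union Fourier analysis altogether. It counts in normalized-derivative coordinates $\mathcal D_k$, using:
\begin{itemize}
\item critical-point sets $\mathcal T^{(r)}$, which give the rigorous thickening $L_r\to\wt L_r$;
\item coefficient recovery via Lemma~\ref{lem:n-fibre-counting}, which is the rigorous projection;
\item a van der Corput B-process: Poisson summation in $j_r$ at scale $K$, Poisson in $j_{r+1}$, uniform stationary phase, then Poisson in $\mathbf j_{\ge r+2}$. This handles every dual frequency $d\lesssim K$ uniformly.
\end{itemize}
It then closes with the $Z_k$/$\theta_r$ iteration and Lemma~\ref{lem:bp36-lead-product}, not with Steps~1--4.
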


\begin{proof}[Proof of Theorem~\ref{thm:main} assuming Proposition~\ref{prop:conical-n}]
We write
\begin{equation}\label{eq:dyadic-decomposition}
\mathcal R_{\mathcal C}(\delta,Q)
=\sum_{{j\ge0:2^{-j}Q\ge2}}
\bigl(\mathcal R_{\mathcal C}(\delta,2^{-j}Q)
-\mathcal R_{\mathcal C}(\delta,2^{-j-1}Q)\bigr)
+O_{\mathcal C}(1).
\end{equation}
We will show that the $j=0$ term satisfies
\begin{equation}\label{eq:dyadic-bound}
\begin{aligned}
\mathcal R_{\mathcal C}(\delta,Q)
-\mathcal R_{\mathcal C}(\delta,Q/2)\lesssim_\epsilon
\delta^{n-1}Q^2
+Q^\epsilon\left(
Q+\sum_{k=1}^{n-1}
\delta^{k^2/(k+1)}Q^{(2k+1)/(k+1)}
\right).
\end{aligned}
\end{equation}
The same estimate applies to every other term after replacing $Q$ by $2^{-j}Q$.
Since all powers of $Q$ on the right hand side are positive, summing these
estimates gives \eqref{eq:intro-refined-bound}. It therefore suffices to prove
\eqref{eq:dyadic-bound} for $j=0$, which counts the pairs with $Q/2<q\le Q$.

For $\xi\in\mathbb R^n$, write
\[
\widehat{d\sigma_\chi}(\xi)
:=\int_{\mathbb R}\chi(t)e(-\xi\cdot\gamma(t))\,dt.
\]
We consider two cases depending on the size of $\delta$:
\begin{equation}
    2Q^{-1} \leq \delta \leq 1, \;\;\;\;\;\;\;\;\;\;\;\; 0<\delta<2Q^{-1}.
\end{equation}

\subsection*{Case 1: \texorpdfstring{$2Q^{-1}\le\delta\le1$}{}}
Choose a nonnegative $\Phi\in\mathcal S(\mathbb R^n)$ such that
$\Phi(x)\ge1$ for $|x|\le2$ and $\widehat{\Phi}\in C_c^\infty(\mathbb R^n)$.
For every $\mathbf p\in\mathbb Z^n$ and every integer $q$ with $Q/2<q\le Q$, one may see that
\[
\mathbf1_{\{\operatorname{dist}(\mathbf p/q,\mathcal C)\le\delta/q\}}
\lesssim_\gamma\frac Q\delta
\int_{\mathbb R}\chi(t)
\Phi\!\left(\frac{\mathbf p-q\gamma(t)}{\delta}\right)\,dt.
\]
Summing over $\mathbf p$ and $q$, and using $\omega(2q/Q)\ge1$ for
$Q/2<q\le Q$, we obtain
\begin{equation}\label{eq:counting-integral-majorant}
\begin{aligned}
\mathcal R_{\mathcal C}(\delta,Q)-\mathcal R_{\mathcal C}(\delta,Q/2)\lesssim_\gamma\frac Q\delta
\sum_{q\in\mathbb Z}\omega(2q/Q)
\int_{\mathbb R}\chi(t)\sum_{\mathbf p\in\mathbb Z^n}
\Phi\!\left(\frac{\mathbf p-q\gamma(t)}{\delta}\right)\,dt.
\end{aligned}
\end{equation}
Poisson summation in $\mathbf p$ gives
\[
\sum_{\mathbf p\in\mathbb Z^n}
\Phi\!\left(\frac{\mathbf p-q\gamma(t)}{\delta}\right)
=\delta^n\sum_{\mathbf j_{\geq 1}\in\mathbb Z^n}
\widehat{\Phi}(\delta\mathbf j_{\geq 1})e(-q\mathbf j_{\geq 1}\cdot\gamma(t)).
\]
Separating the $\mathbf j_{\geq 1}=0$ term, we obtain
\begin{equation}
\begin{aligned}
&\mathcal R_{\mathcal C}(\delta,Q)-\mathcal R_{\mathcal C}(\delta,Q/2)\\
&\quad\lesssim_\gamma\frac Q\delta\delta^n
\sum_{q\in\mathbb Z}\sum_{\mathbf j_{\geq 1}\in\mathbb Z^n}
\omega(2q/Q)\widehat{\Phi}(\delta\mathbf j_{\geq 1})
\int_{\mathbb R}\chi(t)e(-q\mathbf j_{\geq 1}\cdot\gamma(t))\,dt\\
&\quad\lesssim\delta^{n-1}Q^2+\delta^{n-1}Q
\left|\sum_{q\in\mathbb Z}\omega(2q/Q)
\sum_{\mathbf j_{\geq 1}\in\mathbb Z^n\setminus\{0\}}
\widehat{\Phi}(\delta\mathbf j_{\geq 1})\widehat{d\sigma_\chi}(q\mathbf j_{\geq 1})\right|.
\end{aligned}
\end{equation}
In the remaining term, apply Poisson summation in $q$:
\[
\begin{aligned}
&\sum_{q\in\mathbb Z}\omega(2q/Q)
\sum_{\mathbf j_{\geq 1}\in\mathbb Z^n\setminus\{0\}}
\widehat{\Phi}(\delta\mathbf j_{\geq 1})
\widehat{d\sigma_\chi}(q\mathbf j_{\geq 1})=\frac Q2\sum_{\substack{\mathbf j\in\mathbb Z^{n+1}:
\mathbf j_{\geq 1}\neq0}}
\widehat{\Phi}(\delta\mathbf j_{\geq 1})
\widehat{\mu_{\omega,\chi}}\bigl(\tfrac Q2\mathbf j\bigr).
\end{aligned}
\]
Hence, it suffices to prove that
\[
\left|\sum_{\substack{\mathbf j\in\mathbb Z^{n+1}:
\mathbf j_{\geq 1}\neq0}}
\widehat{\Phi}(\delta\mathbf j_{\geq 1})
\widehat{\mu_{\omega,\chi}}\bigl(\tfrac Q2\mathbf j\bigr)\right|
\lesssim_\epsilon Q^\epsilon E_n(Q^{-1},\delta^{-1}).
\]
To prove this estimate, choose $\rho\in C_c^\infty(\mathbb R)$ equal to $1$
on a sufficiently large neighborhood of the origin, and set
$\Psi(u_0,u):=\rho(u_0)\widehat{\Phi}(u)$. We decompose the sum into two terms
\[
\begin{aligned}
\sum_{\substack{\mathbf j\in\mathbb Z^{n+1}\\
\mathbf j_{\geq 1}\neq0}}
\Psi(\delta\mathbf j)
\widehat{\mu_{\omega,\chi}}\bigl(\tfrac Q2\mathbf j\bigr)+\sum_{\substack{\mathbf j\in\mathbb Z^{n+1}\\
\mathbf j_{\geq 1}\neq0}}
\bigl(1-\rho(\delta j_0)\bigr)\widehat{\Phi}(\delta\mathbf j_{\geq 1})
\widehat{\mu_{\omega,\chi}}\bigl(\tfrac Q2\mathbf j\bigr).
\end{aligned}
\]
For the first term, Proposition~\ref{prop:conical-n}, applied with height $Q/2$, gives
\[
\begin{aligned}
\left|\sum_{\substack{\mathbf j\in\mathbb Z^{n+1}:
\mathbf j_{\geq 1}\neq0}}
\Psi(\delta\mathbf j)
\widehat{\mu_{\omega,\chi}}\bigl(\tfrac Q2\mathbf j\bigr)\right|
&\lesssim_\epsilon Q^\epsilon E_n(2Q^{-1},\delta^{-1})\lesssim_\epsilon Q^\epsilon E_n(Q^{-1},\delta^{-1}).
\end{aligned}
\]
By the choice of $\rho$, every nonzero summand in the second term satisfies
\[
\left|j_0+\mathbf j_{\geq 1}\cdot\gamma(t)\right|\gtrsim |j_0|\gtrsim\delta^{-1}
\qquad(t\in\operatorname{supp}\chi).
\]
Integration by parts in $s$ gives
\[
\left|\widehat{\mu_{\omega,\chi}}\bigl(\tfrac Q2\mathbf j\bigr)\right|
\lesssim_M (Q|j_0|)^{-M}.
\]
Summing this estimate with $M$ sufficiently large shows that the second term is
$O_N(Q^{-N})$ for every $N>0$.
Combining the two estimates proves the desired bound. 

\bigskip

\subsection*{Case 2: \texorpdfstring{$0<\delta<2Q^{-1}$}{}}
We will show that the $j=0$ term in \eqref{eq:dyadic-decomposition} satisfies
\[
\mathcal R_{\mathcal C}(\delta,Q)-\mathcal R_{\mathcal C}(\delta,Q/2)
\lesssim_{\epsilon,\gamma}Q^{1+\epsilon}.
\]
Since $\gamma_1(t)=t$ and $\gamma''(t)\ne0$, after a finite subdivision
and a permutation of the last $n-1$ coordinates, it suffices to treat
$\mathcal C=\gamma(I)$ with
\[
|\gamma_2''(t)|\gtrsim_\gamma1\quad(t\in I).
\]
Define the planar curve
\[
\mathcal C_{12}:=\{(t,\gamma_2(t)):t\in I\}\subset\mathbb R^2.
\]
For a counted pair $(\mathbf p,q)$, there is $t\in I$ such that
\[
|\mathbf p-q\gamma(t)|\le\delta
\quad\Longrightarrow\quad
\begin{cases}
|(p_1,p_2)-q(t,\gamma_2(t))|\le\delta,\\
|t-p_1/q|\le\delta/q.
\end{cases}
\]
Fix $p_1,p_2,q$ and choose
\[
t_0\in I\cap\left[\frac{p_1}q-\frac\delta q,\frac{p_1}q+\frac\delta q\right]
\]
if this intersection is nonempty; otherwise there are no points to count.
Then $|t-t_0|\le2\delta/q$, and for $3\le r\le n$,
\[
|p_r-q\gamma_r(t_0)|
\le\delta+q\|\gamma_r'\|_{L^\infty(I)}|t-t_0|
\lesssim_\gamma\delta.
\]
Consequently,
\[
\#\left\{(p_3,\ldots,p_n)\in\mathbb Z^{n-2}:
\operatorname{dist}\!\left(\frac{(p_1,\ldots,p_n)}q,\mathcal C\right)
\le\frac\delta q\right\}
\lesssim_\gamma(1+\delta)^{n-2}\lesssim_\gamma1.
\]
The planar estimate \eqref{eq:intro-planar-bound} therefore gives
\[
\begin{aligned}
\mathcal R_{\mathcal C}(\delta,Q)-\mathcal R_{\mathcal C}(\delta,Q/2)
&\lesssim_\gamma
\mathcal R_{\mathcal C_{12}}(\delta,Q)-\mathcal R_{\mathcal C_{12}}(\delta,Q/2)\\
&\lesssim_{\epsilon,\gamma}\delta Q^2+Q^{1+\epsilon}
\lesssim_{\epsilon,\gamma}Q^{1+\epsilon},
\end{aligned}
\]
where the last inequality uses $\delta<2Q^{-1}$.
This proves \eqref{eq:dyadic-bound} in Case~2 and completes the proof.
\end{proof}

\section{Fourier decay estimate}

\label{sec:fourier-decay-n}

The goal of this section is to reduce Proposition~\ref{prop:conical-n}
to Proposition~\ref{prop:n-derivative-counting}.
\subsection{Non-stationary points}\label{sec:n-nonstationary}

In this subsection, we first prove that  Proposition \ref{prop:conical-n} follows from the following. For every $\epsilon>0$,
\begin{equation}\label{eq:n-single-shell}
\sum_{\substack{\mathbf j\in\mathbb Z^{n+1}\\
 |\mathbf j|\sim \delta^{-1},\ \mathbf j_{\geq 2}\neq0}}
\left|\widehat\mu_{\omega,\chi}(Q\mathbf j)\right|
\lesssim_\epsilon Q^\epsilon E_n(Q^{-1},\delta^{-1}).
\end{equation}
To show this, let us prove that the terms with $\mathbf j_{\geq 2}=0$ and $j_1\neq0$ are harmless.
Indeed, if $\mathbf j_{\geq 2}=0$, then 
\begin{equation}
    \widehat\mu_{\omega,\chi}
 \bigl(Q(j_0,j_1,0,\ldots,0) \bigr) = \iint \omega(s)\chi(t) e \left( -Qs(j_0+j_1t) \right)dtds.
\end{equation}
By the integration by parts, and from the fact that the support of $\omega$ is away from the origin, we obtain
\[
\left|\widehat\mu_{\omega,\chi}
 \bigl(Q(j_0,j_1,0,\ldots,0)\bigr)\right|
\lesssim_L
\bigl(1+Q|(j_0,j_1)|\bigr)^{-L}
\]
for every $L>0$. Taking $L>2$ and summing over $j_0\in\mathbb Z$ and $j_1\in\mathbb Z\setminus\{0\}$ gives 
\begin{equation}
    \sum_{\substack{\mathbf j\in\mathbb Z^{n+1}\\
 |\mathbf j|\lesssim \delta^{-1},\ \mathbf j_{\geq 2}=0,\ j_1\neq0}}
\left|\widehat\mu_{\omega,\chi}(Q\mathbf j)\right|
\lesssim_L \sum_{\substack{j_0\in\mathbb Z\\ j_1\in\mathbb Z\setminus\{0\}}} \big(1+Q|(j_0,j_1)| \big)^{-L}   \lesssim Q^{-L}.
\end{equation}
Thus the terms with $\mathbf j_{\geq 2}=0$ and $j_1\neq0$ are harmless.
We next decompose $|\mathbf j|\lesssim\delta^{-1}$ into dyadic shells.
Their frequency sizes are at most a constant multiple of $\delta^{-1}\le Q$.
Since every $D_{n,k}$ is positive, $E_n(Q^{-1},\delta^{-1})$
is nondecreasing in its second argument. Multiplying that argument
by a constant changes the bound by at most a factor depending on
that constant and $n$.
Thus every shell is bounded by the right hand side of
\eqref{eq:n-single-shell}. There are $O(\log(2Q))$ shells;
using a smaller exponent and the boundedness of $\Psi$ proves
Proposition~\ref{prop:conical-n}.

\bigskip

\subsection{Gan--Maldague--Oh argument}
Let $0<\delta\leq1$. We take
$
 R:=Q\delta^{-1}.
$
For $\xi=(\xi_0,\ldots,\xi_n)\in\mathbb R^{n+1}$ define
\[
P_\xi(t):=\xi\cdot\widetilde\gamma(t)
=\xi_0+\xi_1t+\sum_{k=2}^n\xi_k\gamma_k(t).
\]
Then 
\[
\widehat\mu_{\omega,\chi}(\xi)
=\int\!\!\int\omega(s)\chi(t)e\bigl(-sP_\xi(t)\bigr)
\,dt\,ds,
\qquad
P_{Q\mathbf j}^{(k)}=QP_{\mathbf j}^{(k)}.
\]
Choose a dyadic number $\lambda_1$ so that
\begin{equation}
    R^{-\frac1n} \leq \lambda_1 \leq 1.
\end{equation}
For $2\leq i\leq n-2$, choose $\lambda_i$ successively from
the dyadic values in
\begin{equation}\label{eq:n-lambda-range}
\left(R\prod_{j=1}^{i-1}\lambda_j^{\,n+1-j}\right)
 ^{-1/(n+1-i)}
\leq\lambda_i\leq1,
\end{equation}
including the lower endpoint. Define the last scale by
\begin{equation}\label{eq:n-terminal-scale}
\lambda_{n-1}:=
\left(R\prod_{j=1}^{n-2}\lambda_j^{\,n+1-j}\right)^{-1/2},
\qquad
\lambda_{\mathrm{prod}}:=\prod_{j=1}^{n-1}\lambda_j.
\end{equation}
Write
$\boldsymbol\lambda=(\lambda_1,\ldots,\lambda_{n-2})$;
$\lambda_{n-1}$ is determined by this tuple. The ranges above
ensure that all the $\lambda_i$ belong to $(0,1]$ and
\begin{equation}\label{eq:n-scale-normalization}
R\prod_{j=1}^{n-1}\lambda_j^{\,n+1-j}=1.
\end{equation}
If $\lambda_i$ reaches the lower endpoint in \eqref{eq:n-lambda-range}, then every subsequent
$\lambda_j$ ($j>i$) equals $1$.
Dividing \eqref{eq:n-scale-normalization} by $\lambda_{\mathrm{prod}}^2$ gives
\begin{equation}\label{eq:n-h-formula}
\displaystyle
\lambda_{\mathrm{prod}}^{-1}=\left(R\prod_{j=1}^{n-2}
                  \lambda_j^{\,n-1-j}\right)^{1/2}.
\end{equation}
The number $\lambda_{\mathrm{prod}}$ will be the decay rate of the Fourier transform of a surface measure associated to $\lambda_1,\ldots,\lambda_{n-2}$.
Since $2\leq n+1-j\leq n$ and $0<\lambda_j\leq1$,
\[
\lambda_{\mathrm{prod}}^n\leq\prod_{j=1}^{n-1}\lambda_j^{\,n+1-j}
=R^{-1}\leq \lambda_{\mathrm{prod}}^2,
\qquad
R^{-1/2}\leq \lambda_{\mathrm{prod}}\leq R^{-1/n}.
\]

\begin{definition}[Admissibility]\label{def41}
    Let $\varnothing\neq A\subset\{1,\ldots,n-1\}$. We call
$(\boldsymbol\lambda,A)$ admissible at scale $R$ if \eqref{eq:n-scale-normalization} holds and
\begin{equation}\label{eq:n-admissibility}
i\notin A\quad\Longrightarrow\quad\lambda_i=1.
\end{equation}
\end{definition}

Set $\lambda_n=1$, and index the derivative scales by their order:
\begin{equation}\label{eq:n-derivative-scales}
L_k:=\delta^{-1}
\prod_{i=1}^{n-k}\lambda_i^{\,n+1-k-i}
\quad(0\leq k\leq n),
\end{equation}
where an empty product is understood to be $1$. 
The quantities $L_k$ represent width scales in the derivative coordinates. Notably, $L_0=Q^{-1}, L_n=\de^{-1}$. One also has the monotonicity
\[ L_0\le L_1\le \dots\le L_n, \]
and log-concavity
\begin{equation}\label{convexity}
    L_k L_{k+2}\le L_{k+1}^2. 
\end{equation} 

\begin{remark}
    {\rm
    We remark that $L_k=\de^{-1}\rho_{n-k}$ where $\rho_{n-k}$ is given by \eqref{defrho}. The reason we use this index-reversed notation is to make the following definition of $\Pi_{\tau,\boldsymbol\lambda,A}$ has consistent index in $P_\xi^{(\cdot)}$ and $L_{(\cdot)}$.
    }
\end{remark}

Let $0<\beta<1/(10n)$. The parameter $\beta$ will be chosen in terms of
$\epsilon$ at the end of the reduction. 
Let $\mathcal I(\lambda_{\mathrm{prod}})$ partition $I$ into intervals of length $\lambda_{\mathrm{prod}}$,
and denote their centers by $t_\tau$. Define
\begin{equation}\label{eq:n-plank}
\Pi_{\tau,\boldsymbol\lambda,A}:=
\left\{\xi\in\mathbb R^{n+1}:
\begin{array}{l}
|\xi|\sim R,\\
|P_\xi^{(k)}(t_\tau)|\lesssim R^\beta QL_k
       \quad(k=0,1),\\
|P_\xi^{(k)}(t_\tau)|\lesssim QL_k
       \quad(2\leq k\leq n),\\
|P_\xi^{(k)}(t_\tau)|\gtrsim QL_k
       \quad(n+1-k\in A)
\end{array}\right\}.
\end{equation}

Morally speaking, $\Pi_{\tau,\boldsymbol\lambda,A}$ is a box of dimensions
\[ Q(L_0 \times L_1\times\dots\times L_n )=1\times QL_1\times \dots\times Q L_{n-1}\times R, \]
with coordinate axis given by the osculating frame of $\wt \gamma$ at point $t_\tau$.

Our main ingredient is a pointwise decay estimate for
$\widehat\mu_{\omega,\chi}$, obtained from the Fourier decomposition of
Gan--Maldague--Oh~\cite[Definition~3.17, Lemma~3.20 and Theorem~3.22]{GMO2025}.
Their decomposition is formulated in Frenet coordinates, so we must
express its bounds in terms of the derivatives $P_\xi^{(k)}$ used in
\eqref{eq:n-plank}. We carry out this comparison and prove the following
two propositions in Appendix~\ref{app:derivative-fourier}.

\begin{proposition}
\label{prop:n-fourier-input}
For every
$0<\beta<1/(10n)$ and every $L>0$,
\begin{equation}\label{eq:n-fourier-decomposition}
|\widehat\mu_{\omega,\chi}(\xi)|
\lesssim_{\beta,L,\gamma,\omega,\chi}
R^{O(\beta)}
\sum_{(\boldsymbol\lambda,A)\ \mathrm{admissible~at~}R}
\lambda_{\mathrm{prod}}\sum_{\tau\in\mathcal I(\lambda_{\mathrm{prod}})}
\mathbf1_{\Pi_{\tau,\boldsymbol\lambda,A}}(\xi)
+R^{-L},\qquad |\xi|\sim R.
\end{equation}
\end{proposition}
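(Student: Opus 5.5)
Proposition~\ref{prop:n-fourier-input} is the Gan--Maldague--Oh decay estimate rewritten in the derivative coordinates $P_\xi^{(k)}$. The plan is to take their decomposition as a black box in Frenet coordinates and then translate it.

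\textbf{Step 1: the $s$-integration.} Write $\widehat\mu_{\omega,\chi}(\xi)=\int\chi(t)\widehat\omega(P_\xi(t))\,dt$. Since $\widehat\omega$ is Schwartz, only $t$ with $|P_\xi(t)|\lesssim R^{\beta}$ contribute more than $R^{-L}$. This is the source of the relaxed condition $|P_\xi(t_\tau)|\lesssim R^\beta QL_0=R^\beta$ in \eqref{eq:n-plank}.

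\textbf{Step 2: the GMO decomposition.} Next apply \cite[Definition~3.17, Lemma~3.20, Theorem~3.22]{GMO2025}. For $|\xi|\sim R$, this gives $|\widehat\mu_{\omega,\chi}(\xi)|$ bounded by $R^{O(\beta)}\sum_{(\bla,A)}\laprod\sum_\tau\Id_{P[\bla,A](t_\tau)}(\xi)+R^{-L}$. Here $P[\bla,A](t)$ is the plank in the dual Frenet frame $\bde_1(t),\dots,\bde_{n+1}(t)$, with side lengths $R\rho_{i-1}$ along $\bde_i$. These planks are the ones from Lemma~\ref{lemcompare} and \eqref{gmo0}, with $h=\laprod$.

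\textbf{Step 3: translation to derivative coordinates.} It then remains to show each Frenet plank $P[\bla,A](t_\tau)$ is contained in a constant dilate of $\Pi_{\tau,\bla,A}$.
\begin{itemize}
\item Let $\bv_1(t),\dots,\bv_{n+1}(t)$ be the Gram--Schmidt frame of $(1,\ga),\dots,(1,\ga)^{(n)}$. Then $P_\xi^{(k)}(t)=\xi\cdot\wt\ga^{(k)}(t)$ is a lower-triangular combination $\sum_{i\le k+1}a_{ki}(t)\,(\xi\cdot\bv_i(t))$.
\item Nondegeneracy makes the diagonal entries $a_{k,k+1}$ bounded away from $0$, and all $a_{ki}$ are bounded.
\item Since $\bv_{k+1}=\bde_{n+1-k}$, the coordinate $\xi\cdot\bv_{k+1}$ has size $\lesssim R\rho_{n-k}=QL_k$.
\item By \eqref{convexity} and monotonicity, $L_i\le L_k$ for $i\le k$. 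Hence $|P_\xi^{(k)}(t_\tau)|\lesssim QL_k$, which is the upper condition in \eqref{eq:n-plank}.
\end{itemize}

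\textbf{Step 4: the lower bounds (main obstacle).} I expect the hard part to be the lower conditions $|P_\xi^{(k)}|\gtrsim QL_k$ for $n+1-k\in A$. The Frenet plank gives $|\xi\cdot\bv_{k+1}|\sim QL_k$, but lower-order terms could cancel it. Those terms are only $O(QL_i)$ with $i<k$, and $L_i$ may be comparable to $L_k$ when the intermediate $\la$'s equal $1$. In that case, though, the index in between is not in $A$.

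What I would try first is an upper-triangular change of variables, i.e.\ re-running GMO's argument directly with Taylor expansion of $P_\xi$ around $t_\tau$, so that the size conditions are phrased in $P_\xi^{(k)}$ from the start. Their stationary-phase analysis only uses Taylor coefficients, so this should go through. The constants $\sim$ are then adjusted using the $R^\beta$ room, and by splitting a comparable-size lower-order term into a different admissible $A$. A fallback is to define $\Pi$ via the triangular image and note bounded overlap.

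Finally, the range conditions on $\la_i$ and the terminal choice \eqref{eq:n-terminal-scale} match GMO's admissibility via \eqref{eq:n-scale-normalization}. Together with the dyadic count, this gives \eqref{eq:n-fourier-decomposition}.
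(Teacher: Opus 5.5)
Your Step~3 does give the upper bounds $|P_\xi^{(k)}|\lesssim QL_k$. Step~4, however, has a genuine gap, and it is where the whole difficulty lies. You claim that a lower-order term can be comparable to the leading one only when the intermediate $\la$'s equal $1$; this is false. Indeed $L_{k-1}/L_k=\prod_{i=1}^{n+1-k}\la_i$, so for instance $\la_{n+1-k}=1/2$ (dyadic, hence $n+1-k\in A$) with $\la_i=1$ for $i<n+1-k$ gives $L_{k-1}=L_k/2$. In $P_\xi^{(k)}=\sum_{i\le k+1}a_{ki}\,(\xi\cdot\bv_i)$ the term $a_{kk}(\xi\cdot\bv_k)$ may then have size up to $C|a_{kk}|QL_k/2$, where $C$ is the plank's upper constant. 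It can therefore cancel $a_{k,k+1}(\xi\cdot\bv_{k+1})\sim QL_k$, since for a general curve $a_{kk}=\wt\ga^{(k)}\cdot\bv_k$ is not small. So a Frenet plank need not lie in $\Pi_{\tau,\bla,A}$ at all.

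None of your remedies closes this.
\begin{itemize}
\item ``Re-running GMO in derivative coordinates'' is not an argument.
\item A point cannot be reassigned to a smaller $A$: admissibility forces $\la_i=1$ for $i\notin A$, so you would have to change $\bla$, and with it $\laprod$ and every scale.
\item Defining $\Pi$ as the triangular image proves a different statement from the one fed into Proposition~\ref{prop:n-derivative-counting}.
\end{itemize}
The missing idea is an affine normalization that makes the triangular system nearly the identity. Localize $\chi$ near a point $t_*$, and replace $\wt\ga$ by $B^{-1}\wt\ga$ and $\xi$ by $B^T\xi$, where $B=(\wt\ga(t_*),\wt\ga'(t_*),\ldots,\wt\ga^{(n)}(t_*))$. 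This leaves every $P_\xi^{(k)}$ unchanged. Meanwhile the Gram--Schmidt frame $f_0,\ldots,f_n$ of the normalized curve satisfies $f_k=\sum_{r\le k}c_{kr}B^{-1}\wt\ga^{(r)}$ with $c_{kr}=\delta_{kr}+O(\varepsilon_0)$ on the support of $\chi$. Apply GMO to the normalized curve and invert this system. This gives $|P_\xi^{(k)}(a)-(B^T\xi)\cdot f_k(a)|\lesssim\varepsilon_0QL_k$ for $k\ge2$, using that the widths at orders $0,1$ are $\lesssim QL_2$. Hence both the upper and the lower bounds transfer, however close $\la_{n+1-k}$ is to $1$.

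A second, smaller gap is in Step~2: GMO does not produce planks centered at your grid points $t_\tau$ with weight $\laprod$. The bound \eqref{gmo0} comes from the non-rigorous Section~\ref{sec:algebraic-computation}. GMO's Theorem~3.22 with Lemma~3.20 gives planks centered at points $a\in\mathcal T_m$ of a $\Delta_m$-partition, with weight $\Delta_m$, where $\Delta_m\le\laprod\le R^\eta\Delta_m$. Moving the center from $a$ to a grid point at distance $O(\laprod)$ preserves the upper bounds, by Taylor's formula and $\laprod^rL_{k+r}\le L_k$. It does not preserve the lower bounds. Indeed $P_\xi^{(k)}(a+\laprod u)/(QL_k)=p_k(u)+O(\laprod)$, where $p_k$ is a polynomial of degree at most $n-k$ that can vanish at $u=(t_\tau-a)/\laprod$. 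The paper handles this with $n^2+1$ consecutive grid points near $a$. Since $|p_k(0)|\gtrsim1$, Lagrange interpolation shows $|p_k|\ge c$ at all but $n-k$ of them, so one center works simultaneously for every $k$ with $n+1-k\in A$. Finally, $\Delta_m\#\{a\in\mathcal T_m:|a-t_\tau|\lesssim\laprod\}\lesssim\laprod$ converts the weights.
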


\begin{proposition}
\label{prop:n-overlap-input}
For every  $(\boldsymbol\lambda,A)$ admissible at $R$ and $0<\beta<1/(10n)$,
\begin{equation}\label{eq:n-plank-overlap}
\sum_{\tau\in\mathcal I(\lambda_{\mathrm{prod}})}
\mathbf1_{\Pi_{\tau,\boldsymbol\lambda,A}}(\xi)
\lesssim_{\beta,\gamma}R^{O(\beta)},\qquad |\xi|\sim R.
\end{equation}
\end{proposition}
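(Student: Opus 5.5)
We prove Proposition~\ref{prop:n-overlap-input}, the bounded overlap of the planks $\Pi_{\tau,\boldsymbol\lambda,A}$ up to $R^{O(\beta)}$. The strategy is to show that if $\xi$ lies in two planks $\Pi_{\tau,\boldsymbol\lambda,A}$ and $\Pi_{\tau',\boldsymbol\lambda,A}$, then $|t_\tau-t_{\tau'}|\lesssim R^{O(\beta)}\lambda_{\mathrm{prod}}$. Since the centers $t_\tau$ are $\lambda_{\mathrm{prod}}$-separated, at most $R^{O(\beta)}$ indices $\tau$ can then contain $\xi$. This is the derivative-coordinate analogue of the disjointness half of Lemma~\ref{lemcompare}. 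We prove it directly via Taylor expansion of the polynomial-like function $P_\xi$ rather than through Frenet coordinates.

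\textbf{Step 1: Taylor transfer of derivative bounds.} Fix $\xi\in\Pi_{\tau,\boldsymbol\lambda,A}$ and put $u:=t_{\tau'}-t_\tau$. For each $0\le k\le n$ we expand
\[
P_\xi^{(k)}(t_\tau+u)=\sum_{j=0}^{n-k}\frac{u^j}{j!}P_\xi^{(k+j)}(t_\tau)+O\bigl(|\xi||u|^{n+1-k}\bigr).
\]
The remainder is controlled by $\|\widetilde\gamma^{(n+1)}\|_\infty|\xi|$, and $|\xi|\sim R=QL_n$.

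Write $|u|=K\lambda_{\mathrm{prod}}$. The key scale identity, from \eqref{eq:n-derivative-scales}, is
\[
L_{k}/L_{k+1}=\prod_{i=1}^{n-k}\lambda_i\cdot\Big(\prod_{i=n-k}^{n-k}\cdots\Big),
\]
which gives $L_k\ge\lambda_{\mathrm{prod}}L_{k+1}$. Combined with the log-concavity \eqref{convexity}, this yields $L_k\gtrsim \lambda_{\mathrm{prod}}^{j}L_{k+j}$. So when $K\lesssim R^{O(\beta)}$, each term $u^jP^{(k+j)}_\xi(t_\tau)$ is $\lesssim R^{O(\beta)}QL_k$, consistent with the upper bounds.

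\textbf{Step 2: the separation argument.} Suppose now $K\ge R^{C\beta}$ for a large $C$. Let $i\in A$ be such that the lower bound $|P_\xi^{(n+1-i)}(t_\tau)|\gtrsim QL_{n+1-i}$ holds; $A\neq\varnothing$ guarantees such an $i$ exists. Since $\lambda_i<1$ exactly for $i\in A$, the ratio $L_{k}/L_{k+1}$ at $k=n-i$ is strictly smaller than the generic value $\lambda_{\mathrm{prod}}$ in the appropriate sense.

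Evaluate at $t_{\tau'}$ the derivative of order $k_0$, where $k_0$ is the smallest order whose prescribed size forces the leading term. In the expansion of $P^{(k_0)}_\xi(t_{\tau'})$, the term $u^{n+1-i-k_0}P^{(n+1-i)}_\xi(t_\tau)$ then dominates all others. This is a van der Corput/"largest term" argument. Its output is $|P^{(k_0)}_\xi(t_{\tau'})|\gg R^{O(\beta)}QL_{k_0}$, which contradicts $\xi\in\Pi_{\tau',\boldsymbol\lambda,A}$.

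The choice of $k_0$ goes as follows. Consider the Newton-polygon-type quantities $\max_j |u|^{j-k}QL_j$. The one achieving the maximum over $j\ge n+1-i$ is pinned down by the lower bound at $n+1-i$ and the log-concavity of $L$. The $R^\beta$ slack at $k=0,1$ is absorbed by taking $C$ large.

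\textbf{Main obstacle.} The hard step is Step~2 in the degenerate situation where several $\lambda_i$ equal $1$. There the gaps between consecutive $L_k$ are exactly $\lambda_{\mathrm{prod}}$-like, so no single term obviously dominates, and cancellation among terms could occur. To handle this, I would first try an alternative: compare with the Frenet-frame planks of Lemma~\ref{lemcompare}. The plan is to show that $\Pi_{\tau,\boldsymbol\lambda,A}$ is contained in an $R^{O(\beta)}$-dilate of $P[\bla,A](t_\tau)$, using the triangular change of basis between $(P^{(k)}_\xi(t))_k$ and the Frenet coordinates of $\xi$ at $t$. Disjointness for $|s-s'|\ge R^{O(\beta)}h$ would then follow from \cite[Lemma 3.12]{GMO2025} applied to dilated planks. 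This route is also how the appendix proves Proposition~\ref{prop:n-fourier-input}.
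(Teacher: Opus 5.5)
There is a genuine gap. Your whole argument rests on the claim that $\xi\in\Pi_{\tau,\boldsymbol\lambda,A}\cap\Pi_{\tau',\boldsymbol\lambda,A}$ forces $|t_\tau-t_{\tau'}|\lesssim R^{O(\beta)}\lambda_{\mathrm{prod}}$. That claim is false on the full parameter interval.

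Here is a counterexample. Take $n=4$, $\gamma(t)=(t,t^2,t^3,t^4)$ on $I=[0,1]$, $\lambda_1=\lambda_2=1$, $\lambda_3=R^{-1/2}$, $A=\{3\}$; this pair is admissible. Then $\lambda_{\mathrm{prod}}=R^{-1/2}$, $QL_0=1$, $QL_1=R^{1/2}$, $QL_2=QL_3=QL_4=R$, and the only lower bound is $|P_\xi''(t_\tau)|\gtrsim R$. Let $\xi$ be the vector with $P_\xi(t)=R(t-\tfrac14)^2(t-\tfrac34)^2$, so $|\xi|\sim R$. At any center within $\lambda_{\mathrm{prod}}$ of $\tfrac14$ or of $\tfrac34$ one has $|P_\xi|\lesssim1$, $|P_\xi'|\lesssim R^{1/2}$, $|P_\xi''|\sim R$ and $|P_\xi'''|+|P_\xi''''|\lesssim R$. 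So $\xi$ lies in two planks whose centers are at distance $\sim1$. The analogous example with $\lambda_1=\dots=\lambda_{n-2}=1$ works for every $n\ge4$.

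This is exactly where your dominant-term step breaks. Expand $P_\xi$ from $\tfrac14$ to $\tfrac34$: the terms $u^jP_\xi^{(j)}(\tfrac14)/j!$ for $j=2,3,4$ each have size $\sim R$, yet they cancel completely.

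Your fallback does not rescue this. Since $\xi\cdot\mathbf v_{j+1}(t)$ is a lower-triangular combination of $P_\xi(t),\dots,P_\xi^{(j)}(t)$, the same $\xi$ also lies in the Frenet planks $P[\boldsymbol\lambda,A](\tfrac14)$ and $P[\boldsymbol\lambda,A](\tfrac34)$. So a disjointness statement like Lemma~\ref{lemcompare} can at best hold for $|s-s'|$ below a fixed small constant. You would also need it for $R^{O(\beta)}$-enlarged planks, and the cited lemma does not provide that.

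The proposition only asks for a count, so your argument must allow several far-apart clusters of centers. The paper avoids any pairwise comparison:
\begin{itemize}
\item It takes $k=n+1-\max A$. Admissibility ($\lambda_i=1$ for $i>\max A$) then gives exactly $QL_k=\lambda_{\mathrm{prod}}^{-k}$.
\item It uses only $|P_\xi(t_\tau)|\lesssim R^\beta$ and $|P_\xi^{(k)}(t_\tau)|\gtrsim\lambda_{\mathrm{prod}}^{-k}$.
\item It cuts $I$ into $O(R^\beta)$ intervals $J$ of length $cR^{-\beta}$. On each $J$ it replaces $P_\xi$ by its Taylor polynomial $p$ of degree $M\ge n+2/\beta$, with error $O(R^{-1})$ in $C^n$.
\item It factors $p=a\prod_\ell(t-z_\ell)$ and uses $|p^{(k)}(t)|\lesssim_M|p(t)|\min_\ell|t-z_\ell|^{-k}$. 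This places every admissible center in $J$ within $R^{\beta/k}\lambda_{\mathrm{prod}}$ of one of at most $M$ roots.
\item That gives $\lesssim R^\beta$ centers per $J$, hence $\lesssim R^{2\beta}$ overall.
\end{itemize}
If you want to keep your localization strategy, you must first split $I$ into $O_\gamma(1)$ short intervals. You would then have to actually prove separation for $R^{O(\beta)}\lambda_{\mathrm{prod}}\le|t_\tau-t_{\tau'}|\le c_\gamma$, which needs a real argument in place of the dominant-term heuristic.
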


We are now ready to state the lattice points counting.

\begin{proposition}[Lattice points counting]
\label{prop:n-derivative-counting}
Let $n\ge3$, $Q\geq1$, $Q^{-1}\le\delta\le1$ and
$0<\beta<1/(10n)$. For every admissible $(\boldsymbol\lambda,A)$, define
\[
M_{\boldsymbol\lambda,A}:=
\Biggl\{\mathbf j\in\mathbb Z^{n+1}:
\begin{array}{l}
|\mathbf j|\sim\delta^{-1},\quad\mathbf j_{\geq2}\neq0;
\quad\text{there exists }\tau\in\mathcal I(\lambda_{\mathrm{prod}}) \text{~such that}:\\[1mm]
|P_{\mathbf j}^{(k)}(t_\tau)|\lesssim R^\beta L_k
 \quad(k=0,1),\\[1mm]
|P_{\mathbf j}^{(k)}(t_\tau)|\lesssim L_k
 \quad(2\leq k\leq n),\\[1mm]
|P_{\mathbf j}^{(k)}(t_\tau)|\gtrsim L_k
 \quad(n+1-k\in A)
\end{array}\Biggr\}.
\]
Then, for every $\epsilon>0$,
\begin{equation}\label{eq:n-stable-counting-target}
\#M_{\boldsymbol\lambda,A}
\lesssim_{\epsilon,\beta,\gamma}
Q^\epsilon R^{O(\beta)}\lambda_{\mathrm{prod}}^{-1}
E_n(Q^{-1},\delta^{-1}).
\end{equation}
\end{proposition}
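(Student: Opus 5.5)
The plan is to turn the heuristic argument of Section~\ref{sec:algebraic-computation} into a rigorous one. The set $M_{\boldsymbol\lambda,A}$ is a count of lattice points $\mathbf j\in\ZZ^{n+1}$ lying in a union over $\tau\in\mathcal I(\lambda_{\mathrm{prod}})$ of boxes. These boxes have dimensions $L_0\times L_1\times\cdots\times L_n$, up to $R^{O(\beta)}$ losses, and are oriented along the osculating frame of $\wt\gamma$ at $t_\tau$. Rescaling by $Q$ (equivalently $M=\delta^{-1}$ after the change of variables $\mathbf j\mapsto \delta\mathbf j$), this is precisely the anisotropic counting function $N_n(Q^{-1},\delta^{-1};\bla)$ of \eqref{defanisotropic}, with $L_k=\delta^{-1}\rho_{n-k}$, lattice spacing $\delta$ and height $M=\delta^{-1}$. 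So the target \eqref{eq:n-stable-counting-target} is exactly the statement $\lambda_{\mathrm{prod}}\,\mathbf N_n\lessapprox E_n(Q^{-1},M)$, i.e.\ \eqref{eq:section-two-inner-bound}. The $R^{O(\beta)}$ enlargement of the $k=0,1$ constraints only multiplies counts by $R^{O(\beta)}$, since enlarging box sides by a factor $K$ changes lattice counts by at most $K^{O(1)}$ times the original count, after covering by translates.

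I would therefore prove rigorous versions of Propositions~\ref{input2} and \ref{input0} in the derivative coordinates $P_{\mathbf j}^{(k)}(t_\tau)$. Then I would run the induction of Step~3 in the proof of Theorem~\ref{thm:section-two-main} verbatim, proving $\mathbf N_m\lessapprox \mathcal V_m+U$ for $2\le m\le n$ and concluding as in Step~4 with $h\mathcal V_n\le E_n$.

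For the lower-dimensional problems $\mathbf N_m$, I would define the $m$-dimensional counting sets using only the constraints on $P^{(k)}_{\mathbf j}(t_\tau)$ for $k\ge n+1-m$, or equivalently the curve $(\gamma_1,\dots,\gamma_{m})$ after a linear change. The projection estimate (Proposition~\ref{input0}) is then elementary. Fixing the constraints defining the $(m-1)$-problem, the remaining coordinate is determined, via the nondegenerate linear map $\mathbf j\mapsto (P^{(k)}_{\mathbf j}(t_\tau))_k$, up to an interval of length $\lesssim 1+\delta_m$ in lattice units. The base case $m=2$ is the planar estimate of Huxley--Vaughan--Velani, Proposition~\ref{input3}. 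It applies after observing that the $2$-dimensional anisotropic set is a genuine $\delta$-neighbourhood of a cone over a curve of nonvanishing curvature, using the admissibility normalization.

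The thickening step, Proposition~\ref{input2}, carries the substance, and I expect it to be the main obstacle. I would make it rigorous by majorizing the indicator of $\bigcup_t p[\wt\bla](t)$ by a sum of smooth bumps $\phi_{t}$ adapted to the thickened planks. The geometric inclusion \eqref{unionplankequal} follows from Lemma~\ref{lemcompare} together with a Taylor expansion of $P^{(k)}_{\mathbf j}(t)$ in $t$ with step $\wt h_m$. I would then pair against a smooth majorant of the lattice and apply Poisson summation, which gives the dual lattice $Q_*\ZZ^{m+1}$ as in \eqref{whFX}. Next I would perform the Littlewood--Paley split at $\tc^{-1}$.

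The low part requires the comparison $N_{\tc}(\bigcup p[\wt\bla])\approx\bigcup p[\bla']$. I would verify it directly: a $\tc$-perturbation changes each $P^{(k)}$ by $\lesssim\tc$, which is within the $\bla'$ side lengths. Conversely, planks at distinct $t$ within $h_m'$ fill the $(\rho_{m-2}\tc)^{1/2}$ direction by the Taylor expansion in $t$. The high part needs $|\wh\phi_t|\lesssim |p|\,\mathbf 1_{p^*}$ plus Schwartz tails. I also need a bound on the number of dual lattice points in $\bigcup_t p[\wt\bla](t)^*\cap\{|x|\sim r'\}$ for every dyadic $r'\in[\tc^{-1},\tau^{-1}]$, not only $r'=\tau^{-1}$. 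For this I would project onto the three coordinates spanning the $r'$, $(\rho_{m-2}r'^{-1})^{-1/2}$ and $\rho_{m-2}^{-1}$ directions and recognize a planar problem $N_2(\rho_{m-2}^{-1}Q_*^{-1},r'Q_*^{-1})$. I expect the intermediate frequencies to give a bound no worse than the $r'=\tau^{-1}$ term up to logarithms. The delicate point, which I would check first, is that at intermediate $r'$ the dual planks $p^*$ of different $t$ still organize as a neighbourhood of a planar curve cone with the right aperture, so that Proposition~\ref{input3} applies uniformly with only $Q^\epsilon$ losses.
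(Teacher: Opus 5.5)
Your identification of \eqref{eq:n-stable-counting-target} with the rigorous form of \eqref{eq:section-two-inner-bound} is correct. The architecture you describe also mirrors the paper: projection with fibre counting, thickening, and the induction of Step~3. But Section~\ref{sec:algebraic-computation} is explicitly non-rigorous, and your proposal restates its omissions rather than supplying them. Three points are missing or incorrect.

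\emph{Lower-dimensional problems.} These cannot be defined through the ordinary derivatives $P^{(k)}_{\mathbf j}(t_\tau)$ for a general curve. For $k\ge3$, $P^{(k)}_{\mathbf j}$ still depends on $j_2,\dots,j_{k-1}$, so discarding low-order constraints is not a projection along fixed lattice coordinates. The resulting curve is also in general not a linear image of $(\ga_1,\dots,\ga_m)$; that is special to the moment curve. The paper needs three things here: the normalized derivatives $\mathcal D_k$, for which $\mathcal D_kP_{\mathbf j}$ depends only on $\mathbf j_{\ge k}$; the curves $\cC_{r,m}$ of Proposition~\ref{prop:n-projected-curves}; and a fibre count that controls the variation over all admissible witnesses. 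That count is Lemma~\ref{lem:n-fibre-counting}, giving $1+\wt L_s$; it does not come from a linear map at a single $t_\tau$.

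\emph{Low part.} This needs a pointwise bound, by $\tau/\tc$, on the density of $\bigcup_t p[\wt\bla](t)$ in every $\tc$-ball. Your comparison $N_{\tc}(\bigcup p[\wt\bla])\approx\bigcup p[\bla']$ only controls the support and the average density.

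\emph{Intermediate frequencies.} Your plan here is not correct as stated. The dual boxes $p[\wt\bla](t)^*$ have tangential side $(\rho_{m-2}\tau)^{-1/2}$, which is fixed by $\tau$. So on $|\xi|\sim r'<\tau^{-1}$ they are wider than the natural planks $r'\times(r'/\rho_{m-2})^{1/2}\times\rho_{m-2}^{-1}$ by a factor $(\tau r')^{-1/2}$, and they overlap up to that many times. They stick out of the $\rho_{m-2}^{-1}$-neighbourhood of the cone, so their union is not what $N_2(\rho_{m-2}^{-1}Q_*^{-1},r'Q_*^{-1})$ counts. Recovering the natural planks would require cancellation among the $\wh\phi_t$, which the bound $|\wh\phi_t|\lesssim|p|\Id_{p^*}$ discards. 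Worse, the choice of $\sigma$ in Step~3 does not exclude $\tau<\tc^2/\rho_{m-2}$. In that case, for $\tc^{-1}\le r'<(\rho_{m-2}\tau)^{-1/2}$, these boxes are not elongated along $\bv_1(t)$ at all, and there is no planar picture. So the decisive step, a rigorous Proposition~\ref{input2}, is left open.

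The paper does not rigorize the high--low argument at all. It replaces it by a B-process in one coefficient.
\begin{itemize}
\item It first passes to the critical-point sets $\mathcal T^{(r)}$: a zero of $\mathcal D_{r+1}P_{\mathbf j}$ near $t_\tau$, which exists because $n-r-1\in A_*$ forces $|\mathcal D_{r+2}P_{\mathbf j}|\asymp L_{r+2}$.
\item Proposition~\ref{lem:bp36-B} then applies Poisson summation in $j_r$ with frequencies $|d|\le K=\min\{\de^{-1},\wt L_r^{-1}\}$; this $K$ plays the role of $\tau^{-1}$.
\item The range $|d|\lesssim1+L_{r+3}^2/L_{r+2}^3$ is bounded trivially. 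This is the analogue of your low part below $\tc^{-1}$ and produces the factor $\theta_r$.
\item For larger $d$ it uses Poisson summation in $j_{r+1}$, uniform stationary phase with $\Lambda=dL_{r+2}^3/L_{r+3}^2$, and Poisson summation in $\mathbf j_{\ge r+2}$.
\end{itemize}
Because the stationary point adapts to each $d$, all intermediate frequencies are handled uniformly. They become rational points $p/d$ near $\cC_{r,n-r-1}$ at thickness $L_{r+2}^{-1}$, which Lemma~\ref{lem:bp36-planar-bound} controls via Vaughan--Velani. Your outline needs either this mechanism, or a genuine proof of the multiscale Proposition~\ref{input2} including the density lemma and the intermediate frequencies, before it proves the statement.
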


For $|\mathbf j|\sim\delta^{-1}$ and $\mathbf j_{\geq2}\neq0$,
the identity $P_{Q\mathbf j}^{(k)}=QP_{\mathbf j}^{(k)}$ gives
\[
\mathbf j\in M_{\boldsymbol\lambda,A}
\quad\Longleftrightarrow\quad
Q\mathbf j\in\bigcup_{\tau\in\mathcal I(\lambda_{\mathrm{prod}})}
\Pi_{\tau,\boldsymbol\lambda,A}.
\]
The factors $R^\beta$ in orders $0$ and $1$ must be retained when proving
Proposition~\ref{prop:n-derivative-counting}.

\begin{proof}[Proof of Proposition~\ref{prop:conical-n} assuming Proposition~\ref{prop:n-derivative-counting}]
Assume Propositions~\ref{prop:n-fourier-input},
\ref{prop:n-overlap-input} and~\ref{prop:n-derivative-counting}.
Let $\epsilon>0$ and choose $0<\beta<1/(10n)$ sufficiently small
in terms of $\epsilon$ and $n$.
Summing \eqref{eq:n-fourier-decomposition} over the frequency shell and
then applying \eqref{eq:n-plank-overlap} gives
\begin{align*}
\sum_{\substack{\mathbf j\in\mathbb Z^{n+1}\\
 |\mathbf j|\sim\delta^{-1},\ \mathbf j_{\geq2}\neq0}}
|\widehat\mu_{\omega,\chi}(Q\mathbf j)|
&\lesssim R^{O(\beta)}
\sum_{(\boldsymbol\lambda,A)\ \mathrm{admissible}}
\lambda_{\mathrm{prod}}\sum_{\mathbf j\in M_{\boldsymbol\lambda,A}}
\sum_{\tau\in\mathcal I(\lambda_{\mathrm{prod}})}
\mathbf1_{\Pi_{\tau,\boldsymbol\lambda,A}}(Q\mathbf j)
+\delta^{-(n+1)}R^{-L}\\
&\lesssim R^{O(\beta)}
\sum_{(\boldsymbol\lambda,A)\ \mathrm{admissible}}
\lambda_{\mathrm{prod}}\,\#M_{\boldsymbol\lambda,A}
+\delta^{-(n+1)}R^{-L}.
\end{align*}
Apply Proposition~\ref{prop:n-derivative-counting} with sufficiently small $\beta$ and $\epsilon$. There are $O_n((\log(2R))^{n-2})$ admissible pairs and $R^{O(\beta)} \lesssim Q^{O(\epsilon)}$, so the
last expression is bounded by
\[
Q^{O(\epsilon)}E_n(Q^{-1},\delta^{-1})
+\delta^{-(n+1)}R^{-L}.
\]
Since $\delta^{-1}\le Q$, $R\le Q^2$, and
\[
 E_n(Q^{-1},\delta^{-1})\geq
 (Q\delta^{n-1})^{-1/n}\geq Q^{-1/n},
\]
the rapidly decaying remainder is absorbed by choosing $L$ sufficiently large.
The same lower bound absorbs the nonstationary remainder in
Subsection~\ref{sec:n-nonstationary}. Choosing the exponents
in the preceding estimates sufficiently small proves
Proposition~\ref{prop:conical-n}.
\end{proof}

\section{Dual counting estimate}

\label{sec:n-sufficient-dual-counting}

We record the following $n$-dimensional version of the normalization in
\cite[Lemma~4.1]{CSST2026}. The proof is identical, so we omit the proof.

\begin{lemma}[Reduction to a perturbation of the moment curve]
\label{lem:n-integral-affine-normalization}
Let $n\ge3$ and let $\gamma$ be smooth on a neighborhood of a compact
interval $I$, with $\gamma_1(t)=t$ and
\[
\det\bigl(\gamma'(t),\ldots,\gamma^{(n)}(t)\bigr)\ne0
\qquad(t\in I).
\]
For every $0<\epsilon_0\le1$, there are smooth nondegenerate curves
\[
\mathcal C_d=\gamma_d([-1,1])\subset\mathbb R^n
\quad(1\le d\le M),\qquad
M\lesssim_\gamma\epsilon_0^{-1},
\]
and a constant $1\le A\lesssim_\gamma\epsilon_0^{-n}$ with the following
properties. They satisfy
\begin{equation}\label{eq:n-integral-affine-model}
\left\|\gamma_d(t)-\gamma_d(0)
-\left(t,\frac{t^2}{2!},\ldots,\frac{t^n}{n!}\right)\right\|_{C^{n+1}([-2,2])}
\le\epsilon_0.
\end{equation}
Writing $\mathcal C=\gamma(I)$, we have
\begin{equation}\label{eq:n-integral-affine-count}
\mathcal R_{\mathcal C}(\delta,Q)
\le\sum_{d=1}^M\mathcal R_{\mathcal C_d}(A\delta,Q)
\qquad(Q\ge1,\ 0<\delta<1/2).
\end{equation}
\end{lemma}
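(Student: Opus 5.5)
The plan is to localize, rescale, and make an integral affine change of variables so that each piece becomes a small perturbation of the normalized moment curve. The counting function $\mathcal R$ must change in a controlled way under each operation.

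\textbf{Step 1: localization.} Fix a small parameter $h\sim\epsilon_0$. Cover $I$ by $M\lesssim_\gamma\epsilon_0^{-1}$ intervals $J_d=[t_d-h,t_d+h]$. Since the count of $(\mathbf p,q)$ with $\dist(\mathbf p/q,\gamma(I))\le\delta/q$ is at most the sum of the counts for the pieces $\gamma(J_d)$, it suffices to treat one piece.

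\textbf{Step 2: normalization.} On $J_d$ write $\gamma(t_d+hs)=\gamma(t_d)+L_d(h s,\dots)$. We want a matrix $B_d$ with $B_d(\gamma(t_d+hs)-\gamma(t_d))\approx (s,s^2/2!,\dots,s^n/n!)$. The natural choice is the inverse of the matrix with columns $h^k\gamma^{(k)}(t_d)$, $1\le k\le n$. This inverse has entries of size up to $h^{-n}\lesssim\epsilon_0^{-n}$, and by nondegeneracy and Taylor's theorem the $C^{n+1}$ error on $[-2,2]$ is $O(h)\le\epsilon_0$.

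The difficulty is that $\mathcal R$ is only invariant under maps preserving the rational structure. A map $\mathbf x\mapsto B\mathbf x$ sends $\mathbf p/q$ to $B\mathbf p/q$, and $B\mathbf p$ need not be integral. Following \cite[Lemma~4.1]{CSST2026}, I would handle this as follows.

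\textbf{Step 2a: unimodular part.} First apply only unimodular integer maps, namely lower-triangular integer shears in $SL_n(\ZZ)$. These preserve $\ZZ^n$ and distort distances by factors $\lesssim_\gamma \epsilon_0^{-O(1)}$. They can be chosen to bring the frame $(\gamma^{(k)}(t_d))_k$ to approximately upper-triangular form with controlled diagonal.

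\textbf{Step 2b: remaining diagonal scaling.} Absorb the remaining diagonal scaling by \emph{enlarging} $\delta$ rather than by mapping lattice points. Concretely, replace the target curve $\mathcal C_d$ by the image under the integer map of $\gamma(J_d)$, reparametrized in $s$. The $h$-dilation in the parameter does not affect the lattice.

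\textbf{Step 2c: absorbing errors.} Allow the residual non-integral linear factor to be part of the curve $\gamma_d$ itself, keeping only the $C^{n+1}$-closeness to the moment model up to $\epsilon_0$. Any remaining metric distortion is absorbed into $A\lesssim\epsilon_0^{-n}$ via $\dist(U\mathbf x,U\mathcal C)\le\|U\|\dist(\mathbf x,\mathcal C)$.

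\textbf{Step 2d: translation.} The translation by $\gamma_d(0)$ is harmless because \eqref{eq:n-integral-affine-model} subtracts $\gamma_d(0)$.

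\textbf{Main obstacle.} The hardest step is reconciling the affine normalization with integrality: making the moment-curve normal form hold up to $\epsilon_0$ while only using maps that send $\ZZ^n$ into $\ZZ^n$ injectively and have norm $\lesssim\epsilon_0^{-n}$. I would first try exactly the CSST construction, choosing integer approximations of the normalizing matrix scaled by a common integer and tracking the distortion. I expect that, verbatim, this gives the stated $M$ and $A$.
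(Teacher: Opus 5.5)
\textbf{There is a genuine gap.} The following parts of your proposal are correct:
\begin{itemize}
\item the localization in Step 1;
\item the choice $B_d=(G_dD_h)^{-1}$, with $G_d=(\gamma'(t_d),\dots,\gamma^{(n)}(t_d))$ and $D_h=\operatorname{diag}(h,\dots,h^n)$;
\item the $O(h)$ Taylor error in $C^{n+1}([-2,2])$;
\item the transfer inequality $\operatorname{dist}(U\mathbf x,U\mathcal C)\le\|U\|\operatorname{dist}(\mathbf x,\mathcal C)$.
\end{itemize}
However, the one step that makes the lemma work is never supplied: you need an \emph{integral} map that performs the normalization. Steps 2a--2c, as written, cannot supply it.
\begin{itemize}
\item Unimodular shears (Step 2a) cannot do it, because $|\det B_d|\asymp_\gamma h^{-n(n+1)/2}$. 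The factor $D_h^{-1}$ is exactly what stretches $\gamma(J_d)$, whose $k$-th osculating component has size about $h^k$, into a curve of size $\asymp 1$ on $[-2,2]$.
\item Enlarging $\delta$ (Step 2b) does not change the shape of the curve, so it cannot replace this stretching.
\item Absorbing a non-integral factor $U$ into $\gamma_d$ (Step 2c) breaks the correspondence $(\mathbf p,q)\mapsto(U\mathbf p,q)$, since $U\mathbf p\notin\mathbb Z^n$.
\item Approximating $NB_d$ by an integer matrix for a common integer $N$ (your last paragraph) gives either $N$ times the model curve, or, after dividing by $N$, denominators $Nq$. The latter replaces $Q$ by $NQ$, which \eqref{eq:n-integral-affine-count} does not allow.
\end{itemize}

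\textbf{The missing observation is that unimodularity is unnecessary.} Any nonsingular $A\in\mathbb Z^{n\times n}$ maps $(\mathbf p,q)\mapsto(A\mathbf p,q)$ injectively, keeps the denominator, and costs only the factor $\|A\|$ in $\delta$. So take $A_d$ to be the entrywise rounding of $B_d$ itself.
\begin{itemize}
\item For $|s|\le 2$, the function $\gamma(t_d+hs)-\gamma(t_d)$ and all its $s$-derivatives up to order $n+1$ are $O_\gamma(h)$. Hence the rounding error $A_d-B_d$, whose entries are at most $1/2$, changes $s\mapsto B_d\bigl(\gamma(t_d+hs)-\gamma(t_d)\bigr)$ by only $O_\gamma(h)$ in $C^{n+1}([-2,2])$.
\item Write $A_d=B_d\bigl(I+B_d^{-1}(A_d-B_d)\bigr)$. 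Since $\|B_d^{-1}\|=\|G_dD_h\|\lesssim_\gamma h$, $A_d$ is nonsingular once $h$ is small.
\end{itemize}
Now take $h=c_\gamma\epsilon_0$, intervals $J_d=[t_d-h,t_d+h]$ covering $I$, and $\gamma_d(s):=A_d\gamma(t_d+hs)$. This gives \eqref{eq:n-integral-affine-model} and nondegeneracy of $\gamma_d$. It also gives \eqref{eq:n-integral-affine-count} with $M\lesssim_\gamma\epsilon_0^{-1}$ and $A=\max_d\|A_d\|\lesssim_\gamma h^{-n}\lesssim_\gamma\epsilon_0^{-n}$. No integer translation is needed, because $\gamma_d(0)$ is subtracted in \eqref{eq:n-integral-affine-model}. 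The paper omits this proof, citing \cite[Lemma~4.1]{CSST2026}; the argument above is all that is required.
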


By the above lemma, we may assume that our nondegenerate curve $\gamma$ always satisfies
\begin{equation}
\left\|\gamma(t)-\gamma(0)
-\left(t,\frac{t^2}{2!},\ldots,\frac{t^n}{n!}\right)\right\|_{C^{n+1}([-2,2])}
\le\epsilon_0.
\end{equation}
where $\epsilon_0$ is sufficiently small.

\subsection{Derivative adapted to a curve}

We recall that
\[
P_{\mathbf j}(t)=\sum_{\nu=0}^n j_\nu \gamma_\nu(t)
=j_0+j_1t+\sum_{\nu=2}^n j_\nu \gamma_\nu(t).
\]
The first ordinary derivative eliminates $j_0$, and the second also
eliminates $j_1$. At order $k\ge3$, however, the coefficients
$j_2,\ldots,j_{k-1}$ generally remain. We therefore use derivatives
adapted to the curve.

We use the convention
\[
\operatorname{Wr}(f_0,\ldots,f_k)(t)
:=\det\bigl(f_j^{(i)}(t)\bigr)_{0\le i,j\le k}.
\]
For the coordinate functions, set
\[
W_k(t):=\operatorname{Wr}(\gamma_0,\ldots,\gamma_k)(t)
\quad(0\le k\le n),\qquad W_{-1}:=1.
\]

After localizing the parameter and permuting $\gamma_2,\ldots,\gamma_n$
and the corresponding integer coordinates, choose compact intervals
$I\Subset I^*$ so that, on a neighborhood of $I^*$,
\[
W_k(t)\ne0
\qquad(0\le k\le n).
\]
Define successively
\begin{equation}\label{eq:n-normalized-derivatives}
\mathcal D_0f=f,\qquad
\mathcal D_kf=\frac{(\mathcal D_{k-1}f)'}{(\mathcal D_{k-1}\gamma_k)'}
\quad(1\le k\le n).
\end{equation}
Note that these $\mathcal D_k$ are linear operators.

The functions $(\mathcal D_{k-1}\gamma_k)'$ are smooth, and
\begin{equation}
    \bigl|(\mathcal D_{k-1}\gamma_k)'(t)\bigr|\asymp_\gamma1
\qquad(t\in I^*,\ 1\le k\le n).\end{equation}
With this notation,
\[
W_k(t):=\operatorname{Wr}(\gamma_0,\ldots,\gamma_k)(t),
\]
one can prove by induction that
\[
\mathcal D_k f
=\frac{\operatorname{Wr}(\gamma_0,\ldots,\gamma_{k-1},f)}{W_k}.
\]
Applying this with $f=\gamma_k$ and taking derivative, one has
\[
(\mathcal D_{k-1}\gamma_k)'
=\frac{W_{k-2}W_k}{W_{k-1}^2}.
\]
Since $\gamma-\gamma(0)$ is $\e_0$-close to the moment curve in the $C^{n+1}$-norm, one has $|W_r(t)|\asymp_\gamma 1$ there for $0\le r\le n$. The preceding identity therefore yields the claimed comparability.

By definition, the normalized derivatives satisfy
\[
\mathcal D_k\gamma_j=0\quad(j<k),\qquad \mathcal D_k\gamma_k=1.
\]

\begin{proposition}\label{prop:n-normalized-derivative-expansion}
For every smooth $f$ on a neighborhood of $I^*$ and $1\le k\le n$,
\begin{equation}\label{eq:n-normalized-derivative-expansion}
\mathcal D_kf(t)
=\frac{f^{(k)}(t)}{\displaystyle\prod_{\nu=1}^k(\mathcal D_{\nu-1}\gamma_\nu)'(t)}
+\sum_{m=0}^{k-1}c_{k,m}(t)f^{(m)}(t).
\end{equation}
The coefficients $c_{k,m}$ are smooth functions depending only on
the curve, and, uniformly on $I^*$,
\[
|c_{k,m}(t)|\lesssim_\gamma1,\qquad
c_{k,0}=c_{k,1}=0\quad(k\ge2).
\]
\end{proposition}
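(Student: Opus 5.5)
We argue by induction on $k$, directly from the recursive definition \eqref{eq:n-normalized-derivatives}. Write $a_\nu:=(\mathcal D_{\nu-1}\gamma_\nu)'$, a smooth function with $|a_\nu|\asymp_\gamma 1$ on $I^*$, so that $1/a_\nu$ is smooth with all derivatives bounded by constants depending on $\gamma$. The claim to propagate is slightly stronger than stated:
\[
\mathcal D_kf=\frac{f^{(k)}}{a_1\cdots a_k}+\sum_{m=0}^{k-1}c_{k,m}f^{(m)},
\]
where the $c_{k,m}$ are smooth with bounded derivatives of all orders on $I^*$ (not merely bounded). We also need $c_{k,0}=0$ for $k\ge1$, and $c_{k,1}=0$ for $k\ge2$.

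The base case is $k=1$. Since $\gamma_0=1$ and $\gamma_1(t)=t$, we have $a_1=\gamma_1'=1$, so $\mathcal D_1f=f'$ and there are no lower-order terms.

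For the inductive step, assume the expansion holds at level $k-1$. Differentiate it and divide by $a_k$:
\[
\mathcal D_kf=\frac{1}{a_k}\Bigl(\frac{f^{(k)}}{a_1\cdots a_{k-1}}+\Bigl(\frac{1}{a_1\cdots a_{k-1}}\Bigr)'f^{(k-1)}+\sum_{m=0}^{k-2}\bigl(c_{k-1,m}'f^{(m)}+c_{k-1,m}f^{(m+1)}\bigr)\Bigr).
\]
\begin{itemize}
\item The leading term is exactly $f^{(k)}/(a_1\cdots a_k)$.
\item Each new coefficient is a finite sum of products of $1/a_\nu$, their derivatives, and $c_{k-1,m}$, $c_{k-1,m}'$. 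It is therefore smooth with bounded derivatives, depends only on $\gamma$, and the bounds are uniform because $|a_\nu|\asymp1$ on $I^*$.
\item The coefficient of $f=f^{(0)}$ in $\mathcal D_kf$ is $c_{k-1,0}'/a_k$. This vanishes by the induction hypothesis $c_{k-1,0}=0$, valid for $k-1\ge1$.
\end{itemize}

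The vanishing of $c_{k,1}$ for $k\ge2$ needs more care. For $k\ge3$, the coefficient of $f'$ is
\[
\frac{c_{k-1,1}'+c_{k-1,0}}{a_k},
\]
which is zero by induction. The case $k=2$ must be checked directly: $\mathcal D_2f=f''/a_2$, so $c_{2,1}=0$ holds, since $(1/a_1)'=0$ because $a_1\equiv1$. This is the one place where $\gamma_1(t)=t$ is really used.

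An alternative justification of the vanishing comes from the algebraic structure. $\mathcal D_k$ annihilates $\gamma_0$ and $\gamma_1$ for $k\ge2$, because $\mathcal D_k\gamma_j=0$ for $j<k$. Testing the expansion on $f=1$ then gives $c_{k,0}=0$. Testing on $f=t$ gives $c_{k,1}=0$, since all higher derivatives of $t$ vanish. This is quicker, and I would present it as the main argument for the vanishing claims. I do not expect any real obstacle. The only subtle point is to carry bounds on derivatives of the coefficients through the induction, and this is handled by stating the hypothesis in the stronger smooth form above.
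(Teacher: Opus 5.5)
Your proposal is correct and follows essentially the same route as the paper: induct on $k$ by differentiating the level-$(k-1)$ expansion and dividing by $(\mathcal D_{k-1}\gamma_k)'$, then obtain $c_{k,0}=c_{k,1}=0$ for $k\ge2$ by testing the expansion on $f=1$ and $f=t$, using $\mathcal D_k\gamma_0=\mathcal D_k\gamma_1=0$, which depends on $\gamma_1(t)=t$. Your extra inductive verification of the vanishing and your strengthened hypothesis on the coefficients' derivatives are correct but add nothing essential, since smoothness on a neighborhood of the compact interval $I^*$ already gives bounded derivatives of all orders.
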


\begin{proof}
The formula follows by induction on $k$ using
\eqref{eq:n-normalized-derivatives}. Differentiating the formula
at order $k-1$ and dividing by $(\mathcal D_{k-1}\gamma_k)'$ gives leading coefficient
\[
\left(\prod_{\nu=1}^k(\mathcal D_{\nu-1}\gamma_\nu)'\right)^{-1}.
\]
The remaining coefficients are smooth combinations of the functions
$(\mathcal D_{\nu-1}\gamma_\nu)'$ for $1\le\nu\le k$,
their derivatives, and their reciprocals.
The bounds follow from smoothness and nonvanishing on $I^*$.
For $k\ge2$, applying the expansion to $1$ and $t$ and using
$\mathcal D_k1=\mathcal D_kt=0$ gives $c_{k,0}=c_{k,1}=0$.
\end{proof}

Consequently,
\begin{equation}\label{eq:n-normalized-tail}
\mathcal D_kP_{\mathbf j}(t)
=j_k+\sum_{\nu=k+1}^n j_\nu\mathcal D_k\gamma_\nu(t).
\end{equation}
Thus $\mathcal D_kP_{\mathbf j}$ depends only on
$\mathbf j_{\ge k}=(j_k,\ldots,j_n)$, and
\[
(\mathcal D_kP_{\mathbf j})'=(\mathcal D_k\gamma_{k+1})'\mathcal D_{k+1}P_{\mathbf j}\quad(0\le k\le n-1),
\qquad \mathcal D_nP_{\mathbf j}=j_n.
\]

The normalized derivatives also produce nondegenerate projected curves.

\begin{proposition}\label{prop:n-projected-curves}
Let $r,m$ be integers with $0\le r\le n-2$ and $2\le m\le n-r$.
Define
\[
\mathcal C_{r,m}
:=\{(\mathcal D_r\gamma_{r+1}(t),\ldots,
       \mathcal D_r\gamma_{r+m}(t)):t\in I^*\}\subset\mathbb R^m.
\]
Then $\mathcal C_{r,m}$ is nondegenerate, and
$s=\mathcal D_r\gamma_{r+1}(t)$ gives a smooth reparametrization.
The same statement holds with $I^*$ replaced by any compact interval
in the neighborhood where $\operatorname{Wr}(\gamma_0,\ldots,\gamma_k)\ne0$
for $0\le k\le n$.
\end{proposition}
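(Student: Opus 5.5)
We prove Proposition~\ref{prop:n-projected-curves} with the Wronskian identities recorded above; no new analysis is needed. Write $u_i(t):=\mathcal D_r\gamma_{r+i}(t)$ for $1\le i\le m$. The plan has two steps. First, show that $s=u_1(t)$ is a legitimate parameter. Second, show that in this parameter the curve $(u_1,\dots,u_m)$ is nondegenerate, by identifying its generalized Wronskian with a quotient of the $W_k$.

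For the reparametrization, note that $u_1'=(\mathcal D_r\gamma_{r+1})'=W_{r-1}W_{r+1}/W_r^2$, and this does not vanish wherever all the $W_k$ are nonzero. So $u_1$ is strictly monotone and smooth, and $t\mapsto s$ is a diffeomorphism onto its image. In the variable $s$, the $s$-derivative of any $g$ is $g'(t)/u_1'(t)$. This is exactly the operator $\mathcal D_{r+1}$ applied to functions of the form $\mathcal D_r f$, since $\mathcal D_{r+1}f=(\mathcal D_rf)'/(\mathcal D_r\gamma_{r+1})'$. The curve is $\Phi(s)=(s,u_2,\dots,u_m)$, and we may add the constant function $1=\mathcal D_r\gamma_r$ in front. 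Nondegeneracy of $\Phi$ in $\mathbb R^m$ is equivalent to nonvanishing of the $s$-Wronskian of $(1,s,u_2,\dots,u_m)$. Subtracting the first row, this is the same as nonvanishing of the $s$-Wronskian of $(u_1',\dots,u_m')$ divided by suitable powers, i.e.\ $\det(\Phi',\dots,\Phi^{(m)})\neq0$.

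To verify this, iterate the normalized derivatives. By the defining recursion, the $k$-th $s$-derivative of $u_i$ equals $\mathcal D_{r+k}\gamma_{r+i}$ plus lower-order terms that are combinations of $\mathcal D_{r+j}\gamma_{r+i}$ with $j<k$. For $k=1$ this is exact, since $d u_i/ds=\mathcal D_{r+1}\gamma_{r+i}$. The $s$-derivative of that is $(\mathcal D_{r+1}\gamma_{r+i})'/u_1'$. Since $(\mathcal D_{r+1}\gamma_{r+i})'=(\mathcal D_{r+1}\gamma_{r+2})'\,\mathcal D_{r+2}\gamma_{r+i}$, we get
\[
\frac{d^2u_i}{ds^2}=\frac{(\mathcal D_{r+1}\gamma_{r+2})'}{u_1'}\,\mathcal D_{r+2}\gamma_{r+i},
\]
and in general $d^ku_i/ds^k=a_k(t)\,\mathcal D_{r+k}\gamma_{r+i}+\sum_{j<k}b_{k,j}(t)\,\mathcal D_{r+j}\gamma_{r+i}$. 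Here $a_k=\prod_{\nu=2}^{k}(\mathcal D_{r+\nu-1}\gamma_{r+\nu})'/(u_1')^{k-1}$ is nonvanishing, and the $b_{k,j}$ do not depend on $i$. Row operations therefore reduce $\det\bigl(d^ku_i/ds^k\bigr)_{1\le k,i\le m}$ to $\prod_k a_k\cdot\det\bigl(\mathcal D_{r+k}\gamma_{r+i}\bigr)_{1\le k,i\le m}$. The key fact is $\mathcal D_{r+k}\gamma_{r+i}=0$ for $i<k$ and $=1$ for $i=k$, so this matrix is triangular with unit diagonal and the determinant is $\prod_k a_k\neq0$.

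The main obstacle is the bookkeeping in the inductive claim that each $s$-derivative contributes only the $i$-independent coefficients $a_k,b_{k,j}$. The identity $(\mathcal D_j f)'=(\mathcal D_j\gamma_{j+1})'\,\mathcal D_{j+1}f$ must be applied uniformly in $f=\gamma_{r+i}$, and the product rule then generates only lower-index terms; I would set this up as a clean induction on $k$. Smoothness of the reparametrization and the statement for other compact intervals follow verbatim, since only nonvanishing of the $W_k$ was used.
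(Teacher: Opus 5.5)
Your proof is correct and follows essentially the paper's route: both reduce the derivative matrix of $(\mathcal D_r\gamma_{r+1},\dots,\mathcal D_r\gamma_{r+m})$ to the unit upper-triangular matrix $(\mathcal D_{r+k}\gamma_{r+i})_{1\le k,i\le m}$ by a triangular transformation whose coefficients do not depend on the coordinate index, using $\mathcal D_{r+k}\gamma_{r+i}=0$ for $i<k$ and $\mathcal D_{r+k}\gamma_{r+k}=1$. The only difference is direction. The paper writes $\mathcal D_{r+k}$ in terms of ordinary $t$-derivatives and back-substitutes to get linear independence. You expand the $s$-derivatives in terms of the $\mathcal D_{r+j}$ and so get the explicit value $\prod_k a_k$ for the determinant. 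Your detour through the Wronskian of $(1,s,u_2,\dots,u_m)$ is unnecessary but harmless.
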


\begin{proof}
Let $t_0\in I^*$. To prove nondegeneracy, suppose that real numbers
$a_{r+1},\ldots,a_{r+m}$ satisfy
\[
\sum_{\nu=r+1}^{r+m}a_\nu
\frac{d^k}{dt^k}(\mathcal D_r\gamma_\nu)(t_0)=0
\qquad(1\le k\le m).
\]
We show that all these coefficients are zero.
By the recursive definition of the normalized derivatives,
$\mathcal D_{r+k}\gamma_\nu$ is a linear combination of the first
$k$ ordinary derivatives of $\mathcal D_r\gamma_\nu$,
with coefficients independent of $\nu$.
Indeed, each step differentiates the preceding expression and divides
by the nonzero function $(\mathcal D_{r+k-1}\gamma_{r+k})'$.
Consequently,
\[
\sum_{\nu=r+1}^{r+m}a_\nu\mathcal D_{r+k}\gamma_\nu(t_0)=0
\qquad(1\le k\le m).
\]
Since $\mathcal D_{r+k}\gamma_\nu=0$ for $\nu<r+k$ and
$\mathcal D_{r+k}\gamma_{r+k}=1$, these equations become
\[
0=a_{r+k}+\sum_{\nu=r+k+1}^{r+m}
 a_\nu\mathcal D_{r+k}\gamma_\nu(t_0)
\qquad(1\le k\le m).
\]
Taking $k=m$ gives $a_{r+m}=0$; then $k=m-1$ gives
$a_{r+m-1}=0$. Continuing down to $k=1$ gives
$a_{r+1}=\cdots=a_{r+m}=0$.
Thus the first $m$ derivative vectors are linearly independent at
$t_0$, proving nondegeneracy.

Finally, $ds/dt=(\mathcal D_r\gamma_{r+1})'\ne0$, so the change of
parameter is smooth and preserves nondegeneracy.
The argument applies throughout the neighborhood where the
normalizing denominators are nonzero, proving the last assertion.
\end{proof}

\bigskip

\subsection{Dual counting problem}

Throughout the counting argument, $Q^{-1}\le\delta\le1$.
We use $E_n$ from Section~\ref{sec:algebraic-computation}
with height $\delta^{-1}$ and terminal width $Q^{-1}$:
\begin{equation}\label{eq:n-full-range-budget}
\begin{aligned}
\lambda_{\mathrm{prod}}^{-1}E_n(Q^{-1},\delta^{-1})
&=\lambda_{\mathrm{prod}}^{-1}
\max_{0\le k\le n-1}
(Q\delta^{D_{n,k}})^{-1/(k+1)}.
\end{aligned}
\end{equation}
The factor multiplying $\lambda_{\mathrm{prod}}^{-1}$ is independent
of the scale tuple. It is at most one when $\delta\ge Q^{-1/D_n}$,
and we retain its actual value even when it is smaller than one.
It gives precisely the nonzero-frequency error terms in
Theorem~\ref{thm:main}:
\[
\delta^{n-1}Q^2 E_n(Q^{-1},\delta^{-1})
\le Q+
\sum_{k=1}^{n-1}\delta^{k^2/(k+1)}Q^{(2k+1)/(k+1)}.
\]

In definitions, $O(\beta)$ denotes a common positive constant multiple
of $\beta$, chosen sufficiently large in terms of $n$ and the curve.
In estimates, the constants implicit in $O(\beta)$ may vary.

Recall $L_k$ in \eqref{eq:n-derivative-scales}.
For $0\le r\le n-2$, define
\begin{equation}\label{eq:n-dual-scales}
\ell_r:=\frac{L_{r+1}}{L_{r+2}}
=\prod_{j=1}^{n-r-1}\lambda_j,
\qquad
\widetilde{L}_r:=\frac{L_{r+1}^2}{L_{r+2}}
=\delta^{-1}\prod_{j=1}^{n-r-1}\lambda_j^{n-r+1-j}.
\end{equation}
From \eqref{convexity}, one sees that $\widetilde{L}_r\ge L_r$.

For $r\in\mathcal R_{\boldsymbol\lambda}$ with $r\ge1$ (see the definition below), a displacement of
size $O_\gamma(\ell_r)$ reaches a zero of $\mathcal D_{r+1}P_{\mathbf j}$
and changes $\mathcal D_rP_{\mathbf j}$ by $O_\gamma(\widetilde{L}_r)$.
At $r=0$, the corresponding bounds are $O_\gamma(R^\beta\ell_0)$
and $O_\gamma(R^{O(\beta)}\widetilde{L}_0)$.
Recall that $\lambda_n=1$. We will use
\[
\widetilde{L}_0=Q^{-1},\qquad
L_r=\widetilde{L}_r\lambda_{n-r}\le\widetilde{L}_r,\qquad
L_0\le L_1\le\cdots\le L_n.
\]

Choose $c_*>0$ sufficiently small in terms of the curve, and put
\[
\begin{gathered}
A_*:=\{1\le i\le n-1:\lambda_i\le c_*\},\\
\mathcal R_{\boldsymbol\lambda}
:=\{0\le r\le n-2:n-r-1\in A_*\ \text{when}\ r\ge 1;\ R^\beta\lambda_{n-1}\le c_*\ \text{when}\ r=0\}.
\end{gathered}
\] 
Admissibility (recalling \eqref{eq:n-admissibility}) implies $A_*\subset A$. The additional condition at $r=0$
allows the displacement of size $O_\gamma(R^\beta\ell_0)$ used below.
If $\mathcal R_{\boldsymbol\lambda}=\varnothing$, then
$\lambda_i>c_*$ for $1\le i\le n-2$ and
$\lambda_{n-1}>c_*R^{-\beta}$. Hence the scale normalization gives
\[
R=\prod_{i=1}^{n-1}\lambda_i^{-(n+1-i)}
\lesssim_\gamma R^{O(\beta)}.
\]
Taking $\beta>0$ sufficiently small, the preceding inequality bounds
$R$, and hence both $Q$ and $\delta^{-1}$, in terms of the curve and $n$.
The desired counting estimate is then immediate, since
$E_n(Q^{-1},\delta^{-1})\geq Q^{-1/n}\gtrsim_\gamma1$.
For each $r\in\mathcal R_{\boldsymbol\lambda}$, define
\begin{equation}\label{eq:n-sufficient-dual-set}
\mathcal T^{(r)}_{\delta,\boldsymbol\lambda}(I^*)
:=\left\{\mathbf j_{\ge r}\in\mathbb Z^{n+1-r}:
\begin{array}{l}
|\mathbf j_{\ge r}|\lesssim\delta^{-1};\quad
\text{there exists }t_*\in I^* \text{ such that}:\\[1mm]
\mathcal D_{r+1}P_{\mathbf j}(t_*)=0,\quad
|\mathcal D_rP_{\mathbf j}(t_*)|\lesssim R^{O(\beta)}\widetilde{L}_r,\\[1mm]
|\mathcal D_kP_{\mathbf j}(t_*)|\lesssim L_k
\quad(r+2\le k\le n),\\[1mm]
|\mathcal D_kP_{\mathbf j}(t_*)|\gtrsim L_k
\quad(r+2\le k\le n,\ n+1-k\in A_*)
\end{array}\right\}.
\end{equation}
For each $t_*$, the parameters $L_k$, $r+2\le k\le n$,
specify the width scales in the normalized derivative coordinates
$\mathcal D_kP_{\mathbf j}(t_*)$.
For $\mathbf j_{\ge r}\in
\mathcal T^{(r)}_{\delta,\boldsymbol\lambda}(I^*)$,
we call any point $t_*\in I^*$ satisfying all the conditions
in \eqref{eq:n-sufficient-dual-set} a \emph{witness} for
$\mathbf j_{\ge r}$.
In particular, since $r\in\mathcal R_{\boldsymbol \lambda}$ implies $n-r-1\in A_*$, one has $|\mathcal D_{r+2}P_{\mathbf j}(t_*)|\asymp_\gamma L_{r+2}$
at every such witness $t_*$. Let us state the  counting estimate for  \eqref{eq:n-sufficient-dual-set}. 

\begin{proposition}\label{prop:n-sufficient-dual}
Let $n\ge3$, $Q\ge1$, $Q^{-1}\le\delta\le1$, and
$0<\beta<1/(10n)$, with $R=Q\delta^{-1}$.
For every admissible $(\boldsymbol\lambda,A)$ with
$\mathcal R_{\boldsymbol\lambda}\ne\varnothing$ and every $\epsilon>0$,
\begin{equation}\label{eq:n-sufficient-dual-input}
\begin{aligned}
\min_{r\in\mathcal R_{\boldsymbol\lambda}}
\left\{\left(\prod_{s=0}^{r-1}(1+\widetilde{L}_s)\right)
\#\mathcal T^{(r)}_{\delta,\boldsymbol\lambda}(I^*)\right\}\lesssim_{\epsilon,\beta,\gamma}
Q^\epsilon R^{O(\beta)}\lambda_{\mathrm{prod}}^{-1}E_n(Q^{-1},\delta^{-1}).
\end{aligned}
\end{equation}
\end{proposition}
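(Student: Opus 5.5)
The plan is to make rigorous the inductive scheme of Section~\ref{sec:algebraic-computation}, Steps 1--4, with the sets $\mathcal T^{(r)}_{\delta,\boldsymbol\lambda}(I^*)$ playing the role of the anisotropic counts $\mathbf N_m$, where $m=n-r$ corresponds to the number of remaining coordinates $\mathbf j_{\ge r}$. The weight $\prod_{s<r}(1+\widetilde L_s)$ is the rigorous form of the fiber factors $(1+\de_m)$ in Proposition~\ref{input0}. Once $\mathbf j_{\ge r}$ and a witness $t_*$ are fixed, the coordinate $j_{r-1}$ is determined by $\mathcal D_{r-1}P_{\mathbf j}(t_*)$ up to $O(1+\widetilde L_{r-1})$ choices. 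This follows from \eqref{eq:n-normalized-tail} together with the fact that moving to a zero of $\mathcal D_rP_{\mathbf j}$ costs $\widetilde L_{r-1}$.

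\textbf{Choosing $r$.} We do not try to bound every $r\in\mathcal R_{\boldsymbol\lambda}$. We choose $r$ as in Step~3: descending from $r=0$, the first index at which the hard case \eqref{eq:section-two-hard-case} occurs, i.e.\ where the volume of the next lower-dimensional projected problem exceeds $U=\lambda_{\mathrm{prod}}^{-1}E_n$. If no such index exists, the volume bound suffices. To obtain the volume bound rigorously, we cover $\mathcal T^{(r)}$ by the $O(\ell_r^{-1})$ parameter intervals of length $\ell_r$. On each interval the admissible $\mathbf j_{\ge r}$ lie in a box whose sides, in normalized derivative coordinates, are $\widetilde L_r, L_{r+2},\dots,L_n$. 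Since the change of variables from $\mathbf j_{\ge r}$ to $(\mathcal D_kP_{\mathbf j}(t))_{k\ge r}$ is unipotent by \eqref{eq:n-normalized-tail}, each box contains $\lesssim\prod(1+\text{side})$ lattice points. This reproduces $\mathcal V_m$ and the inequalities \eqref{eq:section-two-U-consequences}--\eqref{eq:section-two-volume-over-w}, which only involve numerology in the $\ell_j$ and are already proved.

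\textbf{Hard case.} This is where Proposition~\ref{input2} must be made rigorous on the normalized-derivative sets. We thicken the last scale $\lambda_{n-r-1}$ up to the threshold $\sigma$ chosen in \eqref{eq:section-two-sigma-choice}. We majorize the indicator of the thickened union of planks by a smooth function and apply Poisson summation in $\mathbf j_{\ge r}$. The zero frequency, together with frequencies below $\tc^{-1}$, gives the low part. This is bounded by $(\tau/\tc)$ times the count one dimension lower, via the projection argument of Proposition~\ref{input0}: we drop the last coordinate using $\tc\le$ lattice spacing. The nonzero frequencies give the high part. Here we need the Fourier transform of a sum of boxes $p[\wt\bla](t)$. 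Its contribution is supported on dual boxes, and after projecting to the three coordinates dual to the two shortest sides and the one next to them, it reduces to a planar count. For that count we invoke Proposition~\ref{input3}, applied to the projected curve $\mathcal C_{r,2}$, which is nondegenerate by Proposition~\ref{prop:n-projected-curves}. The intermediate dyadic frequencies $\tc^{-1}<r'<\tau^{-1}$ are handled the same way, with $\tau$ replaced by $1/r'$. This interpolates between the two endpoint bounds and loses only a logarithm. The terms then combine exactly as in Step~3 to give $\lessapprox T=\max\{\mathcal V_m,U\}\lesssim U$. The final inequality uses Step~4's bound $h\mathcal V_n\le E_n$ at the top level and the induction at lower levels.

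\textbf{Main obstacle.} I expect the main difficulty to be the geometric comparisons: the planks defined by normalized derivatives at witness points must genuinely be comparable to Frenet boxes, and the $\tc$-neighbourhood of a union of thinner planks must be comparable to a union of thicker planks. Both are needed to justify $\kappa\sim\tau/\tc$ and the bounded overlap. I would first prove these through a Taylor expansion of $t\mapsto(\mathcal D_kP_{\mathbf j}(t))_k$ using the identity $(\mathcal D_kP)'=(\mathcal D_k\gamma_{k+1})'\mathcal D_{k+1}P$ and the log-concavity \eqref{convexity}. These make a displacement of size $\ell_r$ change each coordinate by at most its scale. The $R^{O(\beta)}$ losses at $r=0$ are absorbed throughout.
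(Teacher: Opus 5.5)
\textbf{The route.} You propose to make the heuristic of Section~\ref{sec:algebraic-computation} rigorous: thicken, Poisson-sum in all of $\mathbf j_{\ge r}$, read off Fourier transforms of planks, and project. The paper's proof of this proposition is different. It runs a van der Corput B-process on $\mathcal T^{(r)}$:
\begin{itemize}
\item It uses $\mathcal D_rP_{\mathbf j}(t(s))=j_r+j_{r+1}s+H_{\mathbf j}(s)$ to Poisson-sum in $j_r$ alone. The frequencies $|d|\lesssim1+L_{r+3}^2/L_{r+2}^3$ give $K^{-1}(1+L_{r+3}^2/L_{r+2}^3)\#\widetilde{\mathcal T}^{(r+1)}$, which is your low part, $\tfrac{\tau}{\tc}$ times the lower-dimensional count.
\item It then Poisson-sums in $j_{r+1}$ with uniform stationary phase (Proposition~\ref{prop:bp52-stationary-phase}), and finally in $\mathbf j_{\ge r+2}$.
\item It combines the resulting bounds through the $Z_k,\theta_r$ iteration and Lemma~\ref{lem:bp36-lead-product}, not through Step~3's induction.
\end{itemize}
Your low part is fine in principle. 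The gap is in the high part.

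\textbf{The gap at intermediate frequencies.} Only at the top frequency $r'\sim\tau^{-1}$ do the dual boxes $p[\wt\bla](t)^*$ have bounded overlap and lie in the $\rho_{m-2}^{-1}$-neighbourhood of the cone over $\mathcal C_{r,2}$; only there does projecting give a planar count. For $\tc^{-1}<r'<\tau^{-1}$ the situation changes:
\begin{itemize}
\item The boxes overlap up to $(\tau r')^{-1/2}$ times.
\item They fill a neighbourhood of width about $\rho_{m-2}^{-1}+(\rho_{m-2}\tau r')^{-1}$, not $\rho_{m-2}^{-1}$.
\item For $r'$ below the second dual side $(\rho_{m-2}\tau)^{-1/2}$ they are pieces of tangent planes, not conical sectors, so the projection is not a planar rational-point count. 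This range meets $[\tc^{-1},\tau^{-1}]$ whenever $\ell\sigma<\min\{1,w\}^2$, which Step~3 does not exclude.
\end{itemize}
Your phrase ``handled the same way, with $\tau$ replaced by $1/r'$'' presupposes that the frequency-$r'$ part of the thin-plank majorant is $\tau r'$ times that of a majorant of planks of thickness $1/r'$. That is a cancellation statement among the thin planks inside each thick plank, i.e.\ a statement about Fourier transforms. None of the support comparisons you name as the main obstacle ($\kappa\sim\tau/\tc$, bounded overlap, comparability of planks) can deliver it.

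You would need one of two things:
\begin{itemize}
\item a lemma saying that the $1/r'$-mollification of a continuously averaged thin-plank majorant equals $\tau r'$ times a bump with derivative bounds adapted to the thick planks; or
\item an explicit multiplicity count for the thin dual boxes at every radius, with the widened neighbourhoods above.
\end{itemize}
In the paper, stationary phase supplies exactly this: the factor $\sqrt{L_{r+2}/d}$, together with a count $\mathfrak R_{\mathcal C_{r,n-r-1}}(L_{r+2}^{-1},D)$ whose width does not depend on $d$.

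\textbf{Structural mismatch.} Step~3's induction passes through every level, and at a hard level the bound needs the bound one level down. So fixing a single $r$ is not enough; you must run the recursion over all subsequent levels, as the paper does with $Z_k$. However, $\mathcal T^{(r)}$, with its witness condition $\mathcal D_{r+1}P_{\mathbf j}(t_*)=0$, exists only for $r\in\mathcal R_{\bla}$, i.e.\ when $\la_{n-r-1}\le c_*$ (and $R^\beta\la_{n-1}\le c_*$ at $r=0$). At the other levels you need either a substitute set or an argument that no gain is required there. The paper gives the latter, showing $\theta_r\gtrsim R^{-O(\beta)}$ for $r\notin\mathcal R_{\bla}$ in Subsection~\ref{sec:bp36-gain-restore}.

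\textbf{Smaller points.}
\begin{itemize}
\item If no level is hard, the chain of projections ends either at a level with $\de_m\ge1$ or at the bottom level. At the bottom you need the planar theorem (cf.\ \eqref{eq:bp36-terminal}), not the volume bound.
\item Your volume box should have sides $\widetilde L_r,L_{r+1},L_{r+2},\dots,L_n$; the side $L_{r+1}$ is missing.
\end{itemize}
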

For each $r\in\mathcal R_{\boldsymbol\lambda}$, the expression inside the minimum in
\eqref{eq:n-sufficient-dual-input} has two factors:
\[
\underbrace{\prod_{s=0}^{r-1}(1+\widetilde{L}_s)}_{
\substack{\text{cost of recovering}\\j_0,\ldots,j_{r-1}}}
\quad
\underbrace{\#\mathcal T^{(r)}_{\delta,\boldsymbol\lambda}(I^*)}_{
\substack{\text{number of choices for}\\(j_r,\ldots,j_n)}}.
\]
Every vector counted in Proposition~\ref{prop:n-derivative-counting}
projects to a vector in $\mathcal T^{(r)}_{\delta,\boldsymbol\lambda}(I^*)$.
For each such projection, the first factor bounds the number of choices
for the omitted coefficients, up to $R^{O(\beta)}$.

Let us explain why the sidelength $L_s$ is thickened to $\widetilde{L}_s$.
\begin{equation}
    |\mathcal D_rP_{\mathbf j}(t_*) - \mathcal D_rP_{\mathbf j}(t_\tau)| \lesssim |t_*-t_\tau|\sup |\mathcal D_{r+1}P_{\mathbf j}|.
\end{equation}
After some computations, we can see that $|t_*-t_\tau| \lesssim \ell_r$ and $\sup |\mathcal D_{r+1}P_{\mathbf j}| \lesssim L_{r+1}$. So we get the bound $\ell_r L_{r+1}=\widetilde{L}_r$.
For $s<r$, analogous variation estimates on the intervals of possible
witnesses give the recovery factors $1+\widetilde{L}_s$.

We prove these claims in the next subsection and postpone the proof of
Proposition~\ref{prop:n-sufficient-dual}.

\bigskip

\subsection{Proof of Proposition~\ref{prop:n-derivative-counting} assuming Proposition \ref{prop:n-sufficient-dual}}

Let $M_{\boldsymbol\lambda,A}$ be the set defined in
Proposition~\ref{prop:n-derivative-counting}. We prove, for each $r\in\mathcal R_{\boldsymbol\lambda}$, that
\begin{equation}\label{eq:n-dual-recovery}
\#M_{\boldsymbol\lambda,A}
\lesssim_\gamma R^{O(\beta)}
\left(\prod_{s=0}^{r-1}(1+\widetilde{L}_s)\right)
\#\mathcal T^{(r)}_{\delta,\boldsymbol\lambda}(I^*).
\end{equation}
Taking the minimum over $r$ and applying
Proposition~\ref{prop:n-sufficient-dual} will give the result.
Let us briefly explain how to prove \eqref{eq:n-dual-recovery}. We first prove that
\begin{equation}
    \mathbf j\in M_{\boldsymbol\lambda,A} \Longrightarrow \mathbf j_{\ge r}\in\mathcal T^{(r)}_{\delta,\boldsymbol\lambda}(I^*).
\end{equation}
And then we prove that for given $\mathbf j_{\ge r}\in \mathcal T^{(r)}_{\delta,\boldsymbol\lambda}(I^*)$
the number of possible choices of $\mathbf j_{\le r-1}$ such that $\mathbf j=(\mathbf j_{\le r-1},\mathbf j_{\ge r})\in  M_{\boldsymbol\lambda,A}$ is bounded by
\[
O\left(R^{O(\beta)}\prod_{s=0}^{r-1}(1+\widetilde{L}_s)\right).
\]
Combining these two gives the bound \eqref{eq:n-dual-recovery}.

\subsubsection*{\textbf{Counting $\mathbf{j_{\ge r}}$.}}

Let $\mathbf j\in M_{\boldsymbol\lambda,A}$, and let $t_\tau$
be a witness for this membership, that is, a point satisfying
the defining conditions of $M_{\boldsymbol\lambda,A}$.
Take $r\in\mathcal R_{\boldsymbol\lambda}$.
We show that
$\mathbf j_{\ge r}\in\mathcal T^{(r)}_{\delta,\boldsymbol\lambda}(I^*)$.

The collection $M_{\boldsymbol\lambda,A}$ is defined by using the ordinary differentiation. We'll first relate this to the differentiation $\mathcal{D}_k$. 
First, $\mathcal D_0P_{\mathbf j}=P_{\mathbf j}$ and
$\mathcal D_1P_{\mathbf j}=P_{\mathbf j}'$. For $k\ge2$, the coefficients
$c_{k,0}$ and $c_{k,1}$ vanish, so
\eqref{eq:n-normalized-derivative-expansion} gives
\[
|\mathcal D_kP_{\mathbf j}(t_\tau)|
\lesssim_\gamma\sum_{m=2}^k|P_{\mathbf j}^{(m)}(t_\tau)|
\lesssim_\gamma L_k.
\]
Consequently,
\begin{equation}\label{DkPbound}
    |\mathcal D_kP_{\mathbf j}(t_\tau)|\lesssim_\gamma
\begin{cases}
R^\beta L_k,&k=0,1,\\
L_k,&2\le k\le n.
\end{cases}
\end{equation}

If $n+1-k\in A_*$, then
\[
\frac{L_{k-1}}{L_k}
=\prod_{i=1}^{n+1-k}\lambda_i\le c_*.
\]
The leading term in \eqref{eq:n-normalized-derivative-expansion}
has size $\gtrsim_\gamma L_k$, whereas its remaining terms satisfy
\[
\sum_{m=2}^{k-1}|c_{k,m}(t_\tau)P_{\mathbf j}^{(m)}(t_\tau)|
\lesssim_\gamma L_{k-1}\le c_*L_k.
\]
For $k=2$ this sum is empty. Taking $c_*$ sufficiently small gives
\[
|\mathcal D_kP_{\mathbf j}(t_\tau)|\gtrsim_\gamma L_k
\qquad(n+1-k\in A_*).
\]
In particular, 
\begin{equation}\label{Dr+2Pbound}
    |\mathcal D_{r+2}P_{\mathbf j}(t_\tau)|
\asymp_\gamma L_{r+2}.
\end{equation}

We next find a zero of $\mathcal D_{r+1}P_{\mathbf j}$ near $t_\tau$.
For $C_1>0$ sufficiently large in terms of the curve, set
\[
J:=
\begin{cases}
[t_\tau-C_1\ell_r,t_\tau+C_1\ell_r],&r\ge1,\\
[t_\tau-C_1R^\beta\ell_0,t_\tau+C_1R^\beta\ell_0],&r=0.
\end{cases}
\]
The condition $r\in\mathcal R_{\boldsymbol\lambda}$ gives $\ell_r\le c_*$ for $r\ge1$ and
$R^\beta\ell_0\le R^\beta\lambda_{n-1}\le c_*$ for $r=0$.
Thus $J\subset I^*$ after choosing $c_*$ sufficiently small after $C_1$.
For $r\ge1$ and $r+2\le k\le n-1$, we have
\begin{equation}\label{lrineq}
    \ell_r\frac{L_{k+1}}{L_k}
=\prod_{i=n-k+1}^{n-r-1}\lambda_i
\le\lambda_{n-r-1}\le c_*.
\end{equation}

For $r=0$ and $2\le k\le n-1$, the corresponding bound is
\begin{equation}\label{l0ineq}
    R^\beta\ell_0\frac{L_{k+1}}{L_k}
=R^\beta\prod_{i=n-k+1}^{n-1}\lambda_i
\le R^\beta\lambda_{n-1}\le c_*.
\end{equation}

Starting with $\mathcal D_nP_{\mathbf j}=j_n$ and integrating
successively down to order $r+2$, we prove the following result.
\begin{lemma}
For $t\in J,\ r+2\le k\le n$, we have
\begin{equation}\label{supDkP}
    \sup_{t\in J}|\mathcal D_kP_{\mathbf j}(t)|\lesssim_\gamma L_k,
\end{equation}
\begin{equation}\label{DiffDkP}
    |\mathcal D_kP_{\mathbf j}(t)-\mathcal D_kP_{\mathbf j}(t_\tau)|
\lesssim_\gamma C_1c_*L_k.
\end{equation}
\end{lemma}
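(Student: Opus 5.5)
We argue by downward induction on $k$, from $k=n$ to $k=r+2$, proving both \eqref{supDkP} and \eqref{DiffDkP} at each step. The base case $k=n$ is immediate: $\mathcal D_nP_{\mathbf j}=j_n$ is constant, so the difference in \eqref{DiffDkP} vanishes. Moreover $|j_n|=|\mathcal D_nP_{\mathbf j}(t_\tau)|\lesssim_\gamma L_n$ by \eqref{DkPbound}, since $n\ge r+2\ge2$.

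For the inductive step, fix $r+2\le k\le n-1$ and assume \eqref{supDkP} holds at order $k+1$. We use the identity
\[
(\mathcal D_kP_{\mathbf j})'=(\mathcal D_k\gamma_{k+1})'\,\mathcal D_{k+1}P_{\mathbf j},
\qquad \bigl|(\mathcal D_k\gamma_{k+1})'\bigr|\asymp_\gamma1 \text{ on } I^*.
\]
By the fundamental theorem of calculus on $J\subset I^*$, for $t\in J$,
\[
|\mathcal D_kP_{\mathbf j}(t)-\mathcal D_kP_{\mathbf j}(t_\tau)|
\lesssim_\gamma |J|\sup_{J}|\mathcal D_{k+1}P_{\mathbf j}|
\lesssim_\gamma |J|\,L_{k+1}.
\]
To bound $|J|L_{k+1}$, consider the two cases for $r$:
\begin{itemize}
\item If $r\ge1$, then $|J|L_{k+1}=2C_1\ell_rL_{k+1}\le 2C_1c_*L_k$ by \eqref{lrineq}.
\item If $r=0$, then $|J|L_{k+1}=2C_1R^\beta\ell_0L_{k+1}\le 2C_1c_*L_k$ by \eqref{l0ineq}.
\end{itemize}
In both cases we obtain \eqref{DiffDkP} at order $k$. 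Then \eqref{supDkP} at order $k$ follows from the triangle inequality together with \eqref{DkPbound}: since $k\ge2$, we have $|\mathcal D_kP_{\mathbf j}(t_\tau)|\lesssim_\gamma L_k$, and $C_1c_*\le1$.

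The one point needing care is that the implicit constants must not accumulate through the $O(n)$ induction steps. The implicit constant in \eqref{supDkP} at order $k$ is $C_\gamma+C_\gamma'C_1c_*$, and it feeds into the next step only multiplied by $C_1c_*$. Choosing $c_*$ small after $C_1$, as already stipulated, keeps every constant bounded by a quantity depending only on $\gamma$ and $n$. The case $k=r+2$ is included in this range, so both estimates hold at all orders $r+2\le k\le n$.
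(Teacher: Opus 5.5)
Your proposal is correct and follows essentially the same route as the paper. The paper also runs a downward induction from $k=n$, where $\mathcal D_nP_{\mathbf j}\equiv j_n$, and combines the fundamental theorem of calculus with $(\mathcal D_kP_{\mathbf j})'=(\mathcal D_k\gamma_{k+1})'\mathcal D_{k+1}P_{\mathbf j}$, the scale inequalities \eqref{lrineq} and \eqref{l0ineq}, and the triangle inequality with \eqref{DkPbound}; your explicit check that the implicit constants do not accumulate over the $O(n)$ steps spells out what the paper leaves implicit in the choice $C_1c_*\ll1$.
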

\begin{proof}
    We prove the lemma by induction on $k$. When $k=n$, \eqref{supDkP} and \eqref{DiffDkP} hold since $\mathcal D_n P_{\mathbf j}\equiv j_n$. Assuming we have proved for $k+1$, we consider the case at $k$. Note that (recalling \eqref{eq:n-normalized-derivatives})
\[ |\mathcal D_kP_{\mathbf j}(t)-\mathcal D_kP_{\mathbf j}(t_\tau)|\le |t-t_\tau|\cdot\sup_{u\in J}
| (\mathcal D_k P_{\mathbf j})'(u)|=
|t-t_\tau|\cdot\sup_{u\in J}
| (\mathcal D_k\gamma_{k+1})'(u)\mathcal D_{k+1}P_{\mathbf j}(u)|.
\]
Noting that $|(\mathcal D_k\gamma_{k+1})'(u)|\asymp 1$ and by induction hypothesis that $|\mathcal D_{k+1}P_{\mathbf j}(u)|\lesssim L_{k+1}$, we have
\[ |\mathcal D_kP_{\mathbf j}(t)-\mathcal D_kP_{\mathbf j}(t_\tau)|\lesssim C_1\ell_r L_{k+1}\ \ \ \text{when }r\ge 1; \]
\[ |\mathcal D_kP_{\mathbf j}(t)-\mathcal D_kP_{\mathbf j}(t_\tau)|\lesssim C_1R^\beta\ell_0 L_{k+1}\ \ \ \text{when }r=0. \]
By \eqref{lrineq} and \eqref{l0ineq}, we have
\[ |\mathcal D_kP_{\mathbf j}(t)-\mathcal D_kP_{\mathbf j}(t_\tau)|\lesssim C_1 c_* L_k. \]
This proves \eqref{DiffDkP} at case $k$.

Then using \eqref{DkPbound} and
choosing $c_*$ sufficiently
small so that $C_1c_*\ll 1$, one has
\[ |\mathcal D_k P_{\mathbf j}(t)|\le |\mathcal D_k P_{\mathbf j}(t_\tau)|+ |\mathcal D_kP_{\mathbf j}(t)-\mathcal D_kP_{\mathbf j}(t_\tau)| \lesssim L_k.\]
This proves \eqref{supDkP} at case $k$.
\end{proof}

Now that $c_*$ is chosen so that $C_1c_*\ll 1$, we see that $\mathcal D_{r+2}P_{\mathbf j}$ has constant sign on $J$ and
\[
|\mathcal D_{r+2}P_{\mathbf j}(t)|\asymp_\gamma L_{r+2}
\qquad(t\in J).
\]
It follows from
\[
(\mathcal D_{r+1}P_{\mathbf j})'
=(\mathcal D_{r+1}\gamma_{r+2})'\mathcal D_{r+2}P_{\mathbf j}
\]
that $\mathcal D_{r+1}P_{\mathbf j}$ is monotone on $J$, with derivative
of absolute value $\asymp_\gamma L_{r+2}$.
Its initial absolute value (see \eqref{DkPbound}) is $O_\gamma(L_{r+1})$ for $r\ge1$
and $O_\gamma(R^\beta L_1)$ for $r=0$.
Since $\ell_rL_{r+2}=L_{r+1}$, taking $C_1$ sufficiently
large makes its values on the endpoints of $J$ have opposite signs. Thus there exists
$t_*\in J$ such that
\[
\mathcal D_{r+1}P_{\mathbf j}(t_*)=0,
\qquad
|t_*-t_\tau|\lesssim_\gamma
\begin{cases}\ell_r,&r\ge1,\\R^\beta\ell_0,&r=0.\end{cases}
\]
Between $t_\tau$ and $t_*$, monotonicity bounds
$|\mathcal D_{r+1}P_{\mathbf j}|$ by its initial absolute value.
Integrating the identity for $(\mathcal D_rP_{\mathbf j})'$ therefore gives
\[
|\mathcal D_rP_{\mathbf j}(t_*)-\mathcal D_rP_{\mathbf j}(t_\tau)|
\lesssim_\gamma
\begin{cases}
\ell_rL_{r+1}=\widetilde{L}_r,&r\ge1,\\
R^{O(\beta)}\ell_0L_1=R^{O(\beta)}\widetilde{L}_0,&r=0.
\end{cases}
\]
(Recall $\widetilde{L}_r$ in \eqref{eq:n-dual-scales}.)
Using $L_r\le\widetilde{L}_r$ and the initial bounds, we conclude that
\[
|\mathcal D_rP_{\mathbf j}(t_*)|\lesssim_\gamma R^{O(\beta)}\widetilde{L}_r.
\]
All the higher-order conditions hold at $t_*$ because $t_*\in J$.
Hence $\mathbf j_{\ge r}\in\mathcal T^{(r)}_{\delta,\boldsymbol\lambda}(I^*)$.

\subsubsection*{\textbf{Counting $\mathbf j_{\le r-1}$.}}

We first establish a counting lemma for $0\le s\le n-2$,
and then apply it for $s<r$.
Let $(j_{s+1},\ldots,j_n)$ be given.
Our goal is to count the possible integers $j_s$ so that $(\dots,j_s,j_{s+1},\dots,j_n)\in M_{\boldsymbol\lambda,A}$.
By \eqref{eq:n-normalized-tail},
\[
\mathcal D_sP_{\mathbf j}(t)
=j_s+\sum_{\nu=s+1}^n j_\nu\mathcal D_s\gamma_\nu(t)
=:j_s+G_s(t).
\]
At the original witness $t_\tau$, we have
\[
|j_s+G_s(t_\tau)|\lesssim_\gamma
\begin{cases}R^\beta L_s,&s=0,1,\\ L_s,&s\ge2.\end{cases}
\]
Define
\[
E_s:=\left\{t\in I:
\begin{array}{l}
|\mathcal D_kP_{\mathbf j}(t)|\le CL_k
\quad(\max\{2,s+1\}\le k\le n),\\[1mm]
|\mathcal D_1P_{\mathbf j}(t)|\le CR^\beta L_1
\quad\text{if }s=0,\\[1mm]
|\mathcal D_kP_{\mathbf j}(t)|\ge cL_k
\quad(s+1\le k\le n,\ n+1-k\in A_*)
\end{array}\right\}.
\]
Here $C>0$ and $c>0$ depend only on the curve and are chosen
so that every original witness belongs to $E_s$.

\begin{lemma}\label{lem:n-fibre-counting}
Let $0\le s\le n-2$. The following conclusions hold.
They also hold with $I$ replaced by any compact interval on which
the same smoothness and nonvanishing assumptions hold, with constants
depending only on the interval length and the corresponding curve bounds.
    \begin{enumerate}
        \item $E_s$ is a union of  $O(1)$ connected components $J$.

        \item Each connected component satisfies
        \[
        |J|\lesssim_\gamma
        \begin{cases}R^\beta\ell_0,&s=0,\\\ell_s,&s\ge1.\end{cases}
        \]
        \item On each connected component,
        \[
        \operatorname{diam}(G_s(J))\lesssim_\gamma
        \begin{cases}R^{O(\beta)}\widetilde{L}_0,&s=0,\\\widetilde{L}_s,&s\ge1.\end{cases}
        \]
    \end{enumerate}
\end{lemma}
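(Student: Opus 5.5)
The plan is to exploit the structure of $\mathcal D_kP_{\mathbf j}$ for $k\ge s+1$. These functions depend only on the fixed tail $(j_{s+1},\ldots,j_n)$, so $E_s$ is a set determined by the given data, and each $\mathcal D_kP_{\mathbf j}$ is smooth with $(\mathcal D_kP_{\mathbf j})'=(\mathcal D_k\gamma_{k+1})'\mathcal D_{k+1}P_{\mathbf j}$.

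\textbf{Part (1).} We argue that the number of components is bounded. The set $E_s$ is cut out by $O(n)$ inequalities $|\mathcal D_kP_{\mathbf j}|\le CL_k$ and $|\mathcal D_kP_{\mathbf j}|\ge cL_k$. Each such inequality defines a sublevel or superlevel set of a function $\mathcal D_kP_{\mathbf j}=j_k+\sum_{\nu>k}j_\nu\mathcal D_k\gamma_\nu$. This function is a linear combination of the functions $1,\mathcal D_k\gamma_{k+1},\ldots,\mathcal D_k\gamma_n$.

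By Proposition~\ref{prop:n-projected-curves}, after the reparametrization $u=\mathcal D_k\gamma_{k+1}$, the curve $(\mathcal D_k\gamma_{k+1},\ldots,\mathcal D_k\gamma_n)$ is nondegenerate. Nondegeneracy gives a Chebyshev (Pólya) property on a short enough interval: the Wronskian of $1,\mathcal D_k\gamma_{k+1},\ldots$ equals a nonvanishing quotient of the $W$'s. Consequently any nontrivial combination $g-a$ has at most $n$ zeros on $I^*$. I would prove this by the standard Rolle argument, applying the operators $\mathcal D_{k+1},\ldots,\mathcal D_n$. Each step kills one coefficient, the last function is the constant $j_n$, and Rolle then bounds the number of zeros of $g-a$ by $n-k$.

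Hence each level set $\{|g|\le a\}$ or $\{|g|\ge a\}$ has $O(1)$ components. An intersection of $O(1)$ such sets, each a union of $O(1)$ intervals, is again a union of $O(1)$ intervals. The degenerate case where some $\mathcal D_kP_{\mathbf j}$ is identically constant is trivial.

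\textbf{Part (2).} This is the key step. Let $k_0=n-r'$ denote the index with $n+1-k_0\in A_*$ nearest the top of the range. More simply, use the assumption from the ambient setting that $s$ plays the role of $r$: the relevant case is when $n-s-1\in A_*$, so on $E_s$ we have $|\mathcal D_{s+2}P_{\mathbf j}|\ge cL_{s+2}$. Since $J$ is connected, $\mathcal D_{s+2}P_{\mathbf j}$ has constant sign on $J$. Then $\mathcal D_{s+1}P_{\mathbf j}$ is monotone with $|(\mathcal D_{s+1}P_{\mathbf j})'|\gtrsim L_{s+2}$. Its values on $J$ lie in an interval of length $\lesssim L_{s+1}$, or $R^\beta L_1$ when $s=0$. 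Therefore
\[
|J|\lesssim \frac{L_{s+1}}{L_{s+2}}=\ell_s,
\]
or $|J|\lesssim R^\beta\ell_0$ when $s=0$.

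If $n-s-1\notin A_*$, then $\lambda_{n-s-1}>c_*$, and $\ell_s$ differs by a bounded factor from $\ell_{s'}$ for a lower index. In that case I would instead use the largest $k\ge s+2$ with a lower bound and combine it with the upper bounds $\mathcal D_{k-1}P_{\mathbf j}\lesssim L_{k-1}$. This gives $|J|\lesssim L_{k-1}/L_k$, and this is $\lesssim\ell_s$ up to $c_*$-dependent constants, since the intervening $\lambda$'s exceed $c_*$. Handling this bookkeeping is the main obstacle. I expect to have to use admissibility together with the definition of $A_*$ to show that the ratio products of $\lambda_i>c_*$ are $O_\gamma(1)$. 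If no lower bound exists at all, then all $\lambda_i$ with $i\le n-s-1$ are $>c_*$, so $\ell_s\gtrsim1$ and the bound $|J|\le|I|$ suffices.

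\textbf{Part (3).} We have $G_s'=\sum_{\nu>s}j_\nu(\mathcal D_s\gamma_\nu)'=(\mathcal D_s\gamma_{s+1})'\,\mathcal D_{s+1}P_{\mathbf j}$. The leading term gives this since $(\mathcal D_s\gamma_{s+1})'$ is the only contribution independent of $j_s$, and indeed $(\mathcal D_sP_{\mathbf j})'=G_s'$. On $J\subset E_s$ we have $|\mathcal D_{s+1}P_{\mathbf j}|\lesssim L_{s+1}$, or $R^\beta L_1$ when $s=0$. Hence
\[
\operatorname{diam}G_s(J)\lesssim |J|\,L_{s+1}\lesssim \ell_sL_{s+1}=\widetilde L_s,
\]
with the extra $R^{O(\beta)}$ when $s=0$. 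Finally, all constants depend only on Wronskian bounds, so the conclusions transfer to any compact interval with the same nonvanishing assumptions.
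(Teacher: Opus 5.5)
Your parts (1) and (3) are essentially the paper's arguments. Part (1) is the descending Rolle induction showing that $\mathcal D_kP_{\mathbf j}-a$ has at most $n-k$ zeros unless it vanishes identically. Part (3) integrates $G_s'=(\mathcal D_s\gamma_{s+1})'\mathcal D_{s+1}P_{\mathbf j}$ over $J$.

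The gap is in part (2). First, $n-s-1\in A_*$ is not ``the relevant case''. The lemma is used to recover $j_s$ for $s<r$ (in the proof of Proposition~\ref{prop:n-derivative-counting}) and for $r<s\le n-2$ (in Proposition~\ref{lem:bp36-lead-rough} and Lemma~\ref{lem:bp36-lead-tails}). Such $s$ need not lie in $\mathcal R_{\boldsymbol\lambda}$, so the case $n-s-1\notin A_*$ carries the real content.

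In that case your choice of the \emph{largest} $k\ge s+2$ with $n+1-k\in A_*$ fails. It gives
$|J|\lesssim L_{k-1}/L_k=\prod_{i=1}^{n+1-k}\lambda_i=\ell_s\prod_{i=n+2-k}^{n-s-1}\lambda_i^{-1}$.
The range $n+2-k\le i\le n-s-1$ may contain other elements of $A_*$, where $\lambda_i\le c_*$ can be arbitrarily small. For example, take $n=5$, $s=1$ and $A_*\cap\{1,2,3\}=\{1,2\}$. The largest choice $k=5$ yields only $|J|\lesssim\lambda_1$, whereas $\ell_1=\lambda_1\lambda_2\lambda_3$ with $\lambda_2$ possibly tiny. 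So your justification ``the intervening $\lambda$'s exceed $c_*$'' is false for your choice, and admissibility does not repair it.

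The missing idea is minimality. Let $m$ be the \emph{smallest} index in $[s+2,n]$ with $n+1-m\in A_*$, as in the paper. Then $\lambda_i>c_*$ for $n+2-m\le i\le n-s-1$, hence $L_{m-1}/L_m\le c_*^{-(m-s-2)}\ell_s$. With this choice your one-step argument goes through:
\begin{itemize}
\item $\mathcal D_mP_{\mathbf j}$ has a fixed sign on the connected set $J$, so $\mathcal D_{m-1}P_{\mathbf j}$ is monotone on $J$ with slope $\gtrsim_\gamma L_m$.
\item $E_s$ gives $|\mathcal D_{m-1}P_{\mathbf j}|\le CL_{m-1}$, or $\le CR^\beta L_1$ when $s=0$, $m=2$.
\item Hence $|J|\lesssim_\gamma L_{m-1}/L_m\lesssim_\gamma\ell_s$, respectively $|J|\lesssim R^\beta\ell_0$.
\end{itemize}

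With this correction your route is simpler than the paper's. The paper uses the same minimal $m$ but differentiates $\mathcal D_{s+1}P_{\mathbf j}$ exactly $d=m-s-1$ times. It then absorbs the lower-order terms via $L_{m-1}\le c_*L_m$ and applies the finite-difference mean value theorem to get $|J|\lesssim(L_{s+1}/L_m)^{1/d}$. That bound is the geometric mean of the ratios $L_k/L_{k+1}$ for $s+1\le k\le m-1$. Your version yields only the largest of these ratios, but that is still $\lesssim_{c_*}\ell_s$, which is all that items (2) and (3) need.
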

Let us quickly see how to prove Proposition \ref{prop:n-derivative-counting} assuming Proposition \ref{prop:n-sufficient-dual} and Lemma \ref{lem:n-fibre-counting}.
Let $0\le s<r$ and let $(j_{s+1},\dots,j_n)$ be given. We count the possible integers $j_s$ so that 
\begin{equation}\label{inMA}
    (\dots,j_s,j_{s+1},\dots,j_n)\in M_{\mathbf{\lambda},A}.
\end{equation}
The condition \eqref{inMA} implies that there exists $t\in E_s$ such that 
\[ |j_s+G_s(t)|=|\mathcal D_s P_{\mathbf j}(t)|\lesssim L_s\ \text{when}\ s\ge 2\ \ (\text{or}\ \lesssim R^\beta L_s\ \text{when}\ s=0,1 ). \]
By Lemma \ref{lem:n-fibre-counting} and the initial bounds for
$|j_s+G_s(t_\tau)|$, we see that the number of choices for $j_s$ is at most
\[
\begin{cases}
O_\gamma\bigl(R^{O(\beta)}(1+\widetilde{L}_0)\bigr),&s=0,\\
O_\gamma\bigl(R^\beta(1+\widetilde{L}_1)\bigr),&s=1,\\
O_\gamma(1+\widetilde{L}_s),&s\ge2.
\end{cases}
\]
Here we used $L_s\le\widetilde{L}_s$ and $R\ge1$.
Applying this successively for $s=r-1,\ldots,0$ gives
\eqref{eq:n-dual-recovery}; the factor $R^{3\beta}$ also covers $r=0,1$.
Taking the minimum over $r\in\mathcal R_{\boldsymbol\lambda}$ and using
Proposition~\ref{prop:n-sufficient-dual} proves
Proposition~\ref{prop:n-derivative-counting}.

\bigskip

\subsection{Proof of Lemma \ref{lem:n-fibre-counting}}
In the rest of the section, we prove Lemma \ref{lem:n-fibre-counting}.

We first prove item 1. For each
$0\leq k\leq n$ and $a\in\mathbb R$, the function
$\mathcal D_kP_{\mathbf j}-a$ either vanishes identically or has at most
$n-k$ distinct zeros on $I$, as we now show.

We prove this by descending induction on $k$.
For $k=n$, the assertion follows from $\mathcal D_nP_{\mathbf j}=j_n$.
Suppose the assertion holds for $k+1$. Recall that
\[
(\mathcal D_kP_{\mathbf j}-a)'(t)
=(\mathcal D_k\gamma_{k+1})'(t)\mathcal D_{k+1}P_{\mathbf j}(t).
\]
If $\mathcal D_{k+1}P_{\mathbf j}$ vanishes identically, then
$\mathcal D_kP_{\mathbf j}-a$ is constant, so the assertion follows.
Otherwise, the induction hypothesis and the fact that
$(\mathcal D_k\gamma_{k+1})'$ never vanishes imply that
$(\mathcal D_kP_{\mathbf j}-a)'$ has at most $n-k-1$ distinct zeros.
By Rolle's theorem, $\mathcal D_kP_{\mathbf j}-a$ has at most $n-k$
distinct zeros. This completes the induction.

Now consider the boundary equations
\[
\mathcal D_kP_{\mathbf j}(t)=\pm CL_k
\quad(s+1\leq k\leq n)
\]
and
\[
\mathcal D_kP_{\mathbf j}(t)=\pm cL_k
\quad(s+1\leq k\leq n,\ n+1-k\in A_*).
\]
When $s=0$, replace the upper boundary equations at order $1$ by
$\mathcal D_1P_{\mathbf j}(t)=\pm CR^\beta L_1$.
Let $Z$ be the union of the solution sets of those equations
that do not hold identically. By the preceding argument,
\[
\#Z\leq 4\sum_{k=s+1}^{n}(n-k)=O_n(1).
\]
On each connected component of $I\setminus Z$, every
inequality defining $E_s$ is either satisfied everywhere
or nowhere. Hence $E_s$ is a union of some of these intervals
and some points of $Z$. It therefore has at most
$2\#Z+1=O_n(1)$ connected components, proving item 1.

\medskip

We next prove item 2. Let $J$ be a connected component of $E_s$.
Suppose first that no index $s+2\le m\le n$ satisfies
$n+1-m\in A_*$. Then
\[
\lambda_i>c_*\quad(1\le i\le n-s-1),
\qquad
\ell_s=\prod_{i=1}^{n-s-1}\lambda_i\ge c_*^{n-s-1}.
\]
Since $I$ has bounded length, we have
$|J|\lesssim_\gamma\ell_s$.
This proves item 2 in this case, also for $s=0$ because $R^\beta\ge1$.

Otherwise, choose the smallest $s+2\le m\le n$ such that
$n+1-m\in A_*$. Put
\[
d:=m-s-1.
\]
By the definition of $E_s$, on $J$ we have
\[
|\mathcal D_{s+1}P_{\mathbf j}|\le
\begin{cases}CR^\beta L_1,&s=0,\\C L_{s+1},&s\ge1,\end{cases}
\qquad
|\mathcal D_mP_{\mathbf j}|\geq cL_m.
\]

Let us first obtain a lower bound for
$\partial_t^d(\mathcal D_{s+1}P_{\mathbf j})$.
Repeatedly using
$(\mathcal D_kP_{\mathbf j})'=(\mathcal D_k\gamma_{k+1})'\mathcal D_{k+1}P_{\mathbf j}$ gives
\[
\partial_t^d(\mathcal D_{s+1}P_{\mathbf j})
=
\left(\prod_{k=s+2}^{m}(\mathcal D_{k-1}\gamma_k)'\right)\mathcal D_mP_{\mathbf j}
+
\sum_{k=s+2}^{m-1}a_k(t)\mathcal D_kP_{\mathbf j},
\]
where the coefficients $a_k$ are bounded in terms of the curve.
Since $n+1-m\in A_*$,
\[
\frac{L_{m-1}}{L_m}
=
\prod_{i=1}^{n+1-m}\lambda_i
\leq\lambda_{n+1-m}
\leq c_*.
\]
Consequently, the upper bounds defining $E_s$ imply
\[
\left|
\sum_{k=s+2}^{m-1}a_k(t)\mathcal D_kP_{\mathbf j}(t)
\right|
\lesssim_\gamma L_{m-1}
\leq c_*L_m.
\]
Each factor in the leading coefficient is bounded away from zero. Thus, if $c_*$
is sufficiently small, this sum is absorbed by the leading
term, and
\[
|\partial_t^d(\mathcal D_{s+1}P_{\mathbf j})(t)|
\gtrsim_\gamma L_m
\quad(t\in J).
\]

We now compare this lower bound with the upper bound for
$\mathcal D_{s+1}P_{\mathbf j}$. Take any $a<b$ in $J$ and put
$u=(b-a)/d$. The mean value theorem for finite differences
gives some $\xi\in(a,b)$ such that
\[
u^d\partial_t^d(\mathcal D_{s+1}P_{\mathbf j})(\xi)
=
\sum_{\nu=0}^{d}
(-1)^{d-\nu}\binom{d}{\nu}
\mathcal D_{s+1}P_{\mathbf j}(a+\nu u).
\]
All the points $a+\nu u$ belong to $J$. Therefore,
\[
u^dL_m
\lesssim_\gamma
\sum_{\nu=0}^{d}\binom{d}{\nu}
|\mathcal D_{s+1}P_{\mathbf j}(a+\nu u)|
\lesssim_\gamma
\begin{cases}R^\beta L_1,&s=0,\\ L_{s+1},&s\ge1.\end{cases}
\]
Since $a,b$ were arbitrary, we obtain
\[
|J|\lesssim_\gamma
\begin{cases}
R^{\beta/d}(L_1/L_m)^{1/d},&s=0,\\
(L_{s+1}/L_m)^{1/d},&s\ge1.
\end{cases}
\]
The same conclusion is immediate if $J$ is a single point.

It remains to compare the last expression with $\ell_s$.
By the minimality of $m$,
\[
\lambda_i>c_*
\quad(n+2-m\leq i\leq n-s-1).
\]
Hence, for $s+1\leq k\leq m-1$,
\[
\ell_s
\leq
\frac{L_k}{L_{k+1}}
=
\prod_{i=1}^{n-k}\lambda_i
=
\ell_s
\prod_{i=n-k+1}^{n-s-1}\lambda_i^{-1}
\leq c_*^{-(k-s-1)}\ell_s.
\]
Multiplying these inequalities gives
\[
\ell_s^d
\leq
\frac{L_{s+1}}{L_m}
\leq
c_*^{-d(d-1)/2}\ell_s^d.
\]
Since $c_*$ depends only on the curve and $R^{\beta/d}\le R^\beta$,
these comparisons give the lengths asserted in item 2.

\medskip

We finally prove item 3. Recall that
\[
\mathcal D_sP_{\mathbf j}(t)=j_s+G_s(t).
\]
Since $j_s$ is independent of $t$, we have
\[
G_s'(t)=(\mathcal D_sP_{\mathbf j})'(t)
=(\mathcal D_s\gamma_{s+1})'(t)\mathcal D_{s+1}P_{\mathbf j}(t).
\]
Let $J$ be a connected component of $E_s$. For any $a<b$ in $J$,
\[
|G_s(b)-G_s(a)|
\le\int_a^b
|(\mathcal D_s\gamma_{s+1})'(t)\mathcal D_{s+1}P_{\mathbf j}(t)|\,dt.
\]
For $s\ge1$, the integrand is $O_\gamma(L_{s+1})$ and
$|J|\lesssim_\gamma\ell_s$, giving variation $O_\gamma(\widetilde{L}_s)$.
For $s=0$, the integrand is $O_\gamma(R^\beta L_1)$ and
$|J|\lesssim_\gamma R^\beta\ell_0$, giving variation
$O_\gamma(R^{O(\beta)}\widetilde{L}_0)$. Taking the supremum over $a,b\in J$
proves item 3.

\section{Proof of Proposition \ref{prop:n-sufficient-dual}}

In this section, we prove Proposition \ref{prop:n-sufficient-dual}. 
Let us recall the statement. Throughout the section, $\beta$ is a sufficiently small number.

\begin{proposition}\label{prop:bp36-dual-counting}
Let $n\ge3$, $Q\geq1$, $Q^{-1}\le\delta\le1$.
For every $\epsilon>0$, for each admissible scale tuple $(\boldsymbol \lambda,A)$ with
$\mathcal R_{\boldsymbol\lambda}\neq\varnothing$, one has
\begin{equation}
\begin{aligned}
\min_{r\in\mathcal R_{\boldsymbol\lambda}}
\left\{
\#\mathcal T^{(r)}_{\delta,\boldsymbol\lambda}(I^*) \prod_{s=0}^{r-1}(1+\widetilde{L}_s)  \right\}\lesssim_{\epsilon,\beta,\gamma}
Q^\epsilon R^{O(\beta)}\lambda_{\mathrm{prod}}^{-1}E_n(Q^{-1},\delta^{-1}).
\end{aligned}
\end{equation}
\end{proposition}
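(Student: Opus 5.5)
The plan is to make the heuristic argument of Section~\ref{sec:algebraic-computation}, in particular the induction behind \eqref{eq:section-two-induction}, rigorous. The set $\mathcal T^{(r)}_{\delta,\boldsymbol\lambda}(I^*)$ should play the role of the $m$-dimensional anisotropic count $\mathbf N_m$ with $m=n-r$. It counts integer vectors $\mathbf j_{\ge r}$ of size $\lesssim\delta^{-1}$ that lie, for some witness $t_*$, in a box adapted to the normalized-derivative frame. By \eqref{eq:n-normalized-tail} this is a dual plank problem for the projected nondegenerate curve $\mathcal C_{r,n-r}$ of Proposition~\ref{prop:n-projected-curves}. The factor $\prod_{s<r}(1+\widetilde L_s)$ is the ``fiber'' cost of Proposition~\ref{input0}.

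\textbf{Step 1: choice of $r$ and translation of parameters.} Match the parameters with those of Section~\ref{sec:algebraic-computation} via $M=\delta^{-1}$ and $\ell_j\leftrightarrow L_{n-j}$. The widths $\widetilde L_r=L_{r+1}^2/L_{r+2}$ correspond to $\de_m$ in \eqref{eq:section-two-deltam}, and $\widetilde L_0=Q^{-1}=\de_n$. Then choose $r$ as in the induction: descend from the full problem ($r=0$) while the next lower-dimensional volume $\mathcal V_{m-1}$ is still $\le U$. Use the projection estimate (Proposition~\ref{input0}), which in the present language is exactly the bound $\#\mathcal T^{(r-1)}\lesssim(1+\widetilde L_{r-1})\#\mathcal T^{(r)}$. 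Stop either at a level where the width $\widetilde L_r\ge1$, where the volume bound \eqref{volumeestimate} applies, or at the hard case \eqref{eq:section-two-hard-case}. The restriction to $r\in\mathcal R_{\boldsymbol\lambda}$ is harmless: if $n-r-1\notin A_*$, then $\lambda_{n-r-1}\gtrsim 1$, and the corresponding level can be merged with its neighbour at an $O(1)$ loss.

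\textbf{Step 2: the volume bound.} When $\widetilde L_r\gtrsim1$, cover the set of admissible $\mathbf j_{\ge r}$ by $O(\ell_r^{-1})$ boxes, one for each witness $t_*$ in an $\ell_r$-net. Lemma~\ref{lem:n-fibre-counting} lets us freeze $t_*$ at this scale. Each box has sides $\widetilde L_r, L_{r+1}\ell_r\cdot(\text{orders}),\dots,L_n$ in the frame $\mathcal D_k$. Since all sides are at least $1$, the lattice count is at most its volume. Multiplying by the fiber factors should reproduce $\mathcal V_m\cdot(\text{bounded})$, and by Step~4 of the proof of Theorem~\ref{thm:section-two-main} this is at most $\lambda_{\mathrm{prod}}^{-1}E_n$.

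\textbf{Step 3: the hard case, the main obstacle.} Here $\widetilde L_r<1$, and we must prove a rigorous version of Proposition~\ref{input2}. Thicken the last width to $\sigma$ as in \eqref{eq:section-two-sigma-choice}. Write the count as a sum of smooth bumps on the union of thickened planks, and apply Poisson summation in $\mathbf j_{\ge r}$. The zero frequency and the low frequencies $\lesssim\tc^{-1}$ give $\sigma(1+w^{-1})$ times the projected count, and that projected count is handled by the induction hypothesis. The high frequencies are the Fourier transforms of the individual planks. These are dual boxes of dimensions $\tau^{-1}\times(\rho\tau)^{-1/2}\times\rho^{-1}\times\cdots$, and summing them over lattice points reduces, via their top three directions (the curve $\mathcal C_{r,2}$ and its cone), to the planar count $N_2$. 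That count is bounded by Proposition~\ref{input3}; equivalently, by \eqref{eq:intro-planar-bound} for the nondegenerate planar curve from Proposition~\ref{prop:n-projected-curves}. The delicate points are these: the intermediate dyadic frequencies $\tc^{-1}\le r'\le\tau^{-1}$, each of which should lose at most the same amount (a $\log$ factor); controlling the tails of the Schwartz bumps; and making sure the dual boxes indexed by different $t$ have bounded overlap, by an analogue of Lemma~\ref{lemcompare} for the $\mathcal D$-frame. Once this three-term recurrence \eqref{eq:section-two-normalized-recurrence} is established with $R^{O(\beta)}Q^\epsilon$ losses, the numerology of Step~3 of Theorem~\ref{thm:section-two-main} closes the argument verbatim. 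The minimum over $r\in\mathcal R_{\boldsymbol\lambda}$ is bounded by the value at the chosen $r$, which gives the claim.
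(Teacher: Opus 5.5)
\textbf{There is a genuine gap: you never prove the three-term recurrence in your Step~3, and that recurrence is the content of the proposition.} Your outline has the right skeleton, and it is the paper's. Your volume bound at the first level with $\widetilde L_r>1$ corresponds to $Z_0$ (see \eqref{eq:bp36-gain-start}). The projection between levels is \eqref{eq:bp36-rough-support} together with Lemma~\ref{lem:bp36-lead-tails}. The numerology is Section~\ref{sec:bp36-gain-proof}. In the notation of Section~\ref{sec:n-sufficient-dual-counting}, what you need is: for $L_{r+2}\ge1$ and $\max\{\widetilde L_r,\delta\}\le\sigma\le1$,
\[
\#\mathcal T^{(r)}\lesssim R^{O(\beta)}\delta^{-\eta}\Bigl(\sigma\bigl(1+L_{r+3}^2/L_{r+2}^3\bigr)\#\widetilde{\mathcal T}^{(r+1)}+\Bigl(\prod_{k=r+2}^nL_k\Bigr)\bigl((\sigma L_{r+2})^{-1/2}+(\sigma L_{r+2})^{1/2}\bigr)\Bigr).
\]
Note that the first term involves $\widetilde{\mathcal T}^{(r+1)}$, not $\mathcal T^{(r+1)}$. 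This is what Proposition~\ref{lem:bp36-B} and Lemma~\ref{lem:bp36-planar-bound} give with $\sigma=K^{-1}$, and proving it takes all of Section~\ref{sec:bp36-proof}. Proposition~\ref{input2} is only sketched in the paper, which explicitly omits the low-part geometric comparison and the intermediate frequencies. So ``making Section~\ref{sec:algebraic-computation} rigorous'' is where the proof has to happen; it cannot be cited as a step.

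The route you sketch also does not go through as stated, for two reasons.

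\emph{Low frequencies.} Poisson summation in all of $\mathbf j_{\ge r}$ sends the low frequencies back to physical space as a blurred union of planks. To extract the factor $\sigma/\min\{1,w\}$ you need a uniform density bound: every $\min\{1,w\}$-ball must meet the union of $\sigma$-planks in relative volume $\lesssim\sigma/\min\{1,w\}$. Essentially, the union must be the $\sigma$-neighbourhood of the hypersurface where the critical value of $\mathcal D_rP_{\mathbf j}$ vanishes, with curvature radius $\gtrsim L_{r+2}$. You neither list nor prove this.

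\emph{High frequencies.} The dual boxes do not have bounded overlap below the top frequency $\sigma^{-1}$. They all issue from the origin along the cone over $\mathcal C_{r,n-r}$. A lattice point within $O(L_{r+2}^{-1})$ of that cone with $|\xi|\sim r'$ lies in about $(\sigma r')^{-1/2}$ of them, so no analogue of Lemma~\ref{lemcompare} holds there. You must count with this multiplicity. It turns out harmless (the weighted count drops roughly by a factor $(\sigma r')^{1/2}$ relative to the top shell), but that computation is missing.

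The paper sidesteps both issues with the B-process:
\begin{itemize}
\item Poisson summation in $j_r$ alone, so the low frequencies $|d|\lesssim1+L_{r+3}^2/L_{r+2}^3$ are bounded trivially by $\#\widetilde{\mathcal T}^{(r+1)}$, with no geometry.
\item Poisson summation in $j_{r+1}$ plus uniform stationary phase (Proposition~\ref{prop:bp52-stationary-phase}), which ties each pair $(d,p)$ to the single point $s_0=p/d$, so no overlap issue arises.
\item Poisson summation in $\mathbf j_{\ge r+2}$, landing on $\mathfrak R_{\mathcal C_{r,n-r-1}}(L_{r+2}^{-1},D)$ with $D\lesssim K$.
\end{itemize}
It takes $K=\min\{\delta^{-1},\widetilde L_r^{-1}\}$ rather than optimizing $\sigma$, since the dyadic $D$-sum already covers every intermediate thickening.

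A smaller error is in your Step~2. Step~4 of the proof of Theorem~\ref{thm:section-two-main} only gives $h\mathcal V_n\le E_n$. At intermediate levels $\mathcal V_m$ can exceed $U$; that is precisely the hard case. So the volume bound can only enter through the induction $\mathbf N_m\lessapprox\mathcal V_m+U$, not directly as $\lesssim\lambda_{\mathrm{prod}}^{-1}E_n$.
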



We abbreviate $\mathcal T^{(r)}_{\delta,\boldsymbol\lambda}(I^*)$
by $\mathcal T^{(r)}$.
We use a compact interval $J_0$ with $I^*\Subset J_0$, contained
in the neighborhood of $I^*$ where
$\operatorname{Wr}(\gamma_0,\ldots,\gamma_k)\ne0$ for $0\le k\le n$.
For each $r\in\mathcal R_{\boldsymbol\lambda}$, define
\[
\widetilde{\mathcal T}^{(r+1)}_{\delta,\boldsymbol\lambda}(J_0):=
\left\{
(j_{r+1},\ldots,j_n)\in\mathbb Z^{n-r}:
\begin{array}{l}
|(j_{r+1},\ldots,j_n)|\lesssim \delta^{-1};\\[1mm]
\text{there exists }t_*\in J_0\text{ such that}:\\[1mm]
\mathcal D_{r+1}P_{\mathbf j}(t_*)=0,\\[1mm]
|\mathcal D_kP_{\mathbf j}(t_*)|\le CL_k
\quad(r+2\le k\le n),\\[1mm]
|\mathcal D_kP_{\mathbf j}(t_*)|\ge cL_k
\quad(r+2\le k\le n,\ n+1-k\in A_*)
\end{array}
\right\}.
\]
We write $\widetilde{\mathcal T}^{(r+1)}$ for
$\widetilde{\mathcal T}^{(r+1)}_{\delta,\boldsymbol\lambda}(J_0)$.
The superscript $r+1$ records the first coefficient in the vector.
This set retains the critical point and higher derivative conditions
from \eqref{eq:n-sufficient-dual-set}, with the parameter interval $J_0$;
the coordinate $j_r$ and the condition on $|\mathcal D_rP_{\mathbf j}|$
are omitted. A witness for a vector in this set is any $t_*\in J_0$
satisfying its defining conditions.

\subsection{Proof of Proposition~\ref{prop:bp36-dual-counting}}
Proposition~\ref{lem:bp36-lead-rough} is proved in
Subsection~\ref{sec:bp36-rough-proof}.
The proofs of Proposition~\ref{prop:bp36-lead-update} and
Lemma~\ref{lem:bp36-lead-product} are given in
Sections~\ref{sec:bp36-lemma-proofs} and~\ref{sec:bp36-gain-proof}, respectively.
Let $\eta>0$ be chosen at the end of the proof. We need to show that
\[
\begin{aligned}
\min_{r\in\mathcal R_{\boldsymbol\lambda}}
\left\{\left(\prod_{s=0}^{r-1}(1+\widetilde{L}_s)\right)
\#\mathcal T^{(r)}\right\}\lesssim_{\epsilon,\beta,\gamma}
Q^\epsilon R^{O(\beta)}\lambda_{\mathrm{prod}}^{-1}E_n(Q^{-1},\delta^{-1}).
\end{aligned}
\]
We begin with a rough bound. Each of $j_n,j_{n-1}$ has
$O(\delta^{-1})$ choices, and the remaining coefficients
$j_s$, $r\le s\le n-2$, contribute the factors $1+\widetilde{L}_s$.
Multiplying by $\prod_{s=0}^{r-1}(1+\widetilde{L}_s)$ gives the following estimate.
We give a detailed proof in Subsection~\ref{sec:bp36-rough-proof}.

\begin{proposition}\label{lem:bp36-lead-rough}
For every $r\in\mathcal R_{\boldsymbol\lambda}$,
\begin{equation}\label{eq:bp36-lead-rough}
\left(\prod_{s=0}^{r-1}(1+\widetilde{L}_s) \right)\#\mathcal T^{(r)}\lesssim_\gamma R^{O(\beta)}\delta^{-2}\prod_{s=0}^{n-2}(1+\widetilde{L}_s).
\end{equation}
\end{proposition}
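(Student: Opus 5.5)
It suffices to show that
\[
\#\mathcal T^{(r)}\lesssim_\gamma R^{O(\beta)}\delta^{-2}\prod_{s=r}^{n-2}(1+\widetilde{L}_s),
\]
since multiplying by $\prod_{s=0}^{r-1}(1+\widetilde{L}_s)$ then gives \eqref{eq:bp36-lead-rough}. I would count the vector $\mathbf j_{\ge r}=(j_r,\ldots,j_n)$ one coordinate at a time, from the top down. First, $|\mathbf j_{\ge r}|\lesssim\delta^{-1}$ leaves $O(\delta^{-2})$ choices for the pair $(j_{n-1},j_n)$. Next, for $s=n-2,n-3,\ldots,r$ in turn, I fix $(j_{s+1},\ldots,j_n)$ and bound the number of admissible integers $j_s$ by $O(R^{O(\beta)}(1+\widetilde{L}_s))$. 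This is the same fibre argument used above to count $\mathbf j_{\le r-1}$.

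For the fibre step, I write $\mathcal D_sP_{\mathbf j}=j_s+G_s(t)$ by \eqref{eq:n-normalized-tail}, where $G_s$ depends only on $(j_{s+1},\ldots,j_n)$. Let $t_*$ be a witness for $\mathbf j_{\ge r}\in\mathcal T^{(r)}$, and define $E_s$ as in Lemma~\ref{lem:n-fibre-counting}, now on the interval $I^*$ (the lemma explicitly allows this). I must check that $t_*\in E_s$. The definition of $\mathcal T^{(r)}$ gives the required upper and lower bounds at orders $k\ge r+2$, and $\mathcal D_{r+1}P_{\mathbf j}(t_*)=0$ handles order $r+1$. For $s$ with $r<s+1$ the orders $k\ge s+1\ge r+2$ are all controlled, but the lower bounds in $E_s$ require $n+1-k\in A_*$, which matches the definition of $\mathcal T^{(r)}$.

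At the witness, $|j_s+G_s(t_*)|=|\mathcal D_sP_{\mathbf j}(t_*)|$. This is at most $R^{O(\beta)}\widetilde{L}_r$ if $s=r$. If $s>r$, I need a size bound at $t_*$. Here $|\mathcal D_sP_{\mathbf j}(t_*)|\lesssim L_s\le\widetilde{L}_s$ when $s\ge r+2$. When $s=r+1$ the value is $0$.

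By Lemma~\ref{lem:n-fibre-counting}, $E_s$ has $O(1)$ components $J$, and $G_s(J)$ has diameter $\lesssim R^{O(\beta)}\widetilde{L}_s$ on each. Therefore $j_s$ lies in $O(1)$ intervals of length $O(R^{O(\beta)}(1+\widetilde{L}_s))$, one per component, giving $O(R^{O(\beta)}(1+\widetilde{L}_s))$ integer choices. Multiplying over $s=r,\ldots,n-2$ and by $\delta^{-2}$ gives the claimed bound on $\#\mathcal T^{(r)}$.

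The main obstacle is the bookkeeping that puts the witness $t_*$ into $E_s$ with the correct constants, particularly at the boundary orders $s=r$ and $s=r+1$. I would choose $C$ large and $c$ small in $E_s$, uniformly in $s$, to match the implicit constants in \eqref{eq:n-sufficient-dual-set}. I also need to confirm that the item~2 length bound $|J|\lesssim\ell_s$ in Lemma~\ref{lem:n-fibre-counting} uses only the conditions that $t_*$ actually satisfies. These conditions are the bound on $\mathcal D_{s+1}P_{\mathbf j}$ (namely the zero at $s+1=r+1$, or $L_{s+1}$ otherwise) and the lower bound at the first order $m$ with $n+1-m\in A_*$.
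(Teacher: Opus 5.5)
Your top-down fibre count matches the paper's argument for the coordinates $j_{n-2},\ldots,j_{r+1}$, which is the second estimate in \eqref{eq:bp36-rough-support}. The step $s=r$, however, does not work as written.

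The set $E_r$ of Lemma~\ref{lem:n-fibre-counting} contains the lower bound $|\mathcal D_{k}P_{\mathbf j}(t)|\ge cL_{k}$ at $k=r+1$ whenever $n+1-k=n-r\in A_*$. For $r\ge1$ this is compatible with $r\in\mathcal R_{\boldsymbol\lambda}$: take $\lambda_{n-r-1},\lambda_{n-r}\le c_*$. Your witness satisfies $\mathcal D_{r+1}P_{\mathbf j}(t_*)=0$, so in that case $t_*\notin E_r$. Your assertion that the lower bounds of $E_s$ match those of $\mathcal T^{(r)}$ fails exactly at $(s,k)=(r,r+1)$, because $\mathcal T^{(r)}$ imposes lower bounds only for $k\ge r+2$. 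Relatedly, your phrase ``for $s$ with $r<s+1$'' should read $s>r$; that phrase is where the two cases got merged.

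The repair is the one your last paragraph points toward: drop the order-$(s+1)$ lower bound from $E_s$. The proof of Lemma~\ref{lem:n-fibre-counting} never uses it:
\begin{itemize}
\item Item~1 only needs finitely many boundary equations.
\item Item~2 uses the upper bounds together with the lower bound at the first $m\ge s+2$ with $n+1-m\in A_*$.
\item Item~3 uses only the upper bound on $\mathcal D_{s+1}P_{\mathbf j}$ and item~2.
\end{itemize}

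The paper avoids the issue more directly by treating $j_r$ without the lemma. Fix $\mathbf j_{\ge r+1}$. Every witness is a zero of $\mathcal D_{r+1}P_{\mathbf j}$, and this function is not identically zero, since
\[
(\mathcal D_{r+1}P_{\mathbf j})'(t_*)=(\mathcal D_{r+1}\gamma_{r+2})'(t_*)\,\mathcal D_{r+2}P_{\mathbf j}(t_*)\neq0 .
\]
The Rolle argument then gives at most $n-r-1$ witnesses. At each one, $j_r$ lies in an interval of length $O(R^{O(\beta)}\widetilde{L}_r)$. This gives $O(R^{O(\beta)}(1+\widetilde{L}_r))$ choices without any length or diameter bound.

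With either fix, your argument proves the proposition.
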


The bound in \eqref{eq:bp36-lead-rough} may exceed the desired estimate.
To improve it, write $\mathcal R_{\boldsymbol\lambda}=\{r_1,\ldots,r_N\}$
with $r_1>\cdots>r_N$, and define
\[
\begin{aligned}
Z_0&:=\delta^{-2}\prod_{s=0}^{n-2}(1+\widetilde{L}_s),\\
Z_k&:=\min\left\{Z_{k-1},
\left(\prod_{s=0}^{r_k-1}(1+\widetilde{L}_s)\right)\#\mathcal T^{(r_k)}\right\}
\qquad(1\le k\le N).
\end{aligned}
\]
Note that $Z_N\le Z_{N-1} \le \cdots \le Z_0$. The upper bound in \eqref{eq:bp36-lead-rough} is $O(R^{O(\beta)}Z_0)$. The next proposition allows us to improve this bound through iteration.

\begin{proposition}\label{prop:bp36-lead-update}
Let $n\ge3$. For every $r\in\mathcal R_{\boldsymbol\lambda}$, define
\[
\theta_r:=
\begin{cases}
\displaystyle\min\left\{1,
R^{O(\beta)}(1+L_{r+3}^2/L_{r+2}^3)
\max\{\widetilde{L}_r,\delta\}\right\},
&r\le n-3,\ L_{r+2}\ge1,\ \widetilde{L}_r\le1,\\[3mm]
1,
&r\le n-3,\ L_{r+2}<1\ \text{or}\ \widetilde{L}_r>1,\\[3mm]
\displaystyle\min\left\{1,
R^{O(\beta)}\frac{\widetilde{L}_{n-2}+\delta}{1+\widetilde{L}_{n-2}}\right\},
&r=n-2.
\end{cases}
\]
For every $\eta>0$ and every $1\le k\le N$, one has
\[
Z_k\lesssim_{\eta,\gamma}\delta^{-\eta}
\bigl(\theta_{r_k}Z_{k-1}+R^{O(\beta)}\lambda_{\mathrm{prod}}^{-1}E_n(Q^{-1},\delta^{-1})\bigr).
\]
\end{proposition}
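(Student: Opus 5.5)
We plan to mirror the heuristic of Proposition~\ref{input2} (thickening, low/high splitting, projection) in the concrete setting of $\mathcal T^{(r)}$. Fix $r=r_k\in\mathcal R_{\boldsymbol\lambda}$. The set $\mathcal T^{(r)}$ fibres over the projected set $\widetilde{\mathcal T}^{(r+1)}$. Given $(j_{r+1},\dots,j_n)\in\widetilde{\mathcal T}^{(r+1)}$ with witness $t_*$, the condition $|j_r+G_r(t_*)|\lesssim R^{O(\beta)}\widetilde L_r$ determines $j_r$ up to $O(R^{O(\beta)}(1+\widetilde L_r))$ choices. The gain $\theta_r$ must therefore come from showing that, when $\widetilde L_r\le 1$, only a proportion about $\theta_r$ of these fibres actually contain an integer $j_r$.

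Concretely, let $F(\mathbf j_{\ge r+1}):=G_r(t_*)$, where $t_*$ is the critical point. This is a smooth function of the rescaled vector $\mathbf j_{\ge r+1}$, and its level structure is dual to the projected nondegenerate curve $\mathcal C_{r,m}$ of Proposition~\ref{prop:n-projected-curves}. We then need to count pairs $(\mathbf j_{\ge r+1},j_r)$ with $\|F(\mathbf j_{\ge r+1})\|_{\mathbb R/\mathbb Z}\lesssim R^{O(\beta)}\widetilde L_r$.

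We would proceed as follows. Majorize $\mathbf 1_{\|x\|\le\widetilde L_r}$ by a smooth bump and expand it in Fourier series in $x$. The zero mode contributes $\max\{\widetilde L_r,\delta\}\cdot\#\widetilde{\mathcal T}^{(r+1)}$, and we bound $\#\widetilde{\mathcal T}^{(r+1)}$ against $Z_{k-1}$. For this we use that $r_{k-1}>r_k$, that a vector in $\mathcal T^{(r_{k-1})}$ is recovered from one in $\widetilde{\mathcal T}^{(r+1)}$ via the fibre count of Lemma~\ref{lem:n-fibre-counting}, and that the recovery factors for $r<s<r_{k-1}$ match $\prod(1+\widetilde L_s)$. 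Here $\delta$ is the floor coming from truncating the Fourier series at frequencies $|\nu|\lesssim \delta^{-\eta}\widetilde L_r^{-1}$.

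The nonzero modes give exponential sums $\sum_{\mathbf j_{\ge r+1}}e(\nu F(\mathbf j_{\ge r+1}))$ over the anisotropic region. We would treat these by Poisson summation and stationary phase in the two top variables $(j_{r+1},j_{r+2})$, exactly as in the highest part \eqref{high} of the sketch. The phase is then a planar curve with curvature of size $L_{r+3}/L_{r+2}^{2}$ relative to the box, which produces the factor $1+L_{r+3}^2/L_{r+2}^3$. The resulting dual count is a planar rational-point problem, and Proposition~\ref{input3} bounds it by terms absorbed into $R^{O(\beta)}\lambda_{\mathrm{prod}}^{-1}E_n$ via the numerology of Step~3 in Section~\ref{conclude}.

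The degenerate cases should be easy. If $L_{r+2}<1$ or $\widetilde L_r>1$, then $\theta_r=1$ and the bound is just $Z_k\le Z_{k-1}$. For $r=n-2$ there is no curvature term: $F$ is essentially linear in $j_{n-1}$ once $j_n$ is fixed, so a direct one-dimensional equidistribution count gives the proportion $(\widetilde L_{n-2}+\delta)/(1+\widetilde L_{n-2})$.

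The main obstacle is the nonzero-mode analysis. It requires uniform stationary phase for $F$, whose Hessian must be computed from the Wronskian-normalized derivatives $\mathcal D_k$. It also requires controlling the sum over $\nu$ so that the contributions sum to $\delta^{-\eta}$ times the claimed bound. We would first attempt this for the model curve $(t,t^2/2!,\dots,t^n/n!)$, where $F$ is explicit, and then transfer to general curves using the $\epsilon_0$-closeness from Lemma~\ref{lem:n-integral-affine-normalization}.
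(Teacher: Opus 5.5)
Your Case~1 and the overall shape of your main case match the paper's B-process argument: Fourier expansion in $j_r$, low modes bounded by $\#\widetilde{\mathcal T}^{(r+1)}\lesssim Z_{k-1}$, and stationary phase plus a planar rational-point count for the rest.

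\textbf{The genuine gap is the terminal index $r=n-2$.} There $F$ is not essentially linear in $j_{n-1}$. With $s=-j_{n-1}/j_n$ one has $F=-j_n\,y(s)$, where $y''\neq0$ (computed in the proof of Lemma~\ref{lem:bp36-lead-tails}). Hence $\partial_{j_{n-1}}^2F=-y''(s)/j_n$ has size $\asymp\delta$, and the nonlinear part of $F$ varies by $\asymp\delta^{-1}$ over the range $|j_{n-1}|\lesssim\delta^{-1}$.

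What you must prove is \eqref{eq:bp36-terminal}: $\#\mathcal T^{(n-2)}\lesssim_\eta\delta^{-\eta}(\delta^{-2}R^{O(\beta)}\widetilde L_{n-2}+\delta^{-1})$. This counts rational points of height $\asymp\delta^{-1}$ within $R^{O(\beta)}\widetilde L_{n-2}$ of the curved planar curve $s\mapsto(-s,y(s))$. When $\widetilde L_{n-2}\le\delta$ the target is $\delta^{-1-\eta}$, which is exactly the sharp $Q^{1+\varepsilon}$ term of Vaughan--Velani.

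A one-dimensional equidistribution count in $j_{n-1}$ for each fixed $j_n$ cannot reach this. For instance, Erd\H{o}s--Tur\'an with the second-derivative test gives $\delta^{-1}\widetilde L_{n-2}+\delta^{-2/3}$ per $j_n$, hence $\delta^{-5/3}$ after summing over $j_n$. The saving requires averaging over the denominator $j_n$. The paper gets it by invoking the planar theorem of Vaughan and Velani (which rests on Huxley's determinant method) directly; as the introduction remarks, this bound is not known to follow from Fourier analysis alone.

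\textbf{Secondary problems in the main case.}

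\emph{The factor $\max\{\widetilde L_r,\delta\}$.} Truncating at $|\nu|\lesssim\delta^{-\eta}\widetilde L_r^{-1}$ gives a zero mode of size about $\widetilde L_r\,\#\widetilde{\mathcal T}^{(r+1)}$, not $\delta$. The floor $\delta$ comes from deliberately using a coarser majorant at scale $K^{-1}$, with $K=\min\{\delta^{-1},\widetilde L_r^{-1}\}$. This cap is forced. The high frequencies contribute, up to $K^\eta$,
\[
\prod_{k=r+2}^nL_k\bigl(\sqrt{K/L_{r+2}}+\sqrt{L_{r+2}/K}\bigr),
\]
and Lemma~\ref{lem:bp36-lead-oscillatory} fits this into $\lambda_{\mathrm{prod}}^{-1}E_n(Q^{-1},\delta^{-1})$ only because $K\le\delta^{-1}$. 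For example, take $n=3$, $r=0$, $\lambda_1=1$, $\delta=Q^{-1/2}$. Then $\widetilde L_0=Q^{-1}$, and $K=Q$ gives $Q^{5/4}$ against a budget of $Q$.

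\emph{The factor $1+L_{r+3}^2/L_{r+2}^3$.} This does not come from curvature. After Poisson summation in $j_{r+1}$ and the change of variables to the critical point $s$, the one-variable stationary phase has large parameter $\Lambda=dL_{r+2}^3/L_{r+3}^2$. The frequencies $d\lesssim1+L_{r+3}^2/L_{r+2}^3$ therefore get no decay and are bounded trivially by $\#\widetilde{\mathcal T}^{(r+1)}$; that is what multiplies $\max\{\widetilde L_r,\delta\}Z_{k-1}$ in $\theta_r$. A joint stationary phase in $(j_{r+1},j_{r+2})$ would be degenerate: $\nabla F$ is the point of the projected curve at $s_*(\mathbf j)$, so $\nabla^2F$ has rank one.

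\emph{The comparison with $Z_{k-1}$.} $Z_{k-1}$ is a minimum over $Z_0$ and all of $r_1,\dots,r_{k-1}$, not just $r_{k-1}$. You therefore need \eqref{eq:bp36-lead-support} for every earlier index $t$, together with \eqref{eq:bp36-rough-support} for $Z_0$. The direction is the reverse of what you wrote. A vector of $\widetilde{\mathcal T}^{(r_k+1)}$ has its tail in $\mathcal T^{(t)}$ after moving the witness to a zero of $\mathcal D_{t+1}P_{\mathbf j}$. The vector is then recovered from that tail by the fibre count.
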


The second and third cases in the definition of $\theta_r$ are handled
by elementary computations, while the first case uses
Proposition~\ref{lem:bp36-B}.
We postpone the proof to Section~\ref{sec:bp36-lemma-proofs}.

We now complete the proof of Proposition~\ref{prop:bp36-dual-counting}
using  Proposition~\ref{prop:bp36-lead-update}.
Since $N=\#\mathcal R_{\boldsymbol\lambda}\le n-1$ and
$0<\theta_r\le1$ for $r\in\mathcal R_{\boldsymbol\lambda}$,
iterating this estimate for $k=1,\ldots,N$ gives
\begin{equation}\label{0908}
\begin{aligned}
Z_N&=\min\left\{Z_0,
\min_{r\in\mathcal R_{\boldsymbol\lambda}}
\left(\prod_{s=0}^{r-1}(1+\widetilde{L}_s)\right)\#\mathcal T^{(r)}\right\}\\
&\lesssim_{\eta,\gamma}\delta^{-O(\eta)}
\left(Z_0\prod_{r\in\mathcal R_{\boldsymbol\lambda}}\theta_r
+R^{O(\beta)}\lambda_{\mathrm{prod}}^{-1}E_n(Q^{-1},\delta^{-1})\right).
\end{aligned}
\end{equation}
Combining with Proposition \ref{lem:bp36-B} and the definition of $Z_0$, one has
\begin{equation}\label{0908.54}
\begin{aligned}
&\min_{r\in\mathcal R_{\boldsymbol\lambda}}
\left(\prod_{s=0}^{r-1}(1+\widetilde{L}_s)\right)\#\mathcal T^{(r)}
\\
&\quad\lesssim_{\eta,\gamma} R^{O(\beta)}
\delta^{-O(\eta)}
\left(\delta^{-2}\prod_{s=0}^{n-2}(1+\widetilde{L}_s)\prod_{r\in\mathcal R_{\boldsymbol\lambda}}\theta_r
+R^{O(\beta)}\lambda_{\mathrm{prod}}^{-1}E_n(Q^{-1},\delta^{-1})\right).
\end{aligned}
\end{equation}
It remains to bound the first term inside the parentheses on the right hand side of \eqref{0908.54}.

\begin{lemma}\label{lem:bp36-lead-product}
Let $n\ge3$. Then
\begin{equation}\label{eq:bp36-lead-product}
\delta^{-2}\prod_{s=0}^{n-2}(1+\widetilde{L}_s)
\prod_{r\in\mathcal R_{\boldsymbol\lambda}}\theta_r
\lesssim_\gamma R^{O(\beta)}\lambda_{\mathrm{prod}}^{-1}E_n(Q^{-1},\delta^{-1}).
\end{equation}
\end{lemma}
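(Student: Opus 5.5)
The plan is to turn \eqref{eq:bp36-lead-product} into a comparison of monomials in $\delta,Q$ and the $\lambda_i$, and then close it with the same numerology as Step~2--Step~4 of Section~\ref{conclude}. First record everything in terms of the $L_k$. We have $\widetilde L_s=L_{s+1}^2/L_{s+2}$, $L_n=\delta^{-1}$, $L_0=Q^{-1}$ and $\widetilde L_0=Q^{-1}\le 1$. We also have $\lambda_{\mathrm{prod}}=\ell_0=L_1/L_2$, and the $L_k$ are log-concave \eqref{convexity}. Note also that $\widetilde L_s$ is nondecreasing in $s$, since $\widetilde L_s/\widetilde L_{s+1}=L_{s+1}^2L_{s+3}/L_{s+2}^3\le 1$ by log-concavity. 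Hence there is a threshold $s_0$ with $\widetilde L_s\le1$ for $s<s_0$ and $\widetilde L_s>1$ for $s\ge s_0$. This gives
\[
\delta^{-2}\prod_{s=0}^{n-2}(1+\widetilde L_s)\asymp \delta^{-2}\prod_{s\ge s_0}\widetilde L_s .
\]
Telescoping $\prod_{s\ge s_0}L_{s+1}^2/L_{s+2}$ yields a simple expression of the form $L_{s_0+1}^2L_{s_0+2}\cdots L_{n-1}\,\delta$, up to boundary factors. That expression is the volume $\mathcal V_m$ of Section~\ref{conclude}, with $m=n-s_0$, $M=\delta^{-1}$ and $\ell_j$ replaced by $L_{n-j}$. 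I will make this dictionary precise first, because after it the lemma reads exactly like the claim \eqref{eq:section-two-induction} multiplied by $h=\lambda_{\mathrm{prod}}$, with $U=E_n/h$.

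Next, I reproduce the induction of Step~3 in product form. I would order the elements of $\mathcal R_{\boldsymbol\lambda}$ from $r=n-2$ downward and show that the partial products $\delta^{-2}\prod_s(1+\widetilde L_s)\prod_{r'\ge r}\theta_{r'}$ are $\lessapprox \mathcal V_{n-r}+U$. The term $1+L_{r+3}^2/L_{r+2}^3$ in $\theta_r$ plays the role of $1+w^{-1}$ in \eqref{eq:section-two-normalized-recurrence}; indeed $w=\ell^3/\ell_-^2$ matches $L_{r+2}^3/L_{r+3}^2$. The factor $\max\{\widetilde L_r,\delta\}$ plays the role of $\sigma$, with $\widetilde L_r$ corresponding to $\delta_m$ and $\delta$ to $\sigma_0$. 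The cases of $\theta_r$ then split as follows.

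\begin{itemize}
\item When $\widetilde L_r\le1\le L_{r+2}$ (``$\ell>1$''), multiplying by $\theta_r$ converts $\mathcal V_{m-1}$ into $\sigma(1+w^{-1})\mathcal V_{m-1}$. This is bounded by the inequalities \eqref{eq:section-two-SL}, \eqref{eq:section-two-min-bound} and \eqref{eq:section-two-volume-over-w}, exactly as in the chain ending in $4T$.
\item When $\theta_r=1$, we have either $\widetilde L_r>1$, which is the volume case \eqref{volumeestimate}, or $L_{r+2}<1$. In the latter case $\mathcal V_{m-1}\le U$ follows from \eqref{eq:section-two-SL}.
\item For $r\notin\mathcal R_{\boldsymbol\lambda}$ the corresponding $\lambda$ is $\gtrsim 1$ (up to $R^{O(\beta)}$), so the projection step Proposition~\ref{input0} costs nothing. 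This matches the absence of a factor $\theta_r$.
\end{itemize}

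Finally, I treat the base case $r=n-2$, where $\theta_{n-2}=\min\{1,(\widetilde L_{n-2}+\delta)/(1+\widetilde L_{n-2})\}$. This is the planar step $m=2$: the product against $\delta^{-2}(1+\widetilde L_{n-2})$ gives $\delta^{-2}(\widetilde L_{n-2}+\delta)$, which is $\mathcal V_2+M$ in the earlier notation, and it is handled by $U\ge M$. The last step is the absorption $h\mathcal V_n\le E_n$ from Step~4, which gives the right-hand side $R^{O(\beta)}\lambda_{\mathrm{prod}}^{-1}E_n(Q^{-1},\delta^{-1})$.

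The main obstacle is that the lemma uses the fixed choice $\sigma=\max\{\widetilde L_r,\delta\}$ built into $\theta_r$, while Step~3 chose $\sigma$ optimally via $T$. So I must check that the fixed choice still yields the bound, in particular that $\delta\ge\sigma_0$ or that the loss is absorbed into $U$. My first attempt would be to verify this by comparing exponents directly using \eqref{eq:section-two-basic-U}. Failing that, I would split according to which of $\widetilde L_r,\delta$ attains the maximum and check each monomial inequality against a suitable $k$ in the definition of $E_n$.
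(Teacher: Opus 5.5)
Your plan follows the paper's proof, written in the dictionary of Section~\ref{conclude}: $\ell_j=L_{n-j}$, $M=\delta^{-1}$, $\delta_m=\widetilde L_{n-m}$, $w=L_{r+2}^3/L_{r+3}^2$, and $U=\lambda_{\mathrm{prod}}^{-1}E_n(Q^{-1},\delta^{-1})$. The paper likewise starts from $Z_0\lesssim\mathcal V_{n-r_*-1}$, proves the one-step bound $\theta_r\mathcal V_{n-r-1}\le\mathcal V_{n-r}+3U$ for $r\le r_*$ (this is \eqref{eq:bp36-volume-step}), iterates down to $r=0$, and ends with $\mathcal V_n\le U$.

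However, the one step with real content is the one you leave open, and the test you propose for it is the wrong one. Only the first term of \eqref{eq:section-two-normalized-recurrence} enters $\theta_r$, and for that term a larger $\sigma$ only hurts. The condition $\sigma\ge\sigma_0$ in Step~3 was needed for the high-frequency term $S(\ell/\sigma)^{1/2}$, which in the rigorous proof is handled in Lemma~\ref{lem:bp36-lead-oscillatory}, not here. What you must check is that the cross terms created by $\sigma=\max\{\widetilde L_r,\delta\}\le\widetilde L_r+\delta$ are $O(U)$, where $\mathcal V_{n-r-1}=L_{r+2}^2\prod_{j\ge r+3}L_j$.

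If $\mathcal V_{n-r-1}\le U$, there is nothing to do, since $\theta_r\le1$. Otherwise, comparing with $U\ge\sqrt{L_{r+2}/\delta}\,\prod_{j\ge r+3}L_j$ forces $L_{r+2}^3>\delta^{-1}$. In particular $L_{r+2}>1$, so the first case of $\theta_r$ applies, and then
\[
\delta\,\mathcal V_{n-r-1}\le L_{r+2}\prod_{j\ge r+3}L_j\le U,
\qquad
\delta\,\frac{L_{r+3}^2}{L_{r+2}^3}\,\mathcal V_{n-r-1}
\le\frac{1}{\delta L_{r+2}}\prod_{j\ge r+3}L_j
\le\sqrt{\frac{L_{r+2}}{\delta}}\prod_{j\ge r+3}L_j\le U .
\]
The remaining cross term $\widetilde L_r\frac{L_{r+3}^2}{L_{r+2}^3}\mathcal V_{n-r-1}=\mathcal V_{n-r}/w\le U$ is \eqref{eq:section-two-volume-over-w}. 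In your notation the second cross term is $S\ell_-^2/(M\ell)$, and it is $\ell^3>M$ that controls it. This observation, that $L_{r+2}^3>\delta^{-1}$ in the nontrivial case, is the missing idea.

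Second, for $r\notin\mathcal R_{\boldsymbol\lambda}$, the phrase ``the projection step costs nothing'' is not yet an inequality. With no factor $\theta_r$, your partial product stays at $\mathcal V_{n-r-1}+U$. Since $\mathcal V_{n-r}=\widetilde L_r\mathcal V_{n-r-1}$ with $\widetilde L_r\le1$, you must still show $\mathcal V_{n-r-1}\lesssim R^{O(\beta)}(\mathcal V_{n-r}+U)$; appealing to Proposition~\ref{input0} does not give this.

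It follows from \eqref{eq:bp36-key-identity}, which in your notation reads $\delta_m=w\lambda_{m-1}^2$. Indeed,
\[
\mathcal V_{n-r-1}=\lambda_{n-r-1}^{-2}\,\mathcal V_{n-r}/w\le\lambda_{n-r-1}^{-2}\,U,
\]
and $\lambda_{n-r-1}^{-2}\lesssim R^{2\beta}$ off $\mathcal R_{\boldsymbol\lambda}$. For $r=n-2$, use $\mathcal V_1=\mathcal V_2/\widetilde L_{n-2}$ with $\widetilde L_{n-2}=\delta^{-1}\lambda_1^2$.

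The paper records the same fact as $\theta_r\ge\lambda_{n-r-1}^2$. It then runs the chain over all $0\le r\le r_*$ and discards the factors with $r\notin\mathcal R_{\boldsymbol\lambda}$ at a cost of $R^{O(\beta)}$. With these two additions your argument is complete.
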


Applying this lemma in the preceding inequality and absorbing the extra
powers of $R^\beta$ into $R^{O(\beta)}$ gives
\[
\begin{aligned}
\min_{r\in\mathcal R_{\boldsymbol\lambda}}
\left(\prod_{s=0}^{r-1}(1+\widetilde{L}_s)\right)\#\mathcal T^{(r)}\lesssim_{\eta,\gamma}
\delta^{-O(\eta)}R^{O(\beta)}\lambda_{\mathrm{prod}}^{-1}E_n(Q^{-1},\delta^{-1}).
\end{aligned}
\]
Since $\delta^{-1}\le Q$, choosing $\eta>0$ sufficiently small
in terms of $\epsilon$ and $n$ gives
$\delta^{-O(\eta)}\lesssim_{\epsilon,\gamma}Q^\epsilon$.
This finishes the proof of Proposition~\ref{prop:bp36-dual-counting} assuming
Propositions~\ref{lem:bp36-lead-rough} and~\ref{prop:bp36-lead-update},
and Lemma~\ref{lem:bp36-lead-product}.\qed

\bigskip

\subsection{Proof of Proposition~\ref{lem:bp36-lead-rough}}\label{sec:bp36-rough-proof}
We prove the two estimates
\begin{equation}\label{eq:bp36-rough-support}
\#\mathcal T^{(r)}\lesssim_\gamma R^{O(\beta)}(1+\widetilde{L}_r)\#\widetilde{\mathcal T}^{(r+1)},
\qquad
\#\widetilde{\mathcal T}^{(r+1)}\lesssim_\gamma\delta^{-2}\prod_{s=r+1}^{n-2}(1+\widetilde{L}_s).
\end{equation}

For the second estimate, choose $j_n,j_{n-1}$ in $O(\delta^{-2})$
ways and recover $j_{n-2},\ldots,j_{r+1}$ successively.
Let $r+1\le s\le n-2$, and suppose that $j_{n},\ldots,j_{s+1}$
have been chosen. By \eqref{eq:n-normalized-tail},
\[
\mathcal D_sP_{\mathbf j}(t)
=j_s+\sum_{\nu=s+1}^n j_\nu\mathcal D_s\gamma_\nu(t)
=:j_s+G_s(t).
\]
Thus $G_s$ depends only on the coefficients already chosen.
At a witness $t_*$ for a vector in $\widetilde{\mathcal T}^{(r+1)}$,
\[
|j_s+G_s(t_*)|\lesssim_\gamma L_s.
\]
For $s=r+1$ this value is zero; for $s\ge r+2$ the bound is one
of the defining conditions of $\widetilde{\mathcal T}^{(r+1)}$. 

The point $t_*$ may depend on $j_s$. To include all the possible
$t$-values, let
\[
E_s:=\left\{t\in J_0:
\begin{array}{l}
|\mathcal D_kP_{\mathbf j}(t)|\le CL_k
\quad(s+1\le k\le n),\\[1mm]
|\mathcal D_kP_{\mathbf j}(t)|\ge cL_k
\quad(s+1\le k\le n,\ n+1-k\in A_*)
\end{array}\right\},
\]
with the constants from the definition of $\widetilde{\mathcal T}^{(r+1)}$.
These conditions depend only on $j_{s+1},\ldots,j_n$, and every
witness under consideration belongs to $E_s$.
Since $s\ge1$, this is the set $E_s$ in
Lemma~\ref{lem:n-fibre-counting} with $I$ replaced by $J_0$.
The lemma gives $O_n(1)$ connected components $J$, each satisfying
\[
|J|\lesssim_\gamma\ell_s,
\qquad
\operatorname{diam}G_s(J)\lesssim_\gamma\widetilde{L}_s.
\]
Thus the condition $|j_s+G_s(t_*)|\lesssim_\gamma L_s$
places $j_s$ in an interval of length $O_\gamma(\widetilde{L}_s+L_s)$
for each component. Since $L_s\le\widetilde{L}_s$, there are
$O_\gamma(1+\widetilde{L}_s)$ integer choices for $j_s$.
There are only $O_n(1)$ components, so the same bound holds
for all witnesses. Multiplying these bounds for
$s=n-2,\ldots,r+1$ proves the second estimate in
\eqref{eq:bp36-rough-support}.

For the first estimate, let
$\mathbf j_{\ge r+1}\in\widetilde{\mathcal T}^{(r+1)}$.
By \eqref{eq:n-normalized-tail},
\[
\mathcal D_{r+1}P_{\mathbf j}(t)
=j_{r+1}+\sum_{\nu=r+2}^n
j_\nu\mathcal D_{r+1}\gamma_\nu(t).
\]
Thus this function and its zeros depend only on
$\mathbf j_{\ge r+1}$, independent of the coefficient $j_r$
that remains to be chosen.
We first bound the number of witnesses $t_*\in J_0$ for this vector,
each of which satisfies $\mathcal D_{r+1}P_{\mathbf j}(t_*)=0$.

Since $r\in\mathcal R_{\boldsymbol\lambda}$, $n-r-1\in A_*$.
Taking $k=r+2$ in the lower bounds defining
$\widetilde{\mathcal T}^{(r+1)}$, we have
$n+1-k=n-r-1\in A_*$ and hence
\[
|\mathcal D_{r+2}P_{\mathbf j}(t_*)|
\ge cL_{r+2}>0
\]
at every witness. The differentiation identity therefore gives
\[
(\mathcal D_{r+1}P_{\mathbf j})'(t_*)
=(\mathcal D_{r+1}\gamma_{r+2})'(t_*)
\mathcal D_{r+2}P_{\mathbf j}(t_*)\ne0.
\]
Since at least one witness exists, $\mathcal D_{r+1}P_{\mathbf j}$
is not identically zero.

We now follow the proof of item~1 of Lemma~\ref{lem:n-fibre-counting},
with $I$ replaced by $J_0$.
Recall that $\mathcal D_kP_{\mathbf j}$ is either identically zero
or has at most $n-k$ distinct zeros.
Indeed, $(\mathcal D_kP_{\mathbf j})'$ and
$\mathcal D_{k+1}P_{\mathbf j}$ have the same zeros because
$(\mathcal D_k\gamma_{k+1})'$ never vanishes.
If $\mathcal D_{k+1}P_{\mathbf j}$ is not identically zero,
Rolle's theorem bounds the number of zeros of
$\mathcal D_kP_{\mathbf j}$ by one more than that of
$\mathcal D_{k+1}P_{\mathbf j}$; otherwise
$\mathcal D_kP_{\mathbf j}$ is constant.
Starting from $\mathcal D_nP_{\mathbf j}=j_n$ gives the assertion
by descending induction. Applying it with $k=r+1$ shows that
there are at most $n-r-1$ possible witnesses.

At each such point $t_*$, the remaining condition for $j_r$ is
\[
\left|j_r+\sum_{\nu=r+1}^n
j_\nu\mathcal D_r\gamma_\nu(t_*)\right|
\lesssim_\gamma R^{O(\beta)}\widetilde{L}_r.
\]
The sum is already determined by $\mathbf j_{\ge r+1}$ and $t_*$.
Thus $j_r$ lies in an interval of length
$O_\gamma(R^{O(\beta)}\widetilde{L}_r)$, which contains at most
$O_\gamma(1+R^{O(\beta)}\widetilde{L}_r)$ integers.
Since $R\ge1$ and there are only $O_n(1)$ possible witnesses,
this gives $O_\gamma(R^{O(\beta)}(1+\widetilde{L}_r))$ choices for $j_r$
for each $\mathbf j_{\ge r+1}$.
Every vector in $\mathcal T^{(r)}$ projects to a vector in
$\widetilde{\mathcal T}^{(r+1)}$, so summing over these vectors yields
\[
\#\mathcal T^{(r)}
\lesssim_\gamma R^{O(\beta)}(1+\widetilde{L}_r)
\#\widetilde{\mathcal T}^{(r+1)},
\]
which is the first estimate in \eqref{eq:bp36-rough-support}.
Finally, combining the two estimates and multiplying by
$\prod_{s=0}^{r-1}(1+\widetilde{L}_s)$ gives
\eqref{eq:bp36-lead-rough}.\qed

\section{Proof of Proposition~\ref{prop:bp36-lead-update} using the B-process}\label{sec:bp36-lemma-proofs}

\subsection{Van der Corput B-process}\label{sec:bp36-bprocess-estimate}

We begin with the B-process estimate which will be used in the proof of
Proposition~\ref{prop:bp36-lead-update}.

Choose a compact interval $J_1$ with $J_0\Subset J_1$ in the same
neighborhood where
$\operatorname{Wr}(\gamma_0,\ldots,\gamma_k)\ne0$ for $0\le k\le n$.
For $r\in\mathcal R_{\boldsymbol\lambda}$ with $r\le n-3$,
use the projected curve $\mathcal C_{r,n-r-1}$ from
Proposition~\ref{prop:n-projected-curves} with parameter interval $J_1$.
It is nondegenerate by that proposition. We use the reparametrization
$s=\mathcal D_r\gamma_{r+1}(t)$.
For a sufficiently large $A=A(n)>n+1$, write
\[
\mathfrak R_{\mathcal C}(u,D)
=\sum_{\nu\ge0}2^{-A\nu}(1+2^\nu u)
\mathcal R_{\mathcal C}(C2^\nu u,D),
\]
where the constant $C$ depends only on the curve.

\begin{proposition}[B-process]\label{lem:bp36-B}
Let $r\in\mathcal R_{\boldsymbol\lambda}$, $r\le n-3$.
If $L_{r+2}\ge1$ and $1\le K\le\widetilde{L}_r^{-1}$, then
\begin{align}\label{eq:bp36-B}
\#\mathcal T^{(r)}
\lesssim{}&
R^{O(\beta)}\frac{1+L_{r+3}^2/L_{r+2}^3}{K}\#\widetilde{\mathcal T}^{(r+1)}
\notag\\
&+R^{O(\beta)}\frac{\sqrt{L_{r+2}}}{K}
\left(\prod_{k=r+2}^nL_k\right)
\sum_{\substack{1+L_{r+3}^2/L_{r+2}^3\lesssim D\lesssim K\\
D\text{ dyadic}}}
D^{-\frac12}\mathfrak R_{\mathcal C_{r,n-r-1}}(L_{r+2}^{-1},D).
\notag
\end{align}
\end{proposition}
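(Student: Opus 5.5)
The plan is to detect the $j_r$-condition by a smooth bump at a resolution coarser than $\widetilde L_r$, expand that bump in Fourier series, and then treat the oscillatory terms by a van der Corput B-process (Poisson summation in the last coordinates followed by stationary phase). The dual sums that come out should be counts of rational points near the projected curve $\mathcal C_{r,n-r-1}$.

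\textbf{Step 1: fibering over $\widetilde{\mathcal T}^{(r+1)}$.} Fix $\mathbf j_{\ge r+1}\in\widetilde{\mathcal T}^{(r+1)}$. As in the proof of Proposition~\ref{lem:bp36-lead-rough}, it has $O_n(1)$ witnesses $t_*=t_*(\mathbf j_{\ge r+1})$. These are the zeros of $\mathcal D_{r+1}P_{\mathbf j}$, and they are nondegenerate because $|\mathcal D_{r+2}P_{\mathbf j}(t_*)|\asymp L_{r+2}$.
\begin{itemize}
\item Near each witness, the implicit function theorem makes $t_*$ a smooth function of $\mathbf j_{\ge r+1}$.
\item Set $\Phi(\mathbf j_{\ge r+1}):=\sum_{\nu\ge r+1}j_\nu\mathcal D_r\gamma_\nu(t_*)$. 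The condition on $j_r$ is then $|j_r+\Phi|\lesssim R^{O(\beta)}\widetilde L_r$.
\item Since $K\le\widetilde L_r^{-1}$, I majorize the indicator of this condition by $\psi\bigl(K(j_r+\Phi)\bigr)$, where $\psi$ is a Schwartz function with compactly supported $\widehat\psi$.
\item Poisson summation in $j_r$ gives
\[
\sum_{j_r}\psi\bigl(K(j_r+\Phi)\bigr)=K^{-1}\sum_{h\in\mathbb Z}\widehat\psi(h/K)\,e(h\Phi),
\]
so only frequencies $|h|\lesssim K$ occur.
\end{itemize}
The $h=0$ term contributes $K^{-1}\#\widetilde{\mathcal T}^{(r+1)}$. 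Before this, I would split the range of $\mathbf j_{\ge r+1}$ into pieces on which $\Phi$ has controlled Hessian; the factor $1+L_{r+3}^2/L_{r+2}^3$ should be the price of that splitting. It measures how far $t_*$ moves when $j_{r+1}$ changes, since $\partial t_*/\partial j_{r+1}\sim L_{r+2}^{-1}$ and the next-order correction is governed by $L_{r+3}$. This produces the first term of \eqref{eq:bp36-B}.

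\textbf{Step 2: the B-process for $h\neq0$.} For the terms with $h\ne0$, group $h$ into dyadic blocks $|h|\sim D$.
\begin{itemize}
\item Replace the sharp constraints defining $\widetilde{\mathcal T}^{(r+1)}$ by smooth cutoffs adapted to the box $|\mathcal D_kP_{\mathbf j}(t_*)|\lesssim L_k$, $r+2\le k\le n$. This box has volume $\asymp\prod_{k=r+2}^nL_k$ in the coordinates $\mathbf j_{\ge r+1}$, after the $t_*$-reparametrization.
\item Apply Poisson summation in all of $\mathbf j_{\ge r+1}$. The resulting integrals carry phase $h\Phi(\mathbf y)-\mathbf m\cdot\mathbf y$, with $\mathbf m\in\mathbb Z^{n-r}$.
\item The key identity is an envelope property. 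Because $\mathcal D_{r+1}P_{\mathbf j}(t_*)=0$, we have $\nabla_{\mathbf y}\Phi=(\mathcal D_r\gamma_\nu(t_*))_{\nu\ge r+1}$.
\item Consequently, stationary points exist only when $\mathbf m/h$ lies, up to the allowed error, on the curve $(\mathcal D_r\gamma_{r+1},\dots,\mathcal D_r\gamma_n)$, that is, on $\mathcal C_{r,n-r-1}$ after fixing the first coordinate through the reparametrization $s=\mathcal D_r\gamma_{r+1}(t)$.
\end{itemize}
The Hessian of $\Phi$ is degenerate along $\mathbf y$ itself, since $\Phi$ is homogeneous of degree one; the radial direction gives a one-dimensional phase with second derivative of size $\sim|h|/L_{r+2}$. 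One-dimensional stationary phase should therefore produce a factor $\sqrt{L_{r+2}/|h|}$. The remaining directions should produce the volume factor $\prod_{k=r+2}^n L_k$ and restrict $\mathbf m$ to a neighbourhood of $h\,\mathcal C_{r,n-r-1}$ of width $\sim L_{r+2}^{-1}$ at the scale of the cutoffs. Counting these pairs $(\mathbf m,h)$ with $|h|\sim D$ is exactly $\mathcal R_{\mathcal C_{r,n-r-1}}(L_{r+2}^{-1},D)$. The Schwartz tails of the cutoffs at dilated widths $2^\nu L_{r+2}^{-1}$ give the weighted sum $\mathfrak R$.

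\textbf{Step 3: collecting the terms.} Summing over dyadic $D$, with the factor $K^{-1}$ and the per-frequency weight $D^{-1/2}\sqrt{L_{r+2}}$, gives the second term of \eqref{eq:bp36-B}. When $D\lesssim 1+L_{r+3}^2/L_{r+2}^3$ the B-process is not yet effective, and those frequencies are absorbed into the first term.

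\textbf{Main obstacle.} I expect the hardest part to be the uniform stationary-phase analysis in Step 2. Specifically:
\begin{itemize}
\item justifying the quadratic approximation of $\Phi$ on each piece, which is where the hypothesis $L_{r+2}\ge1$ and the factor $L_{r+3}^2/L_{r+2}^3$ enter;
\item controlling the derivatives of $t_*(\mathbf y)$ through the implicit function theorem, using the lower bounds $|\mathcal D_kP_{\mathbf j}|\gtrsim L_k$ for $n+1-k\in A_*$.
\end{itemize}
My first attempt would be to rescale each piece to unit size, verify that the rescaled phase has bounded $C^N$ norms and a non-vanishing second derivative in the radial direction, and then apply the standard van der Corput B-process lemma.
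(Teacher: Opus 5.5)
Your architecture is essentially the paper's. You use a nonnegative majorant with compactly supported Fourier transform at resolution $K$ and Poisson summation in $j_r$. Each of the $O(1+L_{r+3}^2/L_{r+2}^3)$ low frequencies gets the trivial bound $\#\widetilde{\mathcal T}^{(r+1)}$; your Step~3 has this right, but the ``price of splitting'' remark in Step~1 is wrong, since splitting leaves the $h=0$ term unchanged. Then come Poisson summation in $\mathbf j_{\ge r+1}$, stationary phase, and a rational-point count whose dyadic tails give $\mathfrak R$.

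The gap is the stationary-phase step, which as you describe it would not work. Since $\Phi$ is homogeneous of degree one, the radial direction lies in the kernel of $\nabla^2\Phi$: on a ray, $h\Phi-\mathbf m\cdot\mathbf y$ is linear. More than that, your envelope identity gives $\nabla^2\Phi=\Gamma'(t_*)\otimes\nabla t_*$ with $\Gamma=(\mathcal D_r\gamma_\nu)_{\nu\ge r+1}$. So the Hessian has rank one, not corank one. Its kernel is the whole hyperplane $\{\mathbf v:\mathcal D_{r+1}P_{\mathbf v}(t_*)=0\}$, which is the level set of $t_*$ and on which $\Phi(\mathbf v)=\mathbf v\cdot\Gamma(t_*)$ is linear. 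The one curved direction is the one that moves the critical point, e.g.\ $j_{r+1}$ with $\mathbf j_{\ge r+2}$ fixed. There $\partial_{j_{r+1}}^2\Phi=-(\mathcal D_r\gamma_{r+1})'(t_*)/(\mathcal D_{r+1}P_{\mathbf j})'(t_*)\asymp L_{r+2}^{-1}$, and that is the real source of $\sqrt{L_{r+2}/|h|}$. (If only the radial direction were degenerate, you would get $n-r-1$ stationary-phase factors.) Without this structure, your claim that the ``remaining directions'' yield $\prod_{k\ge r+2}L_k$ and a width-$L_{r+2}^{-1}$ localization is unsupported.

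The paper makes this precise by separating the variables. With $s=\mathcal D_r\gamma_{r+1}(t)$, it substitutes $j_{r+1}=-H_{\mathbf j}'(s)$, with Jacobian $|H_{\mathbf j}''|\asymp L_{r+2}$. So the smooth cutoff must also enforce $|\mathcal D_{r+2}P_{\mathbf j}|\gtrsim L_{r+2}$; yours records only upper bounds. The Poisson dual in $j_{r+1}$ becomes $\int a\,|H_{\mathbf j}''|\,e(d[H_{\mathbf j}(s)-(s-s_0)H_{\mathbf j}'(s)])\,ds$ with $s_0=p/d$. Its stationary point is exactly $s_0$, independent of $\mathbf j_{\ge r+2}$, and its critical value $dH_{\mathbf j}(s_0)$ is \emph{linear} in $\mathbf j_{\ge r+2}$.

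Rescaling $s-s_0$ by $L_{r+2}/L_{r+3}$, stationary phase at frequency $\Lambda=dL_{r+2}^3/L_{r+3}^2\gtrsim1$ gives $\sqrt{L_{r+2}/d}\,e(dH_{\mathbf j}(s_0))\,B_{d,s_0}(\mathbf z)$. The requirement $\Lambda\gtrsim1$ is exactly where the low-frequency threshold comes from. Here $B_{d,s_0}$ is uniformly Schwartz in $z_k=H_{\mathbf j}^{(k)}(s_0)/L_{r+k}$. Proving that uniformity is the real work, including the region $|s-s_0|\gg L_{r+2}/L_{r+3}$ where $|\mathbf z|$ grows polynomially. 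In these coordinates it needs only $|H_{\mathbf j}''|\asymp L_{r+2}$, $|H_{\mathbf j}^{(k)}|\lesssim L_{r+k}$ and log-concavity of the $L_k$, with no implicit-function analysis of $t_*$.

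The Poisson summation in $\mathbf j_{\ge r+2}$ is then a plain Fourier transform. The Jacobian of $\mathbf j_{\ge r+2}\mapsto\mathbf z$ gives $\prod_{k\ge r+2}L_k$, and the decay $(1+L_{r+2}|\mathbf a-d(\mathcal D_r\gamma_\nu(t(p/d)))_{\nu\ge r+2}|)^{-L}$ gives the width $L_{r+2}^{-1}$. These lattice points lie near $d\,\mathcal C_{r,n-r}$. You reach $\mathcal C_{r,n-r-1}$ by discarding $a_n$ at a cost of $1+2^\nu L_{r+2}^{-1}$, not by ``fixing the first coordinate''. Finally, the terms with $p/d\notin J_1$ need a separate non-stationary bound.
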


We prove this proposition in Section~\ref{sec:bp36-proof}.

The following lemma bounds the rational-point count in the second term.

\begin{lemma}\label{lem:bp36-planar-bound}
Let $r\in\mathcal R_{\boldsymbol\lambda}$ with $r\le n-3$.
For every $\eta>0$ and $D\ge1$,
\[
\mathfrak R_{\mathcal C_{r,n-r-1}}(u,D)
\lesssim_{\eta,\gamma}D^\eta(uD^2+D),\qquad 0<u\le1.
\]
\end{lemma}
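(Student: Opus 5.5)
Our plan is to reduce Lemma~\ref{lem:bp36-planar-bound} to the planar estimate \eqref{eq:intro-planar-bound} by projecting onto two coordinates, exactly as in Case~2 of the proof of Theorem~\ref{thm:main}. Write $m=n-r-1\ge2$ and parametrize $\mathcal C_{r,m}$ by $s=\mathcal D_r\gamma_{r+1}(t)$, so that it becomes the graph $s\mapsto(s,g_2(s),\ldots,g_m(s))$ over a compact interval. By Proposition~\ref{prop:n-projected-curves} this curve is nondegenerate. In particular $g_2''$ does not vanish, because the first two derivative vectors are independent and the first coordinate is linear. The constants depend only on $\gamma$ and the interval $J_1$. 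Nothing in this argument uses $r\in\mathcal R_{\boldsymbol\lambda}$ beyond fixing a curve from a finite list (there are at most $n$ values of $r$).

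\textbf{Step 1 (single scale).} We claim
\[
\mathcal R_{\mathcal C_{r,m}}(v,D)\lesssim_{\eta,\gamma} D^\eta(vD^2+D)(1+v)^{m-2}
\]
for all $v>0$ and $D\ge1$. Take a counted pair $(\mathbf p,q)$. Its first two coordinates give a point counted by $\mathcal R_{\mathcal C_{12}}(v,D)$, where $\mathcal C_{12}=\{(s,g_2(s))\}$ is a planar curve with nonvanishing curvature. Fix $(p_1,p_2,q)$. Then $s$ is confined to an interval of length $O(v/q)$ around $p_1/q$. Hence each of the remaining $p_k$, $3\le k\le m$, lies in an interval of length $O(v)$, which gives $O((1+v)^{m-2})$ choices. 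The case $m=2$ involves no fibre factor. For $v\le1$ we apply \eqref{eq:intro-planar-bound} directly. For $v>1$ we use instead the trivial bound $\mathcal R_{\mathcal C_{12}}(v,D)\lesssim(1+v)^{2}D^2$. That bound holds because, for each $q\le D$, the $v$-neighbourhood of $q\mathcal C_{12}$ contains $O((1+v)(q+v))$ lattice points, and we may assume $v\lesssim D$ up to harmless growth. In either case the outcome is a polynomial bound of the form $D^\eta(vD^2+D)(1+v)^{m}$.

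\textbf{Step 2 (summing the weights).} Insert $v=C2^\nu u$ with $u\le1$ into the definition of $\mathfrak R$. Each summand is at most
\[
2^{-A\nu}(1+2^\nu)\,D^\eta\bigl(2^\nu uD^2+D\bigr)(1+C2^\nu)^{m}.
\]
Since $A>n+1\ge m+2$, the geometric series in $\nu$ converges. Its total is $\lesssim D^\eta(uD^2+D)$, which is the claimed bound.

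The only delicate point is the large-$v$ regime: there the planar bound \eqref{eq:intro-planar-bound} is stated only for widths $\le1$. We handle it with the crude volume count and let the decaying weights $2^{-A\nu}$ absorb the resulting polynomial losses in $2^\nu$. This is exactly why $A=A(n)$ is taken large.
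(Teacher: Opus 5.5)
Your argument is the paper's own. You project onto the planar curve $\mathcal C_{r,2}$, apply Vaughan--Velani for thickness at most $1$, use the crude volume bound for larger thickness, and let the weights $2^{-A\nu}$ absorb the polynomial losses. Your Step~2 bounds every $\nu$ uniformly using $u\le1$ and $A>m+2$. That is cruder than the paper's split into $2^\nu u\le1$ and $2^\nu u>1$, but equally valid.

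One justification is wrong, although its conclusion is true. Nondegeneracy of $\mathcal C_{r,m}\subset\mathbb R^m$ with $m\ge3$, together with linearity of the first coordinate, only gives $\gamma''\neq0$. It does not give $g_2''\neq0$. For example, $s\mapsto(s,s^3,s^2)$ is nondegenerate, since its derivative determinant is $-12$, yet $g_2''(0)=0$. This is exactly why Case~2 of the proof of Theorem~\ref{thm:main} needs a permutation of coordinates. The correct reason here is that Proposition~\ref{prop:n-projected-curves} also holds with $m=2$. Thanks to the Wronskian ordering, the first-two-coordinate projection $\mathcal C_{r,2}$ is itself nondegenerate. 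Equivalently, as the paper computes, $g_2''(s)=(\mathcal D_{r+1}\gamma_{r+2})'(t(s))/(\mathcal D_r\gamma_{r+1})'(t(s))\neq0$.

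Two minor points.
\begin{enumerate}
\item Your displayed Step~1 claim, with the factor $(1+v)^{m-2}$, is not what you prove for $v>1$. It is in fact false once $v\gg D^{1+\eta}$, because each $q$ then contributes $\gtrsim v^m$ lattice points. You only establish the $(1+v)^m$ version, but that is all Step~2 uses.
\item The trivial bound $\mathcal R_{\mathcal C_{12}}(v,D)\lesssim(1+v)^2D^2$ needs no assumption on $v$, since $(1+v)(q+v)\le(1+v)^2D$ for $q\le D$.
\end{enumerate}
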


\begin{proof}
The planar projection of $\mathcal C_{r,n-r-1}$ is
$\mathcal C_{r,2}=\{(s,f_r(s))\}$, with
\[
f_r(s)=\mathcal D_r\gamma_{r+2}(t(s)),\qquad f_r'(s)=\frac{(\mathcal D_{r}\gamma_{r+2})'(t(s))}{(\mathcal D_r\gamma_{r+1})'(t(s))}=\mathcal D_{r+1}\gamma_{r+2}(t(s)),\qquad
f_r''(s)=\frac{(\mathcal D_{r+1}\gamma_{r+2})'(t(s))}{(\mathcal D_r\gamma_{r+1})'(t(s))}\ne0.
\]
The curve is smooth, and $|f_r''|$ is bounded above and away from zero. 
The planar counting theorem of Vaughan and Velani~\cite[Theorem~3 and the following remark]{VV2006},
applied with thickness $\max\{u,D^{-1}\}$, gives
\[
\mathcal R_{\mathcal C_{r,2}}(u,D)
\lesssim_{\eta,\gamma}D^\eta(uD^2+D),\qquad 0<u\le1.
\]
Recall that $\mathcal R_{\mathcal C_{r,n-r-1}}(u,D)$ counts pairs
$(\mathbf p,d)\in\mathbb Z^{n-r-1}\times\mathbb N$ satisfying
\[
d\lesssim D,\qquad
\operatorname{dist}(\mathbf p/d,\mathcal C_{r,n-r-1})\le u/d.
\]
Their projections $((p_1,p_2),d)$ are counted by
$\mathcal R_{\mathcal C_{r,2}}(u,D)$.
For given $d,p_1,p_2$, the first coordinate condition
$|ds-p_1|\le u$ confines $s$ to an interval of length $2u/d$.
Each remaining coordinate function has bounded first derivative,
so its values multiplied by $d$ lie in an interval of length
$O_\gamma(u)$. Allowing an error of $u$ therefore leaves
$O_\gamma(1)$ choices for each remaining integer coordinate.
There are $n-r-3$ such coordinates, and hence
\[
\mathcal R_{\mathcal C_{r,n-r-1}}(u,D)
\lesssim_\gamma \mathcal R_{\mathcal C_{r,2}}(u,D)
\lesssim_{\eta,\gamma}D^\eta(uD^2+D)
\]
for $0<u\le1$. 

It remains to sum the weights defining $\mathfrak R$.
For any $v>0$ and a fixed denominator $d$, there are
$O_\gamma(d+v)$ choices for the first integer coordinate.
Once it is fixed, the same argument as above gives
$O_\gamma(1+v)$ choices for each remaining coordinate. Hence
\[
\begin{aligned}
\mathcal R_{\mathcal C_{r,n-r-1}}(v,D)
&\lesssim_\gamma\sum_{1\le d\le D}(d+v)(1+v)^{n-r-2}\\
&\lesssim_\gamma D(D+v)(1+v)^{n-r-2}
\le D^2(1+v)^{n-r-1}.
\end{aligned}
\]
Since $C$ is fixed, this bound and the preceding small-thickness estimate give
\[
\mathcal R_{\mathcal C_{r,n-r-1}}(Cv,D)
\lesssim_{\eta,\gamma}D^\eta(vD^2+D),\qquad 0<v\le1.
\]
Indeed, if $Cv>1$, then $v>C^{-1}$ and the elementary bound applies.
Thus the terms with $2^\nu u\le1$ contribute at most
\[
D^\eta\sum_{\substack{\nu\ge0\\2^\nu u\le1}}
2^{-A\nu}(1+2^\nu u)(2^\nu uD^2+D)
\lesssim_{\eta,\gamma}D^\eta(uD^2+D).
\]
For the remaining terms, the elementary bound gives
\[
\begin{aligned}
&\sum_{\substack{\nu\ge0\\2^\nu u>1}}
2^{-A\nu}(1+2^\nu u)
\mathcal R_{\mathcal C_{r,n-r-1}}(C2^\nu u,D)\\
&\qquad\lesssim_\gamma
D^2u^{n-r}\sum_{\substack{\nu\ge0\\2^\nu u>1}}
2^{-(A-n+r)\nu}\lesssim_\gamma D^2u^A\le uD^2.
\end{aligned}
\]
Here $A>n-r$ and $0<u\le1$. These estimates prove the lemma.
\end{proof}

\bigskip

\subsection{Proof of Proposition~\ref{prop:bp36-lead-update}}\label{sec:bp36-update-proof}

We first state two auxiliary lemmas and prove
Proposition~\ref{prop:bp36-lead-update} using the B-process estimate and these lemmas.
Their proofs are given in Subsections~\ref{sec:bp36-oscillatory-proof}
and~\ref{sec:bp36-comparison-proof}, respectively.
The first lemma controls the oscillatory term in the B-process estimate.

\begin{lemma}\label{lem:bp36-lead-oscillatory}
Let $n\ge3$ and $r\in\mathcal R_{\boldsymbol\lambda}$ satisfy $r\le n-3$,
$L_{r+2}\ge1$ and $\widetilde{L}_r\le1$. We have
\begin{equation}\label{eq:bp36-lead-oscillatory}
\begin{aligned}
\left(\prod_{k=r+2}^nL_k\right)
\left(
\sqrt{\frac{\min\{\delta^{-1},\widetilde{L}_r^{-1}\}}{L_{r+2}}}
+\sqrt{\frac{L_{r+2}}{\min\{\delta^{-1},\widetilde{L}_r^{-1}\}}}
\right)\lesssim_\gamma \lambda_{\mathrm{prod}}^{-1}E_n(Q^{-1},\delta^{-1}).
\end{aligned}
\end{equation}
\end{lemma}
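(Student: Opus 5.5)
The inequality is pure numerology, and I would prove it by translating it into the notation of Section~\ref{sec:algebraic-computation}, where all the needed bounds are already in place. There we set $M=\delta^{-1}$ and $r=QM=R$, so the normalization \eqref{eq:n-scale-normalization} is exactly \eqref{eq:section-two-full-relation}. The Section~2 quantities $\ell_j=M\rho_j$ from \eqref{eq:section-two-Lj} then coincide with the present $L_{n-j}$, by the remark that $L_k=\delta^{-1}\rho_{n-k}$; I will write them as $\ell_j^{\mathrm{S2}}$ to avoid a clash with the present $\ell_r$. Put $m':=n-r\geq3$ and take the local parameters of \eqref{eq:section-two-local-parameters} at dimension $m'$:
\[
S=\prod_{j=0}^{m'-3}\ell_j^{\mathrm{S2}}=\prod_{k=r+3}^nL_k,\qquad \ell:=\ell^{\mathrm{S2}}_{m'-2}=L_{r+2},\qquad \ell^{\mathrm{S2}}_{m'-1}=L_{r+1}.
\]
With these identifications,
\[
\prod_{k=r+2}^nL_k=S\ell,\qquad \widetilde L_r=\frac{L_{r+1}^2}{L_{r+2}}=\frac{(\ell^{\mathrm{S2}}_{m'-1})^2}{\ell},
\]
and $h=\lambda_{\mathrm{prod}}$, so $U=\lambda_{\mathrm{prod}}^{-1}E_n(Q^{-1},\delta^{-1})$ is the right-hand side of the lemma.

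The bounds I need are \eqref{eq:section-two-U-consequences} with $m$ replaced by $m'$, namely $U\ge M^{m'-3/2}\ell^{1/2}$ and $U\ge M^{m'-1}\ell^{\mathrm{S2}}_{m'-1}$, together with $S\le M^{m'-2}$ and $\ell\le M$. These hold for every $m'\le n$, because \eqref{eq:section-two-basic-U} holds for every $j$; its derivation is valid for any admissible tuple and does not depend on the induction.

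Write $m_*:=\min\{\delta^{-1},\widetilde L_r^{-1}\}$ and treat the two summands separately.
\begin{enumerate}
\item The first summand is increasing in $m_*$, so it is at most
\[
S\ell\sqrt{M/\ell}=S(\ell M)^{1/2}\le M^{m'-3/2}\ell^{1/2}\le U.
\]
\item For the second summand, note that $m_*^{-1}=\max\{\delta,\widetilde L_r\}$, which gives two contributions. The $\delta$ contribution is
\[
S\ell^{3/2}M^{-1/2}\le S\ell^{1/2}M^{1/2}\le U,
\]
using $\ell\le M$. The $\widetilde L_r$ contribution is
\[
S\ell^{3/2}\cdot\frac{\ell^{\mathrm{S2}}_{m'-1}}{\ell^{1/2}}=S\ell\,\ell^{\mathrm{S2}}_{m'-1}\le M^{m'-1}\ell^{\mathrm{S2}}_{m'-1}\le U.
\]
\end{enumerate}
Adding these bounds proves the lemma with an absolute constant. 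The hypotheses $L_{r+2}\ge1$ and $\widetilde L_r\le1$ are not actually needed for the inequality; they only make $m_*\geq1$ meaningful.

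The main obstacle is the bookkeeping of the dictionary between the two index conventions, which run in opposite directions ($L_k$ versus $\ell_{n-k}^{\mathrm{S2}}$). In particular one must check that \eqref{eq:section-two-basic-U} is invoked at $j=m'-2$ and $j=m'-1$ with exponents $1/(n-j)\le 1/2$ and $\le1$ respectively, and that $\ell^{\mathrm{S2}}_j/M\le1$. If a factor mismatch appears, I would go back to \eqref{eq:section-two-En} directly and choose $k=n-j-1$ there, exactly as in the derivation of \eqref{eq:section-two-basic-U}.
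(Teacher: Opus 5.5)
Your proof is correct and is essentially the paper's own argument. Under $M=\delta^{-1}$ and $\ell_j\leftrightarrow L_{n-j}$, the two bounds you import from \eqref{eq:section-two-basic-U} at $j=n-r-2$ and $j=n-r-1$ are exactly the inequalities \eqref{eq:bp36-budget-scales}; the paper re-derives these directly in Section~4 notation from $QL_k\le\lambda_{\mathrm{prod}}^{-k}$ and the $(k-1)$-th term of $E_n$, and then treats the two summands as you do, using $\min\{\delta^{-1},\widetilde L_r^{-1}\}\le\delta^{-1}$ for the first and splitting $\max\{\delta,\widetilde L_r\}$ for the second. One small point: you use $r$ both for the Section~2 scale ($r=QM=R$) and for the index in $\mathcal R_{\boldsymbol\lambda}$, so rename one of them.
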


The next lemma compares the counts at different indices in $\mathcal R_{\boldsymbol\lambda}$
and also supplies the estimate needed at the terminal index.

\begin{lemma}\label{lem:bp36-lead-tails}
For $r,t\in\mathcal R_{\boldsymbol\lambda}$ with $r<t$, we have
\begin{equation}\label{eq:bp36-lead-support}
\left(\prod_{s=0}^{r}(1+\widetilde{L}_s)\right)
\#\widetilde{\mathcal T}^{(r+1)}\lesssim_\gamma
\left(\prod_{s=0}^{t-1}(1+\widetilde{L}_s)\right)\#\mathcal T^{(t)}.
\end{equation}
If $n-2\in\mathcal R_{\boldsymbol\lambda}$, then, for every $\eta>0$,
\begin{equation}\label{eq:bp36-terminal}
\#\mathcal T^{(n-2)}
\lesssim_{\eta,\gamma}\delta^{-\eta}(\delta^{-2}R^{O(\beta)}\widetilde{L}_{n-2}+\delta^{-1}).
\end{equation}
\end{lemma}

\begin{proof}[Proof of Proposition~\ref{prop:bp36-lead-update}]
Let $\eta>0$ and let $k$ be an integer with $1\le k\le N$.
We need to prove the following inequality.
\[
Z_k\lesssim_{\eta,\gamma}\delta^{-\eta}
\bigl(\theta_{r_k}Z_{k-1}+R^{O(\beta)}\lambda_{\mathrm{prod}}^{-1}E_n(Q^{-1},\delta^{-1})\bigr).
\]
We first treat two elementary cases. In the remaining range,
we apply Proposition~\ref{lem:bp36-B}.

\medskip\noindent\textit{Case 1: $r_k\le n-3$ and either $L_{r_k+2}<1$ or $\widetilde{L}_{r_k}>1$.}

By definition, $\theta_{r_k}=1$. Then the desired estimate follows
from $Z_k\le Z_{k-1}$ alone.

\medskip\noindent\textit{Case 2: $r_k=n-2$.}

Since the elements of $\mathcal R_{\boldsymbol\lambda}$ are listed in decreasing order, $r_k=n-2$
implies $k=1$. Thus this step starts from $Z_0$.
The definition in Proposition~\ref{prop:bp36-lead-update} gives
\begin{equation}\label{eq:bp36-lead-theta-terminal}
\theta_{n-2}=
\min\left\{1,R^{O(\beta)}\frac{\widetilde{L}_{n-2}+\delta}{1+\widetilde{L}_{n-2}}\right\}.
\end{equation}
By \eqref{eq:bp36-terminal},
\[
\begin{aligned}
\left(\prod_{s=0}^{n-3}(1+\widetilde{L}_s)\right)\#\mathcal T^{(n-2)}
&\lesssim_{\eta,\gamma}\delta^{-\eta}
\left(\prod_{s=0}^{n-3}(1+\widetilde{L}_s)\right)
(\delta^{-2}R^{O(\beta)}\widetilde{L}_{n-2}+\delta^{-1})\\
&\le\delta^{-\eta}R^{O(\beta)}
\frac{\widetilde{L}_{n-2}+\delta}{1+\widetilde{L}_{n-2}}Z_0.
\end{aligned}
\]
Since $r_1=n-2$, the definition of $Z_1$ and the preceding estimate give
\[
\begin{aligned}
Z_1&=\min\left\{Z_0,\left(\prod_{s=0}^{n-3}(1+\widetilde{L}_s)\right)\#\mathcal T^{(n-2)}\right\}\\
&\lesssim_{\eta,\gamma}\delta^{-\eta}
\min\left\{1,R^{O(\beta)}\frac{\widetilde{L}_{n-2}+\delta}{1+\widetilde{L}_{n-2}}\right\}Z_0
=\delta^{-\eta}\theta_{n-2}Z_0.
\end{aligned}
\]
This proves the desired estimate for $k=1$.

\medskip\noindent\textit{Case 3: $r_k\le n-3$, $L_{r_k+2}\ge1$, and $\widetilde{L}_{r_k}\le1$.}

Since $\widetilde L_s\le\widetilde L_{r_k}\le1$ for $0\le s\le r_k$,
all factors $1+\widetilde L_s$ with $s\le r_k$ can be absorbed into
constants depending only on $n$. In particular,
\[
Z_k\lesssim_n\min\{Z_{k-1},\#\mathcal T^{(r_k)}\}.
\]
We apply Proposition~\ref{lem:bp36-B}.
Take $K=\min\{\delta^{-1},\widetilde{L}_{r_k}^{-1}\}$, so that
$1\le K\le\widetilde{L}_{r_k}^{-1}$. The proposition gives
\[
\begin{aligned}
\#\mathcal T^{(r_k)}
\lesssim{}&R^{O(\beta)}
\frac{1+L_{r_k+3}^2/L_{r_k+2}^3}{K}
\#\widetilde{\mathcal T}^{(r_k+1)}\\
&+R^{O(\beta)}\frac{\sqrt{L_{r_k+2}}}{K}
\left(\prod_{s=r_k+2}^nL_s\right)
\sum_{\substack{1+L_{r_k+3}^2/L_{r_k+2}^3\lesssim D\lesssim K\\
D\text{ dyadic}}}
D^{-1/2}\mathfrak R_{\mathcal C_{r_k,n-r_k-1}}(L_{r_k+2}^{-1},D).
\end{aligned}
\]
Since $L_{r_k+2}^{-1}\le1$, Lemma~\ref{lem:bp36-planar-bound} gives
\[
\mathfrak R_{\mathcal C_{r_k,n-r_k-1}}(L_{r_k+2}^{-1},D)
\lesssim_{\eta,\gamma}
D^\eta\left(L_{r_k+2}^{-1}D^2+D\right).
\]
For the dyadic sum, we use
\[
\sum_{\substack{1\le D\lesssim K\\D\text{ dyadic}}}
\left(L_{r_k+2}^{-1}D^{3/2+\eta}+D^{1/2+\eta}\right)
\lesssim_\eta K^\eta
\left(\frac{K^{3/2}}{L_{r_k+2}}+K^{1/2}\right).
\]
Substituting these two bounds into the preceding estimate for
$\#\mathcal T^{(r_k)}$ and using $K^\eta\le\delta^{-\eta}$, we obtain
\begin{equation}\label{eq:bp36-usable-B}
\begin{aligned}
\#\mathcal T^{(r_k)}
&\lesssim_{\eta,\gamma} R^{O(\beta)}\delta^{-\eta}
\Biggl(\frac{(1+L_{r_k+3}^2/L_{r_k+2}^3)\#\widetilde{\mathcal T}^{(r_k+1)}}{\min\{\delta^{-1},\widetilde{L}_{r_k}^{-1}\}}\\
&\qquad+\left(\prod_{s=r_k+2}^nL_s\right)
\left(\sqrt{\frac{\min\{\delta^{-1},\widetilde{L}_{r_k}^{-1}\}}{L_{r_k+2}}}
+\sqrt{\frac{L_{r_k+2}}{\min\{\delta^{-1},\widetilde{L}_{r_k}^{-1}\}}}\right)\Biggr).
\end{aligned}
\end{equation}
By Lemma~\ref{lem:bp36-lead-oscillatory}, the second term in
\eqref{eq:bp36-usable-B} is
$\lesssim_\gamma\lambda_{\mathrm{prod}}^{-1}E_n(Q^{-1},\delta^{-1})$.
For the first term, \eqref{eq:bp36-rough-support} and
Lemma~\ref{lem:bp36-lead-tails} give
\[
\begin{aligned}
\#\widetilde{\mathcal T}^{(r_k+1)}&\lesssim_\gamma Z_0,\\
\#\widetilde{\mathcal T}^{(r_k+1)}&\lesssim_\gamma
\left(\prod_{s=0}^{t-1}(1+\widetilde L_s)\right)\#\mathcal T^{(t)}
\quad(t\in\mathcal R_{\boldsymbol\lambda},\ t>r_k).
\end{aligned}
\]
Taking the minimum over $t=r_1,\ldots,r_{k-1}$ and the bound by $Z_0$,
we obtain
\[
\#\widetilde{\mathcal T}^{(r_k+1)}\lesssim_\gamma Z_{k-1}.
\]
For $k=1$, this follows directly from the bound by $Z_0$.
Substituting these estimates into \eqref{eq:bp36-usable-B} and using
$K^{-1}=\max\{\widetilde L_{r_k},\delta\}$ gives
\begin{equation}\label{eq:bp36-lead-step}
\begin{aligned}
\#\mathcal T^{(r_k)}
&\lesssim_{\eta,\gamma}R^{O(\beta)}\delta^{-\eta}
\Biggl(\left(1+\frac{L_{r_k+3}^2}{L_{r_k+2}^3}\right)
\max\{\widetilde L_{r_k},\delta\}Z_{k-1}
+\lambda_{\mathrm{prod}}^{-1}E_n(Q^{-1},\delta^{-1})\Biggr).
\end{aligned}
\end{equation}
In this case, the definition in Proposition~\ref{prop:bp36-lead-update} gives
\begin{equation}\label{eq:bp36-lead-theta}
\theta_{r_k}=\min\left\{1,
R^{O(\beta)}\left(1+\frac{L_{r_k+3}^2}{L_{r_k+2}^3}\right)
\max\{\widetilde L_{r_k},\delta\}\right\}.
\end{equation}
Combining \eqref{eq:bp36-lead-step} with
$Z_k\lesssim_n\min\{Z_{k-1},\#\mathcal T^{(r_k)}\}$ proves
\begin{equation}\label{eq:bp36-lead-update}
Z_k\lesssim_{\eta,\gamma}\delta^{-\eta}
\bigl(\theta_{r_k}Z_{k-1}+R^{O(\beta)}\lambda_{\mathrm{prod}}^{-1}E_n(Q^{-1},\delta^{-1})\bigr).
\end{equation}

This proves the estimate in all three cases.
\end{proof}

\bigskip

\subsection{Proof of Lemma~\ref{lem:bp36-lead-oscillatory}}\label{sec:bp36-oscillatory-proof}

\begin{proof}
We have $L_k\le\delta^{-1}$ for $1\le k\le n$.
We first record numerical bounds valid for every such index.
The scale normalization implies
\begin{equation}\label{eq:bp36-normalization}
\prod_{j=1}^{n-1}\lambda_j^{n+1-j}=\frac{\delta}{Q},
\qquad QL_k\le\lambda_{\mathrm{prod}}^{-k}
\quad(1\le k\le n).
\end{equation}
Indeed, the second inequality follows by dividing the formula for $L_k$
by $Q^{-1}$ and comparing every resulting exponent with $k$.
The definition \eqref{eq:section-two-En} includes only the individual error terms,
with no additional $1$. Since $D_{n,k-1}=k(n-k+1)-1$, its $(k-1)$-th term gives
\begin{equation}\label{eq:bp36-monomial-bounds}
\begin{aligned}
\lambda_{\mathrm{prod}}^{-1}E_n(Q^{-1},\delta^{-1})&\ge\lambda_{\mathrm{prod}}^{-1}
\bigl(Q^{-1}\delta^{-D_{n,k-1}}\bigr)^{1/k}\\
&\ge\bigl(L_k\delta^{-D_{n,k-1}}\bigr)^{1/k}\\
&=\delta^{k-n-1}\left(\delta L_k\right)^{1/k}
\qquad(1\le k\le n).
\end{aligned}
\end{equation}
In particular, $\lambda_{\mathrm{prod}}^{-1}E_n(Q^{-1},\delta^{-1})\ge \delta^{-1}$ by taking $k=n$. Taking $k=r+2$ and
$k=r+1$, and using $0<\delta L_k\le1$, yields
\begin{equation}\label{eq:bp36-budget-scales}
\begin{aligned}
\delta^{r+2-n}\sqrt{\frac{L_{r+2}}{\delta}}&\le \lambda_{\mathrm{prod}}^{-1}E_n(Q^{-1},\delta^{-1}),\\
\delta^{r+1-n}L_{r+1}&\le \lambda_{\mathrm{prod}}^{-1}E_n(Q^{-1},\delta^{-1})
\qquad(0\le r\le n-3).
\end{aligned}
\end{equation}
These inequalities hold for every $r$ in the displayed range.

Now let $r$ satisfy the hypotheses of the lemma and keep the choice
$K=\min\{\delta^{-1},\widetilde L_r^{-1}\}$. The first term is bounded by
\[
\begin{aligned}
\left(\prod_{j=r+2}^nL_j\right)\sqrt{\frac{K}{L_{r+2}}}
&\le\left(\prod_{j=r+3}^nL_j\right)\sqrt{\frac{L_{r+2}}{\delta}}\\
&\le \delta^{r+2-n}\sqrt{\frac{L_{r+2}}{\delta}}\le \lambda_{\mathrm{prod}}^{-1}E_n(Q^{-1},\delta^{-1}).
\end{aligned}
\]
For the second, $K^{-1}=\max\{\delta,\widetilde L_r\}$ and
$L_{r+2}\widetilde L_r=L_{r+1}^2$ give
\[
\begin{aligned}
\left(\prod_{j=r+2}^nL_j\right)\sqrt{\frac{L_{r+2}}K}
&\le\left(\prod_{j=r+2}^nL_j\right)
\left(\sqrt{\delta L_{r+2}}+L_{r+1}\right)\\
&\le \delta^{r+2-n}L_{r+2}+\delta^{r+1-n}L_{r+1}\\
&\le2\lambda_{\mathrm{prod}}^{-1}E_n(Q^{-1},\delta^{-1}).
\end{aligned}
\]
Here $L_{r+2}\le\sqrt{\frac{L_{r+2}}{\delta}}$ was used in the last step.
Combining the two estimates proves the lemma.
\end{proof}

\bigskip

\subsection{Proof of Lemma~\ref{lem:bp36-lead-tails}}\label{sec:bp36-comparison-proof}

\begin{proof}
Let $r,t\in\mathcal R_{\boldsymbol\lambda}$ with $r<t$. To prove \eqref{eq:bp36-lead-support},
it suffices to show that
\begin{equation}\label{eq:bp36-support-recursion}
\#\widetilde{\mathcal T}^{(r+1)}\lesssim_\gamma
\left(\prod_{s=r+1}^{t-1}(1+\widetilde{L}_s)\right)\#\mathcal T^{(t)}.
\end{equation}
The comparison uses the interval and cutoff convention in
Section~\ref{sec:bp36-proof}. In particular, the parameter interval
and derivative bounds for $\widetilde{\mathcal T}^{(r+1)}$ leave room for the displacement
into $\mathcal T^{(t)}$ whenever $t\in\mathcal R_{\boldsymbol\lambda}$ and $t>r$.
Let $t_*$ witness a vector in $\widetilde{\mathcal T}^{(r+1)}$.
Then $|\mathcal D_{t+1}P_{\mathbf j}(t_*)|\lesssim L_{t+1}$ and
$|\mathcal D_{t+2}P_{\mathbf j}(t_*)|\asymp L_{t+2}$.
Since $(\mathcal D_kP_{\mathbf j})'=(\mathcal D_k\gamma_{k+1})'\mathcal D_{k+1}P_{\mathbf j}$ and
\[
\ell_t\frac{L_{k+1}}{L_k}
\le\lambda_{n-t-1}\le c_*\quad(t+2\le k\le n-1),
\qquad (\mathcal D_nP_{\mathbf j})'=0,
\]
the higher derivative bounds persist over distance $O_\gamma(\ell_t)$.
In particular, $(\mathcal D_{t+1}P_{\mathbf j})'$ has constant sign and size
$\asymp_\gamma L_{t+2}$ there. The intermediate value theorem
therefore gives a zero of $\mathcal D_{t+1}P_{\mathbf j}$ within that distance.
At this zero,
\[
|\mathcal D_tP_{\mathbf j}|\lesssim_\gamma
L_t+\ell_tL_{t+1}\lesssim\widetilde{L}_t,
\]
so $(j_t,\ldots,j_n)$ is counted by $\mathcal T^{(t)}$.
For each such vector, apply Lemma~\ref{lem:n-fibre-counting}
at the original witnesses, as in the proof of
Proposition~\ref{lem:bp36-lead-rough}. This gives $O_\gamma(1+\widetilde{L}_s)$
choices for $j_s$, successively for $s=t-1,\ldots,r+1$.
This proves \eqref{eq:bp36-support-recursion}. Multiplying both sides by
$\prod_{s=0}^{r}(1+\widetilde{L}_s)$ gives
\[
\left(\prod_{s=0}^{r}(1+\widetilde{L}_s)\right)\#\widetilde{\mathcal T}^{(r+1)}
\lesssim_\gamma
\left(\prod_{s=0}^{t-1}(1+\widetilde{L}_s)\right)
\#\mathcal T^{(t)}.
\]
This proves \eqref{eq:bp36-lead-support}.

Finally, suppose that $n-2\in\mathcal R_{\boldsymbol\lambda}$. Let $\mathbf j_{\ge n-2}\in \mathcal T^{(n-2)}$.
Then $|j_n|\asymp L_n=\delta^{-1}$. For a witness $t_*$, one has $\mathcal D_{n-1} P_{\mathbf j}=0$. By \eqref{eq:n-normalized-tail} with $k=n-1,n-2$, one has 
\[
\frac{j_{n-1}}{j_n}=-\mathcal D_{n-1}\gamma_n,
\qquad
\left|\frac{j_{n-2}}{j_n}
- (\mathcal D_{n-2}\gamma_{n-1}\cdot\mathcal D_{n-1}\gamma_n
-\mathcal D_{n-2}\gamma_n\bigr)\right|
\lesssim_\gamma\delta R^{O(\beta)}\widetilde{L}_{n-2}.
\]
Thus we count rational points at height $\delta^{-1}$ and thickness
$R^{O(\beta)}\widetilde{L}_{n-2}$ near the planar curve
\[
t\longmapsto
\left(-\mathcal D_{n-1}\gamma_n(t),\,
\mathcal D_{n-2}\gamma_{n-1}(t)\cdot \mathcal D_{n-1}\gamma_n(t)
-\mathcal D_{n-2}\gamma_n(t)\right).
\]
Its first coordinate has derivative $-(\mathcal D_{n-1}\gamma_n)'\ne0$.
With this coordinate as parameter, we rewrite the curve as
\[ s\longmapsto (-s,y(s)), \]
where $y(s)=\mathcal D_{n-2}\gamma_{n-1}(t(s))\cdot\mathcal D_{n-1}\gamma_n(t(s))-\mathcal D_{n-2}\gamma_n(t(s))$. Noting that $\frac{d}{ds}=\frac{1}{(\mathcal D_{n-1}\gamma_n)'}\frac{d}{dt}$, one can compute (using \eqref{eq:n-normalized-derivatives}) that
\[ \frac{d}{ds} y= \frac{1}{(\mathcal D_{n-1}\gamma_n)'}\Big( (\mathcal D_{n-2}\gamma_{n-1})'\cdot \mathcal D_{n-1}\gamma_n+\mathcal D_{n-2}\gamma_{n-1}\cdot(\mathcal D_{n-1}\gamma_n)'-(\mathcal D_{n-2}\gamma_n)' \Big)=\mathcal D_{n-2}\gamma_{n-1}(t(s)), \]
\[ \frac{d^2}{ds^2}y=\frac{(\mathcal D_{n-2}\gamma_{n-1})'}{(\mathcal D_{n-1}\gamma_n)'}\ne0. \]

For $0<R^{O(\beta)}\widetilde{L}_{n-2}\le1$, the planar counting theorem used in
Lemma~\ref{lem:bp36-planar-bound} gives
\[
\#\mathcal T^{(n-2)}
\lesssim_{\eta,\gamma}\delta^{-\eta}
(\delta^{-2}R^{O(\beta)}\widetilde{L}_{n-2}+\delta^{-1}).
\]
For $R^{O(\beta)}\widetilde{L}_{n-2}>1$, choose $j_n,j_{n-1}$ in
$O(\delta^{-2})$ ways. Each pair $(j_{n-1},j_n)$ has $O_n(1)$ relevant critical points,
each allowing $O_\gamma(1+R^{O(\beta)}\widetilde{L}_{n-2})$ choices for $j_{n-2}$.
Hence the count is $O_\gamma(\delta^{-2}R^{O(\beta)}\widetilde{L}_{n-2})$.
This proves \eqref{eq:bp36-terminal}.
\end{proof}

\section{Proof of Lemma~\ref{lem:bp36-lead-product}}\label{sec:bp36-gain-proof}
We use the existing quantities $Z_0$, $\theta_r$, $L_k$, and
$\widetilde L_r$. The main step is
\eqref{eq:bp36-volume-step}:
multiplication by $\theta_r$ replaces
$L_{r+2}^2\prod_{j=r+3}^nL_j$ by
$L_{r+1}^2\prod_{j=r+2}^nL_j$, up to an error bounded by
$3\lambda_{\mathrm{prod}}^{-1}E_n(Q^{-1},\delta^{-1})$.
Iterating this inequality proves the lemma.

\subsection{The starting index}
\label{sec:bp36-gain-preliminary}
As in the definition in Proposition~\ref{prop:bp36-lead-update},
define $\theta_r$ for every $0\le r\le n-2$ by
\[
\theta_r:=
\begin{cases}
\displaystyle\min\left\{1,
R^{O(\beta)}(1+L_{r+3}^2/L_{r+2}^3)
\max\{\widetilde L_r,\delta\}\right\},
&r\le n-3,\ L_{r+2}\ge1,\ \widetilde L_r\le1,\\[3mm]
1,&r\le n-3,\ L_{r+2}<1\ \text{or}\ \widetilde L_r>1,\\[2mm]
\displaystyle\min\left\{1,
R^{O(\beta)}\frac{\widetilde L_{n-2}+\delta}{1+\widetilde L_{n-2}}
\right\},&r=n-2.
\end{cases}
\]
For $r\in\mathcal R_{\boldsymbol\lambda}$ this agrees with the original
definition. Until Subsection~\ref{sec:bp36-gain-restore}, we evaluate
these formulas at $\beta=0$, replacing $R^{O(\beta)}$ by $1$.
We shall prove
\begin{equation}\label{eq:bp36-gain-result}
Z_0\prod_{r=0}^{n-2}\theta_r\lesssim_n \lambda_{\mathrm{prod}}^{-1}E_n(Q^{-1},\delta^{-1}).
\end{equation}
Here
\begin{equation*}
Z_0=\delta^{-2}\prod_{s=0}^{n-2}(1+\widetilde L_s).
\end{equation*}

Choose
\[
r_*:=\max\{0\le r\le n-2:\widetilde L_r\le1\}.
\]
It exists because $\widetilde L_0=Q^{-1}\le\delta\le1$.
Since $\widetilde L_s$ is nondecreasing, $1+\widetilde L_s\asymp1$
for $s\le r_*$, whereas $1+\widetilde L_s\asymp\widetilde L_s$
for $s>r_*$. Hence
\begin{equation}\label{eq:bp36-gain-start}
\begin{aligned}
Z_0&\lesssim_n\delta^{-2}\prod_{s=r_*+1}^{n-2}\widetilde L_s=\begin{cases}
\displaystyle L_{r_*+2}^2\prod_{j=r_*+3}^nL_j,&r_*\le n-3,\\[2mm]
\delta^{-2},&r_*=n-2.
\end{cases}
\end{aligned}
\end{equation}
Here we used $\widetilde L_s=L_{s+1}^2/L_{s+2}$ and $L_n=\delta^{-1}$
to cancel the products.

\bigskip

\subsection{The estimate at one index}
\label{sec:bp36-gain-one-step}
Let $0\le r\le r_*$ and $r\le n-3$. We prove
\begin{equation}\label{eq:bp36-volume-step}
\begin{aligned}
\theta_r L_{r+2}^2\prod_{j=r+3}^nL_j
&\le L_{r+1}^2\prod_{j=r+2}^nL_j
+3\lambda_{\mathrm{prod}}^{-1}E_n(Q^{-1},\delta^{-1}).
\end{aligned}
\end{equation}
Since $L_j\le\delta^{-1}$, \eqref{eq:bp36-budget-scales} gives
\begin{equation}\label{eq:bp36-volume-numerical}
\begin{aligned}
\left(\prod_{j=r+3}^nL_j\right)L_{r+2}
&\le\left(\prod_{j=r+3}^nL_j\right)\sqrt{\frac{L_{r+2}}{\delta}}\\
&\le\delta^{r+2-n}\sqrt{\frac{L_{r+2}}{\delta}}
\le\lambda_{\mathrm{prod}}^{-1}E_n(Q^{-1},\delta^{-1}),\\
\delta^{r+1-n}L_{r+1}
&\le\lambda_{\mathrm{prod}}^{-1}E_n(Q^{-1},\delta^{-1}).
\end{aligned}
\end{equation}
If
\[
L_{r+2}^2\prod_{j=r+3}^nL_j
\le\lambda_{\mathrm{prod}}^{-1}E_n(Q^{-1},\delta^{-1}),
\]
then \eqref{eq:bp36-volume-step} follows from $\theta_r\le1$.
Otherwise, \eqref{eq:bp36-volume-numerical} implies
\[
L_{r+2}^2>\sqrt{\frac{L_{r+2}}{\delta}}
\quad\Longrightarrow\quad L_{r+2}^3>\delta^{-1}\ge1.
\]
Also, $\widetilde L_r\le1$ because $r\le r_*$. Thus the first case
in the definition of $\theta_r$ applies. Using
$\max\{\widetilde L_r,\delta\}\le\widetilde L_r+\delta$ and
$\widetilde L_r=L_{r+1}^2/L_{r+2}$, we obtain
\[
\begin{aligned}
&\theta_r L_{r+2}^2\prod_{j=r+3}^nL_j\\
&\quad\le(\widetilde L_r+\delta)
\left(1+\frac{L_{r+3}^2}{L_{r+2}^3}\right)
L_{r+2}^2\prod_{j=r+3}^nL_j\\
&\quad=L_{r+1}^2\prod_{j=r+2}^nL_j
+\left(\prod_{j=r+3}^nL_j\right)
\left(\frac{L_{r+3}^2L_{r+1}^2}{L_{r+2}^2}
+\delta L_{r+2}^2+\frac{\delta L_{r+3}^2}{L_{r+2}}\right).
\end{aligned}
\]
Each of the three additional terms is at most
$\lambda_{\mathrm{prod}}^{-1}E_n(Q^{-1},\delta^{-1})$.
For the first term, use
\[
L_{r+1}=\lambda_{n-r-1}\frac{L_{r+2}^2}{L_{r+3}}
\le\frac{L_{r+2}^2}{L_{r+3}}.
\]
Then
\[
\begin{aligned}
\left(\prod_{j=r+3}^nL_j\right)
\frac{L_{r+3}^2L_{r+1}^2}{L_{r+2}^2}
&\le\left(\prod_{j=r+3}^nL_j\right)L_{r+3}L_{r+1}\\
&\le\delta^{r+1-n}L_{r+1}
\le\lambda_{\mathrm{prod}}^{-1}E_n(Q^{-1},\delta^{-1}).
\end{aligned}
\]
For the second term, $\delta L_{r+2}\le1$ gives
\[
\left(\prod_{j=r+3}^nL_j\right)\delta L_{r+2}^2
\le\left(\prod_{j=r+3}^nL_j\right)L_{r+2}
\le\lambda_{\mathrm{prod}}^{-1}E_n(Q^{-1},\delta^{-1}).
\]
For the third, $L_{r+3}\le\delta^{-1}$ and
$L_{r+2}^3>\delta^{-1}$ give
\[
\begin{aligned}
\left(\prod_{j=r+3}^nL_j\right)\frac{\delta L_{r+3}^2}{L_{r+2}}
&\le\left(\prod_{j=r+3}^nL_j\right)\frac{1}{\delta L_{r+2}}\\
&\le\left(\prod_{j=r+3}^nL_j\right)\sqrt{\frac{L_{r+2}}{\delta}}\\
&\le\lambda_{\mathrm{prod}}^{-1}E_n(Q^{-1},\delta^{-1}).
\end{aligned}
\]
These bounds prove \eqref{eq:bp36-volume-step}.

\bigskip

\subsection{Iteration and the final bound}
\label{sec:bp36-gain-volume-iteration}
If $r_*\le n-3$, apply \eqref{eq:bp36-volume-step} at $r=r_*$
to \eqref{eq:bp36-gain-start}:
\[
Z_0\theta_{r_*}\lesssim_n
L_{r_*+1}^2\prod_{j=r_*+2}^nL_j
+\lambda_{\mathrm{prod}}^{-1}E_n(Q^{-1},\delta^{-1}).
\]
If $r_*=n-2$, the terminal formula gives the same bound, since
\eqref{eq:bp36-monomial-bounds} with $k=n$ implies
$\delta^{-1}\le\lambda_{\mathrm{prod}}^{-1}E_n(Q^{-1},\delta^{-1})$:
\[
\begin{aligned}
Z_0\theta_{n-2}
&\lesssim_n\delta^{-2}(\widetilde L_{n-2}+\delta)
=L_{n-1}^2L_n+\delta^{-1}\\
&\le L_{n-1}^2L_n
+\lambda_{\mathrm{prod}}^{-1}E_n(Q^{-1},\delta^{-1}).
\end{aligned}
\]
We now apply \eqref{eq:bp36-volume-step} successively for
$r=r_*-1,\ldots,0$. At each step, the preceding error is multiplied
by $\theta_r\le1$. There are at most $n-1$ steps, so
\[
\begin{aligned}
Z_0\prod_{s=r}^{r_*}\theta_s
&\lesssim_n L_{r+1}^2\prod_{j=r+2}^nL_j
+\lambda_{\mathrm{prod}}^{-1}E_n(Q^{-1},\delta^{-1})
\qquad(0\le r\le r_*).
\end{aligned}
\]
In particular,
\[
\begin{aligned}
Z_0\prod_{s=0}^{n-2}\theta_s
&\le Z_0\prod_{s=0}^{r_*}\theta_s\\
&\lesssim_n L_1^2\prod_{j=2}^nL_j
+\lambda_{\mathrm{prod}}^{-1}E_n(Q^{-1},\delta^{-1}).
\end{aligned}
\]
To bound the remaining product, use the scale normalization and
$Q\delta\ge1$:
\[
\begin{aligned}
\lambda_{\mathrm{prod}}L_1^2\prod_{j=2}^nL_j
&=Q^{-1}\delta^{-n}\prod_{i=1}^{n-1}
\lambda_i^{(n-i)(n+1-i)/2}\\
&\le Q^{-1}\delta^{-n}\left(\frac{\delta}{Q}\right)^{1/2}\\
&=Q^{-1}\delta^{1-n}(Q\delta)^{-1/2}
\le Q^{-1}\delta^{1-n}\\
&\le E_n(Q^{-1},\delta^{-1}).
\end{aligned}
\]
The last inequality follows from the $k=0$ term in
\eqref{eq:n-full-range-budget}. Hence
\begin{equation}\label{eq:bp36-budget-endpoints}
L_1^2\prod_{j=2}^nL_j
\le\lambda_{\mathrm{prod}}^{-1}E_n(Q^{-1},\delta^{-1}),
\end{equation}
and \eqref{eq:bp36-gain-result} follows.

\bigskip

\subsection{Return to \texorpdfstring{$\beta>0$ and $r\in\mathcal R_{\boldsymbol\lambda}$}{beta > 0 and r in R(lambda)}}
\label{sec:bp36-gain-restore}
For all $0\le r\le n-2$, the definitions imply
\[
\left.\theta_r\right|_{\beta=0}
\le\theta_r\le R^{O(\beta)}\left.\theta_r\right|_{\beta=0}.
\]
The scale identity
\begin{equation}\label{eq:bp36-key-identity}
\widetilde L_r\frac{L_{r+3}^2}{L_{r+2}^3}
=\lambda_{n-r-1}^2\qquad(0\le r\le n-3)
\end{equation}
implies $\theta_r\ge\lambda_{n-r-1}^2$ in the first case of
its definition; the same bound holds when $\theta_r=1$.
For the terminal index, use
$\widetilde L_{n-2}=\delta^{-1}\lambda_1^2$ to get
$\theta_{n-2}\ge\lambda_1^2/2$.
If $r\ge1$ and $r\notin\mathcal R_{\boldsymbol\lambda}$, then
$\lambda_{n-r-1}>c_*$, so $\theta_r\gtrsim_\gamma1$.
If $0\notin\mathcal R_{\boldsymbol\lambda}$, then
$\lambda_{n-1}>c_*R^{-\beta}$, so
$\theta_0\gtrsim_\gamma R^{-O(\beta)}$.
Consequently,
\[
\begin{aligned}
Z_0\prod_{r\in\mathcal R_{\boldsymbol\lambda}}\theta_r
&\lesssim_\gamma R^{O(\beta)}Z_0\prod_{r=0}^{n-2}\theta_r\\
&\lesssim_\gamma R^{O(\beta)}Z_0
\prod_{r=0}^{n-2}\left.\theta_r\right|_{\beta=0}\\
&\lesssim_\gamma R^{O(\beta)}\lambda_{\mathrm{prod}}^{-1}E_n(Q^{-1},\delta^{-1}).
\end{aligned}
\]
This is \eqref{eq:bp36-lead-product}.\qed

\section{Proof of Proposition~\ref{lem:bp36-B}}\label{sec:bp36-proof}

We follow the B-process in \cite[Section 7, Lemmas 7.5 and 7.6]{CSST2026},
retaining the individual scales $L_{r+2},\ldots,L_n$.
Let $r\in\mathcal R_\lambda$ with $r\le n-3$.
We may assume $\mathcal T^{(r)}\ne\varnothing$, and hence
\[
\#\widetilde{\mathcal T}^{(r+1)}\ge1,
\qquad
1\le K\le\widetilde L_r^{-1}
\quad\Longrightarrow\quad \widetilde L_r\le1.
\]
Choose the intervals below so that
$\operatorname{Wr}(\gamma_0,\ldots,\gamma_k)\ne0$ on $J_1$ for $0\le k\le n$:
\[
\underbrace{I^*}_{\mathcal T^{(r)}}
\Subset
\underbrace{J_0}_{\widetilde{\mathcal T}^{(r+1)}}
\Subset
\underbrace{J_1}_{\mathcal C_{r,n-r-1}}.
\]
For $r,t\in\mathcal R_{\boldsymbol\lambda}$ with $r<t$, choose the intervals and thresholds successively
so that the conditions for $\mathcal T^{(t)}$ include the displacements
from $\widetilde{\mathcal T}^{(r+1)}$ in the proof of
Lemma~\ref{lem:bp36-lead-tails}.
These choices depend only on the curve.

\subsection*{Step 1. The critical-value sum}\label{subsec:bp52-critical-value}
Use $s=\mathcal D_r\gamma_{r+1}(t)$ and denote the images of the
parameter intervals by the same symbols. Write
\begin{equation}\label{Hj}
    H_{\mathbf j}(s)=\sum_{\nu=r+2}^n j_\nu\mathcal D_r\gamma_\nu(t(s)).
\end{equation}

Then
\begin{equation}\label{Hj'}
    (\mathcal D_rP_{\mathbf j})(t(s))=j_r+j_{r+1}s+H_{\mathbf j}(s),
\qquad (\mathcal D_{r+1}P_{\mathbf j})(t(s))=j_{r+1}+H_{\mathbf j}'(s).
\end{equation}

The functions $H_{\mathbf j}$ depend only on
$\mathbf j_{\ge r+2}$.
The differentiation identity in this coordinate is
\[
\frac{d}{ds}(\mathcal D_kP_{\mathbf j})(t(s))
=\frac{(\mathcal D_k\gamma_{k+1})'(t(s))}{(\mathcal D_r\gamma_{r+1})'(t(s))}
(\mathcal D_{k+1}P_{\mathbf j})(t(s))\qquad(r\le k\le n-1).
\]
Its coefficients are smooth and bounded above and away from zero.

Choose a nonnegative $a\in C_c^\infty(\operatorname{int}J_0\times\mathbb R^{n-r-1})$, written as $a(s,\mathbf j_{\ge r+2})$, with
\[
0\le a\lesssim_\gamma1,\qquad a\ge1\quad\text{at the original witnesses}.
\]
We choose its smooth cutoffs so that
\[
a(s,\mathbf j_{\ge r+2})\ne0
\quad\Longrightarrow\quad
\begin{cases}
s\in\operatorname{int}J_0,\\[1mm]
|\mathbf j_{\ge r+2}|\lesssim\delta^{-1},\quad
|H_{\mathbf j}'(s)|\lesssim\delta^{-1},\\[1mm]
|(\mathcal D_kP_{\mathbf j})(t(s))|\le CL_k
& (r+2\le k\le n),\\[1mm]
|(\mathcal D_kP_{\mathbf j})(t(s))|\ge cL_k
& (r+2\le k\le n,\ n+1-k\in A_*).
\end{cases}
\]
These are cutoffs in the coefficient variables $s$, $\mathbf j_{\ge r+2}$, $H'_{\mathbf j}(s)$, $\delta H_{\mathbf j}'(s)$,
and $(\mathcal D_kP_{\mathbf j})(t(s))/L_k$. For example, we may choose smooth bump functions $\phi_{J_0},\phi,\psi$ adapted to
$J_0, [-C,C], [-C,-c]\cup[c,C]$, respectively, and let
\begin{equation}\label{defa}
    a(s,\mathbf{j}_{\ge r+2})=\phi_{J_0}(s)\phi(\de \mathbf{j}_{\ge r+2})\phi(\de H_{\mathbf j}'(s))\prod_{n+1-k\notin A_*}\phi((\mathcal D_kP_{\mathbf j})(t(s))/L_k)\prod_{n+1-k\in A_*}\psi((\mathcal D_kP_{\mathbf j})(t(s))/L_k)
\end{equation}

At a critical point in this support,
\[
j_{r+1}+H_{\mathbf j}'(s)=0
\quad\Longrightarrow\quad
\begin{cases}
|j_{r+1}|=|H_{\mathbf j}'(s)|\lesssim\delta^{-1},\\[1mm]
(\mathcal D_{r+1}P_{\mathbf j})(t(s))=0.
\end{cases}
\]
Thus all the defining conditions of $\widetilde{\mathcal T}^{(r+1)}$
hold with witness $t_*=t(s)$:
\[
\left\{
\begin{gathered}
a(s,\mathbf j_{\ge r+2})\ne0,\\
j_{r+1}+H_{\mathbf j}'(s)=0
\end{gathered}
\right.
\quad\Longrightarrow\quad
\mathbf j_{\ge r+1}\in\widetilde{\mathcal T}^{(r+1)}.
\]

In the $s$ coordinate, the normalized derivatives of order at
least two are lower-triangular combinations of ordinary derivatives
of orders at least two, with nonzero leading coefficients.
Using $L_{r+2}\le\cdots\le L_n=\delta^{-1}$, we obtain, on the support of $a$,
\begin{equation}\label{eq:bp52-derivative-bounds}
|H_{\mathbf j}''(s)|\asymp_\gamma L_{r+2},\qquad
|H_{\mathbf j}^{(k)}(s)|\lesssim_\gamma L_{r+k}
\quad(3\le k\le n-r),\qquad
|\mathbf j_{\ge r+2}|\lesssim\delta^{-1}.
\end{equation}
Higher derivatives are $O_\gamma(\delta^{-1})$.

In the next proposition,
all sums over critical points (i.e., $s$ such that $j_{r+1}+H_{\mathbf j}'(s)=0$) are restricted to points in the support of
$a$. For each $\mathbf j_{\ge r+1}$, there are at most $n-r-1$ such points:
at each one, $\mathcal D_{r+2}P_{\mathbf j}\ne0$, and the proof of item~1
of Lemma~\ref{lem:n-fibre-counting} bounds the number of zeros of
$\mathcal D_{r+1}P_{\mathbf j}$. The coefficient cutoffs also leave only
finitely many integer vectors, so all sums over vectors and critical points
are finite. The next proposition reduces the count to exponential sums
over these points.

\begin{proposition}\label{prop:bp52-value-poisson}
Let $r\in\mathcal R_{\boldsymbol\lambda}$ satisfy $r\le n-3$, $L_{r+2}\ge1$, and
$1\le K\le\widetilde{L}_r^{-1}$.
For the function $a$ chosen above, we have
\begin{equation}\label{eq:bp52-value-poisson}
\begin{aligned}
\#\mathcal T^{(r)}
\lesssim_\gamma{}\frac{R^{O(\beta)}}K\sum_{|d|\lesssim K}
\Biggl|\sum_{\mathbf j_{\ge r+1}\in\mathbb Z^{n-r}}
\sum_{\substack{s\in J_0\\j_{r+1}+H_{\mathbf j}'(s)=0}}
 a(s,\mathbf j_{\ge r+2}) e\bigl(d(j_{r+1}s+H_{\mathbf j}(s))\bigr)\Biggr|.
\end{aligned}
\end{equation}
\end{proposition}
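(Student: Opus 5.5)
This is the counting-to-exponential-sum step of the B-process, following \cite[Lemma~7.5]{CSST2026}. Fix $\mathbf j_{\ge r+1}\in\widetilde{\mathcal T}^{(r+1)}$ together with one of its at most $n-r-1$ critical points $s$ lying in the support of $a$. Put $\phi_{\mathbf j}(s):=j_{r+1}s+H_{\mathbf j}(s)$. By \eqref{Hj'}, the defining condition of $\mathcal T^{(r)}$ on $j_r$ reads
\[
\bigl|j_r+\phi_{\mathbf j}(s)\bigr|\lesssim R^{O(\beta)}\widetilde L_r\le R^{O(\beta)}K^{-1}.
\]
Since every witness for $\mathcal T^{(r)}$ lies in $I^*\Subset J_0$ and has $a\ge1$ there, we get
\[
\#\mathcal T^{(r)}\lesssim\sum_{\mathbf j_{\ge r+1}}\sum_{s\ \text{crit}}a(s,\mathbf j_{\ge r+2})\,\#\Bigl\{j_r\in\mathbb Z:\|\phi_{\mathbf j}(s)\|_{\mathbb R/\mathbb Z}\lesssim R^{O(\beta)}K^{-1}\Bigr\},
\]
and the inner count is at most $1$, or $O(R^{O(\beta)})$ if several integers fit. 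The task is therefore to detect the condition $\|\phi_{\mathbf j}(s)\|\lesssim K^{-1}$ Fourier-analytically.

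\emph{Step 1.} Pick a Fejér-type majorant $F_K$ on $\mathbb R/\mathbb Z$. It should be nonnegative, satisfy $F_K(x)\ge1$ when $\|x\|\le R^{O(\beta)}K^{-1}$, and have Fourier coefficients supported in $|d|\lesssim K$ with $|\widehat{F_K}(d)|\lesssim R^{O(\beta)}K^{-1}$. A concrete choice is $F_K(x)=R^{O(\beta)}K^{-1}\bigl|\sum_{|m|\le K'}e(mx)\bigr|^2/(2K'+1)$ with $K'\asymp R^{-O(\beta)}K$. If $R^{-O(\beta)}K<1$, the trivial choice $F\equiv1$ gives the $d=0$ term alone and is harmless, since $K\le R^{O(\beta)}$ in that case.

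\emph{Step 2.} Bound the indicator by $F_K(\phi_{\mathbf j}(s))=\sum_{|d|\lesssim K}\widehat{F_K}(d)\,e(d\phi_{\mathbf j}(s))$. Sum against the nonnegative weights $a$ and interchange the finite sums; finiteness holds because the cutoffs in $a$ leave finitely many vectors and $O_n(1)$ critical points per vector. The triangle inequality over $d$ then yields exactly \eqref{eq:bp52-value-poisson}.

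\emph{Step 3 (main obstacle).} The subtle point is the bookkeeping. One must check that every element of $\mathcal T^{(r)}$ has a witness at which $a\ge1$; this needs the cutoff constants $C,c$ in \eqref{defa} chosen compatibly with the implicit constants in \eqref{eq:n-sufficient-dual-set}, including $|H'_{\mathbf j}|=|j_{r+1}|\lesssim\delta^{-1}$. One must also check that distinct witnesses of the same vector cause only $O_n(1)$ overcounting, which follows from the zero-count argument in Lemma~\ref{lem:n-fibre-counting}(1). Finally, the $R^{O(\beta)}$ loss from the thickened width must be absorbed into the prefactor. After these checks the inequality is immediate.
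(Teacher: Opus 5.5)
Your proposal is correct and is essentially the paper's argument. Both majorize the condition $|j_r+j_{r+1}s+H_{\mathbf j}(s)|\lesssim R^{O(\beta)}\widetilde L_r$ by a trigonometric polynomial in $\phi_{\mathbf j}(s)$ of degree $\lesssim K$ with coefficients $O(R^{O(\beta)}K^{-1})$, then interchange the finite sums and use the triangle inequality in $d$. The paper realizes the majorant as the periodization $\sum_{j_r}b(K(j_r+v)-m)$ of a Schwartz $b$ with $\operatorname{supp}\widehat b\subset[-1,1]$, summed over $O(R^{C\beta})$ shifts $m$. You instead use a Fejér kernel of degree $\asymp R^{-O(\beta)}K$ and pay $R^{O(\beta)}$ for the number of admissible $j_r$; the two devices are interchangeable.

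Two small remarks. The inner count in your display should read $\#\{j_r:|j_r+\phi_{\mathbf j}(s)|\lesssim R^{O(\beta)}\widetilde L_r\}\lesssim R^{O(\beta)}\mathbf 1[\|\phi_{\mathbf j}(s)\|\lesssim R^{O(\beta)}K^{-1}]$. The check that several witnesses cause only $O_n(1)$ overcounting is unnecessary, since overcounting only enlarges an upper bound.
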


\begin{proof}
We use the periodized Poisson summation identity underlying
\cite[Lemma~3.1(ii)]{CSST2026}, as in the critical-value expansion in
\cite[Lemma~7.2]{CSST2026}.
Here we choose a majorant with compactly supported Fourier transform,
so that the Fourier expansion is finite and has no truncation error.
Choose an even, nonnegative $b\in\mathcal S(\mathbb R)$ such that
\[
b(x)\ge1\quad(|x|\le1),\qquad
\operatorname{supp}\widehat b\subset[-1,1],\qquad
\|\widehat b\|_\infty\le\|b\|_1\lesssim1.
\]
Such a function is obtained by taking the squared modulus of the inverse
Fourier transform of a nonnegative smooth function supported in a
sufficiently small interval about zero, and multiplying by a suitable
constant. This choice is independent of all scale parameters.
With $\widehat b(\xi)=\int_{\mathbb R}b(x)e(-x\xi)\,dx$, Poisson summation gives,
for every $v\in\mathbb R$, $m\in\mathbb Z$, and $K\ge1$,
\[
\sum_{j_r\in\mathbb Z}b\bigl(K(j_r+v)-m\bigr)
=\frac1K\sum_{d\in\mathbb Z}\widehat b(d/K)e\bigl(dv-dm/K\bigr)
=\frac1K\sum_{|d|\le K}\widehat b(d/K)e\bigl(dv-dm/K\bigr).
\]
The series on the left converges absolutely, and the last sum is finite.

Every vector counted by $\mathcal T^{(r)}$ has a critical point $s\in I^*$
at which $a\ge1$. Choose $C\ge1$, depending only on the curve, so that
at each such point
\[
|j_r+j_{r+1}s+H_{\mathbf j}(s)|
\le C R^{C\beta}\widetilde L_r.
\]
Since $K\widetilde L_r\le1$, every $x$ satisfying
$|x|\le C R^{C\beta}\widetilde L_r$ obeys
\[
1\le\sum_{\substack{m\in\mathbb Z\\|m|\le C R^{C\beta}+1}}b(Kx-m).
\]
Indeed, one may take $m$ to be an integer nearest to $Kx$, for which
$|Kx-m|\le1/2$ and $|m|\le C R^{C\beta}+1$.
Dropping the coefficient cutoff on $j_r$ and using $a\ge0$ therefore yields
\[
\begin{aligned}
\#\mathcal T^{(r)}\le{}&
\sum_{\substack{m\in\mathbb Z\\|m|\le C R^{C\beta}+1}}
\sum_{\mathbf j_{\ge r+1}\in\mathbb Z^{n-r}}
\sum_{\substack{s\in J_0\\j_{r+1}+H_{\mathbf j}'(s)=0}}
 a(s,\mathbf j_{\ge r+2})\\
&\qquad\times\sum_{j_r\in\mathbb Z}
 b\bigl(K(j_r+j_{r+1}s+H_{\mathbf j}(s))-m\bigr).
\end{aligned}
\]
Apply the Poisson identity with $v=j_{r+1}s+H_{\mathbf j}(s)$.
The factor $\widehat b(d/K)e(-dm/K)$ is independent of the coefficient
vector and the critical point. Thus we sum over these vectors and points
before taking absolute values. Since there are $O_\gamma(R^{C\beta})$
choices of $m$ and $|\widehat b(d/K)e(-dm/K)|\lesssim1$, we obtain
\[
\#\mathcal T^{(r)}
\lesssim_\gamma\frac{R^{O(\beta)}}K\sum_{|d|\le K}
\Biggl|\sum_{\mathbf j_{\ge r+1}\in\mathbb Z^{n-r}}
\sum_{\substack{s\in J_0\\j_{r+1}+H_{\mathbf j}'(s)=0}}
 a(s,\mathbf j_{\ge r+2})
 e\bigl(d(j_{r+1}s+H_{\mathbf j}(s))\bigr)\Biggr|.
\]
This proves \eqref{eq:bp52-value-poisson}.
\end{proof}

We now estimate the exponential sums in \eqref{eq:bp52-value-poisson}.
After Poisson summation in $j_r$, the remaining coefficient vector is
$\mathbf j_{\ge r+1}$. The support implication above gives
\[
\mathbf j_{\ge r+1}\notin\widetilde{\mathcal T}^{(r+1)},\qquad
j_{r+1}+H_{\mathbf j}'(s)=0
\quad\Longrightarrow\quad a(s,\mathbf j_{\ge r+2})=0.
\]
By the proof of item~1 of Lemma~\ref{lem:n-fibre-counting}, for each
$\mathbf j_{\ge r+1}\in\widetilde{\mathcal T}^{(r+1)}$,
\[
\#\left\{s\in J_0:
 j_{r+1}+H_{\mathbf j}'(s)=0,\quad
 a(s,\mathbf j_{\ge r+2})\ne0\right\}
\le n-r-1.
\]
Consequently, for every $d\in\mathbb Z$,
\[
\begin{aligned}
&\left|
\sum_{\mathbf j_{\ge r+1}\in\mathbb Z^{n-r}}
\sum_{\substack{s\in J_0\\j_{r+1}+H_{\mathbf j}'(s)=0}}
a(s,\mathbf j_{\ge r+2})
e\bigl(d(j_{r+1}s+H_{\mathbf j}(s))\bigr)
\right|\\
&\quad=
\left|
\sum_{\mathbf j_{\ge r+1}\in\widetilde{\mathcal T}^{(r+1)}}
\sum_{\substack{s\in J_0\\j_{r+1}+H_{\mathbf j}'(s)=0}}
a(s,\mathbf j_{\ge r+2})
e\bigl(d(j_{r+1}s+H_{\mathbf j}(s))\bigr)
\right|\\
&\quad\le
\sum_{\mathbf j_{\ge r+1}\in\widetilde{\mathcal T}^{(r+1)}}
\underbrace{\sum_{\substack{s\in J_0\\j_{r+1}+H_{\mathbf j}'(s)=0}}
a(s,\mathbf j_{\ge r+2})}_{\lesssim_\gamma 1}
\lesssim_\gamma\#\widetilde{\mathcal T}^{(r+1)}.
\end{aligned}
\]
We call the frequencies $|d|\lesssim1+L_{r+3}^2/L_{r+2}^3$
the \emph{low frequencies}. There are
$O(1+L_{r+3}^2/L_{r+2}^3)$ such integers $d$.
Summing the preceding bound over these frequencies and multiplying by the
factor $R^{O(\beta)}/K$ in \eqref{eq:bp52-value-poisson}, we obtain a contribution of at most
\[
R^{O(\beta)}\frac{1+L_{r+3}^2/L_{r+2}^3}{K}\#\widetilde{\mathcal T}^{(r+1)}.
\]
Negative frequencies are conjugates of positive ones. After treating the
low frequencies, it therefore suffices to prove the following inequality:
\[
\begin{aligned}
&\sum_{1+L_{r+3}^2/L_{r+2}^3\lesssim d\lesssim K}
\Biggl|
\sum_{\mathbf j_{\ge r+1}\in\mathbb Z^{n-r}}
\sum_{\substack{s\in J_0\\j_{r+1}+H_{\mathbf j}'(s)=0}}
a(s,\mathbf j_{\ge r+2})
e\bigl(d(j_{r+1}s+H_{\mathbf j}(s))\bigr)
\Biggr|\\
&\qquad\lesssim_\gamma
1+\sqrt{L_{r+2}}\left(\prod_{k=r+2}^nL_k\right)
\sum_{\substack{1+L_{r+3}^2/L_{r+2}^3\lesssim D\lesssim K\\
D\text{ dyadic}}}
D^{-1/2}\mathfrak R_{\mathcal C_{r,n-r-1}}(L_{r+2}^{-1},D).
\end{aligned}
\]
Indeed, multiplying this bound by $R^{O(\beta)}/K$ gives the remaining
contribution in Proposition~\ref{lem:bp36-B}. The constant term is absorbed
by the low-frequency bound, since $\#\widetilde{\mathcal T}^{(r+1)}\ge1$.

\bigskip

\subsection*{Step 2. Poisson summation in \texorpdfstring{$j_{r+1}$}{j(r+1)}}
For $d\ge1$ and $p\in\mathbb Z$, write $s_0=p/d$ and define
\begin{equation}\label{eq:bp52-first-poisson}
I_{d,p}(\mathbf j_{\ge r+2})
=\int a(s,\mathbf j_{\ge r+2})\cdot |H_{\mathbf j}''(s)|\cdot
e\bigl(d[H_{\mathbf j}(s)-(s-s_0)H_{\mathbf j}'(s)]\bigr)\,ds.
\end{equation}

\begin{proposition}\label{prop:bp52-first-poisson}
For every integer $d\ge1$, 
\[
\begin{aligned}
\sum_{\mathbf j_{\ge r+1}\in\mathbb Z^{n-r}}
\sum_{\substack{s\in J_0,\ a(s,\mathbf j_{\ge r+2})\ne0\\j_{r+1}+H_{\mathbf j}'(s)=0}}
a(s,\mathbf j_{\ge r+2})
e\bigl(d(j_{r+1}s+H_{\mathbf j}(s))\bigr)=\sum_{p\in\mathbb Z}
\sum_{\mathbf j_{\ge r+2}\in\mathbb Z^{n-r-1}}
I_{d,p}(\mathbf j_{\ge r+2}).
\end{aligned}
\]

\end{proposition}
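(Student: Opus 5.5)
We fix $d\ge1$ and $\mathbf j_{\ge r+2}$, and rewrite the sum over $j_{r+1}$ and its critical points as a sum over $p$ of integrals. The tool is a change of variables that turns the critical-point condition into an integer condition, followed by Poisson summation in $j_{r+1}$. On the support of $a$ we have $|H_{\mathbf j}''(s)|\asymp L_{r+2}>0$ by \eqref{eq:bp52-derivative-bounds}. Hence on each connected component of $\{s\in J_0: a(s,\mathbf j_{\ge r+2})\ne0\}$ the map $s\mapsto u(s):=-H_{\mathbf j}'(s)$ is a diffeomorphism onto its image. The critical points with $j_{r+1}+H_{\mathbf j}'(s)=0$ are exactly the points with $u(s)=j_{r+1}\in\mathbb Z$.

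On each component we write $s=s(u)$ and set
\[
F(u):=a(s(u),\mathbf j_{\ge r+2})\,e\bigl(d(u\,s(u)+H_{\mathbf j}(s(u)))\bigr).
\]
The left-hand side, restricted to that component, is then $\sum_{j_{r+1}\in\mathbb Z}F(j_{r+1})$. Summing over components, which requires that components carry disjoint pieces, we treat the total as $\sum_{m}G(m)$. Here $G(u):=\sum_{s:\,-H'_{\mathbf j}(s)=u}a(s,\cdot)\,e(d(us+H_{\mathbf j}(s)))$ is a compactly supported smooth function of $u\in\mathbb R$, because $a$ vanishes smoothly at the boundaries of the components. Poisson summation gives $\sum_{m}G(m)=\sum_{p\in\mathbb Z}\widehat G(p)$, where we use the convention $\int G(u)e(pu)\,du$. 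The sign is harmless after $p\mapsto-p$; we choose it to match $s_0=p/d$.

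Next we compute $\widehat G(p)$ by changing variables back to $s$. We have $du=|H''_{\mathbf j}(s)|\,ds$ on each component, and this is where the Jacobian factor in \eqref{eq:bp52-first-poisson} comes from. The phase becomes
\[
d\bigl(-H'_{\mathbf j}(s)s+H_{\mathbf j}(s)\bigr)-pH'_{\mathbf j}(s)\cdot(\pm1).
\]
With the appropriate sign choice this equals $d[H_{\mathbf j}(s)-(s-s_0)H'_{\mathbf j}(s)]$ with $s_0=p/d$. Concretely, taking $\int G(u)e(-pu)\,du$ gives $-pu=pH'_{\mathbf j}(s)=d\,s_0H'_{\mathbf j}(s)$, which is exactly the required phase. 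The component integrals then reassemble into a single integral over $s$, which is $I_{d,p}(\mathbf j_{\ge r+2})$. Finally we sum over $\mathbf j_{\ge r+2}$. This sum is finite because $a$ forces $|\mathbf j_{\ge r+2}|\lesssim\delta^{-1}$, so interchanging it with the other sums is trivial.

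The main obstacle is justifying the use of Poisson summation. We need $G\in C_c^\infty$, or at least enough regularity and decay that $\sum_p|\widehat G(p)|<\infty$. Regularity follows because $a$ is a smooth compactly supported function of $s$ and $u$ is a smooth change of variables with nonvanishing Jacobian on the support. We must check that $a$ vanishes to infinite order at the edges of each component. This holds because $a$ is built in \eqref{defa} as a product of smooth bumps composed with smooth functions of $s$, and the cutoff $\phi(\delta H'_{\mathbf j}(s))$ confines $u$ to a bounded range. Decay of $\widehat G(p)$ in $p$ then follows by integration by parts in $u$. This makes the $p$-sum absolutely convergent, so the identity holds as stated.
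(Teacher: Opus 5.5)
Your proof is correct and follows the paper's argument: fix $\mathbf j_{\ge r+2}$, apply Poisson summation in $j_{r+1}$, and undo the substitution $x=-H'_{\mathbf j}(s)$ branch by branch with Jacobian $|H''_{\mathbf j}(s)|$; with $\widehat G(p)=\int G(u)e(-pu)\,du$ this yields exactly the phase $d[H_{\mathbf j}(s)-(s-s_0)H'_{\mathbf j}(s)]$. Your justification of Poisson summation is a detail the paper omits. The cleanest way to get $G\in C_c^\infty$ is to take the branches to be the components of an open neighborhood of $\operatorname{supp}a(\cdot,\mathbf j_{\ge r+2})$ on which $H''_{\mathbf j}\neq0$, rather than relying on infinite-order vanishing of $a$ at the edges of the components of $\{a\neq0\}$.
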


\begin{proof}
Fix a choice of $\mathbf j_{\ge r+2}$.
By construction, $a(\cdot,\mathbf{j}_{\ge r+2})\in C_c^\infty(\operatorname{int}J_0)$, and
\eqref{eq:bp52-derivative-bounds} gives $|H_{\mathbf j}''|\asymp_\gamma L_{r+2}$
on its support. For $x\in\mathbb R$, let
\[
F_{\mathbf j}(x):=
\sum_{\substack{s\in J_0,\ a(s,\mathbf{j}_{\ge r+2})\neq 0\\x+H_{\mathbf j}'(s)=0}}
a(s,\mathbf{j}_{\ge r+2})e\bigl(d(xs+H_{\mathbf j}(s))\bigr).
\]
Poisson summation gives
$\sum_{j_{r+1}\in\mathbb Z}F_{\mathbf j}(j_{r+1})
=\sum_{p\in\mathbb Z}\widehat F_{\mathbf j}(p)$. We partition the domain of $s$ into finitely many branches so that on each branch, $s\mapsto-H_{\mathbf j}'(s)$ is invertible. On each branch, the
change of variables $x=-H_{\mathbf j}'(s)$, with $|dx|=|H_{\mathbf j}''(s)|\,ds$, yields
\[
\begin{aligned}
\widehat F_{\mathbf j}(p)
&=\int\sum_{\substack{s\in J_0,\ a(s,\mathbf{j}_{\ge r+2})\neq 0\\x+H_{\mathbf j}'(s)=0}}
a(s,\mathbf{j}_{\ge r+2})e\bigl(d(xs+H_{\mathbf j}(s))-px\bigr)dx\\
&=\int a(s,\mathbf{j}_{\ge r+2})\cdot|H_{\mathbf j}''(s)|\cdot
 e\bigl(d(H_{\mathbf j}(s)-sH_{\mathbf j}'(s))+pH_{\mathbf j}'(s)\bigr)\,ds\\
&=I_{d,p}(\mathbf j_{\ge r+2}),
\end{aligned}
\]
where the last equality uses $s_0=p/d$.
Summing over $p, \mathbf j_{\ge r+2}$ proves the
identity.
\end{proof}

The phase inside the square brackets in \eqref{eq:bp52-first-poisson}
has derivative $-(s-s_0)H_{\mathbf j}''(s)$.
Thus its only possible critical point on the support of $a$ is $s=s_0$.
When present, its critical value is $H_{\mathbf j}(s_0)$ and its second
derivative is $-H_{\mathbf j}''(s_0)$.
These identities explain the two factors extracted in the next proposition.
They are the phase identities underlying \cite[Lemma 7.5]{CSST2026}.

\bigskip

\subsection*{Step 3. Uniform stationary phase}
We first estimate the integrals for which $s_0=p/d\in J_1$.
Recalling \eqref{Hj}, one has
\[H_{\mathbf j}^{(k)}(s)=\sum_{\nu=r+2}^n j_\nu\partial_s^k[\mathcal D_r\gamma_\nu(t(s))].\]
Here $\partial_s$ differentiates the
full composition with $t=t(s)$.

For fixed $s_0$, introduce the normalized derivative coordinates
\begin{equation}\label{derivativecoord}
    z_k=\frac{H_{\mathbf j}^{(k)}(s_0)}{L_{r+k}}
\quad(2\le k\le n-r),\qquad
\mathbf z=(z_2,\ldots,z_{n-r}),
\end{equation}

the diagonal matrix
\[\mathsf D=\text{diag}(L_{r+2},\dots,L_n), \]
and the matrix
\[
\mathsf M=\left(\left.\partial_s^k\bigl[\mathcal D_r\gamma_\nu(t(s))\bigr]
\right|_{s=s_0}\right)_{\substack{ 2\le k\le n-r\\ r+2\le\nu\le n}}.
\]
One has
\begin{equation}\label{changeofcoord}
     \mathsf D\mathbf z=\mathsf M\mathbf{j}_{\ge r+2}. 
\end{equation} 
Here, we view $\mathbf z$ and $\mj_{\ge r+2}$ as column vectors.

By the nondegeneracy of $\mathcal C_{r,n-r}$
in Proposition~\ref{prop:n-projected-curves}, $\mathsf M$ is  invertible with $\mathsf M^{-1}$ uniformly bounded on $J_1$.
Thus the vector $\mathbf j_{\ge r+2}$ is a linear function of $\mathbf z$:
\[ \mathbf j_{\ge r+2}=\mathsf M^{-1}\mathsf D\mathbf{z}. \]
and
\begin{equation}\label{bounddj}
|\partial_{z_k}\mathbf j_{\ge r+2}|=|\mathsf M^{-1}\mathsf D(\partial_{z_k}\mathbf z)|\lesssim_\gamma L_{r+k}
\quad(2\le k\le n-r).
\end{equation}

The next proposition separates the oscillation at $s_0$ from a smooth
amplitude whose bounds are uniform in the parameters.

\begin{proposition}\label{prop:bp52-stationary-phase}
Let $d\gtrsim1+L_{r+3}^2/L_{r+2}^3$ and $p\in\mathbb Z$ satisfy
$s_0=p/d\in J_1$. There is a smooth function $B_{d,s_0}$ such that
\begin{equation}\label{eq:bp52-amplitude}
I_{d,p}(\mathbf j_{\ge r+2})
=\sqrt{\frac{L_{r+2}}{d}}\,
e\bigl(dH_{\mathbf j}(s_0)\bigr)B_{d,s_0}(\mathbf z),
\end{equation}
where, for every multi-index $\alpha$ and every $L>0$,
\begin{equation}\label{eq:bp52-amplitude-bounds}
\sup_{\mathbf z}(1+|\mathbf z|)^L
|\partial_{\mathbf z}^{\alpha}B_{d,s_0}(\mathbf z)|
\lesssim_{\alpha,L,\gamma}1
\end{equation}
uniformly in $s_0,d,\delta$ and the admissible scales.

\end{proposition}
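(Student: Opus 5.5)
The plan is a rescaled stationary phase argument around the unique critical point $s=s_0$ of the phase in \eqref{eq:bp52-first-poisson}. Write $\Phi(s):=H_{\mathbf j}(s)-(s-s_0)H_{\mathbf j}'(s)$. Then $\Phi'(s)=-(s-s_0)H_{\mathbf j}''(s)$, and Taylor expansion at $s_0$ gives
\[
\Phi(s)-H_{\mathbf j}(s_0)=-(s-s_0)^2\sum_{k\ge2}\frac{k-1}{k!}H_{\mathbf j}^{(k)}(s_0)(s-s_0)^{k-2}.
\]
Using the normalized coordinates \eqref{derivativecoord}, this equals $-(s-s_0)^2\sum_{k}\frac{k-1}{k!}L_{r+k}z_k(s-s_0)^{k-2}$ plus a remainder of order $\delta^{-1}|s-s_0|^{n-r+1}$. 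Since $\mathbf j_{\ge r+2}=\mathsf M^{-1}\mathsf D\mathbf z$, every quantity in the integrand (the amplitude $a$, the factor $|H''_{\mathbf j}|$, and the phase) is a smooth function of $(s,\mathbf z)$. The $\mathbf z$-derivatives are controlled by \eqref{bounddj}.

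Substitute $s=s_0+u\,(dL_{r+2})^{-1/2}$. The Jacobian $(dL_{r+2})^{-1/2}$ combines with $|H''_{\mathbf j}|\asymp L_{r+2}$ to give the prefactor $\sqrt{L_{r+2}/d}$. After this substitution the phase is $dH_{\mathbf j}(s_0)-c_2z_2u^2-\sum_{k\ge3}c_k z_k\,d\,L_{r+k}(dL_{r+2})^{-k/2}u^k$. The cubic coefficient is $z_3\,L_{r+3}/(d^{1/2}L_{r+2}^{3/2})$, which is $\lesssim |z_3|$ exactly because $d\gtrsim L_{r+3}^2/L_{r+2}^3$. For $k\ge4$, log-concavity \eqref{convexity} gives $L_{r+k}\le L_{r+3}(L_{r+3}/L_{r+2})^{k-3}$. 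This reduces the $k$-th coefficient to $|z_k|$ times a power of the cubic ratio times powers of $L_{r+2}^{-1/2}\le1$, which uses $L_{r+2}\ge1$. So all rescaled Taylor coefficients are $O(|\mathbf z|)$, uniformly in $d,s_0,\delta$ and the scales. Define $B_{d,s_0}(\mathbf z)$ to be the remaining rescaled integral divided by $\sqrt{L_{r+2}/d}\,e(dH_{\mathbf j}(s_0))$.

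Next I would verify \eqref{eq:bp52-amplitude-bounds}, splitting according to where the support of $a$ lies.

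\emph{Support avoiding the bounded $\mathbf z$-region.} The cutoff $a$ forces $|(\mathcal D_kP_{\mathbf j})(t(s))|\lesssim L_k$ and $|H''_{\mathbf j}(s)|\asymp L_{r+2}$ for $s$ in the support. Propagating from $s$ to $s_0$ by Taylor expansion, as in Lemma~\ref{lem:n-fibre-counting}, then shows $|\mathbf z|\lesssim1$ whenever $s_0$ is within distance $O(1)$ of the support. If instead $|\mathbf z|$ is large or $s_0$ is far from the support, then $|\Phi'|\gtrsim |s-s_0|L_{r+2}$ is bounded below there. Repeated integration by parts then gives decay $(d L_{r+2})^{-N}$ and also $(1+|\mathbf z|)^{-L}$, after noting that $a$ vanishes unless the derivative conditions hold.

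\emph{Bounded $\mathbf z$.} The rescaled phase is $-c_2z_2u^2+O(|u|^3)$ with $|z_2|\asymp1$ on the support. The standard uniform stationary phase lemma (Hörmander, Theorem~7.7.5, with parameters) then bounds $B$ and its $\mathbf z$-derivatives. Each $\partial_{z_k}$ falling on the phase produces $u^k$ times a bounded coefficient. Each one falling on $a$ or $|H''|$ costs $L_{r+k}\cdot\partial_{\mathbf j}$, which is $O(1)$ because $a$ depends on $\mathbf j$ only through $\delta\mathbf j$ and $(\mathcal D_kP_{\mathbf j})/L_k$.

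The main obstacle is uniformity: the rescaled interval has length $(dL_{r+2})^{1/2}$, which may be large, so the higher-order terms in $u$ must stay perturbative over the whole effective region. I would first localize with a smooth cutoff to $|u|\le \epsilon\,(\text{cubic ratio})^{-1}$. Inside that region the quadratic term dominates and stationary phase applies. Outside it, I would integrate by parts using $|\Phi'|\gtrsim |s-s_0|L_{r+2}$, which holds on the support of $a$ by $|H''|\asymp L_{r+2}$; this gives arbitrary decay. The delicate bookkeeping is checking that $\mathbf z$-derivatives of these cutoffs and of the remainder terms also obey the $(1+|\mathbf z|)^{-L}$ bounds.
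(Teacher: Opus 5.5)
Your route is the paper's. With $h:=L_{r+2}/L_{r+3}$ and $\Lambda:=dL_{r+2}^3/L_{r+3}^2\gtrsim1$, the paper sets $s=s_0+hy$, so your $u$ is $\Lambda^{1/2}y$. Your cutoff $|u|\le\epsilon\Lambda^{1/2}$ is the paper's $|y|\le c$, and your bound of the $k$-th rescaled coefficient by $|z_k|$ times a power of the cubic ratio $\Lambda^{-1/2}$ is exactly \eqref{eq:bp52-scale-control}. The parametric stationary phase lemma should be applied in $y$, where there is a large parameter and a fixed window, not in $u$.

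However, the uniform bounds \eqref{eq:bp52-amplitude-bounds}, which are the whole content of the proposition, are not established, and two of your justifications fail. First, propagating the support conditions from a point $s$ to $s_0$ does \emph{not} give $|\mathbf z|\lesssim1$ when $|s-s_0|=O(1)$. Already $z_2=H_{\mathbf j}''(s_0)/L_{r+2}$ picks up $H_{\mathbf j}'''(s)(s_0-s)/L_{r+2}$, of size up to $|s-s_0|/h$. The Taylor argument only gives $|z_k|\lesssim(1+|y|)^{n-r-k}$, so $\mathbf z$ is bounded only in the window $|y|\lesssim1$. Outside it, the weight $(1+|\mathbf z|)^L$ and the factors $\Lambda\,\partial_{z_k}\Phi_{\mathbf z}$ produced by $\mathbf z$-derivatives of $e(\Lambda\Phi_{\mathbf z})$ grow polynomially in $|y|$. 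Your claimed gain $(dL_{r+2})^{-N}$ is also false near $|y|\sim c$: there each integration by parts gains only a factor $\Lambda^{-1}$, which may be $O(1)$.

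Second, your reason for bounded $\mathbf z$-derivatives of the amplitude is wrong. Since $|\partial_{z_k}\mathbf j_{\ge r+2}|\sim L_{r+k}$, a $\partial_{z_k}$ falling on the cutoff in $(\mathcal D_{r+\ell}P_{\mathbf j})(t(s))/L_{r+\ell}$, or on $|H_{\mathbf j}''(s)|/L_{r+2}$, is naively of size $L_{r+k}/L_{r+\ell}$. This is unbounded when $k>\ell$, reaching $\delta^{-1}/L_{r+2}$.

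The missing idea is to Taylor-expand $H^{(v)}_{\mathbf j}(s)$ about $s_0$. Since $\partial_{z_k}H^{(m)}_{\mathbf j}(s_0)$ equals $L_{r+k}$ for $m=k$ and vanishes for the other $2\le m\le n-r$, only the term $L_{r+k}(s-s_0)^{k-v}/(k-v)!$ survives, and $h^{k-v}L_{r+k}\le L_{r+v}$. The remainder is handled by $|\partial_{z_k}\mathbf j_{\ge r+2}|\lesssim\delta^{-1}$ together with $h^{n-r-v}\delta^{-1}\le L_{r+v}$. This yields $|\partial_y^q\partial_{z_k}H^{(v)}_{\mathbf j}(s)|\lesssim L_{r+v}(1+|y|)^{n-r-2}$, which makes amplitude and phase uniformly smooth in $(y,\mathbf z)$ inside the window, so the parametric lemma applies there.

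Outside the window you also need a dyadic decomposition $|y|\sim2^\nu$. There $|\partial_y\Phi_{\mathbf z}|\asymp2^\nu$ and $|\mathbf z|\lesssim2^{\nu(n-r-2)}$. Each $\mathbf z$-derivative costs at most $\Lambda2^{\nu(n-r)}$, while $N$ integrations by parts gain $\Lambda^{-N}2^{-N\nu}$. These are exactly the steps you deferred as bookkeeping, and they are where uniformity in $s_0,d,\delta$ and the scales is actually proved.
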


\begin{proof}
Let $s_0=p/d$ be fixed.
We regard $\mathbf j_{\ge r+2}$ as the linear function of $\mathbf z$
determined above. We use the change of variables
\[
s=s_0+\frac{L_{r+2}}{L_{r+3}}y
\]
and write the normalized phase and amplitude as

\begin{align}
\label{taylorphi}\Phi_{\mathbf z}(y)
&:=\frac{L_{r+3}^2}{L_{r+2}^3}
\bigl(H_{\mathbf j}(s)-(s-s_0)H_{\mathbf j}'(s)-H_{\mathbf j}(s_0)\bigr)=-\frac1{L_{r+2}}\int_0^y
vH_{\mathbf j}''\!\left(s_0+\frac{L_{r+2}}{L_{r+3}}v\right)\,dv,\\
\widetilde a(y,\mathbf z)
&:=a(s,\mathbf j_{\ge r+2})\frac{|H_{\mathbf j}''(s)|}{L_{r+2}}.
\end{align}

Then \eqref{eq:bp52-first-poisson} becomes
\begin{equation}\label{Idp}
    I_{d,p}(\mathbf j_{\ge r+2})
=\frac{L_{r+2}^2}{L_{r+3}}e\bigl(dH_{\mathbf j}(s_0)\bigr)
\int\widetilde a(y,\mathbf z)
 e\!\left(\frac{dL_{r+2}^3}{L_{r+3}^2}\Phi_{\mathbf z}(y)\right)\,dy.
\end{equation}

In particular, \eqref{eq:bp52-amplitude} holds exactly with
\[
B_{d,s_0}(\mathbf z)
=\left(\frac{dL_{r+2}^3}{L_{r+3}^2}\right)^{1/2}
\int\widetilde a(y,\mathbf z)
 e\!\left(\frac{dL_{r+2}^3}{L_{r+3}^2}\Phi_{\mathbf z}(y)\right)\,dy.
\]
We prove \eqref{eq:bp52-amplitude-bounds} for this expression.
\medskip

$\bullet$ \textbf{The contribution near $y=0$.}
Choose a smooth cutoff $\chi$ equal to one on $|y|\le c$ and supported
in $|y|\le2c$, where $c>0$ is sufficiently small in terms of the curve.

Direct computation gives
\begin{align}
\Phi_{\mathbf z}(0)&=\partial_y\Phi_{\mathbf z}(0)=0,\\ \label{dphi}
\partial_y\Phi_{\mathbf z}(y)&=-y\frac{H_{\mathbf j}''(s)}{L_{r+2}},\\ \label{d2phi}
\partial_y^2\Phi_{\mathbf z}(y)
&=-\frac{H_{\mathbf j}''(s)}{L_{r+2}}
-y\frac{H_{\mathbf j}'''(s)}{L_{r+3}}.
\end{align}

The numbers $L_i$ are nondecreasing, and the ratios $L_{i+1}/L_i$ are nonincreasing. Thus
\begin{equation}\label{eq:bp52-scale-control}
\begin{gathered}
1\le L_{r+2}\le\cdots\le L_n=\delta^{-1},\\
\frac{L_{r+k}}{L_{r+2}}
=\prod_{i=r+2}^{r+k-1}\frac{L_{i+1}}{L_i}
\le\left(\frac{L_{r+3}}{L_{r+2}}\right)^{k-2}
\qquad(2\le k\le n-r).
\end{gathered}
\end{equation}
Taking $k=n-r$ gives
\begin{equation}\label{eq:bp52-last-scale}
\delta^{-1}\left(\frac{L_{r+2}}{L_{r+3}}\right)^{n-r-2}
\le L_{r+2}.
\end{equation}
For $2\le k\le l\le n-r$, the same ratio comparison gives
\begin{equation}\label{twoscalingineq}
    \frac{L_{r+l}}{L_{r+k}}
\left(\frac{L_{r+2}}{L_{r+3}}\right)^{l-k}\le1,
\qquad
\frac{\delta^{-1}}{L_{r+k}}
\left(\frac{L_{r+2}}{L_{r+3}}\right)^{n-r-k}\le1.
\end{equation}

For $2\le k\le n-r$, Taylor-expand the function
$H_{\mathbf j}^{(k)}$ about $s$, evaluating it at $s_0$:
\[
H_{\mathbf j}^{(k)}(s_0)
=\sum_{m=0}^{n-r-k}\frac{(s_0-s)^m}{m!}
 H_{\mathbf j}^{(k+m)}(s)
+O_\gamma\!\left(\delta^{-1}|s_0-s|^{n-r-k}\right).
\]
Here the remainder uses the  $O_\gamma(\delta^{-1})$ bound for
higher derivatives. If $\widetilde a(y,\mathbf z)\ne0$, then
\eqref{eq:bp52-derivative-bounds} holds at $s$. Dividing the last display
by $L_{r+k}$, using
$z_k=H_{\mathbf j}^{(k)}(s_0)/L_{r+k}$ and $s_0-s=-\frac{L_{r+2}}{L_{r+3}}y$, and applying \eqref{twoscalingineq}, we obtain
\begin{equation}\label{boundzk}
    |z_k|
\lesssim_\gamma
\sum_{m=0}^{n-r-k}\frac{L_{r+k+m}}{L_{r+k}}\left( \frac{L_{r+2}}{L_{r+3}} \right)^m|y|^m+\frac{\de^{-1}}{L_{r+k}}\left(\frac{L_{r+2}}{L_{r+3}}\right)^{n-r-k}|y|^{n-r-k}
\lesssim_\gamma1
\qquad (|y|\le2c).
\end{equation}

Consequently,
\begin{equation}\label{zbounded}
    \chi(y)\widetilde a(y,\mathbf z)\ne0
\quad\Longrightarrow\quad |\mathbf z|\lesssim_\gamma1.
\end{equation}

\eqref{dphi} shows that $y=0$ is a critical point of the phase function.
By \eqref{eq:bp52-derivative-bounds}, $|H_{\mathbf j}''(s)|\asymp_\gamma L_{r+2}$ and $|H_{\mathbf j}'''(s)|\lesssim L_{r+3}$ on this support. If $c$ is chosen sufficiently small,
by \eqref{d2phi}, one sees that
\begin{equation}\label{uniformnondeg}
    |\partial_y^2\Phi_{\mathbf z}(y)|\asymp_\gamma1,
\end{equation} 
on the support of $\chi\widetilde{a}$.
Hence $y=0$ is the only stationary point of the phase function $\Phi_{\mathbf z}(y)$ on the support of $\chi\widetilde{a}$, and it is uniformly nondegenerate in the sense of \eqref{uniformnondeg}.

\medskip
The next goal is to prove
\begin{equation}\label{boundtildea}
|\partial_y^q\partial_{\mathbf z}^{\alpha}\widetilde a(y,\mathbf z)| \lesssim_{q,\alpha,\gamma}1,
\end{equation}
when $(y,\mathbf z)$ lies in the support of $\chi \widetilde{a}$. For simplicity, we write \[h:=\frac{L_{r+2}}{L_{r+3}}(\le 1).\] 
Set
\[
b(y,\mathbf z)=\frac{H_{\mathbf j}''(s_0+hy)}{L_{r+2}},
\]
so that $\widetilde a=a|b|$. Here, $\mathbf j_{\ge r+2}=\mathbf j_{\ge r+2}(\mathbf z)$ is a function of $\mathbf z$ as in \eqref{changeofcoord}. We also remark $b\neq 0$ in the support of $\chi\wt a$, so $\partial_y^q\partial_{\mathbf z}^\alpha |b|=\pm \partial_y^q\partial_{\mathbf z}^\alpha b$.

Recall that the majorant $a$ is chosen in \eqref{defa} as a finite product of fixed smooth cutoffs in the normalized
variables
\[
s,\qquad \delta\mathbf j_{\ge r+2},\qquad
\delta H_{\mathbf j}'(s),\qquad
\frac{(\mathcal D_{r+\ell}P_{\mathbf j})(t(s))}{L_{r+\ell}}
\quad(2\le\ell\le n-r),
\]
so we just need to prove $O(1)$ upper bounds when $\partial_y^q\partial_{\mathbf z}^\alpha$ is applied to these variables and applied to $b$. 
\medskip

\begin{enumerate}
    \item It is easy to see $|\partial_y^q\partial_{\mathbf z}^\alpha s|\le h^q\le 1$, since $s=s_0+hy$ and $s$ is independent of $\mathbf z$.\\

    \item Since $\mathbf j_{\ge r+2}$ is linear in $\mathbf z$ and is independent of $y$, only $q=0$ and $|\alpha|\le 1$ contribute. By \eqref{bounddj}, one has $|\partial_y^q\partial_{\mathbf z}^\alpha (\de\mathbf j_{\ge r+2})|\lesssim \de\max_j L_j\le 1$.\\

    \item 
    A direct computation gives
    $|\partial_y^q\partial_{\mathbf z}^\alpha (\de H_{\mathbf j}'(s))|\lesssim \de (|\mj_{\ge r+2}|+|\partial_{\mathbf z}\mj_{\ge r+2}| )\lesssim 1  $. Here, we only need to consider $|\alpha|\le 1$ since $H_\mj'$ is linear in $\mj$ and hence in $\mathbf z$. \\

     \item Denote $F_\ell(s,\mathbf z):=(\mathcal D_{r+\ell}P_{\mathbf j})(t(s))$. We want to prove
     \begin{equation}\label{boundFl}
         | \partial_y^q\partial_{z_k}^\alpha F_\ell |\lesssim L_{r+\ell}
     \end{equation}
     
     By \eqref{eq:n-normalized-derivatives},
     \[ \frac{d}{ds}F_\ell(s,\mathbf z)=c_\ell(s)F_{\ell+1}(s,\mathbf z)\]
     with $|c_\ell|\asymp 1$. Also, \eqref{Hj'} implies
     \[ F_1=j_{r+1}+H'_{\mathbf{j}}, \]
     so
     \[ H''_{\mathbf j}=\frac{d}{ds}F_1=c_1 F_2. \]
     Consequently,
     \[ F_2=c_1^{-1}H''_{\mathbf j},\ \ \ F_3=c_2^{-1}\frac{d}{ds}F_2=c_2^{-1}(c_1^{-1}H''_{\mathbf j})'. \]
     
Iterating this process yields 
\begin{equation}\label{Fl}
    F_\ell(s,\mathbf z)=\sum_{v=2}^{\ell}a_{\ell,v}(s)H^{(v)}_{\mathbf j}(s), 
\end{equation} 
where all $a_{\ell,v}$ are bounded. In other words, $F_\ell$ is a lower-triangular combination of $H_{\mathbf j}^{(v)}$ ($2\le v\le \ell$). Since $F_\ell$ is linear in $\mathbf j_{\ge r+2}$ and hence linear in $\mathbf z$, we just need to show
\[ |\partial_y^q F_\ell|\lesssim L_{r+\ell},\ \ \  |\partial_y^q \partial_{z_k} F_\ell|\lesssim L_{r+\ell}. \]

To prove the first one, we observe that for $2\le v\le \ell$,
\begin{equation}\label{derivativebounddH0}
    |\partial_y^qH^{(v)}_{\mathbf j}(s)|=|h^q H^{(v+q)}_{\mathbf j}(s)|\lesssim h^qL_{\min\{r+v+q,n\}}\lesssim L_{r+v}. 
\end{equation} 
The last inequality uses \eqref{twoscalingineq}. Using \eqref{Fl}, we see that $|\partial_y^qF_\ell|\lesssim L_{r+\ell}$.
Next, we prove the second one. It suffices to prove
\begin{equation}\label{derivativeboundH}
|\partial_y^q\partial_{z_k}H^{(v)}_{\mathbf j}(s)|\lesssim L_{r+v}.
\end{equation}

Recalling \eqref{eq:bp52-derivative-bounds} and\eqref{bounddj} that $|\mj_{\ge r+2}|\lesssim \de^{-1}$ and $|\partial_{z_k}\mj_{\ge r+2}|\lesssim \de^{-1}$ on the support of $\chi\widetilde{a}$, by the definition \eqref{Hj}, one sees that
\begin{equation}\label{trivialbound}
|\partial_y^q\partial_{z_k}H^{(v)}_\mj(s)|\lesssim |\mj_{\ge r+2}|+| \partial_{z_k}\mj_{\ge r+2}|\lesssim \de^{-1}.
\end{equation}

When $v=n-r$, \eqref{derivativeboundH} holds, so we just need to consider $v\le n-r-1$.
By Taylor's expansion,
\begin{equation}\label{taylorHv}
     H^{(v)}_{\mathbf j}(s)=\sum_{m=0}^{n-r-v-1}H^{(v+m)}_\mj(s_0)\frac{(hy)^m}{m!}+\frac{(hy)^{n-r-v}}{(n-r-v-1)!}\int_{0}^1(1-\theta)^{n-r-v-1}H^{(n-r)}_\mj(s_0+\theta hy)d\theta. 
\end{equation}

First consider when $\partial_y^q\partial_{z_k}$ acts on the first term. Recalling that $z_k=\frac{H^{(k)}(s_0)}{L_{r+k}}$, we have
\[ \partial_y^q\partial_{z_k}(\text{First term})\lesssim h^{m}L_{r+k}(1+|y|)^{(m-q)_+}. \]
with $v+m=k$. By \eqref{twoscalingineq}, $h^{k-v}L_{r+k}\le L_{r+v}$, and noting $|y|\lesssim 1$, this finishes
the estimate for the first term.

For the remainder term, using \eqref{trivialbound} we have
\[ \partial_y^q\partial_{z_k}(\text{Remainder term})\lesssim h^{n-r-v}\de^{-1} (1+|y|)^{n-r-v}\lesssim L_{r+v}. \]
The last inequality is by \eqref{twoscalingineq}.\\

\item To prove $|\partial_y^q\partial_{z_k}b|\lesssim 1$, we just note that this is a special case of \eqref{derivativebounddH0} and \eqref{derivativeboundH} with $v=2$.

\end{enumerate}

This finishes the proof of \eqref{boundtildea}.
\medskip

Finally, noting
$\partial_y\Phi_{\mathbf z}=-yb$, so the bounds
for $b$ also give the required bounds for the phase function. Hence we obtain
\begin{equation}
|\partial_y^q\partial_{\mathbf z}^{\alpha}\Phi_{\mathbf z}(y)|
+|\partial_y^q\partial_{\mathbf z}^{\alpha}\widetilde a(y,\mathbf z)|
\lesssim_{q,\alpha,\gamma}1
\end{equation}
on the bounded parameter region contributing to $\chi\widetilde a$.

We now apply Lemma~\ref{lem:stationary-phase-parameters} with
\[
\Lambda=\frac{dL_{r+2}^3}{L_{r+3}^2},
\qquad \phi(y;\mathbf z)=\Phi_{\mathbf z}(y),
\qquad w(\Lambda;y,\mathbf z)=\chi(y)\widetilde a(y,\mathbf z).
\]
The assumed lower bound
$d\gtrsim1+L_{r+3}^2/L_{r+2}^3$ gives $\Lambda\gtrsim1$.
\eqref{uniformnondeg} verifies uniqueness and nondegeneracy of the
critical point. The amplitude is independent of
$\Lambda$, so its positive $\Lambda$-derivatives vanish. Lemma \ref{lem:stationary-phase-parameters} yields
\begin{equation}
\int\chi(y)\widetilde a(y,\mathbf z)
 e\!\left(\Lambda\Phi_{\mathbf z}(y)\right)\,dy=\Lambda^{-1/2}e(\Lambda\Phi_{\mathbf z}(0))w_1(\Lambda;\mathbf z).
\end{equation}
Since $\Phi_{\mathbf z}(0)=0$ and $w_1(\Lambda;\mathbf z)$ have bounded $\mathbf z$-derivatives, we have
\begin{equation}
\sup_{\mathbf z}(1+|\mathbf z|)^L
\left|\partial_{\mathbf z}^{\alpha}
\int\chi(y)\widetilde a(y,\mathbf z)
 e\!\left(\Lambda\Phi_{\mathbf z}(y)\right)\,dy\right|
\lesssim_{\alpha,L,\gamma}\Lambda^{-1/2}
=\left(\frac{dL_{r+2}^3}{L_{r+3}^2}\right)^{-1/2}.
\end{equation}
Here $(1+|\mathbf z|)^L$ is harmless because $|\mathbf z|\lesssim 1$ by \eqref{zbounded}.  Thus the above estimate gives the required bounds for \eqref{eq:bp52-amplitude-bounds} when the $y$-integral is restricted to the cutoff by $\chi$. 
\medskip

$\bullet$ \textbf{The contribution away from $y=0$.}
Write $1-\chi=\sum_{\nu\ge0}\chi_\nu$, where
\[
\operatorname{supp}\chi_\nu\subset\{|y|\asymp2^\nu c\},
\qquad |\partial_y^q\chi_\nu(y)|\lesssim_{q,\gamma}2^{-\nu q}.
\]

We record two useful estimates. Assuming $2\le v\le n-r$, then
\begin{equation}\label{usefulestimate}
|\partial_y^qH^{(v)}_\mj(s)|\lesssim L_{r+v},\ \ \ |\partial_y^q\partial_{z_k} H^{(v)}_\mj(s)|\lesssim L_{r+v}(1+|y|)^{n-r-2}.
\end{equation}
The first one is just \eqref{derivativebounddH0}; the second one follows the same proof as \eqref{derivativeboundH} in which we also retain $y$ variable that contributes an additional factor $\lesssim (1+|y|)^{n-r-v}\le (1+|y|)^{n-r-2}$.

For $q\ge2$, direct differentiation gives
\[
\partial_y^q\Phi_{\mathbf z}(y)
=-\frac1{L_{r+2}}\left[
(q-1)\left(\frac{L_{r+2}}{L_{r+3}}\right)^{q-2}H_{\mathbf j}^{(q)}(s)
+y\left(\frac{L_{r+2}}{L_{r+3}}\right)^{q-1}H_{\mathbf j}^{(q+1)}(s)
\right].
\]
Hence, on the support of $\chi_\nu\widetilde a$,
\[
|\partial_y\Phi_{\mathbf z}|\asymp_\gamma2^\nu,
\qquad |\partial_y^q\Phi_{\mathbf z}|\lesssim_{q,\gamma}2^\nu\quad(q\ge2),
\qquad
\left|\partial_y^q\frac1{\partial_y\Phi_{\mathbf z}}\right|
\lesssim_{q,\gamma}2^{-\nu}\quad(q\ge0).
\]
To control the size of $\mathbf z$, expand each $H_{\mathbf j}^{(k)}(s_0)$
about $s$ as in \eqref{boundzk}. For $2\le k\le n-r$,
\[
|z_k|\lesssim_\gamma
\sum_{m=0}^{n-r-k}\frac{L_{r+k+m}}{L_{r+k}}
\left(\frac{L_{r+2}}{L_{r+3}}\right)^{m}|y|^{m}
+\frac{\delta^{-1}}{L_{r+k}}
\left(\frac{L_{r+2}}{L_{r+3}}\right)^{n-r-k}|y|^{n-r-k}
\lesssim_\gamma(1+|y|)^{n-r-k}.
\]
In particular,
\begin{equation}\label{boundofz}
    \chi_\nu(y)\widetilde a(y,\mathbf z)\ne0
\quad\Longrightarrow\quad
1+|\mathbf z|\lesssim_\gamma2^{\nu(n-r-2)}.
\end{equation}

The next goal is to prove
\begin{equation}
    |\partial_y^q\partial_{\mathbf z}^\alpha\widetilde{a}(y,\mathbf z)|\lesssim_{q,\alpha,\gamma} 2^{\nu(n-r-2)|\alpha|}. 
\end{equation}
The proof follows the same strategy when proving \eqref{boundtildea}; the only additional thing is to take consideration the effect of the $y$ variable.
Using the previous notation, one has $\wt a=a|b|$. 
One needs to estimate the action of $\partial_y^q\partial_{\mathbf z}^\alpha$ on
\[
s,\qquad \delta\mathbf j_{\ge r+2},\qquad
\delta H_{\mathbf j}'(s),\qquad
\frac{(\mathcal D_{r+\ell}P_{\mathbf j})(t(s))}{L_{r+\ell}}
\quad(2\le\ell\le n-r),\qquad b.
\]
Follow the same proof as in the previous part, we see that the action of $\partial_y^q\partial_{\mathbf z}^\alpha$ on $s, \delta\mj_{\ge r+2}$ and $\de H_\mj'(s)$ results in an $O(1)$ bound. Following the proof strategy of \eqref{boundFl} together with \eqref{usefulestimate}, one has
\[ |\partial_y^q\partial_{\mathbf z}^\alpha\frac{(\mathcal D_{r+\ell}P_{\mathbf j})(t(s))}{L_{r+\ell}}|\lesssim (1+|y|)^{n-r-2}. \]
From \eqref{defa}, by using Leibniz rule and noting $|y|\sim 2^\nu$, one has
\[ |\partial_y^q\partial_{\mathbf z}^\alpha a|\lesssim 2^{\nu (n-r-2)|\alpha| } \]
in the support of $\chi_\nu \wt a$. One can similarly prove the bound
\[ |\partial_y^q\partial_{\mathbf z}^\alpha b|\lesssim 2^{\nu(n-r-2)|\alpha|}. \]
Hence, using $\widetilde a=a |b|$ and $b\neq 0$ in the support of $\chi_\nu \wt a$, one shows that
\[
|\partial_y^q\partial_{\mathbf z}^{\alpha}
 \widetilde a(y,\mathbf z)|
\lesssim_{q,\alpha,\gamma}
2^{\nu(n-r-2)|\alpha|}.
\]

One can also show
\begin{align}
|\partial_y^q\partial_{z_k}\Phi_{\mathbf z}(y)|
&\lesssim_{q,\gamma}2^{\nu(n-r)}.
\end{align}
(Since $\Phi_{\mathbf z}$ is linear in $\mathbf z$, all
of its $\mathbf z$-derivatives of order at least two vanish.) When $q\ge 1$, the estimate follows from $\partial_y\Phi_{\mathbf z}=-yb$ and the derivative bounds on $b$. When $q=0$, we use the expression \eqref{taylorphi} to see that it suffices to prove for any $|s'-s|\lesssim |hy|$ that
\begin{equation}\label{want}
|\partial_{z_k}H_{\mj}''(s')|\lesssim L_{r+2} |y|^{n-r-2}. 
\end{equation} 
Using the Taylor's expansion \eqref{taylorHv} with $v=2$, the follow-up estimates give the desired \eqref{want}.

Set
\[
\Lambda:=\frac{dL_{r+2}^3}{L_{r+3}^2}\gtrsim1.
\]
Leibniz rule gives
\begin{equation}\label{eqqq1}
    \partial_{\mathbf z}^{\alpha}
 \bigl((\chi_\nu\wt a) e(\Lambda\Phi_{\mathbf z})\bigr)
=e(\Lambda\Phi_{\mathbf z})
 \sum_{\beta\le\alpha}
 \binom{\alpha}{\beta}
 (2\pi i\Lambda)^{|\beta|}
 \partial_{\mathbf z}^{\alpha-\beta}(\chi_\nu \wt a)
 \prod_{k=2}^{n-r}
 \bigl(\partial_{z_k}\Phi_{\mathbf z}\bigr)^{\beta_k}.
\end{equation}

By
the preceding bounds, the Leibniz rule in $y$, and the estimates for
$\chi_\nu$, for every $q\ge0$ we have
\begin{align}\label{eqqq2}
\left|\partial_y^q\left[
 \partial_{\mathbf z}^{\alpha-\beta}(\chi_\nu \wt a)
 \prod_{k=2}^{n-r}
 \bigl(\partial_{z_k}\Phi_{\mathbf z}\bigr)^{\beta_k}
 \right]\right|
\qquad\lesssim  
2^{\nu(n-r)|\alpha-\beta|}
2^{\nu(n-r)|\beta|}
=2^{\nu(n-r)|\alpha|}.
\end{align}
Indeed, derivatives falling on $\chi_\nu$ only improve this estimate,
while every $y$-derivative of a factor
$\partial_{z_k}\Phi_{\mathbf z}$ satisfies the same bound as the factor
itself.

For fixed $\mathbf z$, introduce the formal adjoint of the
nonstationary-phase operator,
\[
T_{\mathbf z}f:=-\partial_y\!\left(
 \frac{f}{\partial_y\Phi_{\mathbf z}}\right).
\]
The identity
\[
e(\Lambda\Phi_{\mathbf z})
=\frac{1}{2\pi i\Lambda\,\partial_y\Phi_{\mathbf z}}
 \partial_y e(\Lambda\Phi_{\mathbf z})
\]
and $N$ integrations by parts give, with no boundary terms,
\[
\int f(y)e(\Lambda\Phi_{\mathbf z}(y))\,dy
=(2\pi i\Lambda)^{-N}
 \int T_{\mathbf z}^{N}f(y)
 e(\Lambda\Phi_{\mathbf z}(y))\,dy.
\]
Each application of $T_{\mathbf z}$ gains one factor $2^{-\nu}$.  More
precisely, the reciprocal-phase bounds above imply
\[
\begin{aligned}
|\partial_y^q(T_{\mathbf z}f)|
&\le C_q\sum_{j=0}^{q+1}|\partial_y^j f|
 \left|\partial_y^{q+1-j}
       \frac1{\partial_y\Phi_{\mathbf z}}\right|\\
&\lesssim_{q,\gamma}2^{-\nu}
 \max_{0\le j\le q+1}|\partial_y^j f|.
\end{aligned}
\]
Iterating this inequality yields
\[|T_{\mathbf z}^{N}f|
\lesssim_{N,\gamma}2^{-N\nu}
 \max_{0\le q\le N}|\partial_y^qf|.\]
 Hence, for $f$ supported in $\operatorname{supp}\chi_\nu$, whose length is
$O_\gamma(2^\nu)$, we obtain
\begin{equation}\label{eqqq3}
   |\int f(y)e(\Lambda\Phi_{\mathbf z}(y))\,dy|
\lesssim_{N,\gamma} \Lambda^{-N}2^{(1-N)\nu}
\max_{0\le q\le N}\|\partial_y^qf\|_{L^\infty_y}.
\end{equation}

Combining \eqref{eqqq1},\eqref{eqqq2},\eqref{eqqq3}, we obtain
\[
\left|\partial_{\mathbf z}^{\alpha}
\int\chi_\nu(y)\widetilde a(y,\mathbf z)
 e\!\left(\Lambda\Phi_{\mathbf z}(y)\right)\,dy\right|\lesssim_{N,\alpha,\gamma}
\Lambda^{|\alpha|-N}2^{-N\nu}2^\nu
2^{\nu(n-r)|\alpha|}.
\]

It remains to insert the weight in $\mathbf z$. By \eqref{boundofz},
$1+|\mathbf z|\lesssim_\gamma2^{\nu(n-r-2)}$.  Hence
\[
\begin{aligned}
&\sup_{\mathbf z}(1+|\mathbf z|)^L
\left|\partial_{\mathbf z}^{\alpha}\left[
\Lambda^{1/2}
\int\chi_\nu(y)\widetilde a(y,\mathbf z)
 e\!\left(\Lambda\Phi_{\mathbf z}(y)\right)\,dy
\right]\right|\\
&\qquad\lesssim_{N,\alpha,L,\gamma}
\Lambda^{|\alpha|+\frac12-N}2^{-N\nu}2^\nu
2^{\nu(n-r)(|\alpha|+L)}.
\end{aligned}
\]
Choosing  $N$ large enough, we get
\[
\begin{aligned}
\sup_{\mathbf z}(1+|\mathbf z|)^L
\left|\partial_{\mathbf z}^{\alpha}\left[
\left(\frac{dL_{r+2}^3}{L_{r+3}^2}\right)^{1/2}
\int\chi_\nu(y)\widetilde a(y,\mathbf z)
 e\!\left(\frac{dL_{r+2}^3}{L_{r+3}^2}
 \Phi_{\mathbf z}(y)\right)\,dy
\right]\right|
\lesssim_{\alpha,L,\gamma}2^{-\nu}.
\end{aligned}
\]
Summing over $\nu\ge0$ and adding the contribution near $y=0$ proves
\eqref{eq:bp52-amplitude-bounds}.
\end{proof}

\bigskip

\subsection*{Step 4. Parameters outside \texorpdfstring{$J_1$}{J1}}
The parameters $p/d\notin J_1$ give a small error, as the next proposition shows.

\begin{proposition}\label{prop:bp52-exterior}
For every $L>0$, we have
\begin{equation}\label{eq:bp52-exterior}
\sum_{d\gtrsim1+L_{r+3}^2/L_{r+2}^3}
\ \sum_{p:p/d\notin J_1}
\ \sum_{\mathbf j_{\ge r+2}}
|I_{d,p}(\mathbf j_{\ge r+2})|
\lesssim_{L,\gamma}\delta^L
\end{equation}
\end{proposition}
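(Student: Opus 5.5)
The idea is that when $s_0=p/d$ lies outside $J_1$, the phase of $I_{d,p}$ has no critical point on the support of $a$, which sits inside $\operatorname{int}J_0\Subset J_1$. We therefore integrate by parts repeatedly in $s$ and sum the resulting decay over $p$, $d$ and $\mathbf j_{\ge r+2}$.

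Write the phase as $d\,\psi(s)$ with $\psi(s)=H_{\mathbf j}(s)-(s-s_0)H_{\mathbf j}'(s)$, so that $\psi'(s)=-(s-s_0)H_{\mathbf j}''(s)$. On the support of $a$ we have $s\in J_0$. Since $s_0\notin J_1$ and $\operatorname{dist}(J_0,\mathbb R\setminus J_1)\gtrsim_\gamma1$, this gives $|s-s_0|\gtrsim_\gamma 1+|s_0|$. By \eqref{eq:bp52-derivative-bounds}, $|H_{\mathbf j}''|\asymp L_{r+2}\ge1$ there. Hence
\[
|d\psi'(s)|\gtrsim_\gamma d\,L_{r+2}(1+|s_0|)\gtrsim d\,L_{r+2}+|p|\,L_{r+2}.
\]

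For the higher derivatives, $\psi^{(k)}=-(k-1)H_{\mathbf j}^{(k)}-(s-s_0)H_{\mathbf j}^{(k+1)}$, which is $O((1+|s_0|)\delta^{-1})$. The amplitude $a|H_{\mathbf j}''|$ is bounded in absolute value by $L_{r+2}\le\delta^{-1}$. Its $s$-derivatives are $O(\delta^{-O(1)})$: the cutoffs in \eqref{defa} are applied to quantities whose $s$-derivatives are $O(\delta^{-1})$ after normalization by $L_k\ge1$, or are divided by $L_k$ with $L_k\gtrsim1$ on the relevant range. Here I should double-check the factors $1/L_k$ for small $k\le r+1$. They do not appear, since the cutoff only involves $k\ge r+2$ and $L_{r+2}\ge1$.

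Integrating by parts $N$ times with the operator $f\mapsto -\partial_s(f/(d\psi'))$, each step gains a factor $\lesssim \delta^{-O(1)}/(dL_{r+2}(1+|s_0|))$. The ratio $|\psi''|/|\psi'|$ costs at most $\delta^{-1}$, because $|s-s_0|\gtrsim1+|s_0|$ cancels the $(1+|s_0|)$ growth. This gives
\[
|I_{d,p}(\mathbf j_{\ge r+2})|\lesssim_{N,\gamma}\delta^{-C_nN}\,(d(1+|s_0|))^{-N}\le \delta^{-C_nN}(d+|p|)^{-N}.
\]

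The $\mathbf j_{\ge r+2}$-sum has only $O(\delta^{-(n-r-1)})$ nonzero terms, by the support of $a$. Summing over $p\in\mathbb Z$ and $d\ge1$ costs nothing for $N\ge3$. The result is $\delta^{-C_n N-n}$ times $\sum_{d,p}(d+|p|)^{-N}$, which by itself is not small.

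The main obstacle is exactly this: each integration by parts loses $\delta^{-1}$ from higher derivatives of $H_{\mathbf j}$ and of the cutoffs, so decay has to come from $d$, $|p|$ and $L_{r+2}$ beating these losses. For this I would use the sharper scale bounds rather than crude $\delta^{-1}$. The bounds \eqref{twoscalingineq} and \eqref{eq:bp52-last-scale} give $|H^{(k)}_{\mathbf j}|\lesssim L_{r+k}\le L_{r+2}(L_{r+3}/L_{r+2})^{k-2}$. With $h=L_{r+2}/L_{r+3}$, each $s$-derivative of the amplitude or phase therefore costs at most $h^{-1}$, up to the $\delta^{-1}$ remainder for $k>n-r$, which is controlled by \eqref{eq:bp52-last-scale}. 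Each integration by parts then gains
\[
\frac{h^{-1}}{d\,L_{r+2}(1+|s_0|)}=\frac{L_{r+3}}{d\,L_{r+2}^2(1+|s_0|)}\lesssim \frac{1}{(d\,L_{r+2})^{1/2}(1+|s_0|)},
\]
using $d\gtrsim L_{r+3}^2/L_{r+2}^3$. Unlike the crude bound, this gains a power of $d$, although it still gives no $\delta$-power gain when $L_{r+2}\sim1$.

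To obtain $\delta^L$, I would also use that the $\mathbf j$ sum is restricted to $|H_{\mathbf j}'|\lesssim\delta^{-1}$ with $|\mathbf j|\sim$ large (in the original shell $|\mathbf j|\sim\delta^{-1}$), so that effectively $\psi'$ is $\gtrsim\delta^{-c}$. If this fails, I would settle for an $O(1)$ bound, which suffices for the low-frequency absorption, or exploit that $J_1$ may be chosen with $J_0\Subset J_1$ at any fixed distance. I regard pinning down a genuine $\delta^L$ gain as the delicate point.
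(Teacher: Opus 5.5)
Your integration by parts is the paper's mechanism. Your refined per-step gain
\[
\frac{L_{r+3}}{d\,L_{r+2}^2\,(1+|s_0|)}
\]
is exactly what the paper obtains. It rescales $s=s_0+\frac{L_{r+2}}{L_{r+3}}y$ as in \eqref{Idp}, so each step gains $(\Lambda|\partial_y\Phi_{\mathbf z}|)^{-1}$ with $\Lambda=dL_{r+2}^3/L_{r+3}^2$ and $|\partial_y\Phi_{\mathbf z}|\sim\frac{L_{r+3}}{L_{r+2}}(1+|s_0|)$. This gives $|I_{d,p}|\lesssim_N L_{r+2}\bigl(dL_{r+2}^2/L_{r+3}\bigr)^{-N}(1+|p|/d)^{-N}$. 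The gap is the last step: you claim this yields no power of $\delta$ when $L_{r+2}\sim1$, and that is false. Your fallbacks do not rescue it. The cutoff $a$ gives only an upper bound $|\mathbf j_{\ge r+2}|\lesssim\delta^{-1}$, and $|\psi'|\sim dL_{r+2}(1+|s_0|)$ does not grow with $|\mathbf j|$ anyway. Without a $\delta$-gain per step you cannot beat the $O(\delta^{-(n-r-1)})$ nonzero terms of the $\mathbf j_{\ge r+2}$-sum, so not even an $O(1)$ bound follows.

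The missing observation is that the per-step gain is always $\lesssim\delta^{1/(n-r)}$. Using both constraints on $d$, namely $d\ge1$ and $d\gtrsim L_{r+3}^2/L_{r+2}^3$, gives
\[
\frac{dL_{r+2}^2}{L_{r+3}}\gtrsim\max\Bigl\{\frac{L_{r+2}^2}{L_{r+3}},\,\frac{L_{r+3}}{L_{r+2}}\Bigr\}.
\]
Log-concavity of the scales, in the form \eqref{eq:bp52-last-scale}, gives
\[
\delta^{-1}=L_n\le L_{r+2}\Bigl(\frac{L_{r+3}}{L_{r+2}}\Bigr)^{n-r-2}=\Bigl(\frac{L_{r+3}}{L_{r+2}}\Bigr)^{n-r-1}\frac{L_{r+2}^2}{L_{r+3}},
\]
so the maximum is at least $\delta^{-1/(n-r)}$.

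In your problematic case $L_{r+2}\sim1$, this forces $L_{r+3}\gtrsim\delta^{-1/(n-r-2)}$. Hence $d\gtrsim\delta^{-2/(n-r-2)}$, and your own bound $(dL_{r+2})^{-1/2}$ is already $\lesssim\delta^{1/(n-r-2)}$.

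To finish, split the $N$ factors as follows.
\begin{itemize}
\item Spend $N-3$ of them on $\delta^{(N-3)/(n-r)}$, and keep three as $(L_{r+3}/L_{r+2}^2)^3d^{-3}$.
\item Sum over $p$ using $\sum_p(1+|p|/d)^{-N}\lesssim d$, then over $d$ using $\sum_d d^{-2}<\infty$.
\item Bound $L_{r+2}(L_{r+3}/L_{r+2}^2)^3\le L_{r+3}^3\le\delta^{-3}$, using $L_{r+2}\ge1$.
\item There are $O(\delta^{-n})$ tails $\mathbf j_{\ge r+2}$.
\end{itemize}
Altogether the sum is $\lesssim\delta^{(N-3)/(n-r)-n-3}$, which is $\lesssim\delta^L$ once $N$ is large.
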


\begin{proof}
Fix $L>0$ and choose an integer $N>3+n(L+n+3)$.
If $s_0=p/d\notin J_1$, its distance from the support of the
cutoff is bounded below by a positive constant. \eqref{dphi} shows that
\[ |\partial_y\Phi_{\mathbf z}(y)|\sim |y|=\frac{L_{r+3}}{L_{r+2}}|s-s_0|\sim \frac{L_{r+3}}{L_{r+2}}(1+|s_0|). \]
Integrating by parts $N$ times in \eqref{Idp} gives
\[
|I_{d,p}(\mathbf j_{\ge r+2})|
\lesssim_{N,\gamma}L_{r+2}
\left(\frac{dL_{r+2}^2}{L_{r+3}}\right)^{-N}
(1+|p|/d)^{-N}.
\]
By \eqref{twoscalingineq}, one has
\[ \de^{-1}\le \left(\frac{L_{r+3}}{L_{r+2}}\right)^{n-r-1}\frac{L^2_{r+2}}{L_{r+3}}. \]
Therefore,
\[
\frac{dL_{r+2}^2}{L_{r+3}}
\gtrsim\max\left\{
\frac{L_{r+2}^2}{L_{r+3}},
\frac{L_{r+3}}{L_{r+2}}\right\}
\gtrsim\delta^{-1/(n-r)}.
\]
Since $\sum_{p\in\mathbb Z}(1+|p|/d)^{-N}\lesssim_N d$ and there are
$O_\gamma(\delta^{-n})$ admissible tails $\mathbf j_{\ge r+2}$,
we can bound the left hand side of \eqref{eq:bp52-exterior} by
\[
\begin{aligned}
&\delta^{-n}L_{r+2}
\sum_{d\gtrsim1+L_{r+3}^2/L_{r+2}^3}
d\left(\frac{dL_{r+2}^2}{L_{r+3}}\right)^{-N}\\
&\qquad\lesssim_{N,\gamma}
\delta^{-n+(N-3)/(n-r)}L_{r+2}
\left(\frac{L_{r+3}}{L_{r+2}^2}\right)^3
\sum_{d\ge1}d^{-2}\\
&\qquad\lesssim_{N,\gamma}
\delta^{(N-3)/(n-r)-n-3}
\lesssim_{L,\gamma}\delta^L.
\end{aligned}
\]
Here we used $L_{r+2}\ge1$ and $L_{r+3}\le\delta^{-1}$ to obtain
$L_{r+2}(L_{r+3}/L_{r+2}^2)^3\le\delta^{-3}$, and then used the choice of $N$.
\end{proof}

After multiplication by $R^{O(\beta)}/K$, this error is absorbed by
$R^{O(\beta)}\#\widetilde{\mathcal T}^{(r+1)}/K$, since
$\#\widetilde{\mathcal T}^{(r+1)}\ge1$ and $\delta\le1$.
It remains to prove the following inequality:
\[
\begin{aligned}
&\sum_{1+L_{r+3}^2/L_{r+2}^3\lesssim d\lesssim K}
\ \sum_{\substack{p\in\mathbb Z\\p/d\in J_1}}
\left|\sum_{\mathbf j_{\ge r+2}\in\mathbb Z^{n-r-1}}
I_{d,p}(\mathbf j_{\ge r+2})\right|\\
&\qquad\lesssim_\gamma
\sqrt{L_{r+2}}\left(\prod_{k=r+2}^nL_k\right)
\sum_{\substack{1+L_{r+3}^2/L_{r+2}^3\lesssim D\lesssim K\\
D\text{ dyadic}}}
D^{-1/2}\mathfrak R_{\mathcal C_{r,n-r-1}}(L_{r+2}^{-1},D).
\end{aligned}
\]

\bigskip

\subsection*{Step 5. Poisson summation in the remaining coefficients}
Here we use Proposition~\ref{prop:bp52-stationary-phase}
to estimate the sum over the remaining coefficients.

\begin{proposition}\label{prop:bp52-second-poisson}
Let $d\gtrsim1+L_{r+3}^2/L_{r+2}^3$ and $p\in\mathbb Z$ satisfy
$p/d\in J_1$. For every $L>0$, we have
\begin{equation}\label{eq:bp52-second-poisson}
\left|\sum_{\mathbf j_{\ge r+2}\in\mathbb Z^{n-r-1}}
I_{d,p}(\mathbf j_{\ge r+2})\right|
\lesssim_{L,\gamma}
\left(\prod_{k=r+2}^nL_k\right)
\sqrt{\frac{L_{r+2}}d}
\sum_{\mathbf a\in\mathbb Z^{n-r-1}}
\left(1+L_{r+2}
\left|\mathbf a-d\bigl(\mathcal D_r\gamma_\nu(t(p/d))\bigr)_{\nu=r+2}^n\right|\right)^{-L}.
\end{equation}
\end{proposition}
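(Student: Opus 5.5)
Fix $d$ and $p$ with $s_0=p/d\in J_1$. By Proposition~\ref{prop:bp52-stationary-phase}, each summand factors as
\[
I_{d,p}(\mathbf j_{\ge r+2})=\sqrt{L_{r+2}/d}\,e\bigl(dH_{\mathbf j}(s_0)\bigr)B_{d,s_0}(\mathbf z).
\]
Here $\mathbf z=\mathsf D^{-1}\mathsf M\mathbf j_{\ge r+2}$ is a linear function of $\mathbf j_{\ge r+2}$. The phase is linear in $\mathbf j$:
\[
dH_{\mathbf j}(s_0)=\mathbf j_{\ge r+2}\cdot d\,\mathbf g(s_0),\qquad \mathbf g(s_0):=\bigl(\mathcal D_r\gamma_\nu(t(s_0))\bigr)_{\nu=r+2}^n.
\]
So the sum over $\mathbf j_{\ge r+2}\in\mathbb Z^{n-r-1}$ is a lattice sum of a modulated Schwartz function $G(\mathbf x):=B_{d,s_0}(\mathsf D^{-1}\mathsf M\mathbf x)\,e(\mathbf x\cdot d\mathbf g)$. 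I apply Poisson summation to get
\[
\sum_{\mathbf j}G(\mathbf j)=\sum_{\mathbf a\in\mathbb Z^{n-r-1}}\widehat G(\mathbf a).
\]
The sum is absolutely convergent because of the decay \eqref{eq:bp52-amplitude-bounds}.

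To compute $\widehat G$, I change variables $\mathbf x=\mathsf M^{-1}\mathsf D\mathbf z$. The Jacobian is $|\det\mathsf M|^{-1}\prod_{k=r+2}^nL_k\asymp_\gamma\prod_k L_k$, since $\mathsf M$ is uniformly invertible on $J_1$ by Proposition~\ref{prop:n-projected-curves}. This gives
\[
\widehat G(\mathbf a)=|\det \mathsf M|^{-1}\Bigl(\prod_k L_k\Bigr)\,\widehat{B_{d,s_0}}\bigl(\mathsf D\mathsf M^{-T}(\mathbf a-d\mathbf g)\bigr).
\]
The uniform bounds \eqref{eq:bp52-amplitude-bounds}, with derivatives in $\mathbf z$ and polynomial weights, show that $\widehat{B_{d,s_0}}$ is Schwartz uniformly in $d,s_0,\delta$ and the scales. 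Hence
\[
|\widehat{B_{d,s_0}}(\boldsymbol\xi)|\lesssim_L(1+|\boldsymbol\xi|)^{-L}.
\]

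It remains to compare $|\mathsf D\mathsf M^{-T}\mathbf v|$ with $L_{r+2}|\mathbf v|$ for $\mathbf v=\mathbf a-d\mathbf g$. Since $\mathsf M^{-T}$ is uniformly invertible, $|\mathsf M^{-T}\mathbf v|\gtrsim|\mathbf v|$. Every diagonal entry of $\mathsf D$ is at least $L_{r+2}$, because the $L_k$ are nondecreasing. Therefore
\[
|\mathsf D\mathsf M^{-T}\mathbf v|\ge L_{r+2}|\mathsf M^{-T}\mathbf v|\gtrsim L_{r+2}|\mathbf v|.
\]
Inserting this lower bound, taking absolute values, and summing over $\mathbf a$ yields \eqref{eq:bp52-second-poisson}, with the factor $\sqrt{L_{r+2}/d}$ carried through unchanged.

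The main obstacle is justifying that $\widehat{B_{d,s_0}}$ decays uniformly. Proposition~\ref{prop:bp52-stationary-phase} supplies exactly this: $\alpha$-derivatives combined with $(1+|\mathbf z|)^L$ weights bound $|\boldsymbol\xi|^{|\alpha|}|\widehat B(\boldsymbol\xi)|$ through integration by parts. A secondary point is that $\mathbf j$ ranges over all of $\mathbb Z^{n-r-1}$. This causes no difficulty, since the cutoff $a$ vanishes outside the admissible region and $B$ is defined and decaying on all of $\mathbb R^{n-r-1}$.
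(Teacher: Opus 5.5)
Your proposal is correct and follows the paper's proof essentially step by step. Both arguments use the factorization from Proposition~\ref{prop:bp52-stationary-phase}, Poisson summation in the original integer coordinates, the change of variables $\mathbf x=\mathsf M^{-1}\mathsf D\mathbf z$ with Jacobian $\prod_{k=r+2}^nL_k/|\det\mathsf M|$, uniform decay of $\widehat{B_{d,s_0}}$ by integration by parts, and the lower bound $|\mathsf D\mathsf M^{-T}\mathbf v|\ge L_{r+2}|\mathsf M^{-T}\mathbf v|\gtrsim L_{r+2}|\mathbf v|$ coming from the monotonicity of the $L_k$.
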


\begin{proof}
Write $s_0=p/d$ and recall the following notation.
\[
\mathsf M=
\left(\left.\partial_s^k\bigl[\mathcal D_r\gamma_\nu(t(s))\bigr]
\right|_{s=s_0}\right)_{
\substack{2\le k\le n-r\\ r+2\le\nu\le n }}.
\]
The nondegeneracy of $\mathcal C_{r,n-r}$ in
Proposition~\ref{prop:n-projected-curves} gives
\[
|\det\mathsf M|\asymp_\gamma1,
\qquad
\|\mathsf M\|+\|\mathsf M^{-1}\|\lesssim_\gamma1.
\]
The normalized derivative coordinates from \eqref{derivativecoord} give
\[
\mathbf z(\mj_{\ge r+2})
=\operatorname{diag}(L_{r+2},\ldots,L_n)^{-1}\mathsf M\mathbf \mj_{\ge r+2}.
\]
In particular, \eqref{eq:bp52-amplitude} reads
\[
I_{d,p}(\mathbf j_{\ge r+2})
=\sqrt{\frac{L_{r+2}}d}\,
 e\!\left(d\sum_{\nu=r+2}^n j_\nu \mathcal D_r\gamma_\nu(t(s_0))\right)
 B_{d,s_0}\bigl(\mathbf z(\mathbf j_{\ge r+2})\bigr).
\]

Poisson summation in the original integer coordinates gives
\[
\sum_{\mathbf j_{\ge r+2}\in\mathbb Z^{n-r-1}}
 I_{d,p}(\mathbf j_{\ge r+2})
=\sqrt{\frac{L_{r+2}}d}
\sum_{\mathbf a\in\mathbb Z^{n-r-1}}
\int_{\mathbb R^{n-r-1}}
 B_{d,s_0}\bigl(\mathbf z(\mathbf x)\bigr)
\cdot e\!\left(
-\bigl[\mathbf a-d\bigl(\mathcal D_r\gamma_\nu(t(s_0))\bigr)_{\nu=r+2}^n\bigr]
\cdot\mathbf x\right)\,d\mathbf x.
\]
To estimate these Fourier integrals, use
\[
\mathbf x=\mathsf M^{-1}
\operatorname{diag}(L_{r+2},\ldots,L_n)\mathbf z,
\qquad
 d\mathbf x=\frac{\prod_{k=r+2}^nL_k}{|\det\mathsf M|}\,d\mathbf z(\mathbf x).
\]
This Jacobian produces the factor $\prod_{k=r+2}^nL_k$.
Moreover,
\[
\boldsymbol\eta\cdot\mathbf x
=\left(\operatorname{diag}(L_{r+2},\ldots,L_n)
\mathsf M^{-T}\boldsymbol\eta\right)\cdot\mathbf z.
\]
Consequently,
\[
\begin{aligned}
\int_{\mathbb R^{n-r-1}}
 B_{d,s_0}\bigl(\mathbf z(\mathbf x)\bigr)
 e(-\boldsymbol\eta\cdot\mathbf x)\,d\mathbf x=
\frac{\prod_{k=r+2}^nL_k}{|\det\mathsf M|}
\widehat B_{d,s_0}\!\left(
\operatorname{diag}(L_{r+2},\ldots,L_n)
\mathsf M^{-T}\boldsymbol\eta\right).
\end{aligned}
\]
The derivative bounds in \eqref{eq:bp52-amplitude-bounds} imply, by
integration by parts, that
\[
\left|\widehat B_{d,s_0}(\boldsymbol\xi)\right|
=\left|\int_{\mathbb R^{n-r-1}}B_{d,s_0}(\mathbf z)
 e(-\boldsymbol\xi\cdot\mathbf z)\,d\mathbf z\right|
\lesssim_{L,\gamma}(1+|\boldsymbol\xi|)^{-L}.
\]
Since $L_{r+2}\le\cdots\le L_n$, we also have
\[
\begin{aligned}
\left|\operatorname{diag}(L_{r+2},\ldots,L_n)
\mathsf M^{-T}\boldsymbol\eta\right|^2
=\sum_{k=2}^{n-r}L_{r+k}^2
 \left|(\mathsf M^{-T}\boldsymbol\eta)_k\right|^2\ge L_{r+2}^2|\mathsf M^{-T}\boldsymbol\eta|^2
\gtrsim_\gamma L_{r+2}^2|\boldsymbol\eta|^2.
\end{aligned}
\]
Thus the smallest scale $L_{r+2}$ controls the decay in every direction,
and the Fourier integral is bounded by
\[
\left|\int_{\mathbb R^{n-r-1}}
 B_{d,s_0}\bigl(\mathbf z(\mathbf x)\bigr)
 e(-\boldsymbol\eta\cdot\mathbf x)\,d\mathbf x\right|
\lesssim_{L,\gamma}
\left(\prod_{k=r+2}^nL_k\right)
(1+L_{r+2}|\boldsymbol\eta|)^{-L}.
\]
Substituting
$\boldsymbol\eta=\mathbf a-d\bigl(\mathcal D_r\gamma_\nu(t(s_0))\bigr)_{\nu=r+2}^n$
in the Poisson formula and summing over $\mathbf a$ gives
\[
\left|\sum_{\mathbf j_{\ge r+2}\in\mathbb Z^{n-r-1}}
 I_{d,p}(\mathbf j_{\ge r+2})\right|
\lesssim_{L,\gamma}{}
\sqrt{\frac{L_{r+2}}d}
\left(\prod_{k=r+2}^nL_k\right)
\sum_{\mathbf a\in\mathbb Z^{n-r-1}}
\left(1+L_{r+2}
\left|\mathbf a-d\bigl(\mathcal D_r\gamma_\nu(t(s_0))\bigr)_{\nu=r+2}^n\right|\right)^{-L}.
\]
Since $s_0=p/d$, this is \eqref{eq:bp52-second-poisson}.
\end{proof}

The normalized derivative coordinates were used to estimate this
Fourier transform; both Poisson summations use the original
integer coefficients.

Choose $L\ge A$, where $A$ is the exponent in the definition of
$\mathfrak R$, and let $D$ be dyadic with
$1+L_{r+3}^2/L_{r+2}^3\lesssim D\lesssim K$.
Applying Proposition~\ref{prop:bp52-second-poisson} and using
$d^{-1/2}\lesssim D^{-1/2}$ for $D/2<d\le D$, we obtain
\[
\begin{aligned}
&\sum_{\substack{D/2<d\le D\\d\gtrsim1+L_{r+3}^2/L_{r+2}^3}}
\ \sum_{\substack{p\in\mathbb Z\\p/d\in J_1}}
\left|\sum_{\mathbf j_{\ge r+2}\in\mathbb Z^{n-r-1}}
I_{d,p}(\mathbf j_{\ge r+2})\right|\\
&\quad\lesssim_{L,\gamma}
\sqrt{\frac{L_{r+2}}D}\left(\prod_{k=r+2}^nL_k\right)\\
&\qquad\times
\sum_{\substack{D/2<d\le D\\d\gtrsim1+L_{r+3}^2/L_{r+2}^3}}
\ \sum_{\substack{p\in\mathbb Z\\p/d\in J_1}}
\ \sum_{\mathbf a\in\mathbb Z^{n-r-1}}
\left(1+L_{r+2}
\left|\mathbf a-d\bigl(\mathcal D_r\gamma_\nu(t(p/d))\bigr)_{\nu=r+2}^n\right|\right)^{-L}.
\end{aligned}
\]
The prefactor already has the required form. It therefore suffices to prove,
for every such $D$,
\[
\begin{aligned}
&\sum_{\substack{D/2<d\le D\\d\gtrsim1+L_{r+3}^2/L_{r+2}^3}}
\ \sum_{\substack{p\in\mathbb Z\\p/d\in J_1}}
\ \sum_{\mathbf a\in\mathbb Z^{n-r-1}}
\left(1+L_{r+2}
\left|\mathbf a-d\bigl(\mathcal D_r\gamma_\nu(t(p/d))\bigr)_{\nu=r+2}^n\right|\right)^{-L}\\
&\qquad\lesssim_{L,\gamma}
\mathfrak R_{\mathcal C_{r,n-r-1}}(L_{r+2}^{-1},D).
\end{aligned}
\]
Summing the resulting estimates over $D$ gives the inequality at the end of \textbf{Step~4}.

\bigskip

\subsection*{Step 6. The projected counting function}
We prove the inequality at the end of Step~5 by counting the integer vectors
in each distance range.

\begin{proposition}\label{prop:bp52-projected-counting}
For every dyadic $D$ with $1+L_{r+3}^2/L_{r+2}^3\lesssim D\lesssim K$
and every integer $\nu\ge0$, we have
\[
\begin{aligned}
&\sum_{\substack{D/2<d\le D\\d\gtrsim1+L_{r+3}^2/L_{r+2}^3}}
\ \sum_{\substack{p\in\mathbb Z\\p/d\in J_1}}
\#\left\{\mathbf a\in\mathbb Z^{n-r-1}:
\left|\mathbf a-d\bigl(\mathcal D_r\gamma_k(t(p/d))\bigr)_{k=r+2}^n\right|
\le2^\nu L_{r+2}^{-1}\right\}\\
&\qquad\lesssim_\gamma
(1+2^\nu L_{r+2}^{-1})
\mathcal R_{\mathcal C_{r,n-r-1}}(C2^\nu L_{r+2}^{-1},D).
\end{aligned}
\]
\end{proposition}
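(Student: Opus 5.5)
The idea is to build an injection-type map from the triples $(d,p,\mathbf a)$ counted on the left into the rational points counted by $\mathcal R_{\mathcal C_{r,n-r-1}}(C2^\nu L_{r+2}^{-1},D)$, with fibres of size at most $O_\gamma(1+2^\nu L_{r+2}^{-1})$.

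Write $u:=2^\nu L_{r+2}^{-1}$ and fix $d$, $p$ with $D/2<d\le D$ and $s_0=p/d\in J_1$. Recall that $\mathcal C_{r,n-r-1}$ is parametrized by
\[
s\mapsto\bigl(s,\mathcal D_r\gamma_{r+2}(t(s)),\ldots,\mathcal D_r\gamma_{n}(t(s))\bigr)
\]
via $s=\mathcal D_r\gamma_{r+1}(t)$, as in Proposition~\ref{prop:n-projected-curves}. Given $\mathbf a\in\mathbb Z^{n-r-1}$ with
\[
\left|\mathbf a-d\bigl(\mathcal D_r\gamma_k(t(s_0))\bigr)_{k=r+2}^n\right|\le u,
\]
I form the integer vector $\mathbf P:=(p,\mathbf a)\in\mathbb Z^{n-r-1}$ and take $d$ as the denominator. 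Then
\[
\left|\frac{\mathbf P}{d}-\Bigl(s_0,\bigl(\mathcal D_r\gamma_k(t(s_0))\bigr)_{k=r+2}^n\Bigr)\right|\le\frac{u}{d},
\]
since the first coordinate agrees exactly. Hence $\operatorname{dist}(\mathbf P/d,\mathcal C_{r,n-r-1})\le u/d\le Cu/d$, with $d\le D$. This is the only point needing care: the curve $\mathcal C_{r,n-r-1}$ must be taken over the parameter interval $J_1$, which is exactly how $\mathfrak R_{\mathcal C_{r,n-r-1}}$ was set up in Lemma~\ref{lem:bp36-planar-bound}. The constant $C$ only absorbs the normalization of distance (Euclidean versus sup norm), and possibly the range $d\lesssim D$ versus $d\le D$ in the definition of $\mathcal R$, handled by enlarging $C$.

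Next I bound the multiplicity of the map $(d,p,\mathbf a)\mapsto(\mathbf P,d)$. Because $\mathbf P$ records both $p$ and $\mathbf a$, the map is injective on triples, so each counted rational point $(\mathbf P,d)$ arises at most once. At first sight this gives the bound without the factor $1+u$. However, $\mathcal R$ as defined in the introduction counts pairs with $\mathbf P\in\mathbb Z^{n-r-1}$ whose first coordinate is an arbitrary integer $p$, and $p$ is indeed an arbitrary integer here. So the map lands in the counted set and the factor $(1+u)$ is simply a harmless overestimate. I would nonetheless keep it, since it is convenient if one instead projects away the first coordinate, and it matches the weights in $\mathfrak R$.

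Summing over $d$, $p$ and $\mathbf a$ then gives the stated inequality, since every triple maps to a distinct pair counted by $\mathcal R_{\mathcal C_{r,n-r-1}}(C2^\nu L_{r+2}^{-1},D)$.

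Finally, to close Step~5, I decompose the weight $(1+L_{r+2}|\cdot|)^{-L}$ dyadically into shells where $|\mathbf a-d(\cdots)|\sim 2^\nu L_{r+2}^{-1}$, which carry weight $\lesssim 2^{-L\nu}$. Applying the proposition on each shell and using $L\ge A$ gives a total of at most
\[
\sum_\nu 2^{-A\nu}(1+2^\nu L_{r+2}^{-1})\,\mathcal R\bigl(C2^\nu L_{r+2}^{-1},D\bigr)=\mathfrak R_{\mathcal C_{r,n-r-1}}(L_{r+2}^{-1},D).
\]
I expect no real obstacle. The only delicate points are matching the parameter interval and the distance normalization, both of which are absorbed into $C$.
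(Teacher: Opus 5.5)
There is a genuine gap: you are using the wrong curve. By Proposition~\ref{prop:n-projected-curves} with $m=n-r-1$, the curve $\mathcal C_{r,n-r-1}\subset\mathbb R^{n-r-1}$ is
\[
s\mapsto\bigl(s,\mathcal D_r\gamma_{r+2}(t(s)),\ldots,\mathcal D_r\gamma_{n-1}(t(s))\bigr).
\]
It stops at $\mathcal D_r\gamma_{n-1}$. The curve you wrote, ending in $\mathcal D_r\gamma_n$, is $\mathcal C_{r,n-r}\subset\mathbb R^{n-r}$. Likewise, $\mathbf a=(a_{r+2},\ldots,a_n)$ already has $n-r-1$ entries, so your $\mathbf P=(p,\mathbf a)$ lies in $\mathbb Z^{n-r}$, not in $\mathbb Z^{n-r-1}$. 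Your injection therefore only shows that the left-hand side is at most $\mathcal R_{\mathcal C_{r,n-r}}(u,D)$, which is not the quantity in the statement.

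Your claim that the factor $1+u$ is a superfluous overestimate is in fact false. Take $1\ll u\lesssim D$, which occurs for large $\nu$. The left-hand side is then of order $D^2u^{n-r-1}$: there are about $D^2$ pairs $(d,p)$, and each admits about $u^{n-r-1}$ vectors $\mathbf a$. By contrast, the elementary bound in the proof of Lemma~\ref{lem:bp36-planar-bound} gives
\[
\mathcal R_{\mathcal C_{r,n-r-1}}(Cu,D)\lesssim\sum_{d\le D}(d+Cu)(1+Cu)^{n-r-2}\lesssim D^2u^{n-r-2}.
\]

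The missing step is to discard the last coordinate, not the first. Use the map
\[
(d,p,\mathbf a)\mapsto\bigl((p,a_{r+2},\ldots,a_{n-1}),d\bigr).
\]
Your distance computation, with exact agreement in the first coordinate and comparison to the curve point at $s_0=p/d\in J_1$, shows that the image is counted by $\mathcal R_{\mathcal C_{r,n-r-1}}(Cu,D)$. This map is no longer injective, however. Once the image is fixed, $a_n$ is an integer in an interval of length $2u$ about $d\,\mathcal D_r\gamma_n(t(p/d))$, so each fibre has $O(1+u)$ elements. This fibre count is exactly where the factor $1+u$ comes from, and with it your argument becomes the paper's proof.

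Equivalently, you could keep your bound by $\mathcal R_{\mathcal C_{r,n-r}}$ and then prove separately that $\mathcal R_{\mathcal C_{r,n-r}}(u,D)\lesssim(1+u)\,\mathcal R_{\mathcal C_{r,n-r-1}}(Cu,D)$. That is the same projection argument.
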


\begin{proof}
For each tuple counted on the left,
\[
\frac pd\in J_1,\qquad
\left|\mathbf a-d\bigl(\mathcal D_r\gamma_k(t(p/d))\bigr)_{k=r+2}^n\right|
\le2^\nu L_{r+2}^{-1}.
\]
Omitting $a_n$ and dividing by $d$ gives
\[
\operatorname{dist}\!\left(
\left(\frac pd,\frac{a_{r+2}}d,\ldots,\frac{a_{n-1}}d\right),
\mathcal C_{r,n-r-1}\right)
\le\frac{C2^\nu L_{r+2}^{-1}}d,
\]
since
\[
\mathcal C_{r,n-r-1}
=\bigl\{(s,\mathcal D_r\gamma_{r+2}(t(s)),\ldots,
\mathcal D_r\gamma_{n-1}(t(s))):s\in J_1\bigr\}.
\]
Thus there are at most
$\mathcal R_{\mathcal C_{r,n-r-1}}(C2^\nu L_{r+2}^{-1},D)$
choices for $(p,a_{r+2},\ldots,a_{n-1},d)$.
For each such choice, the remaining coordinate satisfies
\[
a_n\in\mathbb Z\cap
\left[d\mathcal D_r\gamma_n(t(p/d))-2^\nu L_{r+2}^{-1},\,
      d\mathcal D_r\gamma_n(t(p/d))+2^\nu L_{r+2}^{-1}\right],
\]
so it has $O(1+2^\nu L_{r+2}^{-1})$ possibilities.
Multiplying these two bounds proves the proposition. 
\end{proof}
\begin{remark}
    {\rm
    The above counting just ignored the $a_n$. Intuitively, we call it a projection argument since the counting problem becomes 1 dimensional lower.
    }
\end{remark}

Decomposing the distance into dyadic ranges and applying
Proposition~\ref{prop:bp52-projected-counting}, we obtain
\[
\begin{aligned}
&\sum_{\substack{D/2<d\le D\\d\gtrsim1+L_{r+3}^2/L_{r+2}^3}}
\ \sum_{\substack{p\in\mathbb Z\\p/d\in J_1}}
\ \sum_{\mathbf a\in\mathbb Z^{n-r-1}}
\left(1+L_{r+2}
\left|\mathbf a-d\bigl(\mathcal D_r\gamma_k(t(p/d))\bigr)_{k=r+2}^n\right|\right)^{-L}\\
&\quad\lesssim_{L,\gamma}
\sum_{\nu\ge0}2^{-L\nu}(1+2^\nu L_{r+2}^{-1})
\mathcal R_{\mathcal C_{r,n-r-1}}(C2^\nu L_{r+2}^{-1},D)\\
&\quad\le
\sum_{\nu\ge0}2^{-A\nu}(1+2^\nu L_{r+2}^{-1})
\mathcal R_{\mathcal C_{r,n-r-1}}(C2^\nu L_{r+2}^{-1},D)=
\mathfrak R_{\mathcal C_{r,n-r-1}}(L_{r+2}^{-1},D),
\end{aligned}
\]
where the second inequality uses $L\ge A$ and the last equality is the
definition of $\mathfrak R$.
This proves the inequality at the end of \textbf{Step~5}.
The reductions in Steps~1--5 complete the proof of
Proposition~\ref{lem:bp36-B}.

\section{Proof of Proposition~\ref{cor:intro-moment-count}}
\label{sec:sharpness}

In this section, we give a proof of Proposition~\ref{cor:intro-moment-count}.
Recall that
\[
 \gamma_n(t):=(t,t^2,\ldots,t^n),\qquad
 \Gamma_n=\gamma_n([0,1]).
\]

\subsection{Volume bound}
We first prove the volume lower bound
\[
\mathcal R_{\Gamma_n}(\delta,Q)\gtrsim_n\delta^{n-1}Q^2
\]
for $0<\delta\leq1$. 
The lower bound of Beresnevich--Vaughan--Velani--Zorin
\cite[Corollary~1.2]{BVVZ2021}, 
gives this estimate when $\delta\gtrsim Q^{-1/(n-1)}$.
For $0<\delta \lesssim Q^{-1/(n-1)}$, the pairs $(0,q)$ instead give
\[
\delta^{n-1}Q^2\lesssim_n Q
\lesssim\#\{q\in\mathbb N:Q/2<q\leq Q\}
\leq \mathcal R_{\Gamma_n}(\delta,Q).
\]
It remains to obtain the lower bounds for the other terms in
Proposition~\ref{cor:intro-moment-count}. \medskip

\bigskip

\subsection{Contribution of the origin}

Since $\gamma_n(0)=0$, every integer $Q/2<q\leq Q$ satisfies
\[
\operatorname{dist}(0,q\Gamma_n)
\leq |q\gamma_n(0)|=0\leq\delta.
\]
Thus the pairs $(0,q)$ give
\begin{equation}\label{eq:sharp-Q-lower}
\begin{aligned}
\mathcal R_{\Gamma_n}(\delta,Q)
\geq\#\{q\in\mathbb N:Q/2<q\leq Q\}\gtrsim Q.
\end{aligned}
\end{equation}
This gives the term $Q$ in Proposition~\ref{cor:intro-moment-count}. \medskip

\bigskip

\subsection{Local flatness}
We next fix $1\leq k\leq n-1$. We prove that
\[
\mathcal R_{\Gamma_n}(\delta,Q)\gtrsim_n
Q(\delta^kQ)^{k/(k+1)}
=\delta^{k^2/(k+1)}Q^{(2k+1)/(k+1)}.
\]
If $\delta^kQ$ is bounded by a sufficiently large constant depending only
on $n$, then
\[
\delta^{k^2/(k+1)}Q^{(2k+1)/(k+1)}
=Q(\delta^kQ)^{k/(k+1)}\lesssim_n Q,
\]
so \eqref{eq:sharp-Q-lower} gives the desired lower bound.
We may therefore assume that $\delta^kQ$ is larger than this constant.  Choose $c_0=c_0(n)>0$ sufficiently small.
Let $q$ be an integer with $Q/2<q\leq Q$. First choose $p_k\in\mathbb Z$ with
\begin{equation}\label{eq:sharp-pk-range}
2^{-k}q\left(\frac{c_0\delta}{Q}\right)^{k/(k+1)}
\leq p_k\leq
\left(\frac23\right)^kq\left(\frac{c_0\delta}{Q}\right)^{k/(k+1)}.
\end{equation}
The interval has length comparable to
\[
q\left(\frac{c_0\delta}{Q}\right)^{k/(k+1)}
\asymp_n(\delta^kQ)^{1/(k+1)},
\]
so it contains $\gtrsim_n(\delta^kQ)^{1/(k+1)}$ integers. 
For $2\leq j\leq k$, define
\begin{equation}\label{eq:sharp-Fjq}
F(x)=q^{-1/(j-1)}x^{j/(j-1)},\qquad x>0.
\end{equation}
Here and below, the dependence of $F$ on $j$ and $q$ is suppressed.
We choose
$p_{k-1},\ldots,p_1$ successively. To be specific, after $p_j$ has been chosen, choose every integer $p_{j-1}$ satisfying
\begin{equation}\label{eq:sharp-inverse-interval}
p_{j-1}\in F^{-1}
\bigl([p_j-c_0\delta,p_j+c_0\delta]\bigr).
\end{equation}
When $k=1$, there are no further choices, and
\eqref{eq:sharp-pk-range} directly specifies $p_1$.

\begin{lemma}\label{lem:sharp-integer-choices}
Under the assumptions above, for each $2\leq j\leq k$ and every $p_j$
arising from the choices in \eqref{eq:sharp-pk-range} and
\eqref{eq:sharp-inverse-interval},
\[
\#\Bigl(\mathbb Z\cap F^{-1}
\bigl([p_j-c_0\delta,p_j+c_0\delta]\bigr)\Bigr)
\gtrsim_n(\delta^kQ)^{1/(k+1)}.
\]
Moreover, writing $t_j=(p_j/q)^{1/j}$, every coordinate chosen in this
construction satisfies
\[
\frac13\left(\frac{c_0\delta}{Q}\right)^{1/(k+1)}
\leq t_j\leq
\frac34\left(\frac{c_0\delta}{Q}\right)^{1/(k+1)}
\qquad(1\leq j\leq k).
\]
\end{lemma}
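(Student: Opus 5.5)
We prove both assertions together by descending induction on $j$, from $j=k$ down to $j=1$. Throughout, write $T:=(c_0\delta/Q)^{1/(k+1)}$. Since $Q/2<q\le Q$, we have $qT^{k}\asymp_n(\delta^kQ)^{1/(k+1)}\cdot c_0^{k/(k+1)}$ and $qT^{k+1}\asymp c_0\delta$.

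\textbf{Base case $j=k$.} The range \eqref{eq:sharp-pk-range} reads $(T/2)^k q\le p_k\le (2T/3)^k q$. Taking $k$-th roots gives $t_k\in[T/2,2T/3]$, which lies inside $[T/3,3T/4]$.

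\textbf{Inductive step.} Suppose $t_j\in[T/3,3T/4]$, so $p_j=qt_j^j$. The map $F(x)=q^{-1/(j-1)}x^{j/(j-1)}$ is increasing on $x>0$, with inverse $F^{-1}(y)=q^{1/j}y^{(j-1)/j}$. Therefore $F^{-1}([p_j-c_0\delta,p_j+c_0\delta])$ is an interval.

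\emph{Location of the interval.} Its centre is approximately $F^{-1}(p_j)=q^{1/j}(qt_j^j)^{(j-1)/j}=qt_j^{j-1}$. By the mean value theorem, its length is approximately
\[2c_0\delta\,(F^{-1})'(p_j)=2c_0\delta\,\tfrac{j-1}{j}\,q^{1/j}p_j^{-1/j}=2c_0\delta\,\tfrac{j-1}{j}\,t_j^{-1}.\]
To justify this linearization, check that $c_0\delta\ll p_j$. This holds because $p_j\asymp qT^j\ge qT^k\asymp(\delta^kQ)^{1/(k+1)}\cdot c_0^{k/(k+1)}$, which is large by the standing assumption that $\delta^kQ$ is large (taken after $c_0$). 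More precisely, $c_0\delta/p_j\lesssim c_0\delta/(qT^{j})\asymp T^{k+1-j}\le T\ll1$. So the endpoints $y=p_j\pm c_0\delta$ satisfy $y/p_j\in[1-O(T),1+O(T)]$, and $F^{-1}$ is comparable to its linearization.

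\emph{Counting integers.} The length is $\asymp c_0\delta/T$. Since $T^{k+1}=c_0\delta/Q$, we have $\delta/T\asymp QT^k/c_0\asymp (\delta^kQ)^{1/(k+1)}c_0^{-1/(k+1)}$. So the length is $\gtrsim_n(\delta^kQ)^{1/(k+1)}$. This quantity is large, so the interval contains $\gtrsim$ (its length) integers. This proves the first claim.

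\emph{Location of $t_{j-1}$.} Each chosen $p_{j-1}$ satisfies $p_{j-1}=qt_j^{j-1}(1+O(T))$. Hence $t_{j-1}=(p_{j-1}/q)^{1/(j-1)}=t_j(1+O(T))$.

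\textbf{Main obstacle.} The difficulty is that these $(1+O(T))$ drifts could push $t$ outside $[T/3,3T/4]$ over at most $k$ steps. The base case places $t_k$ inside $[T/2,2T/3]$, which has multiplicative room on both sides. The cumulative factor $(1+CT)^{k}$ stays within $[2/3,9/8]$ provided $T\le c_0^{1/(k+1)}$ is small. This holds since $\delta\le1\le Q$ gives $T\le c_0^{1/(k+1)}$, and we choose $c_0$ small depending on $n$.

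To keep all constants uniform, I would carry out the induction with the explicit slightly shrinking window $t_j\in[\tfrac12(1-CT)^{k-j}T,\tfrac23(1+CT)^{k-j}T]$. Throughout, I would use only monotonicity and the mean value theorem for $x\mapsto x^{(j-1)/j}$ on $[1/2,3/2]$. This yields both claims of the lemma.
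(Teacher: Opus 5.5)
Your proof is correct and follows essentially the paper's argument. Both use descending induction on $j$ to keep $t_j$ in a window comparable to $T=(c_0\delta/Q)^{1/(k+1)}$, a per-step drift $|t_{j-1}-t_j|=O(T^2)$ coming from $|qt_{j-1}^j-qt_j^j|\le c_0\delta$, and the mean value theorem to show the preimage interval has length $\asymp c_0\delta/T\asymp_n(\delta^kQ)^{1/(k+1)}$. The differences are only bookkeeping: you track multiplicative drift with a shrinking window and get smallness from $c_0$, whereas the paper sums additive drifts against $t_k$ and uses $T^2\le (c_0/Q)^{1/n}T$.
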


We postpone the proof of this lemma until the end of this section.

By Lemma~\ref{lem:sharp-integer-choices} and the number of choices for $p_k$,
we obtain
$
\gtrsim_n(\delta^kQ)^{k/(k+1)}
$
distinct tuples $(p_1,\ldots,p_k)$ for each denominator $q$.
For any tuple $(p_1,\ldots,p_k)$ constructed above, set
\[
t=\frac{p_1}{q},\qquad
p=(p_1,\ldots,p_k,0,\ldots,0)\in\mathbb Z^n.
\]
We will show that
\[
|q\gamma_n(t)-p|
=\bigl|(qt,qt^2,\ldots,qt^n)-(p_1,\ldots,p_k,0,\ldots,0)\bigr|
\leq\delta.
\]
Assuming this bound, each constructed pair satisfies
\[
1\leq q\leq Q,\qquad
\operatorname{dist}(p/q,\Gamma_n)
\leq\frac{|q\gamma_n(t)-p|}{q}\leq\frac{\delta}{q},
\]
and is therefore counted by $\mathcal R_{\Gamma_n}(\delta,Q)$. Thus each $q$ contributes
$\gtrsim_n(\delta^kQ)^{k/(k+1)}$ pairs. Summing over the
$\asymp Q$ integers $q\in(Q/2,Q]$, we obtain
\[
\mathcal R_{\Gamma_n}(\delta,Q)\gtrsim_n
Q(\delta^kQ)^{k/(k+1)}
=\delta^{k^2/(k+1)}Q^{(2k+1)/(k+1)}.
\]
Together with the volume lower bound proved above and
\eqref{eq:sharp-Q-lower}, these estimates for $1\leq k\leq n-1$
yield the claimed lower bound in Proposition~\ref{cor:intro-moment-count}.

We now prove the distance bound above. It suffices to show that
\[
|qt^j-p_j|\lesssim_n c_0\delta\quad(1\leq j\leq k),
\qquad
qt^{k+1}\lesssim_n c_0\delta.
\]

For the second bound, Lemma~\ref{lem:sharp-integer-choices} gives
\[
0<t=t_1\lesssim_n\left(\frac{c_0\delta}{Q}\right)^{1/(k+1)}.
\]
Hence, using $q\asymp Q$, we obtain
\[
qt^{k+1}\lesssim_n\frac qQ c_0\delta\lesssim_n c_0\delta.
\]
We now prove the first bound, for $1\leq j\leq k$.
Since $q\asymp Q$ and $p_1,p_{j-1}\lesssim_n Q$, we have $t=p_1/q\lesssim_n1$.
Thus $x\lesssim_n Q$ for every $x$ between $qt^{j-1}$ and $p_{j-1}$, and
\[
F'(x)=\frac{j}{j-1}\left(\frac{x}{q}\right)^{1/(j-1)}\lesssim_n1.
\]
For $2\leq j\leq k$, the mean value theorem and
\eqref{eq:sharp-inverse-interval} give
\begin{equation}\label{eq:sharp-error-recursion}
\begin{aligned}
|qt^j-p_j|
&=|F(qt^{j-1})-p_j|\\
&\leq |F(qt^{j-1})-F(p_{j-1})|
+\underbrace{|F(p_{j-1})-p_j|}_{\leq c_0\delta}\lesssim_n |qt^{j-1}-p_{j-1}|+c_0\delta.
\end{aligned}
\end{equation}
Since $qt=p_1$ and $k\leq n-1$, iterating this estimate at most $n-2$ times gives
\[
|qt^j-p_j|\lesssim_n c_0\delta
\qquad(1\leq j\leq k).
\]
The two bounds give the required distance estimate when $c_0(n)$ is
sufficiently small, completing the proof of
Proposition~\ref{cor:intro-moment-count}.

\begin{proof}[Proof of Lemma~\ref{lem:sharp-integer-choices}]
We first prove the bounds for $t_j$ by descending induction,
from $j=k$ to $j=1$.
By \eqref{eq:sharp-pk-range},
\[
\frac12\left(\frac{c_0\delta}{Q}\right)^{1/(k+1)}
\leq t_k\leq
\frac23\left(\frac{c_0\delta}{Q}\right)^{1/(k+1)},
\]
so the case $j=k$ holds. Let $2\leq j\leq k$ and assume the stated bounds
hold for $t_j,\ldots,t_k$. We show that they also hold for $t_{j-1}$
whenever $p_{j-1}$ is chosen according to \eqref{eq:sharp-inverse-interval}.
Since $j\leq k\leq n-1$, $q\asymp Q$, and $0<c_0\delta\leq1$,
\[
\frac{p_j}{c_0\delta}
=\frac{qt_j^j}{c_0\delta}
\gtrsim_n\left(\frac{Q}{c_0\delta}\right)^{\frac{k+1-j}{k+1}}
\geq Q^{1/n},
\]
where $(k+1-j)/(k+1)\geq1/n$.
Taking $Q_n$ sufficiently large, uniformly in $\delta,j,k$, we obtain
\[
0<p_j-c_0\delta\asymp p_j+c_0\delta\asymp p_j.
\]
For every such choice of $p_{j-1}$, \eqref{eq:sharp-inverse-interval}
gives $|F(p_{j-1})-p_j|\leq c_0\delta$.
By computations, the definition of $F$ gives $F(p_{j-1})=qt_{j-1}^j$.
Dividing $|F(p_{j-1})-p_j|\leq c_0\delta$ by $q$ gives
\begin{equation}\label{eq:sharp-tj-stability}
\begin{aligned}
|t_{j-1}^j-t_j^j|=\frac{|F(p_{j-1})-p_j|}{q}\leq\frac{c_0\delta}{q}.
\end{aligned}
\end{equation}
We claim that
$
|t_{j-1}-t_j|
\lesssim_n\left(\frac{c_0\delta}{Q}\right)^{2/(k+1)}.
$
To prove the claim, use $t_{j-1},t_j>0$ to write
\[
\begin{aligned}
|t_{j-1}^j-t_j^j|
=|t_{j-1}-t_j|
\sum_{\ell=0}^{j-1}t_{j-1}^{\ell}t_j^{j-1-\ell}\geq |t_{j-1}-t_j|\,t_j^{j-1}.
\end{aligned}
\]
Combining this with \eqref{eq:sharp-tj-stability}
gives $|t_{j-1}-t_j|\leq c_0\delta/(qt_j^{j-1})$.
Next, using $q\asymp Q$ and the assumed lower bound for $t_j$, we obtain
\[
\begin{aligned}
\frac{c_0\delta}{qt_j^{j-1}}
\lesssim_n\frac{c_0\delta}{Q}
\left(\frac{c_0\delta}{Q}\right)^{-(j-1)/(k+1)}=\left(\frac{c_0\delta}{Q}\right)^{(k-j+2)/(k+1)}
\leq\left(\frac{c_0\delta}{Q}\right)^{2/(k+1)}.
\end{aligned}
\]
The last inequality uses $j\leq k$ and $c_0\delta/Q\leq1$.
This proves the claim.

To complete the induction step, sum the bounds from the claim:
\[
\begin{aligned}
|t_{j-1}-t_k|
&\leq\sum_{\ell=j-1}^{k-1}|t_\ell-t_{\ell+1}|\\
&\lesssim_n\left(\frac{c_0\delta}{Q}\right)^{2/(k+1)}
\leq\left(\frac{c_0}{Q}\right)^{1/n}
\left(\frac{c_0\delta}{Q}\right)^{1/(k+1)}.
\end{aligned}
\]
Here we used $k\leq n-1$ and
$(c_0\delta/Q)^{1/(k+1)}\leq(c_0/Q)^{1/n}$. This gives the asserted bounds for $t_{j-1}$
and completes the induction.

For every $x\in F^{-1}\bigl([p_j-c_0\delta,p_j+c_0\delta]\bigr)$, we have
\[
x\asymp_n q\left(\frac{c_0\delta}{Q}\right)^{(j-1)/(k+1)},
\]
and therefore
\begin{equation}\label{eq:sharp-F-derivative}
F'(x)
=\frac{j}{j-1}\left(\frac{x}{q}\right)^{1/(j-1)}
\asymp_n\left(\frac{c_0\delta}{Q}\right)^{1/(k+1)}.
\end{equation}
Since $F$ is strictly increasing, the interval in
\eqref{eq:sharp-inverse-interval} has endpoints
$F^{-1}(p_j-c_0\delta)$ and $F^{-1}(p_j+c_0\delta)$.
The mean value theorem gives a point
$\xi\in F^{-1}\bigl([p_j-c_0\delta,p_j+c_0\delta]\bigr)$ such that
\[
F'(\xi)\bigl(F^{-1}(p_j+c_0\delta)-F^{-1}(p_j-c_0\delta)\bigr)
\asymp c_0\delta.
\]
Using \eqref{eq:sharp-F-derivative}, we therefore obtain
\begin{equation}\label{eq:sharp-I-length}
\begin{aligned}
\left|F^{-1}\bigl([p_j-c_0\delta,p_j+c_0\delta]\bigr)\right|
&\asymp\frac{c_0\delta}{F'(\xi)}
\asymp_n(\delta^kQ)^{1/(k+1)}.
\end{aligned}
\end{equation}
Our assumption on $\delta^kQ$ ensures that each interval contains
$\gtrsim_n(\delta^kQ)^{1/(k+1)}$ integers, proving the counting estimate.
\end{proof}

\appendix
\section{Stationary phase method}

We recall \cite[Theorem~3.6]{CSST2026} here.

\begin{lemma}[Stationary phase with parameters]
\label{lem:stationary-phase-parameters}
Let $Y\subset\mathbb R$ and $Z\subset\mathbb R^{d_0}$ be open sets of
diameter at most $10$, and let
$\phi\in C^\infty(Y\times Z;\mathbb R)$. Suppose that there is a smooth
map $y_*:Z\to Y$ such that, for every $\mathbf z\in Z$, the point
$y_*(\mathbf z)$ is the unique critical point of
$y\mapsto\phi(y;\mathbf z)$ in $Y$, and
\[
\bigl|\partial_y^2\phi(y_*(\mathbf z);\mathbf z)\bigr|\geq c_0>0.
\]
Let
$w\in C^\infty(\mathbb R_{>0}\times\mathbb R\times\mathbb R^{d_0})$
be supported in $\mathbb R_{>0}\times Y\times Z$. If, for some
integer $A>1$,
\[
\sup_{\Lambda>0,\,y,\,\mathbf z}
\bigl|\partial_\Lambda^\iota\partial_y^N
\partial_{\mathbf z}^{\boldsymbol\beta}w(\Lambda;y;\mathbf z)\bigr|
\leq C_A(1+\Lambda)^{-\iota}
\quad
(0\leq N,\iota,|\boldsymbol\beta|\leq A),
\]
then, for $\Lambda\geq1$,
\[
\int_{\mathbb R}e\!\left(\Lambda\phi(y;\mathbf z)\right)
 w(\Lambda;y;\mathbf z)\,dy
=\Lambda^{-1/2}
 e\!\left(\Lambda\phi(y_*(\mathbf z);\mathbf z)\right)
 w_1(\Lambda;\mathbf z),
\]
where
\[
\bigl|\partial_\Lambda^\iota
\partial_{\mathbf z}^{\boldsymbol\beta}w_1(\Lambda;\mathbf z)\bigr|
\lesssim_{A,c_0,C_A,\phi,y_*}\Lambda^{-\iota}
\quad(0\leq\iota,|\boldsymbol\beta|\leq A).
\]
For a family of phases, the last constant is uniform provided the
finitely many required seminorms of $\phi$ and $y_*$ are uniform.

\end{lemma}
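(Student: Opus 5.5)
This lemma is the standard one-dimensional stationary phase expansion with smooth dependence on parameters. I plan to prove it by reducing to a normal form via the Morse lemma and then analysing a Fresnel-type integral. For brevity write $y_*=y_*(\mathbf z)$.

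\textbf{Step 1: Taylor expansion at the critical point.} I write
\[
\phi(y;\mathbf z)-\phi(y_*;\mathbf z)=(y-y_*)^2 g(y;\mathbf z),
\qquad g(y;\mathbf z)=\int_0^1(1-\theta)\,\partial_y^2\phi\bigl(y_*+\theta(y-y_*);\mathbf z\bigr)\,d\theta .
\]
Uniqueness of the critical point, together with $|\partial_y^2\phi(y_*;\mathbf z)|\ge c_0$, should give $|g|\gtrsim 1$ on the support of $w$. Away from a small neighbourhood of $y_*$ I instead use $|\partial_y\phi|\gtrsim 1$, which follows from compactness of the relevant seminorm data. This is where uniformity has to be tracked: the lower bound must depend only on the seminorms of $\phi$ and $y_*$.

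\textbf{Step 2: localise and change variables.} I split $w=w\chi_0+w(1-\chi_0)$, with $\chi_0$ a cutoff near $y_*$.
\begin{itemize}
\item Far piece: repeated integration by parts in $y$ gives $O(\Lambda^{-N})$. The estimate is stable under $\partial_\Lambda^\iota$ and $\partial_{\mathbf z}^{\boldsymbol\beta}$ because of the hypotheses on $w$, so this piece can be absorbed into $\Lambda^{-1/2}w_1$.
\item Near piece: I substitute $u=(y-y_*)\sqrt{|g|}$. This is a smooth diffeomorphism depending smoothly on $\mathbf z$, and it turns the phase into $\Lambda(\phi(y_*;\mathbf z)\pm u^2)$.
\end{itemize}

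\textbf{Step 3: the model integral.} It remains to study
\[
\int e(\pm\Lambda u^2)\,\tilde w(\Lambda;u;\mathbf z)\,du .
\]
I handle it via Plancherel, writing it as a pairing of $\widehat{\tilde w}$ against the Gaussian $\Lambda^{-1/2}e(\mp\xi^2/(4\Lambda))$ up to a constant. This gives $\Lambda^{-1/2}$ times a function $w_1$ whose $\partial_\Lambda^\iota$ derivatives produce factors $\xi^2/\Lambda^2$. Those factors are controlled by $\Lambda^{-\iota}$ using the $u$-regularity of $\tilde w$. Derivatives in $\mathbf z$ pass through the change of variables and cost only the seminorms of $\phi$ and $y_*$.

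\textbf{Main obstacle.} The delicate point is the bookkeeping showing that $\partial_\Lambda^\iota w_1=O(\Lambda^{-\iota})$ while using only $A$ derivatives. It is not deep; it is handled by the asymptotic expansion of the Gaussian pairing truncated at finite order.
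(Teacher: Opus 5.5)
\textbf{Gaps in the proposal.} The paper gives no proof of this lemma. It is quoted from \cite[Theorem~3.6]{CSST2026} and used only in Proposition~\ref{prop:bp52-stationary-phase}. Your Morse-normal-form plus Fresnel outline is the standard route. However, two of your claims are false, and in both cases the obstruction lies in the statement as written, not in bookkeeping.

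\textbf{First gap: the lower bounds in Step~1.} You claim $|g|\gtrsim 1$ on $\operatorname{supp} w$ and $|\partial_y\phi|\gtrsim 1$ away from $y_*$. These do not follow from uniqueness of the critical point, $|\partial_y^2\phi(y_*;\mathbf z)|\ge c_0$, and seminorm bounds. Seminorm bounds are upper bounds and cannot produce lower bounds. If $Y$ is disconnected, $g$ can even vanish. Even for an interval, $\partial_y\phi$ may degenerate at $\partial Y$.

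Here is a concrete counterexample. Take $Y=(0,1)$ and $\partial_y\phi(y)=(y-\tfrac12)e^{-1/(1-y)}$. For large $\Lambda_0$ set $w(y)=(\log\Lambda_0)^{-A}\psi\bigl((1-y)\log\Lambda_0\bigr)$, with a fixed bump $\psi\ge 0$ supported in $[\tfrac12,1]$, times a fixed cutoff in $\mathbf z$.
\begin{itemize}
\item The hypotheses hold uniformly in $\Lambda_0$.
\item On $\operatorname{supp} w$ we have $\Lambda_0|\partial_y\phi|\le\tfrac12$, so the phase barely moves.
\item Hence $|\int e(\Lambda_0\phi)w\,dy|\gtrsim(\log\Lambda_0)^{-A-1}$, which is far larger than $\Lambda_0^{-1/2}$.
\end{itemize}
You must add a hypothesis such as $|\partial_y\phi(y;\mathbf z)|\ge c_0|y-y_*(\mathbf z)|$ on $Y\times Z$. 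This is exactly what the application supplies through \eqref{uniformnondeg}: there $\partial_y\Phi_{\mathbf z}=-yb$ with $|b|\asymp 1$ on the support of $\chi\widetilde a$.

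\textbf{Second gap: the derivative count.} You claim everything closes ``using only $A$ derivatives''. This is false.
\begin{itemize}
\item Your Plancherel step uses $|\partial_\Lambda^\iota e(\mp\xi^2/(4\Lambda))|\lesssim\Lambda^{-\iota}\sum_{1\le j\le\iota}(\xi^2/\Lambda)^j$. It therefore needs $\int(1+|\xi|)^{2\iota}|\widehat{\tilde w}(\xi)|\,d\xi<\infty$, which costs about $2\iota+2$ derivatives of $\tilde w$.
\item Your far piece already shows the loss: $A$ integrations by parts give $\Lambda^{1/2-A}$ for $\partial_\Lambda^A w_1$, not $\Lambda^{-A}$.
\end{itemize}
This cannot be repaired. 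Take $\phi=y^2$ on $(-5,5)$ and $w(y)=\Lambda_0^{-A}\cos(2\pi\Lambda_0 y^2)\psi(y)$, with $\psi\ge0$ supported in $[1,2]$, times a cutoff in $\mathbf z$. Then $\|\partial_y^N w\|_\infty\lesssim 1$ for $N\le A$ uniformly in $\Lambda_0$. Yet
$\partial_\Lambda^A w_1(\Lambda_0)=\tfrac12(2\pi i)^A\Lambda_0^{1/2-A}\bigl(\int y^{2A}\psi\,dy+O(\Lambda_0^{-1})\bigr)$, which violates the claimed $O(\Lambda_0^{-A})$ bound.

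\textbf{What is provable, and what the paper needs.} The provable statement has a loss of derivatives. Hypotheses up to order $A$ yield the conclusion only for $\iota,|\boldsymbol\beta|\le A'$, with $A$ sufficiently large in terms of $A'$. That weaker version is all the paper needs. In Proposition~\ref{prop:bp52-stationary-phase} the amplitude $\chi\widetilde a$ is independent of $\Lambda$, and all its derivatives are bounded by \eqref{boundtildea}. With the extra nondegeneracy hypothesis and a derivative loss allowed, your outline becomes a correct proof.
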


\section{Fourier decomposition in derivative coordinates}
\renewcommand{\theequation}{\thesection.\arabic{equation}}
\label{app:derivative-fourier}

We prove Propositions~\ref{prop:n-fourier-input} and
\ref{prop:n-overlap-input} with the derivative conditions in
\eqref{eq:n-plank}. For Proposition~\ref{prop:n-fourier-input}, we first
record the pointwise bound obtained from the Fourier decomposition of
Gan--Maldague--Oh~\cite{GMO2025}. We then reduce the proof to covering
its Frenet boxes by the boxes in \eqref{eq:n-plank}. Proposition~\ref{prop:n-overlap-input} is proved by a direct argument.
We use the notation of Section~\ref{sec:fourier-decay-n}.

\subsection{Reduction to a covering of Frenet boxes}

Choose $\eta>0$ sufficiently small in terms of $\beta$ and $n$, with
$2\eta<\beta$. By a partition of unity, we may assume that $\chi$ is supported
in a sufficiently small fixed neighborhood of some $t_*$. Recall that $\widetilde\gamma(t)=(1,\gamma(t))\in\mathbb R^{n+1}$.
Nondegeneracy makes the matrix
\[
B=\bigl(\widetilde\gamma(t_*),\widetilde\gamma'(t_*),\ldots,
\widetilde\gamma^{(n)}(t_*)\bigr)
\]
invertible. Under the changes of coordinates
$\widetilde\gamma\mapsto B^{-1}\widetilde\gamma$ and
$\xi\mapsto B^T\xi$, we have
\begin{equation}\label{eq:app-affine-phase}
\begin{aligned}
B^{-1}\widetilde\gamma^{(k)}(t_*)&=\mathbf e_k
\quad(0\leq k\leq n),\\
(B^T\xi)\cdot\bigl(B^{-1}\widetilde\gamma^{(k)}(t)\bigr)
&=P_\xi^{(k)}(t).
\end{aligned}
\end{equation}
Here $\mathbf e_0,\ldots,\mathbf e_n$ denote the standard basis
of $\mathbb R^{n+1}$.

Apply Gram--Schmidt to the ordered vectors
\[
B^{-1}\widetilde\gamma(t),\quad B^{-1}\widetilde\gamma'(t),\quad\ldots,
\quad B^{-1}\widetilde\gamma^{(n)}(t)
\]
and denote the resulting orthonormal frame by $f_0(t),\ldots,f_n(t)$.
Following \cite[Section~3.3]{GMO2025}, choose scales satisfying
\[
R^{-1/2}=\Delta_1<\cdots<\Delta_{M+1}=R^{-1/n},
\qquad \Delta_{m+1}\leq R^\eta\Delta_m\quad(1\leq m\leq M),
\]
where $M$ depends only on $\eta$ and $n$.
We group the admissible scale vectors $\boldsymbol\lambda$
according to the size of $\lambda_{\mathrm{prod}}$.
As in \cite[Definition~3.15]{GMO2025}, define
\[
\begin{aligned}
\mathcal D_1&:=\{\boldsymbol\lambda\text{ admissible}:
\Delta_1\leq\lambda_{\mathrm{prod}}\leq\Delta_2\},\\
\mathcal D_m&:=\{\boldsymbol\lambda\text{ admissible}:
\Delta_m<\lambda_{\mathrm{prod}}\leq\Delta_{m+1}\}
\quad(2\leq m\leq M).
\end{aligned}
\]
Thus each admissible $\boldsymbol\lambda$ belongs to exactly one
$\mathcal D_m$, and
\[
\boldsymbol\lambda\in\mathcal D_m
\quad\Longrightarrow\quad
\Delta_m\leq\lambda_{\mathrm{prod}}\leq R^\eta\Delta_m.
\]
Let $\mathcal T_m$ denote the centers of the parameter intervals of
length a fixed multiple of $\Delta_m$ in that construction.
The signed indices are
$\boldsymbol\nu\in\{-1,0,1\}^{n-1}$ with
$\nu_i=0$ if and only if $\lambda_i=1$, as in
\cite[Definition~3.1]{GMO2025}.

Fix a sufficiently large constant $C\geq1$, independent of
$R,Q,\delta,m,a$ and $\boldsymbol\lambda$.
For $a\in\mathcal T_m$ and $\boldsymbol\lambda\in\mathcal D_m$, define
\[
\mathcal P_{a,\boldsymbol\lambda,\boldsymbol\nu}
:=\left\{\zeta\in\mathbb R^{n+1}:
\begin{aligned}
|\zeta\cdot f_0(a)|&\leq CR^{2\eta},\\
|\zeta\cdot f_1(a)|&\leq CR^\eta\Delta_m^{-1},\\
|\zeta\cdot f_k(a)|&\leq CQL_k\quad(2\leq k\leq n),\\
\nu_{n+1-k}\,\zeta\cdot f_k(a)&\geq C^{-1}QL_k
\quad(\nu_{n+1-k}\ne0)
\end{aligned}
\right\}.
\]
The last condition is imposed for $2\leq k\leq n$ with
$\nu_{n+1-k}\ne0$; when $\nu_{n+1-k}=0$, only the upper bound is imposed.
The dependence on $m$ is determined by $\boldsymbol\lambda$.
These are fixed enlargements of the signed thickened planks in
\cite[Definition~3.17]{GMO2025}, chosen to contain the cutoff supports.
Indeed, coordinate $i$ in its reversed Frenet frame is $f_{n+1-i}$ here,
and its width is
\[
R\prod_{r=1}^{i-1}\lambda_r^{i-r}=QL_{n+1-i}.
\]

The following is a consequence of
\cite[Theorem~3.22 and Lemma~3.20]{GMO2025}, written in our notation.

\begin{lemma}\label{lem:app-gmo-pointwise}
With the definitions above, for every $L>0$, $\e>0$ and $|\xi|\sim R$,
\begin{equation}\label{eq:app-gmo-pointwise}
|\widehat\mu_{\omega,\chi}(\xi)|
\lesssim_{\eta,L,\e}R^{\e}
\sum_m\Delta_m\sum_{a\in\mathcal T_m}
\sum_{\boldsymbol\lambda\in\mathcal D_m,\,\boldsymbol\nu}
\mathbf1_{\mathcal P_{a,\boldsymbol\lambda,\boldsymbol\nu}}(B^T\xi)
+R^{-L}.
\end{equation}
\end{lemma}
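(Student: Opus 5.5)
The plan is to deduce Lemma~\ref{lem:app-gmo-pointwise} directly from \cite[Theorem~3.22 and Lemma~3.20]{GMO2025}, applied to the transformed cone. The first step is to reduce to the model geometry. By \eqref{eq:app-affine-phase}, $\widehat\mu_{\omega,\chi}(\xi)$ equals the Fourier transform of the pushforward measure on the cone generated by $B^{-1}\widetilde\gamma$, evaluated at $B^T\xi$. Since $\chi$ is supported near $t_*$ and $B^{-1}\widetilde\gamma^{(k)}(t_*)=\mathbf e_k$, this cone is a small smooth perturbation of the standard conical moment curve. The constants in \cite{GMO2025} depend only on finitely many $C^N$ norms and on lower bounds for the torsion, so they are uniform here. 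Also $|B^T\xi|\sim R$ whenever $|\xi|\sim R$, with constants depending only on the curve.

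The second step is to invoke the GMO decomposition. In \cite[Definition~3.17]{GMO2025} the frequency space $\{|\zeta|\sim R\}$ is covered by a smooth partition adapted to signed thickened planks indexed by $(m,a,\boldsymbol\lambda,\boldsymbol\nu)$, where $\boldsymbol\lambda\in\mathcal D_m$ and $a\in\mathcal T_m$. Their Lemma~3.20 and Theorem~3.22 then give two facts. On the support of the cutoff for the piece $(m,a,\boldsymbol\lambda,\boldsymbol\nu)$, the Fourier transform has size $\lesssim R^{\e}\Delta_m$. Outside all pieces, it is $O(R^{-L})$.

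Summing the cutoffs and bounding each one by the indicator of its support yields \eqref{eq:app-gmo-pointwise}, provided each support lies in $\mathcal P_{a,\boldsymbol\lambda,\boldsymbol\nu}$. This containment is the one genuine check. In their reversed Frenet frame, coordinate $i$ corresponds to our $f_{n+1-i}(a)$. The thickened widths there are $R^{2\eta}$ for the shortest direction and $R^\eta\Delta_m^{-1}$ for the next one. For $i\le n-1$ the widths are $R\prod_{r<i}\lambda_r^{i-r}=QL_{n+1-i}$, by \eqref{eq:n-derivative-scales} and the normalization \eqref{eq:n-scale-normalization}. The signed lower bounds $\nu_i\,\zeta\cdot f_{n+1-i}\gtrsim$ width appear exactly when $\lambda_i<1$. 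Choosing $C$ large absorbs the fixed constants coming from the cutoff supports, so each support lies in $\mathcal P_{a,\boldsymbol\lambda,\boldsymbol\nu}$.

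I expect the main obstacle to be bookkeeping rather than new analysis. The pieces must be matched to the present indexing: the grouping into $\mathcal D_m$ by $\lambda_{\mathrm{prod}}$ must agree with GMO's grouping, and the parameter $a$ must range over $\Delta_m$-intervals rather than $\lambda_{\mathrm{prod}}$-intervals, which is harmless because $\Delta_m\le\lambda_{\mathrm{prod}}\le R^\eta\Delta_m$. One must also verify that the $R^{O(\eta)}$ losses from GMO's thickening are absorbed into $R^\e$. The count of scale tuples is $O((\log R)^{n})$, which is also absorbed into $R^\e$.
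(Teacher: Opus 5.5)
Your proposal is correct and is essentially the paper's argument. You pass to $\zeta=B^T\xi$, so that $f_0,\dots,f_n$ is the Frenet frame of $B^{-1}\widetilde\gamma$. You then apply the decomposition of \cite[Theorem~3.22]{GMO2025}, bound its cutoffs by signed plank indicators via \cite[Lemma~3.20]{GMO2025}, and check that $\mathcal P_{a,\boldsymbol\lambda,\boldsymbol\nu}$ contains those thickened planks.

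Two small points of precision. First, the factor $\Delta_m$ comes from the trivial mass bound $|\widehat{\mu}_{I_m}|\lesssim\Delta_m$ for each localized piece (\cite[(3.45)]{GMO2025}). It is not a size bound on the full transform over a cutoff's support, as your wording suggests. Second, the paper follows GMO in pigeonholing to a single $m$ and $(\boldsymbol\lambda,\boldsymbol\nu)$ and then enlarges back to the full sum, which is equivalent to your direct summation over all pieces.
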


\begin{proof}
Write $\zeta=B^T\xi$. Let $\mu_{\om,\chi}^B$ be the measure defined as in \eqref{defmu} with $\wt\ga$ replaced by $B^{-1}\wt\ga$. 
Write the Fourier transform 
\[
\begin{aligned}
\widehat\mu_{\omega,\chi}(\xi)
&=\iint \omega(s)\chi(t)
 e\bigl(-s\,\xi\cdot\widetilde\gamma(t)\bigr)\,dt\,ds\\
&=\iint \omega(s)\chi(t)
 e\bigl(-s\,(B^T\xi)\cdot(B^{-1}\widetilde\gamma(t))\bigr)\,dt\,ds\\
 &=\wh{\mu_{\om,\chi}^B}(\zeta).
\end{aligned}
\]
The frame
$f_0,\ldots,f_n$ defined above is the Frenet frame for $B^{-1}\wt\ga(t)$. We may
therefore apply the argument in \cite[Section 3.4]{GMO2025} to obtain an estimate for $\wh{\mu_{\om,\chi}^B}(\zeta)$.
We briefly explain it below.

We refer to the argument from (3.44) to (3.47) in \cite[Section 3.4]{GMO2025}, it was proved that for any $\e>0$,
$|\wh \mu_\Ga(\xi)|$ with $|\xi|\sim R$ can be controlled by 
\[R^\e\sum_{I_m\in \bI_m}|\wh\mu_{I_m}(\xi)|\phi_{P[\vec\de,\vec\nu](s_{I_m})}(\xi)\lesssim R^\e\sum_{I_m\in \bI_m}\De_m \Id_{P[\vec\de,\vec\nu](s_{I_m})}(\xi)\]
for some choice of $m$ and $(\vec\de,\vec\nu)$. We briefly explain the proof. By \cite[Theorem 3.22]{GMO2025}, we can bound $\wh \mu_\Ga$ by
\[ \sum_{I_1<\dots<I_M} \wh\mu_{I_1,\dots,I_M}\bigg(\sum_{k}\psi_{I_k,I_{k+1}}+\phi_{I_M}\bigg). \]
Then recall \cite[Lemma 3.20 (1),(3)]{GMO2025} that $\psi_{I_kI_{k+1}}$ and $\phi_{I_M}$ can be bounded by the sum of indicator functions of $(\vec\de,\vec\nu)$-planks. We remark that there is a single $m$ and $(\vec\de,\vec\nu)$ in (3.47) of \cite{GMO2025} because a pigeonhole argument was applied. Finally, we use (3.45) in \cite{GMO2025} to see that $|\wh\mu_{I_m}|\lesssim \De_m$, and note that $\phi_{P[\vec\de,\vec\nu](s_{I_m})}$ is a bump function of $P[\vec\de,\vec\nu](s_{I_m})$.

We translate this estimate to the notation in this paper. $\mu_\Ga$ becomes $\mu_{\om,\chi}^B$; $P[\vec\delta,\vec\nu](s_{I_m})$ becomes $\mathcal P_{a,\bla,\bnu}$.
Also, we include all the $\mathcal P_{a,\bla,\bnu}$ in the upper bound.
After translating to the notation in this paper, we obtain
\eqref{eq:app-gmo-pointwise}.
\end{proof}

Set $A=\{i:\nu_i\ne0\}$. The definition above gives the required lower bounds at every order $k$ with $n+1-k\in A$, and
$i\notin A\Rightarrow\lambda_i=1$, as in \eqref{eq:n-admissibility}.
Discarding the signs requires only finitely many choices.

\begin{lemma}\label{lem:app-frenet-covering}
For each Frenet box $\mathcal P_{a,\boldsymbol\lambda,\boldsymbol\nu}$
above, set $A=\{i:\nu_i\ne0\}$. Then, for $|\xi|\sim R$,
\[
\mathbf1_{\mathcal P_{a,\boldsymbol\lambda,\boldsymbol\nu}}(B^T\xi)
\leq\sum_{\substack{\tau\in\mathcal I(\lambda_{\mathrm{prod}})\\
|t_\tau-a|\leq C_n\lambda_{\mathrm{prod}}}}
\mathbf1_{\Pi_{\tau,\boldsymbol\lambda,A}}(\xi).
\]
\end{lemma}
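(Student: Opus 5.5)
Fix a Frenet box $\mathcal P_{a,\boldsymbol\lambda,\boldsymbol\nu}$ and a point $\xi$ with $|\xi|\sim R$ and $\zeta=B^T\xi\in\mathcal P_{a,\boldsymbol\lambda,\boldsymbol\nu}$. The task is to exhibit an interval $\tau\in\mathcal I(\lambda_{\mathrm{prod}})$ with $|t_\tau-a|\le C_n\lambda_{\mathrm{prod}}$ and $\xi\in\Pi_{\tau,\boldsymbol\lambda,A}$.

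\textbf{Step 1: derivative bounds at $a$.} By \eqref{eq:app-affine-phase},
\[
P_\xi^{(k)}(t)=\zeta\cdot B^{-1}\widetilde\gamma^{(k)}(t).
\]
Since $f_0(a),\dots,f_n(a)$ is obtained from $B^{-1}\widetilde\gamma^{(j)}(a)$ by Gram--Schmidt, the vector $B^{-1}\widetilde\gamma^{(k)}(a)$ is a combination of $f_0(a),\dots,f_k(a)$. Its coefficient on $f_k(a)$ is $\asymp1$, and all coefficients are $O(1)$, because $\chi$ is localized near $t_*$ and the frame there is close to the standard basis.

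Expanding gives
\[
P_\xi^{(k)}(a)=\sum_{i\le k}c_{ki}(a)\,\zeta\cdot f_i(a).
\]
The Frenet bounds together with monotonicity $QL_0\le QL_1\le\cdots$ then give $|P_\xi^{(k)}(a)|\lesssim QL_k$ for $k\ge2$. Two facts enter here:
\begin{itemize}
\item $\zeta\cdot f_0$ and $\zeta\cdot f_1$ are bounded by $CR^{2\eta}\le QL_2$ and $R^\eta\Delta_m^{-1}\lesssim R^{2\eta}\lambda_{\mathrm{prod}}^{-1}$;
\item one has $QL_1=R\lambda_{\mathrm{prod}}^2/\lambda_{\mathrm{prod}}\cdot(\ldots)$. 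Concretely, $QL_1=Q\delta^{-1}\prod_{i\le n-1}\lambda_i^{n-i}=\lambda_{\mathrm{prod}}^{-1}$ by \eqref{eq:n-scale-normalization}, and $QL_0=1$.
\end{itemize}
For $k=0,1$ we obtain $|P^{(k)}_\xi(a)|\lesssim R^{2\eta}QL_k\le R^\beta QL_k$, using $2\eta<\beta$.

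The lower bounds for $n+1-k\in A$ follow in the same way. The leading term is $\gtrsim QL_k$, and the lower-order terms are $\lesssim QL_{k-1}+R^{2\eta}$. When $\lambda_{n+1-k}$ is small, $QL_{k-1}\le\lambda_{n+1-k}QL_k$ is negligible. When $\lambda_{n+1-k}\sim1$, I would absorb the lower-order terms by enlarging the lower-bound constant. This is a point to check with care, possibly using the $C^{-1}$ versus $C$ slack in the box definition.

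\textbf{Step 2: moving from $a$ to $t_\tau$.} Choose $\tau$ with $|t_\tau-a|\le\lambda_{\mathrm{prod}}$. Taylor expansion gives
\[
P^{(k)}_\xi(t_\tau)=\sum_{m\ge0}\frac{(t_\tau-a)^m}{m!}P^{(k+m)}_\xi(a)+O\bigl(R\lambda_{\mathrm{prod}}^{n+1-k}\bigr),
\]
so each term is bounded by $\lambda_{\mathrm{prod}}^mQL_{k+m}$. The needed comparison is
\[
\lambda_{\mathrm{prod}}^m L_{k+m}\le L_k .
\]
I would prove it from log-concavity \eqref{convexity}: $L_{j+1}/L_j=\prod_{i\le n-j}\lambda_i^{-1}\cdot(\cdots)$, and in fact $L_j/L_{j+1}=\prod_{i=1}^{n-j}\lambda_i\ge\lambda_{\mathrm{prod}}$. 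The remainder term is handled because $QL_n=R$.

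The upper bounds therefore persist at $t_\tau$ up to constants. The lower bounds persist if the expansion is restricted to displacements of size $c\lambda_{\mathrm{prod}}$ with $c$ small. Since $C_n$ is arbitrary, I would instead take $t_\tau$ to be the center of the interval containing $a$, so that $|t_\tau-a|\le\lambda_{\mathrm{prod}}/2$.

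\textbf{Main obstacle.} The main obstacle is preserving the lower bounds $|P^{(k)}_\xi(t_\tau)|\gtrsim QL_k$. The perturbation $\sum_{m\ge1}\lambda_{\mathrm{prod}}^mQL_{k+m}$ is only $O(QL_k)$, not small. To handle this I would use the sharper ratio $L_k/L_{k+1}=\prod_{i\le n-k}\lambda_i$, which is smaller than $\lambda_{\mathrm{prod}}$ only when some $\lambda_i$ with $i>n-k$ is small. If needed, I would subdivide the $\lambda_{\mathrm{prod}}$-interval into $O(1)$ pieces, which is allowed by the sum over several $\tau$ with $|t_\tau-a|\le C_n\lambda_{\mathrm{prod}}$. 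Alternatively, I would fall back on the implicit constants in the definition \eqref{eq:n-plank}, where "$\gtrsim$" allows a small constant.
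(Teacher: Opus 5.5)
Your two-step structure matches the paper's: a triangular Gram--Schmidt comparison at $a$, then Taylor recentering using $\lambda_{\mathrm{prod}}^mL_{k+m}\le L_k$ and $R\lambda_{\mathrm{prod}}^{n+1-k}\le\lambda_{\mathrm{prod}}QL_k$. Your upper bounds, including those at orders $0$ and $1$, go through. The lower bounds do not, and there are two gaps.

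\textbf{Step~1.} Knowing only that the off-diagonal coefficients are $O(1)$ is not enough. Membership $i\in A$ only forces $\lambda_i\le 1/2$, so $L_{k-1}/L_k=\prod_{i\le n+1-k}\lambda_i$ can equal $1/2$. Then the lower-order term $c_{k,k-1}\,\zeta\cdot f_{k-1}(a)$ can be comparable to $C\,QL_k$ and opposite in sign to $\zeta\cdot f_k(a)$, which is only guaranteed to exceed $C^{-1}QL_k$. The $C$ versus $C^{-1}$ slack therefore works against you. What you need is that the coefficients are $\delta_{ki}+O(\varepsilon_0)$. Since $B^{-1}\widetilde\gamma^{(k)}(t_*)=\mathbf e_k$, the frame at $t_*$ is the standard basis, so after localizing $\chi$ near $t_*$ the triangular change of basis is $\varepsilon_0$-close to the identity. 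This gives $|P_\xi^{(k)}(a)-\zeta\cdot f_k(a)|\lesssim\varepsilon_0 QL_k$ for $k\ge2$, and choosing $\varepsilon_0$ small after $C$ transfers the lower bounds. You note that the frame is close to the standard basis, but you never extract this smallness from it.

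\textbf{Step~2.} You correctly locate the obstacle, but none of your remedies removes it.
\begin{itemize}
\item The sharper ratio does not help enough: $\lambda_{\mathrm{prod}}L_{k+1}/L_k=\prod_{i=n-k+1}^{n-1}\lambda_i$ can be $1/2$, so it gains only a fixed factor, which cannot close the gap between the upper-bound and lower-bound constants.
\item The intervals of $\mathcal I(\lambda_{\mathrm{prod}})$ are fixed, so nothing can be subdivided, and the nearest center may lie at distance $\lambda_{\mathrm{prod}}/2$ from $a$.
\item No choice of implicit constant in $\Pi_{\tau,\boldsymbol\lambda,A}$ helps, because the normalized Taylor polynomial
\[
p_k(u)=\sum_{r=0}^{n-k}\frac{\lambda_{\mathrm{prod}}^r P_\xi^{(k+r)}(a)}{r!\,QL_k}\,u^r
\]
can genuinely vanish at $u=(t_\tau-a)/\lambda_{\mathrm{prod}}$.
\end{itemize}
The missing idea is to combine this polynomial structure with the room $|t_\tau-a|\le C_n\lambda_{\mathrm{prod}}$ allowed in the statement. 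Since $p_k$ has degree at most $n-k$ and $|p_k(0)|\gtrsim1$, Lagrange interpolation shows that $|p_k|<c$ (for a fixed small $c$) at no more than $n-k$ points of any bounded, $1$-separated family. Among $n^2+1$ consecutive centers near $a$, at most $\sum_{k=2}^n(n-k)<n^2+1$ are bad for some $k$ with $n+1-k\in A$. Hence one center satisfies every lower bound simultaneously, up to the $O(\lambda_{\mathrm{prod}})$ Taylor remainder. This simultaneity across all active orders is exactly what is required, since a single $\tau$ must serve them all.
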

Assuming Lemma~\ref{lem:app-frenet-covering}, let us finish the proof of
Proposition~\ref{prop:n-fourier-input}. For a fixed
$\tau$, the separation of $\mathcal T_m$ gives
\[
\Delta_m\#\{a\in\mathcal T_m:|t_\tau-a|\leq C_n\lambda_{\mathrm{prod}}\}
\lesssim\Delta_m\left(1+\frac{\lambda_{\mathrm{prod}}}{\Delta_m}\right)
\lesssim\lambda_{\mathrm{prod}}.
\]
Substituting the covering in Lemma~\ref{lem:app-frenet-covering} into
the estimate of Lemma~\ref{lem:app-gmo-pointwise}, we obtain
\[
|\widehat\mu_{\omega,\chi}(\xi)|
\lesssim_{\beta,L,\gamma,\omega,\chi}
R^{O(\beta)}
\sum_{(\boldsymbol\lambda,A)\ \mathrm{admissible}}
\lambda_{\mathrm{prod}}\sum_{\tau\in\mathcal I(\lambda_{\mathrm{prod}})}
\mathbf1_{\Pi_{\tau,\boldsymbol\lambda,A}}(\xi)+R^{-L}.
\]
This is the desired bound. It remains to prove Lemma~\ref{lem:app-frenet-covering}.
We first compare Frenet coordinates with the derivatives of $P_\xi$
at $a$, and then pass from $a$ to a prescribed center $t_\tau$.

\bigskip

\subsection{Proof of Lemma~\ref{lem:app-frenet-covering}}

\noindent\emph{Step 1: Comparison of coordinates.}

Fix a Frenet box $\mathcal P_{a,\boldsymbol\lambda,\boldsymbol\nu}$
at level $m$ and suppose that
$B^T\xi\in\mathcal P_{a,\boldsymbol\lambda,\boldsymbol\nu}$.
We will prove the bounds in \eqref{eq:app-annular-transfer}
and \eqref{eq:app-low-transfer}, together with
\[
|P_\xi^{(k)}(a)|\lesssim QL_k\qquad(2\leq k\leq n).
\]
These estimates express the Frenet box conditions in terms of the
derivatives of $P_\xi$ at $a$. 

To prove these estimates, recall the scale identities from
Section~\ref{sec:fourier-decay-n}:
\begin{equation}\label{eq:app-scale-identities}
\begin{aligned}
&QL_0=1,\qquad QL_1=\lambda_{\mathrm{prod}}^{-1},\qquad
QL_2=\lambda_{\mathrm{prod}}^{-2},\qquad QL_n=R,\\
&R^{-1/2}\leq\lambda_{\mathrm{prod}}\leq R^{-1/n}.
\end{aligned}
\end{equation}

On a sufficiently small fixed neighborhood of $t_*$,
\begin{equation}\label{eq:app-triangular-frame}
f_k(t)=\sum_{r=0}^k c_{kr}(t)B^{-1}\widetilde\gamma^{(r)}(t),
\qquad c_{kr}(t)=\delta_{kr}+O(\varepsilon_0),
\end{equation}
where $\varepsilon_0>0$ can be chosen as small as needed.

Use the defining bounds of the Frenet box and
$\Delta_m\leq\lambda_{\mathrm{prod}}\leq R^\eta\Delta_m$.
For large $R$,
\[
\begin{aligned}
\frac{R^{2\eta}}{QL_2}
&=R^{2\eta}\lambda_{\mathrm{prod}}^2\lesssim1,\\
\frac{R^\eta\Delta_m^{-1}}{QL_2}
&=R^\eta\frac{\lambda_{\mathrm{prod}}}{\Delta_m}
\lambda_{\mathrm{prod}}
\leq R^{2\eta}\lambda_{\mathrm{prod}}\lesssim1.
\end{aligned}
\]
Also note that $L_2\leq\cdots\leq L_n$. Taking the inverse of the 
system
\[
(B^T\xi)\cdot f_k(a)=\sum_{r=0}^k c_{kr}(a)P_\xi^{(r)}(a)
\]
yields
\begin{equation}\label{system}
    P_\xi^{(k)}(a)=\sum_{r=0}^k c'_{kr}(a) (B^T\xi)\cdot f_r(a), \qquad c'_{kr}(t)=\de_{kr}+O(\varepsilon_o). 
\end{equation} 
Together with the previous bounds, it implies
\[
\bigl|P_\xi^{(k)}(a)-(B^T\xi)\cdot f_k(a)\bigr|
\lesssim\varepsilon_0 QL_k\quad(k\geq2)
\]
Thus, after choosing
$\varepsilon_0$ small,
\begin{equation}\label{eq:app-annular-transfer}
|(B^T\xi)\cdot f_k(a)|\gtrsim QL_k
\ \Longrightarrow\ |P_\xi^{(k)}(a)|\gtrsim QL_k
\quad(k\geq2).
\end{equation}
The same comparison gives the claimed upper bounds for $2\leq k\leq n$.
For $k=0,1$, the system \eqref{system} gives
\begin{equation}\label{eq:app-low-transfer}
|P_\xi(a)|\lesssim R^{2\eta},\qquad
|P_\xi'(a)|\lesssim R^{2\eta}\lambda_{\mathrm{prod}}^{-1}.
\end{equation}

\medskip
\noindent\emph{Step 2: Choice of the interval center.}

Fix a Frenet box at level $m$ and a frequency
$B^T\xi\in\mathcal P_{a,\boldsymbol\lambda,\boldsymbol\nu}$.
The preceding comparison gives
\[
\begin{aligned}
|P_\xi^{(k)}(a)|&\lesssim QL_k\quad(2\leq k\leq n),\\
|P_\xi^{(k)}(a)|&\gtrsim QL_k\quad(n+1-k\in A),\\
|P_\xi(a)|+\lambda_{\mathrm{prod}}|P_\xi'(a)|&\lesssim R^{2\eta}.
\end{aligned}
\]
We must find $\tau\in\mathcal I(\lambda_{\mathrm{prod}})$ with
$|t_\tau-a|\lesssim_n\lambda_{\mathrm{prod}}$ for which these bounds
still hold with $a$ replaced by $t_\tau$. 
The idea is similar to Lemma \ref{lemcompare}: if we perturb the parameter of the plank within $\laprod$, then the two planks are comparable. We provide detail below.

We use the scale inequalities
\begin{equation}\label{eq:app-scale-taylor}
\begin{aligned}
\lambda_{\mathrm{prod}}^rL_{k+r}&\leq L_k
\quad(0\leq k\leq k+r\leq n),\\
\frac{R\lambda_{\mathrm{prod}}^{n+1-k}}{QL_k}
&\leq\lambda_{\mathrm{prod}}\quad(0\leq k\leq n).
\end{aligned}
\end{equation}
Indeed,
\[
\lambda_{\mathrm{prod}}\frac{L_{k+1}}{L_k}
=\prod_{i=n-k+1}^{n-1}\lambda_i\leq1,
\qquad
\frac{R\lambda_{\mathrm{prod}}^{n+1-k}}{QL_k}
=\lambda_{\mathrm{prod}}\frac{\lambda_{\mathrm{prod}}^{n-k}L_n}{L_k}
\leq\lambda_{\mathrm{prod}},
\]
with empty products equal to one.

For each $k$ with $n+1-k\in A$, write Taylor's formula as
\begin{equation}\label{eq:app-recenter-polynomial}
\begin{aligned}
\frac{P_\xi^{(k)}(a+\lambda_{\mathrm{prod}}u)}{QL_k}
&=p_k(u)+O_\gamma(\lambda_{\mathrm{prod}}),\\
p_k(u)&:=\sum_{r=0}^{n-k}
\frac{\lambda_{\mathrm{prod}}^rP_\xi^{(k+r)}(a)}{r!QL_k}u^r.
\end{aligned}
\end{equation}
The remainder is uniform for bounded $u$, and
\[
\deg p_k\leq n-k,\qquad
|p_k(0)|=\frac{|P_\xi^{(k)}(a)|}{QL_k}\gtrsim1.
\]
Choose $n^2+1$ consecutive centers $t_{\tau_j}$ near $a$ and set
\[
u_j:=\frac{t_{\tau_j}-a}{\lambda_{\mathrm{prod}}},\qquad
|u_j|\lesssim_n1,\qquad |u_i-u_j|\gtrsim1\quad(i\ne j).
\]
Near an endpoint, choose the centers on the available side.
For any $n-k+1$ of these centers, Lagrange interpolation gives
\[
\begin{aligned}
p_k(0)
&=\sum_{\ell=0}^{n-k}p_k(u_{j_\ell})
\prod_{\substack{0\leq h\leq n-k\\h\ne\ell}}
\frac{-u_{j_h}}{u_{j_\ell}-u_{j_h}},\\
1&\lesssim |p_k(0)|\lesssim_n
\max_{0\leq\ell\leq n-k}|p_k(u_{j_\ell})|.
\end{aligned}
\]
Thus, for a sufficiently small fixed $c>0$,
\[
\#\{1\leq j\leq n^2+1:|p_k(u_j)|<c\}\leq n-k.
\]
Taking the union over the required derivative orders, we obtain
\[
\begin{aligned}
&\#\{1\leq j\leq n^2+1:
|p_k(u_j)|<c\text{ for some }k\text{ with }n+1-k\in A\}\\
&\qquad\leq\sum_{k=2}^n(n-k)
=\frac{(n-1)(n-2)}2<n^2+1.
\end{aligned}
\]
Hence one of the centers, denoted by $t_\tau$, satisfies the polynomial
lower bounds simultaneously. For sufficiently large $R$, \eqref{eq:app-recenter-polynomial} gives
\[
\begin{aligned}
\frac{|P_\xi^{(k)}(t_\tau)|}{QL_k}
&\geq\left|p_k\!\left(\frac{t_\tau-a}{\lambda_{\mathrm{prod}}}\right)\right|
-C_\gamma\lambda_{\mathrm{prod}}\geq c-C_\gamma\lambda_{\mathrm{prod}}\geq\frac c2
\qquad(n+1-k\in A).
\end{aligned}
\]
Taylor's formula and \eqref{eq:app-scale-taylor} also give
\[
\begin{aligned}
|P_\xi^{(k)}(t_\tau)|
&\lesssim\sum_{r=0}^{n-k}\lambda_{\mathrm{prod}}^r
|P_\xi^{(k+r)}(a)|+R\lambda_{\mathrm{prod}}^{n+1-k}\lesssim QL_k\qquad(2\leq k\leq n).
\end{aligned}
\]
For the two lowest orders, the same expansion yields
\[
\begin{aligned}
|P_\xi(t_\tau)|+\lambda_{\mathrm{prod}}|P_\xi'(t_\tau)|
&\lesssim |P_\xi(a)|+\lambda_{\mathrm{prod}}|P_\xi'(a)|+\sum_{k=2}^n\lambda_{\mathrm{prod}}^k|P_\xi^{(k)}(a)|
+R\lambda_{\mathrm{prod}}^{n+1}\\
&\lesssim R^{2\eta}+1\lesssim R^{2\eta},
\end{aligned}
\]
where we used $\lambda_{\mathrm{prod}}^kQL_k\leq QL_0=1$.
Since $2\eta<\beta$ and $QL_1=\lambda_{\mathrm{prod}}^{-1}$,
these are the bounds required in \eqref{eq:n-plank}, so
$\xi\in\Pi_{\tau,\boldsymbol\lambda,A}$.
The case of bounded $R$ is absorbed into the implicit constants.
This proves Lemma~\ref{lem:app-frenet-covering} and completes the proof of
Proposition~\ref{prop:n-fourier-input}.

\bigskip

\subsection{Proof of Proposition~\ref{prop:n-overlap-input}}

Fix $0<\beta<1/(10n)$, an admissible $(\boldsymbol\lambda,A)$, and
$\xi$ with $|\xi|\sim R$, using the notation of Section~\ref{sec:fourier-decay-n}. Since we fixed $\boldsymbol\lambda$, the number $\lambda_{\mathrm{prod}}$ is also fixed. 
Recall that $I\subset\mathbb R$ is the compact parameter interval of $\gamma$.
We claim that, if $2\leq k\leq n$ and $I_0\subset I$ is a finite set of
$\lambda_{\mathrm{prod}}$-separated points satisfying
\[
|P_\xi(t)|\lesssim R^\beta,\qquad |P_\xi^{(k)}(t)|\gtrsim \lambda_{\mathrm{prod}}^{-k}
\]
for any $t \in I_0$,
then
\begin{equation}\label{eq:app-derivative-packing}
\#I_0\lesssim_{\beta,\gamma}R^{2\beta}.
\end{equation}

Assuming this claim, let us finish the proof. Choose $k=n+1-\max A\in\{2,\ldots,n\}$.
By admissibility (see Definition \ref{def41}), $\lambda_i=1$ for $i>\max A$.
Recall that $R=Q\delta^{-1}$ and $\lambda_n=1$.
By \eqref{eq:n-derivative-scales} and \eqref{eq:n-scale-normalization},
\begin{equation}\label{eq:app-active-scale}
\begin{aligned}
QL_0
&=R\prod_{i=1}^{n}\lambda_i^{\,n+1-i}
=R\prod_{i=1}^{n-1}\lambda_i^{\,n+1-i}
=1,\\
QL_k
&=R\prod_{i=1}^{n-k}\lambda_i^{\,n+1-k-i}
=R\prod_{i=1}^{n-1}\lambda_i^{\,n+1-k-i}\\
&=\lambda_{\mathrm{prod}}^{-k}
\left(R\prod_{i=1}^{n-1}\lambda_i^{\,n+1-i}\right)
=\lambda_{\mathrm{prod}}^{-k}.
\end{aligned}
\end{equation}
In the product for $QL_k$, the added factor at $i=n+1-k=\max A$
has exponent zero, and all subsequent $\lambda_i$ equal $1$.
By \eqref{eq:n-plank}, the $\lambda_{\mathrm{prod}}$-separated set
$I_0=\{t_\tau:\xi\in\Pi_{\tau,\boldsymbol\lambda,A}\}$ satisfies
\[
|P_\xi(t_\tau)|\lesssim R^\beta,\qquad
|P_\xi^{(k)}(t_\tau)|\gtrsim QL_k=\lambda_{\mathrm{prod}}^{-k}.
\]
Applying \eqref{eq:app-derivative-packing} gives
\[
\sum_{\tau\in\mathcal I(\lambda_{\mathrm{prod}})}\mathbf1_{\Pi_{\tau,\boldsymbol\lambda,A}}(\xi)
=\#I_0\lesssim_{\beta,\gamma}R^{2\beta},
\]
which proves Proposition~\ref{prop:n-overlap-input}.

\begin{proof}[Proof of the claim]
The idea of the proof is to approximate the function $P_{\xi}$ by a polynomial of a fixed degree, and use the fundamental theorem of algebra. To do so, choose an integer $M\geq n+\lceil2/\beta\rceil$ and partition $I$ into
$O_{\beta,\gamma}(R^\beta)$ intervals of length at most $cR^{-\beta}$,
where $0<c\leq1$ is a sufficiently small fixed constant. On each such
interval $J$, let $p$ be the Taylor polynomial of order $M$ for $P_\xi$ at the
left endpoint. Since $|\xi|\sim R$ and $\gamma$ is smooth, Taylor's formula gives
\[
\max_{0\leq j\leq n}\|P_\xi^{(j)}-p^{(j)}\|_{L^\infty(J)}
\lesssim_{\beta,\gamma}R(cR^{-\beta})^{M+1-n}
\lesssim_{\beta,\gamma}R^{-1}.
\]
Since $0<\lambda_i\leq1$, we have $\lambda_{\mathrm{prod}}\leq1$.
Taking $c$ sufficiently small, we therefore have
\[
|p(t)|\lesssim R^\beta,\qquad |p^{(k)}(t)|\gtrsim \lambda_{\mathrm{prod}}^{-k}
\quad(t\in I_0\cap J).
\]
If $I_0\cap J$ is nonempty, the preceding lower bound implies that $p$ is nonconstant.
By the fundamental theorem of algebra, we may write
\[
p(t)=a\prod_{\ell=1}^{\deg p}(t-z_\ell),\qquad \deg p\leq M,
\]
where $z_1,\ldots,z_{\deg p}\in\mathbb C$ are the roots, counted with multiplicity.
For $p(t)\neq0$, the product rule gives
\[
|p^{(k)}(t)|\lesssim_M
|p(t)|\bigl(\min_{1\leq\ell\leq\deg p}|t-z_\ell|\bigr)^{-k}.
\]
Consequently every $t\in I_0\cap J$, including any zero of $p$, satisfies
\[
\min_{1\leq\ell\leq\deg p}|t-z_\ell|
\lesssim_{\beta,\gamma}R^{\beta/k}\lambda_{\mathrm{prod}}
\leq R^\beta \lambda_{\mathrm{prod}}.
\]
Thus $I_0\cap J$ is covered by at most $M$ real intervals centered at
$\operatorname{Re}z_\ell$, each of length $\lesssim_{\beta,\gamma}R^\beta \lambda_{\mathrm{prod}}$.
Since $I_0$ is $\lambda_{\mathrm{prod}}$-separated,
\[
\#(I_0\cap J)\lesssim_{\beta,\gamma}1+R^\beta
\lesssim_{\beta,\gamma}R^\beta.
\]
Summing over the $O_{\beta,\gamma}(R^\beta)$ intervals $J$ proves
\eqref{eq:app-derivative-packing}.
\end{proof}

\bibliographystyle{alpha}
\bibliography{newreference.bib}

\end{document}